\numberwithin{equation}{section}
\tikzset{
	>=stealth',
	punktchain/.style={
		rectangle,
		rounded corners,
		draw=black, thick,
		minimum height=3em,
		text centered,
		on chain},
	line/.style={draw, thick, <-},
	element/.style={
		tape,
		top color=white,
		bottom color=blue!50!black!60!,
		minimum width=8em,
		draw=blue!40!black!90, very thick,
		text width=10em,
		minimum height=3.5em,
		text centered,
		on chain},
	every join/.style={->, thick,shorten >=1pt},
	decoration={brace},
	tuborg/.style={decorate},
	tubnode/.style={midway, right=2pt},
}
\setlist[enumerate,1]{label={\rm(\alph*)}, ref={\rm\alph*}}
\newtheorem{Thm}{Theorem}[section]
\newtheorem{Prop}[Thm]{Proposition}
\newtheorem{Lem}[Thm]{Lemma}
\newtheorem{Cor}[Thm]{Corollary}
\newtheorem{thm-int}{Theorem}
\theoremstyle{definition}
\newtheorem{Def-s}[Thm]{Definition}
\newtheorem{Def}[Thm]{Definition}
\theoremstyle{remark}
\newtheorem{Rem}[Thm]{Remark}
\renewcommand\_{^{}_}
\newcommand\PP{\mathbb P}
\newcommand\C{\mathbb C}
\newcommand\Q{\mathbb Q}
\newcommand\R{\mathbb R}
\newcommand\Z{\mathbb Z}
\newcommand\cA{\mathcal A}
\newcommand\cD{\mathcal D}
\newcommand\cH{\mathcal H}
\newcommand\cO{\mathcal O}
\newcommand\cI{\mathcal I}
\newcommand\cJ{\mathcal J}
\newcommand\cT{\mathcal T}
\newcommand\cS{\mathcal S}
\newcommand\cP{\mathcal P}
\newcommand\cF{\mathcal F}
\newcommand\ch{\operatorname{ch}}
\newcommand\ev{\operatorname{ev}}
\newcommand\Hom{\operatorname{Hom}}
\newcommand\rk{\operatorname{rank}}
\newcommand\ku{\operatorname{Ku}}
\def\abs#1{\left\lvert#1\right\rvert}
\newcommand\Coh{\operatorname{Coh}}
\def\Db{\mathop{\mathrm{D}^{\mathrm{b}}}\nolimits}
\def\Stab{\mathop{\mathrm{Stab}}\nolimits}
\def\Aut{\mathop{\mathrm{Aut}}\nolimits}
\newcommand\cB{\mathcal B}
\newcommand\cN{\mathcal N}
\newtheorem*{rep@theorem}{\rep@title}
\newcommand{\newreptheorem}[2]{%
	\newenvironment{rep#1}[1]{%
		\def\rep@title{#2 \ref{##1}}%
		\begin{rep@theorem}}%
		{\end{rep@theorem}}}
\title{Serre-invariant stability conditions and \\ Ulrich bundles on cubic threefolds}
\author{Soheyla Feyzbakhsh}
\address{Department of Mathematics, Imperial College, London, SW7 2AZ, United Kingdom}
\email{feyzbakhsh@imperial.ac.uk}
\author{Laura Pertusi}
\address{Dipartimento di Matematica F.\ Enriques, Universit\`a degli studi di Milano, Via Cesare Saldini 50, 20133 Milano, Italy}
\email{laura.pertusi@unimi.it}
\begin{document}



\maketitle

\begin{prelims}

\DisplayAbstractInEnglish

\bigskip

\DisplayKeyWords

\medskip

\DisplayMSCclass
 
\end{prelims}


\newpage

\setcounter{tocdepth}{1}

\tableofcontents


\section{Introduction}
Let $X$ be a smooth cubic hypersurface in the complex projective space $\PP^4$. As first noted by Kuznetsov~\cite{kuznetsov-derived-category-cubic-3folds-V14}, the birational geometry of $X$ is controlled by a certain full admissible subcategory $\ku(X)$ of the bounded derived category $\Db(X),$ defined by the semiorthogonal decomposition
$$\Db(X)= \langle \ku(X), \cO_X, \cO_X(H) \rangle.$$
We call $\ku(X)$ the Kuznetsov component of $X$. It has been shown that $\ku(X)$ completely determines the isomorphism class of the cubic threefold $X$; see~\cite{macri:categorical-invarinat-cubic-threefolds,feyz:desing}. 
The proof involves the theory of stability conditions for complexes in the derived category as introduced by Bridgeland~\cite{bridgeland:stability-condition-on-triangulated-category}, and the study of moduli spaces of stable objects in $\ku(X)$. 

Up to now there are two different constructions of stability conditions on $\ku(X)$: one in~\cite{macri:categorical-invarinat-cubic-threefolds} (see Section~\ref{sec-inducedstabcond}) and one in~\cite{bayer:stability-conditions-kuznetsov-component} (see Section~\ref{subsection-kuz-stability}). 
One of the goals of this paper originates from the desire to understand the connection between these two families of stability conditions.

\subsection{Main results}
Let $\cT$ be a $\C$-linear triangulated category with Serre functor $S$. Recall that the space of stability conditions on $\cT$ carries a right action of $\widetilde{\text{GL}}^+_2(\R)$, which is the universal cover of $\text{GL}^+_2(\R)$; 
see Section~\ref{sec-review}. The first result of this paper is a general criterion which implies the existence of a unique orbit of stability conditions which are invariant with respect to the action of the Serre functor (Definition~\ref{def-Sinvstab}).   

\begin{Thm}[Theorem~\ref{thm_uniqueSinv}] \label{thm_uniqueSinv_intro}
Let $\sigma_1$ and $\sigma_2$ be Serre-invariant stability conditions on a linear triangulated category $\cT$ satisfying the conditions~\eqref{C1},~\eqref{C2} and~\eqref{C3} in Section~\ref{section-serre-invariant}. Then there exists a $\tilde{g} \in \widetilde{\emph{GL}}^+(2, \mathbb{R})$ such that $\sigma_1= \sigma_2 \cdot \tilde{g}$. \end{Thm}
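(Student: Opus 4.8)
The plan is to use the large group $\widetilde{\mathrm{GL}}{}^+_2(\R)$ first to reconcile the two central charges, and then to show that once the central charges agree the two slicings are forced to coincide. I would begin by recording that the assumptions \ref{C1}--\ref{C3} make the numerical Grothendieck group $\cN(\cT)$ of rank two, so that each central charge $Z_i$ is an $\R$-linear isomorphism $\cN(\cT)_\R \xrightarrow{\ \sim\ } \C$. Setting $g := Z_1 \circ Z_2^{-1} \in \mathrm{GL}_2(\R)$, the only point to check is that $\det g > 0$: this orientation compatibility should follow from Serre-invariance, since the $S$-action forces each $Z_i$ to induce the same rotational orientation on $\cN(\cT)_\R$ (alternatively it is part of lying in one connected component, but it genuinely needs an argument, as an orientation-reversing $g$ cannot be corrected inside $\widetilde{\mathrm{GL}}{}^+_2(\R)$). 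Lifting $g$ to $\tilde g \in \widetilde{\mathrm{GL}}{}^+_2(\R)$ and replacing $\sigma_2$ by $\sigma_2 \cdot \tilde g$, I may assume from now on that $Z_1 = Z_2 =: Z$. Because a stability condition is determined by its central charge together with its slicing, it then suffices to prove $\cP_1 = \cP_2$, the residual ambiguity being only the central shift by $[2\Z]$, which already lies in a single orbit.

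Next I would extract the phase constraint coming from Serre-invariance. Each $\sigma_i$ being Serre-invariant means $S$ acts on it by some $\tilde h_i \in \widetilde{\mathrm{GL}}{}^+_2(\R)$; the fractional Calabi--Yau relation underlying \ref{C1}--\ref{C3} (an isomorphism $S^q \cong [p]$ with $0 < p/q \le 2$) forces $\tilde h_i^{\,q} = [p]$, hence $\tilde h_i$ shifts $\sigma_i$-phases by the fixed constant $w := p/q \le 2$ and rotates $Z$ by a fixed angle. Thus $\phi_i(SA) = \phi_i(A) + w$ for every $\sigma_i$-stable object $A$. Feeding this into Serre duality $\Hom(A,B) \cong \Hom(B, SA)^\vee$ gives the decisive \emph{phase window}: for $\sigma_i$-stable $A, B$ with $\Hom(A,B) \neq 0$ one has $\phi_i(A) \le \phi_i(B) \le \phi_i(SA) = \phi_i(A) + w \le \phi_i(A) + 2$. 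In other words, any two interacting stable objects have phases within distance at most $2$, exactly as if $S$ were a shift of cohomological degree $\le 2$.

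It then remains to compare the two slicings, and this is the step I expect to carry the real weight. I would take a $\sigma_1$-stable object $E$ and examine its $\sigma_2$-Harder--Narasimhan filtration, with semistable factors $A_1, \dots, A_m$ of strictly decreasing $\sigma_2$-phases; the maximal destabilizing subobject and the minimal quotient provide nonzero maps $A_1 \to E$ and $E \to A_m$. Applying the window bound of the previous paragraph inside each of $\sigma_1$ and $\sigma_2$, together with the fact that $Z_1 = Z_2$ forces the $\sigma_1$- and $\sigma_2$-phases of any fixed class to agree modulo $2$, I would argue that the spread $\phi_2(A_1) - \phi_2(A_m)$ cannot be positive, so $m = 1$ and $E$ is $\sigma_2$-semistable; a direct phase comparison then promotes this to $\sigma_2$-stability of the same phase. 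By the symmetric argument the $\sigma_1$- and $\sigma_2$-stable objects coincide class by class, whence $\cP_1(\phi) = \cP_2(\phi)$ for every $\phi$ and therefore $\sigma_1 = \sigma_2 \cdot \tilde g$.

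The hard part will be precisely this cross-comparison: the Serre-duality window controls phase differences only \emph{within} a single stability condition, whereas the Harder--Narasimhan analysis unavoidably mixes $\sigma_1$ and $\sigma_2$. Bridging the two is where I expect the assumptions \ref{C1}--\ref{C3} to be used in full strength --- converting the soft bound ``width $\le 2$'' into the rigid conclusion $m=1$ by exploiting that equal central charges pin all phases modulo $2$ --- and where one must also keep careful track of the discrete shift ambiguity in the choice of lift $\tilde g$.
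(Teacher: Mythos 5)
Your high-level plan (normalize the central charges by the $\widetilde{\mathrm{GL}}^+_2(\R)$-action, then show the slicings coincide via Serre-duality phase windows) matches the paper's strategy in outline, but both load-bearing steps have genuine gaps. First, the orientation claim $\det g>0$ does not follow from Serre-invariance alone. When $0<k/r<2$ with $k/r\notin\Z$ the induced automorphism $S_*$ of $\cN(\cT)$ has non-real eigenvalues and one could try to extract an orientation from the rotation number of the lift, but condition \ref{C1} explicitly allows $k=2r=4$ (e.g.\ the quartic double solid, where $S=\iota[2]$ and $S_*$ is an involution with real eigenvalues); there Serre-invariance puts no orientation constraint on $Z_i$ at all. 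The paper instead normalizes using condition \ref{C3}, which your step never invokes: the objects $Q_1,Q_2$ with $-\ell_\cT+1\le\hom^1(Q_i,Q_i)<-2\ell_\cT+2$ are $\sigma$-stable for \emph{every} Serre-invariant $\sigma$ (Proposition \ref{prop-PY}\eqref{minimal-stable}), and the nonvanishing $\hom(Q_2,Q_1)$, $\hom(Q_1,Q_2[1])$ pin $\phi(Q_1)-1<\phi(Q_2)<\phi(Q_1)$ for both $\sigma_1$ and $\sigma_2$, so one may arrange $Z(Q_1)=-1$, $Z(Q_2)=i$ for both; this fixes the orientation and, crucially, also controls the even-shift ambiguity you defer. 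Relatedly, your assertion that $\tilde h_i$ shifts all $\sigma_i$-phases by the constant $w=p/q$ is false: $\tilde h_i^q=[p]$ only forces $g^q=\mathrm{id}+p$ for an increasing $g$, and the paper proves only the inequalities $\phi(E)+1\le\phi(S(E))\le\phi(E)+2$, where the lower bound itself is not formal --- it uses $\hom^1(E,E)\ge-\ell_\cT+1$, hence condition \ref{C2}.

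Second, and more seriously, the core comparison is not actually argued: ``the spread $\phi_2(A_1)-\phi_2(A_m)$ cannot be positive, so $m=1$'' is precisely the theorem, and equal central charges plus the width-$2$ window do not yield it --- to compare $\phi_2(A_j)$ with $\phi_1(E)$ you must already know the HN factors $A_j$ are $\sigma_1$-semistable of the same phase, which is circular as stated. The paper's mechanism is an induction on $\hom^1(E,E)$: the weak Mukai Lemma (Proposition \ref{prop-PY}\eqref{Mukai}, whose hypothesis $\hom(B,A[2])=0$ is exactly what the Serre phase bound supplies) gives $\hom^1(E_1,E_1)+\hom^1(E/E_1,E/E_1)\le\hom^1(E,E)$, and the universal lower bound $\hom^1\ge-\ell_\cT+1\ge 2$ forces strict decrease, so the induction hypothesis applies to the HN and Jordan--H\"older factors; the base case (minimal $\hom^1$) is stable for both conditions, and residual even shifts $E[2m]$, $m\neq0$, are excluded by showing $\chi(Q_1,E)=\chi(Q_2,E)=0$, whence $[E]=0$ in the rank-two nondegenerate lattice $\cN(\cT)$ --- a contradiction. (The case $k=2r=4$ needs the further dichotomy of Lemma \ref{lem-posibilities}, since Jordan--H\"older factors may pair $G$ with $S(G)[-2]$.) Without this induction, or a substitute for it, your outline cannot close: two stability conditions with equal central charge need not coincide in general, and nothing in your sketch converts the soft window bound into $m=1$.
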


Theorem~\ref{thm_uniqueSinv_intro} applies for instance to the Kuznetsov component of a cubic threefold $X$ 
to show that the known stability conditions on $\ku(X)$ are identified in the stability manifold up to the action of $\widetilde{\text{GL}}^+_2(\R)$, answering a question asked by Macr\`{\i} and Stellari (see~\cite[Remark 4.9]{pertusi:some-remarks-fano-threefolds-index-two}). 
 
\begin{Thm}[Corollary~\ref{cor_barsigmavssigmaalphabeta}]
\label{thm_1}
Let $X$ be a cubic threefold. The stability condition $\overline{\sigma}$ on $\ku(X)$ defined in~\cite{macri:acm-bundles-cubic-3fold} is in the same orbit as the stability conditions $\sigma(\alpha,\beta)$ introduced in~\cite{bayer:stability-conditions-kuznetsov-component} with respect to the  $\widetilde{\emph{GL}}^+_2(\R)$-action.    
\end{Thm}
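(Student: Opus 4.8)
The plan is to deduce the statement directly from the abstract uniqueness criterion, Theorem~\ref{thm_uniqueSinv_intro}, applied to $\cT=\ku(X)$. Since that theorem asserts that any two Serre-invariant stability conditions on a category satisfying \ref{C1}, \ref{C2} and \ref{C3} lie in a single $\widetilde{\mathrm{GL}}^+_2(\R)$-orbit, the proof reduces to three verifications: (a) that $\ku(X)$ satisfies the three hypotheses; (b) that the family $\sigma(\alpha,\beta)$ is Serre-invariant in the sense of Definition~\ref{def-Sinvstab}; and (c) that $\overline{\sigma}$ is Serre-invariant. Once all three hold, the conclusion $\overline{\sigma}=\sigma(\alpha,\beta)\cdot\tilde g$ for some $\tilde g\in\widetilde{\mathrm{GL}}^+_2(\R)$ is immediate, as both are numerical stability conditions on the same lattice.

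For (a) I would use the explicit description of $\ku(X)$ as a fractional Calabi--Yau category: the Serre functor satisfies $S^3\simeq[5]$, so $\ku(X)$ has fractional Calabi--Yau dimension $5/3\le 2$, which should be precisely the content of \ref{C1}. The numerical Grothendieck group $\cN(\ku(X))$ has rank two, and I would check the remaining conditions \ref{C2} and \ref{C3} against its known Euler pairing together with the existence of the requisite stable objects (for instance, the classes of minimal square-norm), supplying the input on the lattice and on the realizable phases that the abstract theorem requires.

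For (b) I expect Serre-invariance of $\sigma(\alpha,\beta)$ to follow from the compatibility of its construction with the action of $S$: using $S^3\simeq[5]$ and the induced action of $S$ on $\cN(\ku(X))$, one checks that $S(\sigma(\alpha,\beta))$ has the same heart up to shift and a central charge differing from that of $\sigma(\alpha,\beta)$ by an element of $\mathrm{GL}^+_2(\R)$, which is exactly the Serre-invariance of Definition~\ref{def-Sinvstab}.

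The main obstacle is (c). Unlike $\sigma(\alpha,\beta)$, the stability condition $\overline{\sigma}$ arises from the different construction of \cite{macri:acm-bundles-cubic-3fold}, and its interaction with the Serre functor is not built into its definition. I would therefore analyze $\overline{\sigma}$ through its defining heart and central charge directly: show that applying $S$ preserves this heart up to shift and transforms the central charge linearly, thereby verifying Serre-invariance by hand. With (a)--(c) in place, Theorem~\ref{thm_uniqueSinv_intro} yields the desired identification of $\overline{\sigma}$ and $\sigma(\alpha,\beta)$ in a single $\widetilde{\mathrm{GL}}^+_2(\R)$-orbit.
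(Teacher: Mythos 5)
Your plan coincides with the paper's own proof: conditions \ref{C1}--\ref{C3} are checked for $\ku(X)$ in Corollary \ref{cor-s-invariant-ku} (using $\cS^3=[5]$, the rank-two lattice with $\chi(v,v)\leq -1$, and $\hom^1(\cI_\ell,\cI_\ell)=2$), Serre-invariance of $\sigma(\alpha,\beta)$ is part of Theorem \ref{thm_stabcondinduced} (quoted from earlier work), Serre-invariance of $\overline{\sigma}$ is proved directly in Theorem \ref{thm-S-B0} exactly as in your step (c), and Theorem \ref{thm_uniqueSinv} then yields the orbit statement. The only refinement to your step (c) is that the Serre functor does not preserve the heart up to shift on the nose: the paper instead shows $\cS_{\cB_0}^{-1}[2]$ sends $\cA_{-\frac{5}{4}}$ into $\langle \cA_{-\frac{5}{4}}, \cA_{-\frac{5}{4}}[1]\rangle$ (Proposition \ref{prop-heart}), matches the central charges via the explicit action on $\cN(\ku(\PP^2,\cB_0))$, and concludes with \cite[Lemma 8.11]{bayer:the-space-of-stability-conditions-on-abelian-threefolds}.
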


Our second result provides a detailed description of the moduli spaces of stable objects in $\ku(X)$ with minimal dimension with respect to any Serre-invariant stability condition. More precisely, let $M_X(v)$ and $M_X(w)$ be the moduli of slope-stable sheaves on $X$ with Chern characters
\begin{equation*}
    v = \left(1, 0, -\frac{1}{3}H^2, 0\right) \quad \text{and} \quad 
    w = \left(2, -H, -\frac{1}{6}H^2, \frac{1}{6}H^3\right). 
\end{equation*}
Note that $v$ is the numerical class of the ideal sheaf $\cI_{\ell}$ of a line $\ell$ in $X$ and in fact $M_X(v)$ is isomorphic to the Fano variety of lines on $X$. We also denote by $M_X(v-w)$ the moduli of large volume stable complexes on $X$ of class $v-w$; see Definition~\ref{def-large-stable}. 

\begin{Thm}[Theorem~\ref{thm-moduli-spaces}]\label{thm-introduction-2}
We have the isomorphisms 
\begin{equation*}
    \xymatrix@C+=3cm{
    M_X(v) \ar[r]_{\cong}^{L_{\cO_X}(- \otimes \cO(H))} & M_X(w) \ar[r]_{\cong}^{L_{\cO_X}(- \otimes \cO(H))} &  M_X(v-w).
    }
\end{equation*}
Moreover, the above three moduli spaces are isomorphic to the moduli of $\sigma$-stable objects of class $[\cI_{\ell}]$ in $\ku(X)$, where $\sigma$ is any Serre-invariant stability condition on $\ku(X)$. 
\end{Thm}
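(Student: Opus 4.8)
The plan is to treat the three spaces on the right-hand side uniformly, by realizing all of them as moduli of $\sigma$-stable objects in $\ku(X)$ and exploiting the single autoequivalence $\Phi := L_{\cO_X}(-\otimes\cO_X(H))$. First I would record the basic properties of $\Phi$. A short mutation argument shows that $\Phi$ restricts to an autoequivalence of $\ku(X)$: for $E\in\ku(X)$ the object $L_{\cO_X}(E(H))$ is right-orthogonal to $\cO_X$ by construction, and its defining triangle together with $\Hom^\bullet(\cO_X(H),E(H))=\Hom^\bullet(\cO_X,E)=0$ and $\Hom^\bullet(\cO_X(H),\cO_X)=H^\bullet(\cO_X(-H))=0$ shows it is right-orthogonal to $\cO_X(H)$ as well. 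Next I would compute the action of $\Phi$ on the numerical Grothendieck group by a Chern-character and Riemann--Roch calculation: tensoring by $\cO_X(H)$ and subtracting the correct multiple of $\ch(\cO_X)$ gives the three-cycle $v \mapsto -w \mapsto w-v \mapsto v$, so that, up to shift, $\Phi$ sends the class of $\cI_\ell$ successively to $w$ and to $v-w$.

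The key structural input is that $\Phi$ preserves the orbit of Serre-invariant stability conditions. Since any exact autoequivalence commutes with the Serre functor up to natural isomorphism, if $\sigma$ is Serre-invariant then so is $\Phi_*\sigma$; by Theorem \ref{thm_uniqueSinv_intro} we then have $\Phi_*\sigma = \sigma\cdot\tilde h$ for some $\tilde h\in\widetilde{\mathrm{GL}}^+_2(\R)$. Consequently $\Phi$ carries $\sigma$-stable objects to $\Phi_*\sigma$-stable objects, which are the same as the $\sigma$-stable ones, and hence induces isomorphisms between the moduli of $\sigma$-stable objects of class $c$ and of class $\Phi c$ for every $c$. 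Combined with the three-cycle above, and with the fact that the moduli of stable objects of a class and of its negative agree via the shift $[1]$, this yields the abstract skeleton of the diagram: the moduli of $\sigma$-stable objects of classes $v$, $w$ and $v-w$ in $\ku(X)$ are isomorphic, and the explicit arrows $L_{\cO_X}(-\otimes\cO_X(H))$ in the statement are exactly these instances of $\Phi$. Theorem \ref{thm_uniqueSinv_intro} also guarantees that this Bridgeland moduli space is independent of the chosen Serre-invariant $\sigma$, so we are free to compute with the most convenient stability condition, for instance a member of the family $\sigma(\alpha,\beta)$.

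It then remains to match each abstract moduli space with the corresponding classical one. For the class $v$ one shows that $\cI_\ell$ is $\sigma(\alpha,\beta)$-stable and, conversely, that every $\sigma$-stable object of class $v$ is the ideal sheaf of a line, identifying the Bridgeland moduli with $M_X(v)$, which the excerpt already records as the Fano variety of lines. For the classes $w$ and $v-w$ one checks that the slope-stable rank-two sheaves of class $w$ and the large-volume stable complexes of class $v-w$ lie in $\ku(X)$; indeed $\Phi(\cI_\ell)$ is, up to shift, precisely such a rank-two sheaf, as one sees from the cohomology computation $\Hom^\bullet(\cO_X,\cI_\ell(H))=\C^3$, whose evaluation triangle exhibits the mutation as a two-term complex of the expected rank. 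One must then verify that on these classes $\sigma$-stability coincides with slope-stability, respectively with large-volume stability.

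The main obstacle is exactly this translation between intrinsic $\sigma$-stability in $\ku(X)$ and the honest stability notions on $X$. Concretely, the comparison passes through the relationship between $\sigma(\alpha,\beta)$-stability and tilt-stability on $\Db(X)$ and the behaviour at the large-volume limit, and one has to rule out wall-crossing for these minimal classes, whose discriminants are smallest. The delicate point is to show that the projection to $\ku(X)$ preserves (semi)stability on the nose, so that the geometric moduli $M_X(v)$, $M_X(w)$ and $M_X(v-w)$ are genuinely isomorphic to the Bridgeland moduli, not merely birational or numerically matched; granting this, the isomorphisms between the three $M_X$ spaces induced by $L_{\cO_X}(-\otimes\cO_X(H))$ follow immediately from the second paragraph.
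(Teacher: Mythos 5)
Your outline reproduces the paper's architecture: $\Phi := L_{\cO_X}(-\otimes\cO_X(H))$ preserves $\ku(X)$, cycles the three classes, and respects the unique $\widetilde{\mathrm{GL}}^+_2(\R)$-orbit of Serre-invariant stability conditions, so the three Bridgeland moduli spaces are identified and it remains to match them with the classical ones. But you miss the observation the paper leans on: by Kuznetsov's relation $\mathrm{O}|_{\ku(X)}^3\cong[-1]$ one has $\cS=\Phi[1]$ on $\ku(X)$, i.e.\ the functor in the diagram \emph{is} the Serre functor up to shift. This makes your second paragraph immediate from the definition of Serre-invariance (no appeal to Theorem \ref{thm_uniqueSinv} is needed for that step --- though your route via uniqueness is also valid), and, more importantly, it is what lets the paper avoid your plan of verifying ``$\sigma$-stability coincides with slope-stability, respectively large-volume stability'' separately on the classes $w$ and $v-w$: there, a $\sigma$-stable object of class $[\cS(\cI_\ell)]$ or $[\cS^2(\cI_\ell)]$ is hit with $\cS^{-1}$ and reduced to class $[\cI_\ell]$, so the wall-crossing analysis is performed exactly once. (Minor slip: since $\Phi^3=[-1]$, the numerical cycle is $v\mapsto -w\mapsto w-v\mapsto -v$, not back to $v$; harmless given your shift-identification.)

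The genuine gap is that everything you defer to ``one must then verify'' is the actual content of the theorem, and your proposal gives no indication of how to carry it out. Concretely: (i) that every $\sigma$-stable object of class $[\cI_\ell]$ is a shifted slope-stable sheaf is proved by putting $(\alpha,\beta)$ on the hyperbola $\alpha^2=\beta^2-2/3$ so that $Z(\alpha,\beta)(E)=-1$ forces maximal phase, comparing $\sigma(\alpha,\beta)$-, $\sigma^0_{\alpha,\beta}$- and tilt-stability via Proposition \ref{prop-rotated-stability}, killing the possible torsion factor $T$ with Li's stronger Bogomolov-type inequality (Theorem \ref{thm-li-ch3}), and excluding the wall $W(E,\cO_X(-H)[1])$ by the numerical analysis of Lemma \ref{lem-numerial}; none of this appears in your sketch beyond the phrase ``rule out wall-crossing''. (ii) For $M_X(w)$ you only establish the forward inclusion ($\Phi(\cI_\ell)$ is, up to shift, a slope-stable sheaf of class $w$ --- itself requiring the reflexivity bound of Lemma \ref{lem-reflexive} and a cohomology-sheaf analysis of the mutation that you do not perform). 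What makes the arrow $M_X(v)\to M_X(w)$ \emph{surjective} is the classification of Proposition \ref{prop_slopestableclassw}: every slope-stable sheaf $K$ of class $w$ sits in $0\to K\to\cO_X^{\oplus 3}\to\cI_\ell(H)\to 0$; without it one only gets an embedding of the Fano surface into $M_X(w)$, and even membership of an arbitrary such $K$ in $\ku(X)$ is not immediate (the intermediate cohomology vanishings are part of that argument, as is $h^0(K_\ell(H))=3$ from Lemma \ref{lem-h0} for the next mutation). The analogous identification for $v-w$ (that $\cH^{-1}(J_\ell)=\cO_X(-H)$, that large-volume stable complexes of this class lie in $\ku(X)$ and are $\sigma(\alpha,\beta)$-stable) is likewise asserted rather than proved. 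So what you have is a correct road map whose skeleton matches the paper, but the decisive steps --- the one-time wall-crossing for class $v$ and the classification for class $w$ --- are missing.
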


Finally, we apply Theorem~\ref{thm_1} to study the moduli space of Ulrich bundles of rank at least $2$ on $X$. Recall that an Ulrich bundle $E$ on $X$ is an arithmetically Cohen--Macaulay vector bundle such that the graded module $\oplus_{m \in \Z} H^0(X, E(mH))$ has $3 \rk(E)$ generators in degree $1$ (see Section~\ref{sec_Ulrich}). Ulrich bundles all lie in $\ku(X)$. 
Thus the moduli spaces of stable objects in $\ku(X)$ become a useful tool  to deduce properties of certain classical moduli spaces of semistable sheaves on $X$, such as non-emptyness and irreducibility. Applying these techniques, Lahoz, Macr\`{\i} and Stellari~\cite[Theorem B]{macri:acm-bundles-cubic-3fold} show that the moduli space $\mathfrak{M}^{sU}_{d}$ of stable Ulrich bundles of rank $d$ on $X$ is non-empty and smooth of the expected dimension $d^2+1$. They leave as an open question its irreducibility, which is our last result. 

\begin{Thm}[Theorem~\ref{thm_ulrich}]
\label{thm_2}
Let $X$ be a cubic threefold. The moduli space of Ulrich bundles of rank at least $2$ on $X$ is irreducible.
\end{Thm}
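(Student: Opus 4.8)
The plan is to prove irreducibility of the moduli space of Ulrich bundles of rank $d \geq 2$ by transporting the problem into the Kuznetsov component $\ku(X)$, where the machinery of Serre-invariant stability conditions becomes available. Since Ulrich bundles lie in $\ku(X)$, the first step is to identify, via an explicit computation of Chern characters, the numerical class $[E] \in K(\ku(X))$ of an Ulrich bundle of rank $d$, and to verify that a stable Ulrich bundle $E$ is $\sigma$-stable (or at least $\sigma$-semistable of a controlled form) with respect to a Serre-invariant stability condition $\sigma$ on $\ku(X)$. Here Theorem \ref{thm_1} is essential: it tells us that it suffices to work with a single orbit of Serre-invariant stability conditions, so we may freely pass between the construction $\overline{\sigma}$ of \cite{macri:acm-bundles-cubic-3fold} and the family $\sigma(\alpha,\beta)$ of \cite{bayer:stability-conditions-kuznetsov-component}, choosing whichever is most convenient for computing walls and destabilizing sequences.

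Next I would set up a comparison between the classical moduli space $\mathfrak{M}^{sU}_d$ of stable Ulrich bundles and the moduli space $M_\sigma([E])$ of $\sigma$-stable objects in $\ku(X)$ of the corresponding class. The goal is to realize $\mathfrak{M}^{sU}_d$ as an open subset (or at least a locally closed, dense subset) of a Bridgeland moduli space $M_\sigma([E])$. By \cite[Theorem B]{macri:acm-bundles-cubic-3fold} we already know $\mathfrak{M}^{sU}_d$ is smooth of dimension $d^2+1$, so irreducibility will follow if we can show the associated Bridgeland moduli space is irreducible and that the Ulrich locus is dense in it. A natural strategy for irreducibility of $M_\sigma([E])$ is an induction on the rank $d$: one would analyze the structure of $\sigma$-(semi)stable objects of class $d \cdot [E_1]$, where $[E_1]$ is the class of a rank-$1$ Ulrich-type object, and show that a general such object is built up from lower-rank pieces via extensions, so that $M_\sigma([E])$ is dominated by an irreducible parameter space of such extensions. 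Theorem \ref{thm-introduction-2}, giving an explicit and irreducible description of the minimal-dimension moduli spaces (identified with the Fano variety of lines $M_X(v)$), provides the base case and the fundamental building block for this induction.

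The main obstacle, as I see it, will be controlling the wall-and-chamber structure and the stable factors that appear along the induction: one must rule out, or carefully account for, destabilizing subobjects whose existence would either break irreducibility or prevent the Ulrich locus from being dense. Concretely, the hard part is showing that every $\sigma$-stable object of the relevant class is either an Ulrich bundle or lies in the closure of the Ulrich locus, which requires a precise understanding of the Jordan--Hölder factors of $\sigma$-semistable objects on the relevant wall and of how they deform. Here the Serre-invariance of $\sigma$ is doing real work: because $\sigma$ is fixed by the Serre functor, the central charge and phases of objects and their Serre duals are rigidly constrained, which sharply limits the possible classes of destabilizing subobjects and forces the Hom- and Ext-dimensions to take controlled values. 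I would exploit this, together with the numerical constraints coming from the fractional Calabi--Yau structure of $\ku(X)$, to bound the relevant $\Ext$-groups and thereby show that the space of extensions producing rank-$d$ objects is irreducible of the expected dimension.

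Finally, once $M_\sigma([E])$ is shown to be irreducible and the Ulrich locus is shown to be open and dense inside it, irreducibility of $\mathfrak{M}^{sU}_d$ for each $d \geq 2$ follows immediately, and taking the union over $d$ (noting these are disjoint components indexed by rank) gives the stated result. The smoothness and dimension count from \cite[Theorem B]{macri:acm-bundles-cubic-3fold} ensure there are no hidden embedded or lower-dimensional components to worry about, so establishing irreducibility of the ambient Bridgeland moduli space is genuinely the crux.
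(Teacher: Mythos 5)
Your opening moves coincide with the paper's: the class of a rank-$d$ Ulrich bundle is $d[\cI_\ell]$, Ulrich bundles are $\sigma(\alpha,\beta)$-semistable objects of $\ku(X)$ (proved there via tilt-stability, the restriction theorem of \cite{feyz:slope-stability-of-restriction} and Proposition \ref{prop-rotated-stability}), and $\mathfrak{M}^{U}_{d}$ embeds as an open subset of the Bridgeland moduli space $M^{\sigma(\alpha,\beta)}_{\ku(X)}(d[\cI_\ell])$, with Theorem \ref{thm_1} used to move freely within the Serre-invariant orbit. (Incidentally, you need not worry about density of the Ulrich locus: a nonempty open subset of an irreducible space is automatically dense and irreducible, so openness suffices.) But your strategy for the crux --- irreducibility of the ambient Bridgeland moduli space --- diverges from the paper and has a genuine gap. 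You propose an induction on $d$ in which $M_\sigma(d[\cI_\ell])$ is ``dominated by an irreducible parameter space of extensions'' of lower-rank pieces. Objects realized as same-phase extensions of lower-rank semistable objects are precisely the \emph{strictly} semistable ones; $\sigma$-stable objects of class $d[\cI_\ell]$ are by definition never of this form, so no extension space can dominate the stable locus. Your Serre-invariance and Ext-bounding considerations can control destabilizing classes and give $\Ext^2(E,E)=0$, but they do not make stable objects degenerate to the semistable boundary.

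What actually closes this gap in the paper is a second, decisive use of Theorem \ref{thm_1} that your sketch omits: after applying the Serre functor to replace the class $d[\cI_\ell]$ by $d[\cS(\cI_\ell)]$, the equivalence $\ku(X)\simeq\ku(\PP^2,\cB_0)$ and the uniqueness of the Serre-invariant orbit identify the Bridgeland moduli space with the Gieseker moduli space $\mathcal{M}_d$ of semistable $\cB_0$-modules of class $2d[\cB_0]-d[\cB_{-1}]$ on $\PP^2$ (Proposition \ref{prop_stableB_0modinMd}). This is a projective GIT quotient, and there $\Ext^2(E,E)=\Hom(E,E\otimes_{\cB_0}\cB_{-1})^{\vee}=0$ gives normality via classical deformation theory of Quot-schemes. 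Irreducibility then follows from connectedness plus normality: the strictly semistable locus is connected by induction (it is covered by the images of $\mathcal{M}_{d_1}\times\mathcal{M}_{d_2}\to\mathcal{M}_d$, with base case $\mathcal{M}_1$ isomorphic to the Fano surface of lines, matching your intended use of Theorem \ref{thm-introduction-2}), and the argument of \cite[Lemma 4.1]{lehn:singular-symplectic-moduli-space} rules out connected components consisting purely of stable sheaves --- a step that requires exactly the properness your intrinsic approach lacks. Without the passage to $\Db(\PP^2,\cB_0)$, neither projectivity, nor normality, nor even the scheme structure of $M_\sigma(d[\cI_\ell])$ is available, and the wall-crossing induction you outline cannot reach the stable locus; so the transport to the noncommutative plane is not a convenience but the missing idea.
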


\subsection{Related works and motivation}
The interest in the study of Serre-invariant stability conditions on semiorthogonal components in the bounded derived category has grown recently, mostly due to  applications to the study of moduli spaces (see for instance~\cite{pertusi:some-remarks-fano-threefolds-index-two, LiuZhang}) and to the desire to better understand  the Kuznetsov component. For Fano threefolds of Picard rank $1$ and index $2$, the existence of Serre-invariant stability conditions on their Kuznetsov component is proved in~\cite{pertusi:some-remarks-fano-threefolds-index-two}, making use of the stability conditions constructed in~\cite{bayer:stability-conditions-kuznetsov-component}. In the upcoming paper~\cite{PR}, the same result is proved for the Kuznetsov component of a Gushel--Mukai threefold. On the other hand, in the recent paper~\cite{KuzPer}, the authors show the non-existence of Serre-invariant stability conditions on Kuznetsov components of almost all Fano complete intersections of codimension at least $2$. 

The assumptions in Theorem~\ref{thm_uniqueSinv_intro} require the category to be fractional Calabi--Yau with numerical Grothendieck group of rank $2$ generated by objects $E$ with small $\hom^1(E, E)$. The first condition allows one to control the phase of stable objects after the action of the Serre functor, and the other conditions make the category similar to the bounded derived category of a curve. Since in the case of curves of genus at least $2$ there is a unique orbit of stability conditions by Macr\`{\i}'s result~\cite{macri:stability-conditions-on-curves}, these are very natural conditions for one to expect the category to have a unique Serre-invariant stability condition.

Theorem~\ref{thm_uniqueSinv_intro} applies to cubic threefolds and to other Fano threefolds (see Remark~\ref{rem_applicationto Fano3}). Note that in these cases the uniqueness result has been recently proved independently by~\cite{zhang-hochschild:}. We explain in Section~\ref{subsec_cubicfourfolds} a related application in the case of very general cubic fourfolds.

In Proposition~\ref{prop.2-dim} we apply the method in~\cite{bayer:stability-conditions-kuznetsov-component} to construct stability conditions on $\ku(X)$ via the embedding of $\ku(X)$ in $\Db(\PP^2, \cB_0)$; see~\cite{macri:acm-bundles-cubic-3fold}. These stability conditions are Serre invariant, as shown in Section~\ref{sec-Srreinvariance}. 
Since the stability conditions constructed in~\cite{bayer:stability-conditions-kuznetsov-component} are also Serre invariant by~\cite[Corollary 5.5]{pertusi:some-remarks-fano-threefolds-index-two}, we deduce that all the stability conditions constructed on $\ku(X)$ up to now are Serre invariant. Theorem~\ref{thm_1} then follows  from Theorem~\ref{thm_uniqueSinv_intro}. An interesting question would be to understand whether the property of Serre invariance characterises the stability conditions on $\ku(X)$. This fact together with Theorem~\ref{thm_uniqueSinv_intro} would allow one to show that there is a unique orbit of stability conditions on $\ku(X)$, in analogy to the case of curves of genus at least $2$; see~\cite{macri:stability-conditions-on-curves}.  

Another open question is to generalise Theorem~\ref{thm-introduction-2} and Corollary~\ref{cor_moduliprojective} to further study moduli spaces of semistable objects in $\ku(X)$ with respect to a Serre-invariant stability condition like their projectivity or irreducibility. Some cases of small dimension have been studied in~\cite{pertusi:some-remarks-fano-threefolds-index-two, rota, feyz:desing, qin, LiuZhang}. 

The study of Ulrich bundles is a central theme in classical algebraic geometry and commutative algebra (see for instance~\cite{eisenbud:resultants} and~\cite{beauville:introUlrich} for a survey). The existence of Ulrich bundles have been shown on cubic threefolds by~\cite[Theorem B]{macri:acm-bundles-cubic-3fold}.
In the rank $2$ case, namely the case of instanton sheaves of minimal charge, we know a full description of the moduli space with class $2[\cI_\ell]$ for cubic threefolds, see \cite{macri:acm-bundles-cubic-3fold}, and more generally for Fano threefolds of Picard rank $1$, index $2$ and degree $d \geq 3$; see~\cite{qin, LiuZhang}. For the case of cubic fourfolds, we refer to~\cite{FaenziKim}. 

Theorem~\ref{thm_2} follows from an embedding of the moduli space of Ulrich bundles of rank $d \geq 2$ on $X$ in a moduli space of semistable objects in $\ku(X)$ with class $d[\cI_\ell]$, where $\cI_\ell$ is the ideal sheaf of a line in $X$. By Theorem~\ref{thm_1}, the latter moduli space can be described via an irreducible moduli space parametrising  Gieseker-semistable sheaves which are $\cB_0$-modules; it has the same dimension as 
$\mathfrak{M}^{sU}_{d}$.

\subsection{Plan of the paper}
In Section~\ref{sec_Bridgstabandtilting} we recall the definitions and basic properties of (weak) stability conditions, tilt stability, wall-crossing and a method to construct stability conditions on Kuznetsov components. Section~\ref{section-serre-invariant} is devoted to the proof of Theorem~\ref{thm_uniqueSinv_intro}. In Section~\ref{sec_kuzcubicthreefold} we review the construction of the stability conditions $\sigma(\alpha, \beta)$ from~\cite{bayer:stability-conditions-kuznetsov-component}, and then we prove Theorem~\ref{thm-introduction-2}. In Section~\ref{sec_kuzconicfibr} we extend the construction of stability conditions on $\ku(X)$ from~\cite{macri:categorical-invarinat-cubic-threefolds,macri:acm-bundles-cubic-3fold}, and we show they are Serre invariant. Finally, we deduce Theorem~\ref{thm_1}. In Section~\ref{sec_Ulrich} we prove Theorem~\ref{thm_2}.

\subsection*{Acknowledgments} 
We are very grateful to Chunyi Li, Emanuele Macr\`{\i}, Paolo Stellari, Richard Thomas, Xiaolei Zhao and Shizhuo Zhang for many useful conversations, and to Arend Bayer for suggesting the strategy used in Theorem~\ref{thm_uniqueSinv_intro}. We also thank an anonymous referee for useful comments.

\section{(Weak) Bridgeland stability conditions and tilting} \label{sec_Bridgstabandtilting}

In this section we recall the definitions and properties of (weak) Bridgeland stability conditions and wall-crossing. Our main reference is~\cite{bayer:the-space-of-stability-conditions-on-abelian-threefolds}. We also mention the stronger Bogomolov inequalities proved in~\cite{li:fano-picard-number-one} in the case of cubic threefolds. Finally, we explain
the method to construct stability conditions on the orthogonal complement of an exceptional collection introduced in~\cite{bayer:stability-conditions-kuznetsov-component}.  

\subsection{Review on (weak) stability conditions} \label{sec-review}
Let $\cT$ be a $\C$-linear triangulated category, and denote by $K(\cT)$ its Grothendieck group. A (weak) stability condition on $\cT$ is defined by giving the heart of a bounded t-structure and a (weak) stability function on it. Fix a finite-rank lattice $\Lambda$ with a surjective morphism $v \colon K(\cT) \twoheadrightarrow \Lambda$. We denote by $\Re[-]$ (resp.\ $\Im [-]$) the real (resp.\ imaginary) part of a complex number. 

\begin{Def}\label{def:weakstabfunction}
Let $\cA$ be the heart of a bounded t-structure on $\cT$. A group homomorphism $Z \colon \Lambda \rightarrow \C$ is a
\emph{$($weak$)$ stability function} on $\cA$ if for any $0 \neq E \in \cA$, we have $\Im Z(v(E)) \geq 0$, and in the case that $\Im Z(v(E)) = 0$, we have $\Re Z(v(E))< (\leq)\, 0$.
\end{Def}

By abuse of notation, we will write $Z(E)$ for an object $E \in \cA$ instead of $Z(v(E))$. The slope of $E \in \cA$ with respect to $Z$ is defined by
$$\mu_{Z}(E)= 
\begin{cases}
-\frac{\Re Z(E)}{\Im Z(E)} & \text{if } \Im Z(E) > 0, \\
+ \infty & \text{otherwise.}
\end{cases}$$
An object $F \in \cT$ is called \emph{$($semi$\,)$stable} with respect to the pair $(\cA , Z)$ if some shift $F[k]$ lies in the heart $\cA$ and for every proper subobject $F' \subset F[k]$ in $\cA$ we have 
$$\mu_{Z}(F') < (\leq) \ \mu_{Z}(F[k]/F').$$

\begin{Def}\label{def:wstab}
A \emph{$($weak$)$ stability condition} (with respect to $\Lambda$) on $\cT$ is a pair $\sigma=(\cA,Z)$, where $\cA$ is the heart of a bounded t-structure on $\cT$ and $Z$ is a (weak) stability function satisfying the following properties:
\begin{enumerate}
    \item \emph{HN property}: Every object of $\cA$ has a Harder--Narasimhan (HN) filtration with $\sigma$-semistable factors.
    \item  \emph{Support property}: There exists a quadratic form $Q$ on $\Lambda \otimes \R$ such that the restriction of $Q$ to $\ker Z$ is negative definite and $Q(E) \geq 0$ for all $\sigma$-semistable objects $E$ in $\cA$.
\end{enumerate}
\end{Def}

We denote by $\Stab_{\Lambda}(\cT)$ the space of stability conditions on $\cT$ with respect to $\Lambda$. This space is actually a complex manifold; see \cite{bridgeland:stability-condition-on-triangulated-category}.

If $\Lambda$ is the numerical Grothendieck group of $\cT$, we call $\sigma$ a \emph{numerical} (weak) stability condition. Any (weak) stability condition defines a slicing on $\cT$. 

\begin{Def}
\label{def_slicing}
The \textit{phase} of a $\sigma$-semistable object $E \in \cA$ is
$$\phi(E) \coloneqq \frac{1}{\pi}\text{arg}(Z(E)) \in (0,1].$$
If $Z(E)=0$, then $\phi(E)=1$. For $F=E[n]$, we set
$$\phi(E[n]):=\phi(E)+n.$$ 
The \textit{slicing} $\cP_{\sigma}$ of $\cT$ corresponding to $\sigma$ is a collection of full additive subcategories $\cP_{\sigma}(\phi) \subset \cT$ for $\phi \in \R$, such that:
\begin{enumerate}
\item[(i)] for $\phi \in (0,1]$, the subcategory $\cP_{\sigma}(\phi)$ is given by the zero object and all $\sigma$-semistable objects with phase $\phi$;
\item[(ii)] for $\phi+n$ with $\phi \in (0,1]$ and $n \in \Z$, we set $\cP_{\sigma}(\phi+n)\coloneqq \cP_{\sigma}(\phi)[n]$.
\end{enumerate} 
The HN property of (weak) stability condition $\sigma$ implies that for $0 \neq E \in \cT$, there exists a unique finite sequence of real numbers 
\begin{equation*}
    \phi^+(E) \coloneqq \phi_1 > \phi_2 > \cdots > \phi_n \eqqcolon \phi^-(E)
\end{equation*}
and a unique sequence of objects in $\cT$ 
\begin{equation*}
    0= E_0 \xrightarrow{f_1} E_1 \xrightarrow{f_2} E_2 \xrightarrow{f_3} \cdots \xrightarrow{f_{n-1}} E_{n-1} \xrightarrow{f_n} E_n= E
\end{equation*}
with cone$(f_i) \in \cP_\sigma(\phi_i)$. 
\end{Def}

\noindent It is clear from the definition that $\cA=\cP_\sigma(0,1]$, where the latter is the full subcategory of $\cT$ consisting of the zero object together with those objects $0 \neq E \in \cT$ which satisfy $0 < \phi^-(E) \leq \phi^+(E) \leq 1$.  

\pagebreak

On the stability manifold $\Stab_{\Lambda}(\cT)$, we have: 
\begin{enumerate}
\item  \emph{a right action of the universal covering space $\widetilde{\mathrm{GL}}^+_2(\R)$ of $\mathrm{GL}^+_2(\R)$}: for a stability condition $\sigma=(\cP,Z) \in \Stab_{\Lambda}(\cT)$ and $\tilde{g}=(g,M) \in \widetilde{\mathrm{GL}}^+_2(\R)$, where $g: \R \to \R$ is an increasing function such that $g(\phi+1)=g(\phi)+1$ and $M \in \mathrm{GL}^+_2(\R)$, we define $\sigma \cdot \tilde{g}$  to be the stability condition $\sigma'=(\cP',Z')$ with $Z'=M^{-1} \circ Z$ and $\cP'(\phi)=\cP(g(\phi))$ (see~\cite[Lemma 8.2]{bridgeland:stability-condition-on-triangulated-category});
\item \emph{a left action of the group of linear exact autoequivalences $\Aut_{\Lambda}(\cT)$ of $\cT$, whose induced action on $K(\cT)$ is compatible with $v$}: for $\Phi \in \Aut_{\Lambda}(\cT)$ and $\sigma \in \Stab_{\Lambda}(\cT)$, we define $\Phi \cdot \sigma=(\Phi(\cP), Z \circ \Phi_*^{-1})$, where $\Phi_*$ is the automorphism of $K(\cT)$ induced by $\Phi$.
\end{enumerate}

\subsection{Tilt stability} \label{sec-tiltstab}
An important tool in the construction of weak stability conditions is the procedure of tilting a heart. Let $\sigma=(\cA,Z)$ be a weak stability condition on $\cT$ and $\mu\in\R$. We consider
the following subcategories of $\cA$:
\begin{align*}
    \cT_\sigma^{\mu}\coloneqq \{E \in \cA|\text{ every Harder--Narasimhan factor $F$ of $E$ has $\mu_Z(F)>\mu$}\},\\
     \cF_\sigma^{\mu}\coloneqq \{E \in \cA|\text{ every Harder--Narasimhan factor $F$ of $E$ has $\mu_Z(F)\leq \mu$}\}.
\end{align*}

\begin{Thm}[\cite{happel:tilting-in-abelian-categories-and-quasitilted-algebras}]\label{pd:tiltheart}
The category
$$\cA_\sigma^\mu\coloneqq\langle \cT_\sigma^{\mu},\cF_\sigma^{ \mu}[1]\rangle_{\text{\rm extension closure}}$$
is the heart of a bounded t-structure on $\cT$.
\end{Thm}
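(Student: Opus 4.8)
The plan is to recognize this as the Happel--Reiten--Smal\o\ tilting theorem and to verify its hypotheses in the present setting. First I would show that the pair $(\cT_\sigma^{\mu}, \cF_\sigma^{\mu})$ forms a torsion pair on the abelian category $\cA$. The decomposition part follows directly from the HN property of $\sigma$: given $0 \neq E \in \cA$, its Harder--Narasimhan filtration has factors of strictly decreasing slope, so there is a unique step at which the slope drops from $> \mu$ to $\leq \mu$; the corresponding subobject $T$ lies in $\cT_\sigma^{\mu}$, the quotient $F$ in $\cF_\sigma^{\mu}$, and they sit in a short exact sequence $0 \to T \to E \to F \to 0$. The orthogonality $\Hom(T,F) = 0$ for $T \in \cT_\sigma^{\mu}$, $F \in \cF_\sigma^{\mu}$ reduces, by d\'evissage along the HN factors, to the elementary seesaw statement that there are no nonzero maps between semistable objects $A,B$ with $\mu_Z(A) > \mu_Z(B)$.

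Given the torsion pair, the statement becomes the general fact that tilting produces a new heart, which I would prove by checking the two defining axioms of the heart of a bounded t-structure for $\cA_\sigma^{\mu} = \langle \cT_\sigma^{\mu}, \cF_\sigma^{\mu}[1]\rangle$. For the vanishing of negative extensions, observe that $\Hom$ and the extension-closure operation are compatible with long exact sequences, so it suffices to verify $\Hom(X,Y[n]) = 0$ for $n < 0$ when $X,Y$ each range over the generators $\cT_\sigma^{\mu}$ and $\cF_\sigma^{\mu}[1]$. The four resulting cases all reduce either to the vanishing $\Hom(-, -[m]) = 0$ for $m < 0$ inside $\cA$ (valid since $\cA$ is itself a heart) or, in the single boundary case $\Hom(T, F'[1][-1]) = \Hom(T, F')$, to the torsion-pair orthogonality established above.

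For the filtration axiom I would pass through the bounded t-structure attached to $\cA$. An arbitrary $0 \neq X \in \cT$ has only finitely many nonzero $\cA$-cohomology objects $H^i_{\cA}(X)$; applying the torsion decomposition to each and reassembling identifies the tilted heart intrinsically as $\cA_\sigma^{\mu} = \{X : H^{-1}_{\cA}(X) \in \cF_\sigma^{\mu},\ H^0_{\cA}(X) \in \cT_\sigma^{\mu},\ H^i_{\cA}(X) = 0 \text{ for } i \neq 0,-1\}$, and yields the required tower of triangles whose cones are shifts of objects of $\cA_\sigma^{\mu}$ with strictly decreasing degrees.

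The main obstacle is this last axiom: one must produce, for \emph{every} object of $\cT$ and not merely those of $\cA$, a canonical filtration adapted to the tilted heart and confirm that its graded pieces land in $\cA_\sigma^{\mu}$ with the correct shifts. This is handled by the standard d\'evissage that splits the problem into the contributions of consecutive pairs of $\cA$-cohomology objects, with the finiteness of the $\cA$-cohomological support guaranteeing that the filtration terminates. Everything else --- the torsion pair and the negative $\Ext$-vanishing --- is comparatively routine once the seesaw inequality and the heart property of $\cA$ are in hand.
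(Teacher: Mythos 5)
Your proposal is correct, and there is nothing in the paper to compare it against: the paper states this result as a citation to Happel--Reiten--Smal\o\ \cite{happel:tilting-in-abelian-categories-and-quasitilted-algebras} and gives no proof, and your argument --- using the HN property of the weak stability condition $\sigma$ to exhibit $(\cT_\sigma^{\mu}, \cF_\sigma^{\mu})$ as a torsion pair on $\cA$ (decomposition from the unique slope-crossing step of the HN filtration, orthogonality from the seesaw inequality between semistables), then verifying the heart axioms for $\langle \cT_\sigma^{\mu},\cF_\sigma^{\mu}[1]\rangle$ via the four-case negative-$\Ext$ check and the d\'evissage through the $\cA$-cohomology objects --- is precisely the classical HRS tilting proof behind that citation. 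All steps go through verbatim in the weak setting, since the HN property is built into Definition~\ref{def:wstab} and the seesaw argument is insensitive to slopes taking the value $+\infty$.
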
 

\noindent We say that the heart $\cA^\mu_{\sigma}$ is obtained by tilting $\cA$ with respect to the weak stability condition $\sigma$ at slope $\mu$.

The above construction applies to the case of the bounded derived category $\cT=\Db(X)$ of coherent sheaves on a smooth projective variety $X$, to define tilt stability conditions. Fix an ample divisor $H$ on $X$, and set $n:=\text{dim}(X)$. The pair $\sigma_H=(\Coh(X),\ Z_H)$, where $Z_H= -\ch_1H^{n-1}+\ch_0H^n$, is a weak stability condition on $\Db(X)$, known as \emph{slope stability}, with respect to the rank $2$ lattice generated by the elements of the form $(H^n\ch_0(E), H^{n-1} \ch_1(E))$ for $E \in \Db(X)$; see \cite[Example 2.8]{bayer:stability-conditions-kuznetsov-component}. In particular, the slope of a coherent sheaf $E$ on $X$ is defined by
$$\mu_H(E)=
\begin{cases}
\frac{H^{n-1}\ch_1(E)}{H^n\ch_0(E)} & \text{if } \ch_0(E) > 0,\\
+\infty & \text{otherwise}.
\end{cases}$$
Any $\mu_{H}$-semistable sheaf $E$ satisfies the Bogomolov--Gieseker inequality
\begin{equation*}
\Delta_{H}(E):=(H^{n-1}\ch_{1}(E))^{2}-2H^{n}\ch_{0}(E)\cdot H^{n-2}\ch_{2}(E)\geq 0.
\end{equation*}
We observe that if $n=1$, then $\sigma_H$ is a stability condition.

Now given $\beta\in \R$, we denote by $\Coh^{\beta}(X)$ the heart of a bounded $t$-structure obtained by tilting the weak stability condition $\sigma_H$ at the slope $\mu_H=\beta$ (see Theorem~\ref{pd:tiltheart}). 
For $E \in \Db(X)$, the twisted Chern character is defined by $\ch^{\beta}(E):=e^{-\beta H}\ch(E)$. Explicitly, the first three terms are
$$\ch_0^{\beta}(E):=\ch_0(E), \quad  \ch_1^{\beta}(E):=\ch_1(E)-\beta H \ch_0(E)$$
and
$$\ch_2^{\beta}(E):= \ch_2(E) -\beta H \ch_1(E) +\frac{\beta^2 H^2}{2}\ch_0(E).$$

\begin{Prop}[\protect{\cite[Proposition 2.12]{bayer:stability-conditions-kuznetsov-component}}]
\label{first-tilting-wsc}
There is a continuous family of weak stability conditions parametrised by $\R_{>0}\times \R$, given by
$$(\alpha, \beta) \in \R_{>0}\times \R \mapsto \sigma_{\alpha, \beta}=(\emph{Coh}^{\beta}(X), Z_{\alpha, \beta})$$
with
\begin{align*}
    Z_{\alpha, \beta}(E) \colon \Lambda \cong \Z^3\ &\ \rightarrow \ \mathbb{C}\\
    \left(H^3\ch_0(E), H^2\ch_1(E), H\ch_2(E)\right) \ &\ \mapsto\  \frac{1}{2}\alpha^2 H^{n}\ch_{0}^{\beta}(E)-H^{n-2}\ch_{2}^{\beta}(E)
+i H^{n-1}\ch_{1}^{\beta}(E). 
\end{align*}
\end{Prop}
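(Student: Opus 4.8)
The plan is to check the three axioms of Definition~\ref{def:wstab} for the pair $(\Coh^\beta(X),Z_{\alpha,\beta})$ and then observe that continuity in $(\alpha,\beta)$ is automatic. Since $Z_{\alpha,\beta}$ factors through the rank-three lattice generated by $\bigl(H^n\ch_0,H^{n-1}\ch_1,H^{n-2}\ch_2\bigr)$, every estimate involves only these three numbers, so the argument is formally identical to the surface case in \cite{bayer:the-space-of-stability-conditions-on-abelian-threefolds}, the sole geometric input being the classical Bogomolov inequality $\Delta_H(F)\geq 0$ for $\mu_H$-semistable sheaves recorded above (note that $\Delta_H$ is unchanged by the twist $\ch\mapsto\ch^\beta$, since the latter preserves the form $c^2-2rd$). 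First I would verify that $Z_{\alpha,\beta}$ is a weak stability function. Writing $\Im Z_{\alpha,\beta}=H^{n-1}\ch_1^\beta$ and using $\Coh^\beta(X)=\langle\cT_{\sigma_H}^\beta,\cF_{\sigma_H}^\beta[1]\rangle$, one gets $\Im Z_{\alpha,\beta}\geq 0$ directly: objects of $\cT_{\sigma_H}^\beta$ have $\mu_H>\beta$ on their slope-HN factors, hence $H^{n-1}\ch_1^\beta>0$ (or are torsion with $H^{n-1}\ch_1\geq 0$), while the shift $[1]$ flips the sign of the nonpositive quantity $H^{n-1}\ch_1^\beta$ on $\cF_{\sigma_H}^\beta$. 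For the boundary case $\Im Z_{\alpha,\beta}(E)=0$, the exact sequence $\cH^{-1}(E)[1]\to E\to\cH^0(E)$ forces $\cH^0(E)$ to be a sheaf supported in codimension $\geq 2$, on which $\Re Z_{\alpha,\beta}=-H^{n-2}\ch_2\leq 0$ by effectivity, and $\cH^{-1}(E)$ to have all slope-HN factors of slope exactly $\beta$, on which $\Re Z_{\alpha,\beta}(\cH^{-1}(E)[1])<0$ follows from classical Bogomolov; adding up gives $\Re Z_{\alpha,\beta}(E)\leq 0$.

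The crux, and the step I expect to be the main obstacle, is the support property. I would take the quadratic form to be $Q=\Delta_H$. A direct computation shows that $\ker Z_{\alpha,\beta}$ is the real line spanned by $\bigl(1,\beta,\tfrac12(\alpha^2+\beta^2)\bigr)$, on which $\Delta_H=-\alpha^2<0$, so $Q$ is negative definite there. The nontrivial half is the Bogomolov inequality for tilt stability, $\Delta_H(E)\geq 0$ for every $\sigma_{\alpha,\beta}$-semistable $E$. For this I would reduce to the cohomology sheaves $\cH^{-1}(E)\in\cF_{\sigma_H}^\beta$ and $\cH^0(E)\in\cT_{\sigma_H}^\beta$, apply the classical inequality to the slope-semistable factors of each, and combine the resulting bounds with the ordering of $\mu_{Z_{\alpha,\beta}}$-slopes imposed by semistability to control the polarised cross term of $\Delta_H$. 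This is delicate precisely because one must track the interplay of the two cohomology sheaves and exclude destabilising subobjects, exactly as in \cite{bayer:the-space-of-stability-conditions-on-abelian-threefolds}; the sharper cubic-threefold refinement of such inequalities appears in \cite{li:fano-picard-number-one}.

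Finally, the HN property follows from a Noetherianity argument: the tilted heart $\cA_{\sigma_H}^\beta=\Coh^\beta(X)$ is Noetherian, and the image of $Z_{\alpha,\beta}$ is discrete because it factors through a lattice, so the general existence criterion for Harder--Narasimhan filtrations applies. Continuity of $(\alpha,\beta)\mapsto\sigma_{\alpha,\beta}$ is then immediate: $Z_{\alpha,\beta}$ depends real-analytically on $(\alpha,\beta)$, the heart $\Coh^\beta(X)$ is locally constant in $\beta$ away from the locally finite set of slopes of $\mu_H$-semistable sheaves, and the support property persists under small deformations, so the deformation theorem for (weak) stability conditions analogous to \cite{bridgeland:stability-condition-on-triangulated-category} identifies the family with a continuous path in $\Stab$.
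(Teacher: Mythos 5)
The paper itself offers no proof here: Proposition \ref{first-tilting-wsc} is imported verbatim from \cite[Proposition 2.12]{bayer:stability-conditions-kuznetsov-component}, so the comparison must be with the standard argument there and in its sources \cite{bayer:bridgeland-stability-conditions-on-threefolds, bayer:the-space-of-stability-conditions-on-abelian-threefolds}. Your skeleton matches that argument: the weak positivity of $Z_{\alpha,\beta}$ on $\Coh^{\beta}(X)$ via the classical Bogomolov inequality (your boundary analysis, including $\Re Z_{\alpha,\beta}=-H^{n-2}\ch_2\leq 0$ on sheaves supported in codimension $\geq 2$, which is exactly why the condition is only \emph{weak}), the choice $Q=\Delta_H$, and the computation that $\ker Z_{\alpha,\beta}$ is spanned by $\bigl(1,\beta,\tfrac{1}{2}(\alpha^2+\beta^2)\bigr)$ with $\Delta_H=-\alpha^2<0$ there, are all correct and are the right steps.

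However, two of your justifications fail as stated. First, for the HN property you claim the image of $Z_{\alpha,\beta}$ is discrete ``because it factors through a lattice''; this is false for irrational $\beta$, since $\Im Z_{\alpha,\beta}(\Lambda)=\{e_1-\beta e_0 : (e_0,e_1)\in\Z^2\}$ is then dense in $\R$. The Noetherian-heart-plus-discreteness criterion only settles $\beta\in\Q$; irrational parameters require the separate argument of \cite[Appendix B]{bayer:bridgeland-stability-conditions-on-threefolds}, where a destabilising subobject at irrational $\beta$ is shown to persist at nearby rational values. Second, your continuity argument asserts the heart $\Coh^{\beta}(X)$ is locally constant away from a locally finite set of slopes; in fact the slopes of $\mu_H$-semistable sheaves are dense, so the heart changes at a dense set of $\beta$, and continuity must instead be extracted from the uniform support property together with the deformation and wall-crossing arguments of \cite{bayer:the-space-of-stability-conditions-on-abelian-threefolds}. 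Relatedly, for the crux (the inequality $\Delta_H(E)\geq 0$ for tilt-semistable $E$, i.e.\ \cite[Theorem 3.5]{bayer:the-space-of-stability-conditions-on-abelian-threefolds}) your proposed mechanism --- apply classical Bogomolov to the slope-semistable factors of $\cH^{-1}(E)$ and $\cH^{0}(E)$ and control the cross term --- does not close at arbitrary $(\alpha,\beta)$, since the cohomology sheaves of a tilt-semistable object need not be slope-semistable and $\Delta_H$ is not convex on arbitrary filtrations; the actual proof passes to the large-volume limit and propagates the inequality across walls using $\Delta_H(E)\geq\sum_i\Delta_H(E_i)$ for Jordan--H\"older factors of \emph{equal} tilt slope. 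You hedge by deferring to the reference, which is legitimate, but the reduction as you describe it would not succeed on its own.
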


The weak stability condition $\sigma_{\alpha, \beta}$ is called \emph{tilt stability}, and if $n=2$, then it defines a stability condition on $\Db(X)$ (see~\cite{bridgeland:K3-surfaces, AB}). There is a region in the upper half-plane where the notions of $\sigma_{\alpha, \beta}$-stability and $\mu_H$-stability are closely related. 

\begin{Lem}[\protect{\cite[Lemma 2.7]{bayer:the-space-of-stability-conditions-on-abelian-threefolds}}]
\label{lem-large-volume}
Assume $E \in \Coh(X)$ is a $\mu_H$-stable sheaf of positive rank. Then $E$ is an element of $\,\Coh^{\beta}(X)$ for $\beta < \mu_H(E)$ and is $\sigma_{\alpha, \beta}$-stable for $\alpha \gg 0$. 
\end{Lem}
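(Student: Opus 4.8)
The plan is to treat the two assertions separately: membership in $\Coh^{\beta}(X)$ is immediate from the construction of the tilted heart, while stability requires an asymptotic analysis of the slope $\mu_{\alpha,\beta}:=\mu_{Z_{\alpha,\beta}}$ together with a boundedness input, and it is this uniformity that will be the real obstacle. For the first assertion: since $E$ is $\mu_H$-stable of positive rank it is its own Harder--Narasimhan factor for $\sigma_H$, of slope $\mu_H(E)>\beta$. Hence $E$ lies in the torsion part $\cT^{\beta}_{\sigma_H}$ of the tilting pair of Theorem \ref{pd:tiltheart} applied to $\sigma_H$ at $\mu=\beta$, and therefore $E\in\Coh^{\beta}(X)=\langle\cT^{\beta}_{\sigma_H},\cF^{\beta}_{\sigma_H}[1]\rangle$.

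Next I would reduce subobjects to subsheaves. Given a short exact sequence $0\to F\to E\to G\to 0$ in $\Coh^{\beta}(X)$, I pass to the long exact sequence of cohomology sheaves for the standard t-structure. Since $E$ is a sheaf, $\cH^{-1}(E)=0$, so $\cH^{-1}(F)=0$ and $F$ is a sheaf in $\cT^{\beta}_{\sigma_H}$; moreover $\cH^{-1}(G)\in\cF^{\beta}_{\sigma_H}$ injects into the sheaf $F\in\cT^{\beta}_{\sigma_H}$, and as $\Hom(\cT^{\beta}_{\sigma_H},\cF^{\beta}_{\sigma_H})=0$ this forces $\cH^{-1}(G)=0$. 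Thus every subobject of $E$ in $\Coh^{\beta}(X)$ is genuinely a subsheaf $F\subset E$ with $F$ and $G=E/F$ both sheaves in $\cT^{\beta}_{\sigma_H}$. Since $E$ is $\mu_H$-stable of positive rank it is torsion-free, so any nonzero such $F$ has $\ch_0(F)>0$ and hence, being in $\cT^{\beta}_{\sigma_H}$, satisfies $H^{n-1}\ch_1^{\beta}(F)=\Im Z_{\alpha,\beta}(F)>0$.

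By the see-saw property it suffices to prove $\mu_{\alpha,\beta}(F)<\mu_{\alpha,\beta}(E)$ for every proper subobject $F$. Writing $\rho=H^n\ch_0$, $s=H^{n-1}\ch_1^{\beta}$, $t=H^{n-2}\ch_2^{\beta}$ and clearing the positive denominators $\Im Z_{\alpha,\beta}$, this inequality is equivalent to
\[
\tfrac12\alpha^2\bigl(\rho(F)s(E)-\rho(E)s(F)\bigr)>t(F)s(E)-t(E)s(F).
\]
A short computation shows the $\beta$-dependence cancels and
\[
\rho(F)s(E)-\rho(E)s(F)=(H^n)^2\,\ch_0(E)\ch_0(F)\,\bigl(\mu_H(E)-\mu_H(F)\bigr),
\]
which is \emph{strictly positive} by $\mu_H$-stability of $E$, since a proper subsheaf forces $\mu_H(F)<\mu_H(E)$. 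Hence the coefficient of $\alpha^2$ is positive and, for each fixed $F$, the displayed inequality holds once $\alpha$ is large.

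The previous step produces a threshold $\alpha_0(F)$ depending on $F$, whereas the statement needs a single $\alpha_0$ valid for all potential destabilizers at once; this uniformity is the crux. I would resolve it by boundedness: the candidate subsheaves $F\subset E$ have $\ch_0(F)\le\ch_0(E)$ and slope in the bounded range $\beta<\mu_H(F)<\mu_H(E)$, so by Grothendieck's boundedness they form a bounded family. Consequently the pairs $(\rho(F),s(F))$ take only finitely many values, the positive quantity $\rho(F)s(E)-\rho(E)s(F)$ is bounded below by a positive constant, and $t(F)=H^{n-2}\ch_2^{\beta}(F)$ is bounded above; thus the destabilizing inequality can fail only for $\alpha$ in a bounded set, and any $\alpha_0$ above its supremum gives $\sigma_{\alpha,\beta}$-stability of $E$ for all $\alpha>\alpha_0$. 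Alternatively, one may invoke the local finiteness of walls guaranteed by the support property (Definition \ref{def:wstab}(b), using $\Delta_H\geq 0$) to conclude that $E$ lies in the unbounded large-volume chamber, where the leading-order comparison above certifies stability.
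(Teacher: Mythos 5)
First, note that the paper does not actually prove this lemma: it is quoted with citation to \cite[Lemma 2.7]{bayer:the-space-of-stability-conditions-on-abelian-threefolds}, so there is no in-paper proof to match; your argument has to be measured against the standard large-volume-limit proofs in the literature (Bridgeland's Proposition 14.2 for K3s, Bayer--Macr\`i--Toda, BMS). Your first assertion ($E \in \Coh^{\beta}(X)$) and your asymptotic slope computation are both correct. The genuine gap is the ``reduction to subsheaves.'' You argue that $\cH^{-1}(G)$ injects into $F \in \cT^{\beta}_{\sigma_H}$ and invoke $\Hom(\cT^{\beta}_{\sigma_H},\cF^{\beta}_{\sigma_H})=0$ to kill it --- but the map in question goes \emph{from} the free part $\cH^{-1}(G)\in\cF^{\beta}_{\sigma_H}$ \emph{to} the torsion part, i.e.\ you would need $\Hom(\cF^{\beta}_{\sigma_H},\cT^{\beta}_{\sigma_H})=0$, which is false for any torsion pair of this kind: $\cO_X(-kH)\hookrightarrow \cO_X$ with $-k\le\beta<0$ is already such a map. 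Correspondingly, quotients of a sheaf in the tilted heart genuinely can have $\cH^{-1}(G)\neq 0$; this phenomenon is precisely why the standard proofs of this lemma are not a two-line see-saw argument. So the conclusion that every heart-subobject of $E$ is a subsheaf of $E$ is wrong, not just insufficiently justified.

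This error propagates into your uniformity step, which is where the real work lies. Your boundedness argument parametrizes destabilizers by subsheaves $F\subseteq E$ with $\beta<\mu_H(F)<\mu_H(E)$; but an actual destabilizer is an extension $0\to\cH^{-1}(G)\to F\to F'\to 0$ with $F'\subseteq E$ a subsheaf and $\cH^{-1}(G)\in\cF^{\beta}_{\sigma_H}$, and the rank of $\cH^{-1}(G)$ is not a priori bounded (only $0<H^{n-1}\ch_1^{\beta}(F)\le H^{n-1}\ch_1^{\beta}(E)$ is automatic), so Grothendieck boundedness as you invoke it does not cover all candidates. The pointwise part of your argument survives this correction: since $\mu_H(\cH^{-1}(G))\le\beta<\mu_H(E)$ and $\mu_H(F')<\mu_H(E)$ by $\mu_H$-stability (or $F'=E$, which forces $\cH^{-1}(G)\neq 0$), the slope of the extension interpolates and one still gets the strict inequality $\mu_H(F)<\mu_H(E)$, hence a threshold $\alpha_0(F)$ for each fixed $F$. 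But to get a single threshold you should replace the Grothendieck-boundedness paragraph by your closing ``alternative'': apply $\Delta_H\ge 0$ to both factors of a destabilizing sequence to see that the actual walls for the fixed class $\ch(E)$ are nested semicircles of bounded radius (local finiteness alone does not yet give an unbounded top chamber --- you need boundedness of the wall radii), so that along the fixed vertical line $\beta$ there is a largest wall and $E$ is stable above it. That repaired route is essentially the proof in the cited literature.
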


We now recall the notions of walls and chambers for tilt stability.

\begin{Def}
Fix $v \in \Lambda$. A \textit{numerical wall} for $v$ is a subset of the upper half-plane of the form
$$W(v, w)= \lbrace (\alpha,\beta) \in \R_{>0} \times \R \colon \mu_{\alpha,\beta}(v)=\mu_{\alpha,\beta}(w)  \rbrace$$
with respect to a vector $w \in \Lambda$. We say a point $(\alpha, \beta) \in W(v, w)$ is on an \textit{actual wall} for $v$ if and only if there is an object $E \in \Coh^{\beta}(X)$ of class $v(E) = v$ which is strictly $\sigma_{\alpha, \beta}$-semistable and unstable on one side of the wall.   

A \textit{chamber} for class $v$ is a connected component in the complement of the union of walls in the upper half-plane. 
\end{Def}
Tilt stability satisfies well-behaved wall and chamber structure, in the sense that walls with respect to a class $v \in \Lambda$ are locally finite. Properties of walls are described by the following theorem, which was first proved in~\cite{maciocia:the-walls-projective-spaces} (see also~\cite{Sch}). 

\begin{Thm}[Bertram's nested wall theorem]
Fix $v \in \Lambda$.
\begin{enumerate}
\item Numerical walls for $v$ are either nested semicircles centred on the $\beta$-axis, or a vertical ray
parallel to the $\alpha$-axis.
\item If two numerical walls intersect, then they are the same. If a numerical wall contains a point defining an actual wall, then the numerical wall is an actual wall.
\item If $\ch_0(v) \neq 0$, then there is a unique vertical numerical wall given by $\beta= \mu_H(v)$. If $\ch_0(v)=0$ and $H^{n-1}\ch_1(v) \neq 0$, then there is no vertical wall.
\end{enumerate}
\end{Thm}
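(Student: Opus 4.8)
The plan is to linearise everything by passing to the coordinates $x=\tfrac12(\alpha^2+\beta^2)$ and $y=\beta$ on the upper-half plane, under which numerical walls become straight lines and both part (1) and the nesting in part (2) reduce to elementary facts about a pencil of lines.

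First I would fix the class $v$ and an auxiliary class $w$, abbreviating $r_u=H^n\ch_0(u)$, $d_u=H^{n-1}\ch_1(u)$, $e_u=H^{n-2}\ch_2(u)$ for $u\in\{v,w\}$. Using the twisted characters from Proposition \ref{first-tilting-wsc} one gets $\Im Z_{\alpha,\beta}(u)=d_u-\beta r_u$ and $\Re Z_{\alpha,\beta}(u)=\tfrac12\alpha^2 r_u-e_u+\beta d_u-\tfrac{\beta^2}{2}r_u$. The wall $W(v,w)$ is cut out by $\Re Z(v)\,\Im Z(w)-\Re Z(w)\,\Im Z(v)=0$; expanding, all the mixed terms collapse and I expect the clean equation
\[
\tfrac12(\alpha^2+\beta^2)\,a-\beta\,b+c=0,\qquad a=r_vd_w-r_wd_v,\ b=r_ve_w-r_we_v,\ c=d_ve_w-d_we_v,
\]
that is, $ax-by+c=0$ in the coordinates $(x,y)$. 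This already yields part (1): if $a\neq0$ the equation reads $\alpha^2+(\beta-\tfrac{b}{a})^2=(\tfrac{b}{a})^2-\tfrac{2c}{a}$, a semicircle centred on the $\beta$-axis; if $a=0$ it reads $\beta=c/b$, a vertical ray. Part (3) then follows from the identity $a\,e_v-b\,d_v+c\,r_v=0$, which holds because $(a,b,c)$ is, up to sign, $v\times w$ and hence orthogonal to $v$: when $a=0$ and $r_v\neq0$ it forces $\beta=c/b=d_v/r_v=\mu_H(v)$, a single vertical wall, whereas if $r_v=0$ and $d_v\neq0$ the same identity forces $b=0$ and then $c=0$, so $w$ is proportional to $v$ and there is no genuine vertical wall.

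The heart of part (2) is the observation that, for fixed $v$, the triples $(a,b,c)$ attached to varying $w$ all lie in the fixed plane $\{a e_v-b d_v+c r_v=0\}$. Hence in the $(x,y)$-plane every numerical wall of $v$ is a line of a single pencil: when $r_v\neq0$ they all pass through the common point $P=(e_v/r_v,\,d_v/r_v)$, and when $r_v=0$ they are mutually parallel. Since $(\alpha,\beta)\mapsto(x,y)$ maps the open upper-half plane bijectively onto the region $\{x>\tfrac12 y^2\}$ above the parabola, and since $2\,(e_v/r_v)-(d_v/r_v)^2=-\Delta_H(v)/r_v^2\leq0$ by the Bogomolov inequality, the point $P$ lies on or below the parabola, outside the open region. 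Two distinct lines of a pencil meet only at $P$, so the corresponding semicircles are pairwise disjoint in the open upper-half plane; this gives at once that intersecting numerical walls coincide and that, being disjoint arcs of a pencil anchored at an exterior point, the semicircles are nested, completing part (1) and the first assertion of (2).

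It remains to prove the second assertion of part (2), that a numerical wall containing one actual-wall point is an actual wall throughout. This is the step I expect to be the main obstacle, since it is not formal. The idea is that along a connected numerical wall the phases of $Z_{\alpha,\beta}(v)$ and of any fixed subobject class stay equal, while the support property together with local finiteness of walls bounds the possible destabilising classes to a finite set; one then checks that being strictly semistable with a subobject of a given class is both an open and a closed condition along the wall, so the actual-wall locus is open and closed in the connected numerical wall and hence all of it. I would carry out this propagation argument following Maciocia and Schmidt rather than reprove it from scratch.
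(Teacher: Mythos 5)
The paper itself gives no proof of this theorem --- it is quoted from \cite{maciocia:the-walls-projective-spaces} and \cite{Sch} --- and your linearisation $(x,y)=\bigl(\tfrac{1}{2}(\alpha^2+\beta^2),\beta\bigr)$ is exactly the standard route taken there. Your computations check: the wall $W(v,w)$ is cut out by $ax-by+c=0$ with $(c,-b,a)$ the cross product of $(r_v,d_v,e_v)$ and $(r_w,d_w,e_w)$, the identity $ae_v-bd_v+cr_v=0$ holds, and it yields both the pencil through $P=(e_v/r_v,\,d_v/r_v)$ (concurrent lines for $r_v\neq 0$, parallel lines for $r_v=0$) and part (3). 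Two caveats, one of which is a genuine gap as written. First, you invoke ``the Bogomolov inequality'' for the fixed class $v\in\Lambda$, but $\Delta_H(v)\geq 0$ is a theorem about classes of $\mu_H$- or tilt-semistable objects (\cite[Theorem 3.5]{bayer:the-space-of-stability-conditions-on-abelian-threefolds}), not about arbitrary lattice vectors: for, say, $v$ with $\ch_0=1$, $H^2\ch_1=0$, $H\ch_2>0$ one has $\Delta_H(v)<0$, the point $P$ lies strictly inside the region $\{x>\tfrac{1}{2}y^2\}$, and then \emph{all} numerical walls for $v$ pass through the corresponding point of the upper half-plane, so the disjointness in part (2) literally fails. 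You must add the hypothesis $\Delta_H(v)\geq 0$ (harmless for the intended applications, since any nonempty actual wall produces a tilt-semistable object of class $v$), which is how the sources mean the statement. Relatedly, ``disjoint arcs of a pencil anchored at an exterior point'' does not by itself give nesting: walls on opposite sides of the vertical wall $\beta=\mu_H(v)$ are disjoint but side-by-side, and within each side nesting needs the easy monotonicity of the chord endpoints on the parabola as the line through $P$ rotates (the endpoint intervals grow strictly with the slope), not just pairwise disjointness.

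For the second assertion of part (2), your open-and-closed sketch is the weak link: openness of ``there exists a strictly semistable object of class $v$ with a destabilising subobject of class $w$'' along the wall is not clear as stated, and this is not how the cited proofs run. The efficient argument derives propagation from the nesting you have already established, applied to the \emph{other} classes: as subsets of the upper half-plane, $W(v,w)=W(w,v)=W(v-w,v)$, i.e.\ the same curve is simultaneously a numerical wall for $v$, for $w$, and for $v-w$. By nesting for the classes $w$ and $v-w$ (their discriminants are nonnegative since they are classes of semistable objects at the given point), no numerical wall for $w$ or for $v-w$ can cross $W(v,w)$, so the destabilising subobject and quotient remain semistable of equal slope along the entire connected wall, and their extensions produce strictly semistable objects of class $v$ at every point of it. Since you explicitly defer this step to \cite{maciocia:the-walls-projective-spaces} and \cite{Sch} --- the very references the paper cites in lieu of a proof --- the step is covered, but the mechanism you sketched should be replaced by this one.
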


\subsection{Cubic threefolds}  
We know any slope-semistable sheaf and, more generally, any $\sigma_{\alpha, \beta}$-semistable object satisfies $\Delta_H(E) \geq 0$; see \cite[Theorem 3.5]{bayer:the-space-of-stability-conditions-on-abelian-threefolds}. In the case of a smooth complex cubic threefold $X$, we have stronger Bogomolov inequalities as follows. 

\begin{Prop}[\protect{\cite[Corollary 3.4]{li:fano-picard-number-one}}]\label{prop-li-ch2}
Let $E$ be a slope-semistable coherent sheaf of positive rank on $X$. If $\abs{\mu_H(E)} \leq \frac{1}{2}$, then $\ch_2(E) \leq 0$, and if $\frac{1}{2} < \abs{\mu_H(E)} \leq 1$, then
\begin{equation*}
    H\ch_2(E) \leq \abs{H^2\ch_1(E)} - \frac{1}{2}H^3\ch_0(E). 
\end{equation*}
\end{Prop}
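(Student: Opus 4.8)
The plan is to reduce the statement to the stronger Bogomolov--Gieseker-type inequality already available on cubic threefolds from \cite{li:fano-picard-number-one}, and to extract the piecewise-linear bounds on $H\ch_2(E)$ by optimizing over the relevant range of the slope. First I would normalize the problem by twisting: since tensoring by a line bundle $\cO_X(mH)$ sends a slope-semistable sheaf to a slope-semistable sheaf and shifts $\mu_H$ by $m$, it suffices to treat $E$ with $\mu_H(E)$ lying in a fixed fundamental domain, and the two stated regimes $\abs{\mu_H(E)}\leq\tfrac12$ and $\tfrac12<\abs{\mu_H(E)}\leq 1$ are exactly the pieces one must check. I would also use the symmetry $E\mapsto E^\vee$ (dualizing), which replaces $\mu_H$ by $-\mu_H$ and $\ch_2$ by $\ch_2$ up to sign-controlled corrections, to reduce to the case $0\leq\mu_H(E)\leq 1$, so that the absolute values disappear.

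The core input is the refined bound on cubic threefolds, which controls $\ch_2^\beta(E)$ for $\sigma_{\alpha,\beta}$-semistable objects in terms of a quadratic expression in $(\ch_0,\ch_1)$; concretely it gives a sharpened quadratic form $Q$ whose nonnegativity on semistable objects is strictly stronger than the classical $\Delta_H(E)\geq 0$. The key step is to rewrite $\ch_2(E)=\ch_2^\beta(E)+\beta H\ch_1(E)-\tfrac{\beta^2H^2}{2}\ch_0(E)$ and then choose $\beta$ cleverly (roughly $\beta$ near the slope $\mu_H(E)$, or at the endpoints $\beta=\pm 1$) so that the refined inequality collapses to a linear bound. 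For $\abs{\mu_H(E)}\leq\tfrac12$ the natural choice pushes the quadratic correction to force $\ch_2(E)\leq 0$ outright, while for $\tfrac12<\abs{\mu_H(E)}\leq 1$ the optimal $\beta$ yields the affine bound $H\ch_2(E)\leq\abs{H^2\ch_1(E)}-\tfrac12 H^3\ch_0(E)$. Throughout I would invoke Lemma~\ref{lem-large-volume} to guarantee that a $\mu_H$-stable sheaf really is $\sigma_{\alpha,\beta}$-stable for the parameters where I apply the refined inequality, and handle the merely semistable case by passing to Jordan--H\"older factors and using additivity of the Chern characters.

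The main obstacle I expect is \emph{sharpness and the boundary cases}: the bounds are piecewise linear with a genuine corner at $\abs{\mu_H(E)}=\tfrac12$, so one must verify that the optimization over $\beta$ in the refined quadratic inequality actually produces these specific slopes and constants, rather than a weaker bound, and that the two pieces match continuously at the corner. This amounts to checking that the extremal $\beta$ stays in the admissible range $\R_{>0}\times\R$ where $\sigma_{\alpha,\beta}$ is a genuine weak stability condition and where the strengthened inequality of \cite{li:fano-picard-number-one} is valid; near $\mu_H(E)=\pm 1$ the geometry of the nested walls (Bertram's Nested Wall Theorem) can force the destabilizing locus right to the boundary, so the estimate is tight precisely there. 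A secondary technical point is bookkeeping the dualization symmetry: dualizing a non-locally-free semistable sheaf can introduce higher cohomology sheaves, so I would either restrict to the reflexive/torsion-free situation where $\ch_0,\ch_1,\ch_2$ transform cleanly, or argue directly on each half-interval of the slope without invoking duality, accepting a slightly longer case analysis in exchange for avoiding that subtlety.
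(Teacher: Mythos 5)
The paper contains no proof of Proposition \ref{prop-li-ch2}: it is quoted verbatim from \cite[Corollary 3.4]{li:fano-picard-number-one}, so the benchmark is Li's argument. Your peripheral reductions are fine (twisting by $\cO_X(mH)$, passing to Jordan--H\"older factors, and handling duals via the reflexive hull, where one can use that $\ch_2$ of the codimension-two quotient $E^{\vee\vee}/E$ is nonnegative, so it suffices to bound the hull). The genuine gap is your core step. Within this paper the only ``refined quadratic inequality'' available is Theorem \ref{thm-li-ch3}, i.e.\ $Q_{\alpha,\beta}(E)\geq 0$, and no choice of $\beta$ makes it ``collapse'' to the stated piecewise-linear bound, because $Q_{\alpha,\beta}$ involves $C_3=\ch_3(E)\cdot H^0$, which is uncontrolled for a slope-semistable sheaf. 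The total coefficient of $C_3$ in $Q_{\alpha,\beta}$ is $3(\beta C_0-C_1)$, so the only way to obtain an unconditional bound on $C_2$ is to take $\beta=C_1/C_0=\mu_H(E)$; but there, in the limit $\alpha\to 0$, one computes
\begin{equation*}
Q_{\alpha,\beta}(E)\ \longrightarrow\ \frac{\mu^2}{2}\left(C_1^2-2C_0C_2\right)-\mu C_1C_2+2C_2^2 \;=\; 2\left(C_2-\frac{C_1^2}{2C_0}\right)^{2}\ \geq\ 0,
\end{equation*}
a tautology carrying no information (and for $\alpha>0$ the left side is only larger, given $\Delta_H\geq 0$). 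For any other $\beta$ the unknown $C_3$ enters with a sign you cannot control. Concretely, at $\mu_H(E)=\tfrac12$ everything extractable from $Q_{\alpha,\beta}\geq 0$ or $\Delta_H\geq 0$ gives only $H\ch_2(E)\leq \tfrac18 H^3\ch_0(E)$, strictly weaker than the claimed $\ch_2(E)\leq 0$. So the optimization-over-$\beta$ mechanism that your proposal rests on provably cannot produce the proposition.

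There is also a problem of logical order: Theorem \ref{thm-li-ch3} is deduced (via \cite[Theorem 5.4]{bayer:the-space-of-stability-conditions-on-abelian-threefolds}) from \cite[Theorem 0.1]{li:fano-picard-number-one}, whose proof runs precisely through Li's stronger Bogomolov-type inequality, of which Corollary 3.4 is the sheaf-level specialization. So deriving Proposition \ref{prop-li-ch2} from $Q_{\alpha,\beta}\geq 0$ would be circular, and if instead by ``the refined bound'' you mean Li's stronger inequality controlling $\ch_2^{\beta}$ in terms of $(\ch_0,\ch_1)$ alone, then your proposal assumes the entire technical content of the result and reduces to the source's own bookkeeping. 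What is actually needed is a new input beyond the tilt-stability formalism: Li establishes his stronger inequality for tilt-semistable objects by a dedicated wall-crossing/restriction argument combined with Hom-vanishing and Euler-characteristic estimates against the line bundles $\cO_X(kH)$ --- the same style of argument this paper replays in Lemmas \ref{lem-reflexive} and \ref{lem-h0}, where vanishings like $\hom(\cO_X,E)=0$ and $\hom(E,\cO_X(-2H)[1])=0$ force $\chi(\cO_X,E)\leq 0$; note, however, that such $\chi$-estimates bound $\ch_3$ once $\ch_2$ is constrained, so they are no substitute for the $\ch_2$ bound itself. The continuity of the two bounds at the corner $\abs{\mu_H(E)}=\tfrac12$, which you flag as the main obstacle, is in fact the easy part; the missing ingredient is the inequality you proposed to optimize.
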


\begin{Thm}\label{thm-li-ch3}
For any $\sigma_{\alpha, \beta}$-semistable object $E \in \Db(X)$, we have 
\begin{equation*}
    0 \leq Q_{\alpha, \beta}(E) \coloneqq \left( \frac{\alpha^2}{2} + \frac{\beta^2}{2}
    \right)\left(C_1^2-2C_0C_2\right) +\beta \left(3C_0C_3-C_1C_2\right) +\left(2C_2^2-3C_1C_3\right),
\end{equation*}
where $C_i:=\ch_i(E).H^{3-i}$.
\end{Thm}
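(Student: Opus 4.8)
The plan is to prove this stronger Bogomolov--Gieseker inequality by induction on the classical discriminant $\Delta_H(E)=C_1^2-2C_0C_2$, with the special geometry of the cubic threefold entering only at the end through the second Chern character bound of Proposition \ref{prop-li-ch2}. First I would reduce to the case where $E$ is $\sigma_{\alpha,\beta}$-\emph{stable}. Viewing $Q_{\alpha,\beta}$ as a quadratic form on $\Lambda\otimes\R$, one checks that its associated symmetric bilinear form $\tilde Q_{\alpha,\beta}$ is nonnegative on pairs of classes whose central charges $Z_{\alpha,\beta}$ lie on a common ray; this is the usual Hodge-index type statement coming from the Lorentzian signature of $Q_{\alpha,\beta}$ together with the support property, and it is exactly what the reduction needs. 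Taking a Jordan--Hölder filtration of a semistable $E$ into stable factors $E_i$ of the same phase, the identity $Q_{\alpha,\beta}(E)=\sum_i Q_{\alpha,\beta}(E_i)+2\sum_{i<j}\tilde Q_{\alpha,\beta}(E_i,E_j)$ then gives $Q_{\alpha,\beta}(E)\geq 0$ once the inequality is known for each $E_i$.

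Next I would set up the geometry for a fixed Chern character. When $\Delta_H(E)>0$ the locus $\{Q_{\alpha,\beta}(E)=0\}$ is a semicircle $C_E$ centered on the $\beta$-axis, with $Q_{\alpha,\beta}(E)<0$ strictly inside the bounded disk $D_E$ and $Q_{\alpha,\beta}(E)>0$ outside; in particular $Q_{\alpha,\beta}(E)>0$ for $\alpha\gg 0$, so by Lemma \ref{lem-large-volume} the inequality is automatic in the large-volume region and only the interior of $D_E$ must be excluded. The base case $\Delta_H(E)=0$ (and the degenerate cases with $C_0=0$) is treated directly, since there $Q_{\alpha,\beta}(E)$ becomes linear and reduces at once to Proposition \ref{prop-li-ch2}.

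Arguing by contradiction, suppose $E$ is tilt-stable at an interior point of $D_E$. I would move the stability parameter inside $D_E$, keeping track of actual walls, which are again nested semicircles and locally finite. Two outcomes are possible. Either $E$ meets an actual wall while still inside $D_E$: then on the wall there is a short exact sequence $0\to A\to E\to B\to 0$ in $\Coh^\beta(X)$ with $A,B$ tilt-stable of the same phase, and additivity of $\Delta_H$ along same-slope sequences together with $\Delta_H\geq 0$ forces $\Delta_H(A),\Delta_H(B)<\Delta_H(E)$; the inductive hypothesis applies to $A$ and $B$, and the same-phase positivity above yields $Q_{\alpha,\beta}(E)\geq 0$ at the wall point, contradicting that it lies in $D_E$. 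Or $E$ stays stable as $\alpha\to 0^+$ at an interior value of $\beta$: then the limiting weak stability condition degenerates and $E$ becomes, up to shift, a $\mu_H$-semistable sheaf whose slope is pinned to the relevant range, and feeding the stronger bound of Proposition \ref{prop-li-ch2} into the expression for $Q_{0,\beta}(E)$ gives $Q_{0,\beta}(E)\geq 0$, again a contradiction.

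The main obstacle is precisely the wall-crossing bookkeeping in the last paragraph: one must guarantee that the chamber of $E$ inside $D_E$ genuinely runs into either an actual wall or the $\alpha\to 0^+$ boundary \emph{while still inside $D_E$}, rather than escaping through $C_E$ with $E$ remaining stable, and one must ensure $\Delta_H$ strictly drops so that the induction terminates. Making this dichotomy precise --- and correctly identifying the limiting sheaf and its slope so that Proposition \ref{prop-li-ch2} is applicable --- is the heart of the argument, and is exactly where the cubic-threefold-specific input is indispensable.
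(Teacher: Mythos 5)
Your skeleton is essentially the wall-crossing machinery behind \cite[Theorem 5.4]{bayer:the-space-of-stability-conditions-on-abelian-threefolds}, whereas the paper does not reprove anything: it simply cites \cite[Theorem 0.1]{li:fano-picard-number-one} (which establishes \cite[Conjecture 5.3]{bayer:the-space-of-stability-conditions-on-abelian-threefolds} on cubic threefolds) and then \cite[Theorem 5.4]{bayer:the-space-of-stability-conditions-on-abelian-threefolds} to get \cite[Conjecture 4.1]{bayer:the-space-of-stability-conditions-on-abelian-threefolds}, which is equivalent to $Q_{\alpha,\beta}(E)\geq 0$. The genuine gap in your proposal sits exactly where the cubic-threefold input must enter. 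At the $\alpha\to 0^+$ boundary you claim that ``feeding the stronger bound of Proposition \ref{prop-li-ch2} into $Q_{0,\beta}(E)$ gives $Q_{0,\beta}(E)\geq 0$.'' This cannot work: writing it out, $Q_{0,\beta}(E)=\tfrac{\beta^2}{2}\Delta_H(E)-\beta C_1C_2+2C_2^2+3C_3(\beta C_0-C_1)$ contains $C_3=H^0\ch_3(E)$ linearly, with coefficient $3(\beta C_0-C_1)$, while Proposition \ref{prop-li-ch2} bounds only $\ch_2$ of slope-semistable sheaves and says nothing about $\ch_3$. The input actually required at the limit is a bound on $\ch_3^{\beta}$ of tilt-semistable objects with vanishing tilt slope --- precisely Li's Theorem 0.1, alias BMS Conjecture 5.3 --- and its proof is a substantial separate argument (it uses the $\ch_2$ bound of Proposition \ref{prop-li-ch2} as one ingredient, together with Euler-characteristic and restriction techniques); it is not an immediate consequence of that proposition. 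Moreover, your identification of the $\alpha\to 0$ limit of a tilt-stable object with a $\mu_H$-semistable sheaf of pinned slope is too quick: limits of tilt-semistable objects along $\sigma_{0,\beta}$ can be genuine two-term complexes, and the analysis must go through the $\nu_{\alpha,\beta}=0$ locus rather than through slope stability directly.

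The second missing piece is the one you flag yourself as ``the heart of the argument'': ruling out that the chamber of $E$ escapes through the circle $C_E$ with $E$ still stable. In \cite{bayer:the-space-of-stability-conditions-on-abelian-threefolds} this is resolved by a structural lemma you do not have: the zero locus $\{Q_{\alpha,\beta}(v)=0\}$ is itself a \emph{numerical wall} for $v$ in tilt stability (it belongs to the nested family of walls for $v$), so the sign of $Q_{\alpha,\beta}(E)$ is constant along each wall, stability can be transported along walls, and one slides down to the small-$\alpha$/$\nu_{\alpha,\beta}=0$ regime where the $\ch_3^{\beta}$ bound applies. Without that lemma your dichotomy (actual wall inside $D_E$ versus reaching $\alpha\to 0^+$) does not close, and the induction is incomplete. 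Your reduction to stable objects via the Jordan--H\"older filtration and negative semidefiniteness of $Q_{\alpha,\beta}$ on $\ker Z_{\alpha,\beta}$, and the strict drop of $\Delta_H$ across actual walls, are correct and standard; but the two steps you defer are precisely the content of the theorems the paper cites, and the specific claim that Proposition \ref{prop-li-ch2} alone handles the boundary case is false.
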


\begin{proof}
Conjecture 5.3 of \cite{bayer:the-space-of-stability-conditions-on-abelian-threefolds} is proved for cubic threefolds in~\cite[Theorem 0.1]{li:fano-picard-number-one}. Thus~\cite[Theorem 5.4]{bayer:the-space-of-stability-conditions-on-abelian-threefolds} implies that~\cite[Conjecture 4.1]{bayer:the-space-of-stability-conditions-on-abelian-threefolds} holds on cubic threefold. One can easily show that its statement is equivalent to $Q_{\alpha, \beta}(E) \geq 0$. 
\end{proof}

\subsection{Induced stability conditions}
We finish this section by recalling induced stability conditions on an admissible subcategory. Let $\cT$ be a $\C$-linear triangulated category with Serre functor $S$. Assume $\cT$ has a semiorthogonal decomposition of the form 
$$
\cT
=\langle \cD, E_1,\ldots ,E_m\rangle,
$$
where $\{E_1, \dots, E_m\}$ is an exceptional collection and $\cD:=\langle E_1, \dots, E_m \rangle^{\perp}$. The next proposition gives a criterion  to construct a stability condition on $\cD$ from a weak stability condition on $\cT$.

\begin{Prop}[\protect{\cite[Proposition 5.1]{bayer:stability-conditions-kuznetsov-component}}]
\label{prop_inducstab}
Let $\sigma=(\cA, Z)$ be a weak stability condition on $\cT$. 
Assume that the exceptional collection $\{E_1,\ldots ,E_m\}$ satisfies the following conditions: 
\begin{enumerate}
\item $E_i\in \cA$;
\item $S(E_i)\in \cA[1]$; 
\item  $Z(E_i)\neq0$ for all $i=0,1, \ldots, m$.
\end{enumerate}
If moreover there are no objects $0\neq F\in \cA':=\cA\cap \cD$ with $Z(F)=0$,
then the pair $\sigma'=(\cA',Z|_{K(\cA)})$ is a stability condition on $\cD$.
\end{Prop}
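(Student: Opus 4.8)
The plan is to verify the four defining requirements for the pair $\sigma'=(\cA',Z')$, where $Z'$ denotes the restriction of $Z$: that $\cA'=\cA\cap\cD$ is the heart of a bounded t-structure on $\cD$, that $Z'$ is a genuine stability function on $\cA'$, and that $\sigma'$ satisfies the HN and support properties. The first two are fairly direct consequences of conditions (1)--(3) together with the no-kernel hypothesis, whereas the HN and support properties are the substance of the argument and must be imported from the ambient weak stability condition $\sigma$.

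\emph{The heart.} Since $\cA'$ is a full subcategory of $\cA$ and $\cD\subset\cT$ is full, the vanishing $\Hom(A,B[n])=0$ for $A,B\in\cA'$ and $n<0$ is inherited from $\cA$. Thus it suffices to show that the bounded t-structure with heart $\cA$ restricts to $\cD$, i.e.\ that $H^i_{\cA}(F)\in\cA'$ for every $F\in\cD$ and every $i$; the truncation triangles of $F$ then have all vertices in $\cD$ (iterated cones of shifts of objects of $\cD$), which exhibits the required filtrations. Fix $F\in\cD$, set $H\coloneqq H^i_{\cA}(F)\in\cA$, and check $\Hom(E_j,H[k])=0$ for all $j,k$. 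For $k<0$ this is the heart axiom. For $k\geq 2$, Serre duality gives $\Hom(E_j,H[k])\cong\Hom(H,S(E_j)[-k])^{\vee}$, and condition (2) places $S(E_j)[-k]$ in $\cA[1-k]$ with $1-k<0$, so the group vanishes. The remaining cases $k\in\{0,1\}$ are handled by the spectral sequence
\[
E_2^{p,q}=\Hom(E_j,H^q_{\cA}(F)[p])\Rightarrow\Hom(E_j,F[p+q]),
\]
which by the previous two cases is concentrated in the columns $p\in\{0,1\}$ and hence degenerates, yielding short exact sequences
\[
0\to\Hom(E_j,H^{n}_{\cA}(F))\to\Hom(E_j,F[n])\to\Hom(E_j,H^{n-1}_{\cA}(F)[1])\to 0.
\]
Since $F\in\cD$, the middle terms vanish, forcing the outer terms to vanish for all $n$; these are exactly the cases $k=0$ and $k=1$. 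Hence $H\in\cA\cap\cD=\cA'$, and $\cA'$ is a heart.

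\emph{The stability function and the remaining properties.} For $0\neq E\in\cA'\subset\cA$ the weak stability function gives $\Im Z(E)\geq 0$, with $\Re Z(E)\leq 0$ when $\Im Z(E)=0$; the no-kernel hypothesis $Z(E)\neq 0$ upgrades this to $\Re Z(E)<0$, so $Z'$ is a genuine stability function. For the support property I would take the same quadratic form $Q$ furnished by $\sigma$, restricted to the sublattice $\Lambda'=v(K(\cD))$: it remains negative definite on $\ker Z'=\ker Z\cap\Lambda'_{\R}$, so the only point to check is $Q(E)\geq 0$ for $\sigma'$-semistable $E$. For HN I would, given $E\in\cA'$, produce a maximal destabilizing $\cA'$-subobject; here one uses that short exact sequences in $\cA'$ are exact in $\cA$ (compare $\cA$-cohomology of the defining triangle, whose three terms lie in $\cA$), so every $\cA'$-subobject of $E$ is an $\cA$-subobject and the $\mu_Z$-inequalities in $\cA'$ are tested among $\cA$-subobjects. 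The task then reduces to a finiteness statement: only finitely many classes in $\Lambda'$ arise from $\cA'$-subobjects of $E$ of slope $\geq\mu_Z(E)$, to be extracted from the ambient support property via $Q$.

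\emph{The main obstacle.} The crux, and the reason HN and support do not transport for free, is that a $\sigma'$-semistable object of $\cA'$ need not be $\sigma$-semistable in $\cA$ --- its maximal destabilizing $\cA$-subobject generally lies outside $\cD$. Consequently one cannot simply deduce $Q(E)\geq 0$ from positivity of $Q$ on the $\sigma$-Harder--Narasimhan factors, since cancellation among factors of differing phase could in principle make $|Z(E)|$ small relative to $\|v(E)\|$. The key point I expect to need is a uniform control of the $\sigma$-phases of objects of $\cD$: because $E_j\in\cA=\cP_\sigma(0,1]$ and $S(E_j)\in\cA[1]=\cP_\sigma(1,2]$, the orthogonality of $F\in\cD$ both to the $E_j$ (as second argument) and to the $S(E_j)$ (as first argument, via Serre duality) should confine the phases $\phi^{+}_\sigma(F)$ and $\phi^{-}_\sigma(F)$ to an interval strictly shorter than $1$, determined by the collection and independent of $F$. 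Such a phase-window bound rules out the dangerous cancellation, giving simultaneously the boundedness $\|v(E)\|\leq C\,|Z(E)|$ needed for the support property and the finiteness needed for HN. Establishing this uniform phase-window, rather than the formal manipulations above, is where the real work lies.
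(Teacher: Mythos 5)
Your first half is correct, and it is essentially the standard argument for the cited result (note that the paper itself only quotes this statement from \cite{bayer:stability-conditions-kuznetsov-component} without reproducing a proof): the vanishing of $\Hom(E_j,H^q_{\cA}(F)[k])$ for $k<0$ by the heart axiom, for $k\geq 2$ by Serre duality together with condition (2), and the two-column spectral sequence to kill $k=0,1$ is exactly how one shows the t-structure restricts, and the upgrade from weak stability function to stability function is immediate from the no-kernel hypothesis. The genuine gap is in the second half: you do not prove the HN and support properties, and the mechanism you propose to supply them --- a uniform window of length strictly less than $1$, depending only on the exceptional collection, confining the $\sigma$-phases $\phi^{\pm}_\sigma(F)$ of all $F\in\cD$ --- does not exist. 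For arbitrary $F\in\cD$ it fails trivially, since $\cD$ is triangulated and contains $G\oplus G[2]$. For $F\in\cA'$ the phases automatically lie in $(0,1]$ because $\cA=\cP_\sigma(0,1]$, and no shorter uniform window exists: if $F_1,F_2\in\cA'$ are $\sigma'$-semistable of phases close to $1$ and close to $0$ respectively (in the paper's application such objects exist for suitable classes $a[\cI_\ell]+b[\cS(\cI_\ell)]$; e.g.\ $\cI_\ell$ itself has phase $1$ by \eqref{eq_Zofline=-1}), then $F_1\oplus F_2\in\cA'$ has $\phi^+_\sigma-\phi^-_\sigma$ arbitrarily close to $1$. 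The frustrating part is that your mechanism \emph{would} suffice if the window existed --- central charges confined to a sector of angle $<\pi$ cannot cancel, and the ambient support property bounds each HN factor --- so its failure is fatal to the plan as written, and your correct worry that $\sigma'$-semistable objects need not be $\sigma$-semistable is left unresolved.

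A complete argument circumvents, rather than overcomes, that comparison. For the HN property one does not count classes in $\Lambda'$ via $Q$: since the inclusion $\cA'\subset\cA$ is exact (as you observe), chains of subobjects or quotients in $\cA'$ are chains in $\cA$, and one verifies Bridgeland's two chain conditions for the honest stability function $Z'$ using the HN property of the ambient weak stability condition together with noetherianity/discreteness inputs available for the tilted hearts in question --- no phase window enters. For the support property, in every application in this paper the numerical Grothendieck group of the component has rank $2$ and $Z'\otimes\R$ is injective on it (as in the proof of Theorem \ref{thm-S-B0}, where $\overline{Z}(v_1)=2i$ and $\overline{Z}(v_2)=4+2i$ are $\R$-linearly independent; likewise for $Z(\alpha,\beta)$ on $[\cI_\ell]$, $[\cS(\cI_\ell)]$), so $\ker Z'\cap\Lambda'_{\R}=0$ and the support property holds vacuously, e.g.\ with $Q\equiv 0$; in particular no positivity needs to be transported from $\sigma$ across the two inequivalent notions of semistability. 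As it stands, your proposal establishes the heart and the stability function but leaves both remaining axioms open, resting on a false key claim.
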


\section{Serre-invariant stability conditions}\label{section-serre-invariant}
Let $\cT$ be a triangulated category which is linear of finite type over a field $K$; \textit{i.e.} $\oplus_i\Hom(E, F[i])$ is a finite-dimensional vector space over $K$. Assume that $\cT$ has Serre functor~$S$. Since $S$ is a linear exact autoequivalence, we can have the following definition.   

\begin{Def} \label{def-Sinvstab}
A stability condition $\sigma$ on $\cT$ is \emph{Serre invariant} (or \emph{$S$-invariant}\,) if $S\cdot \sigma = \sigma \cdot \tilde{g}$ for some $\tilde{g} \in \widetilde{\text{GL}}_2^+(\R)$. 
\end{Def}

Assume $\cT$ satisfies the following conditions: 
\begin{enumerate}[label=\rm{(}\textbf{C\arabic*}\rm{)},ref=\textbf{C\arabic*}]
	\item\label{C1}The Serre functor $S$ of $\cT$ satisfies $S^r = [k]$ when $0 < k/r < 2$ or $r=2$ and $k=4$. 
	\item\label{C2} The numerical Grothendieck group $\mathcal{N}(\cT)$ is of rank $2$, and we have the inequality $\ell_{\cT} \coloneqq \max\{\chi(v, v) \colon 0 \neq v \in \mathcal{N}(\cT) \} < 0$.
	\item\label{C3} There is an object $Q \in \cT$ satisfying 
	 	\begin{equation}\label{minimal}
	   -\ell_{\cT} +1 \leq  \hom^1(Q, Q) <  -2\ell_{\cT} +2. 
	\end{equation}
	If $k= 2r =4$, there are two objects $Q_1, Q_2 \in \cT$ satisfying \eqref{minimal} such that $Q_1$ is not isomorphic to $Q_2$ or $Q_2[1]$, $\hom(Q_2, Q_1) \neq 0$ and $\hom(Q_1, Q_2[1]) \neq 0$. 
\end{enumerate}
We can slightly relax~\eqref{C3} in the case $k= 2r =4$; see Lemma~\ref{lem-relax}. 

The goal of this section is to prove the following theorem. 

\begin{Thm} \label{thm_uniqueSinv}
Let $\sigma_1$ and $\sigma_2$ be $S$-invariant numerical stability conditions on a triangulated category $\cT$ satisfying the above conditions~\eqref{C1},~\eqref{C2} and~\eqref{C3}. Then there exists a $\tilde{g} \in \widetilde{\emph{GL}}^+(2, \mathbb{R})$ such that $\sigma_1= \sigma_2 \cdot \tilde{g}$. 
\end{Thm}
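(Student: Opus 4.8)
The plan is to use the $\widetilde{\mathrm{GL}}^+_2(\R)$-action to bring $\sigma_1$ and $\sigma_2$ into a common normal form in which the Serre functor acts by a rigid rotation of phases, and then to prove that two such normalized conditions sharing a central charge must have the same slicing. First I would record the action of $S$ on the numerical lattice: $S$ induces $S_\ast\in\mathrm{GL}(\cN(\cT))$, and \ref{C1} gives $S_\ast^{\,r}=(-1)^k\mathrm{Id}$, so $S_\ast$ has finite order. For a Serre-invariant $\sigma=(\cP,Z)$ write $S\cdot\sigma=\sigma\cdot\tilde g$ with $\tilde g=(g,M)$; comparing central charges forces $M=Z S_\ast Z^{-1}$ and hence $M^r=(-1)^k\mathrm{Id}$. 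By \ref{C2} the map $Z\colon\cN(\cT)\otimes\R\xrightarrow{\sim}\C$ is an isomorphism, and the relation $\chi(v,w)=\chi(w,S_\ast v)$ coming from Serre duality shows that $\ell_\cT<0$ is incompatible with $S_\ast$ having a $(-1)$-eigenvector; this pins down $M$ as the elliptic element conjugate to the rotation by angle $\pi k/r$ (in the boundary case $k=2r=4$ one finds $S_\ast=\mathrm{Id}$ and $M=\mathrm{Id}$). Acting by a suitable $\tilde h$ I conjugate $M$ to this rotation itself, after which $S$ shifts every phase by the constant $k/r\in(0,2]$, i.e. $\phi_\sigma(S^jE)=\phi_\sigma(E)+jk/r$ for all $\sigma$-stable $E$. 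Carrying this out for both $\sigma_1,\sigma_2$ and then using that the centralizer of a nontrivial rotation in $\mathrm{GL}^+_2(\R)$ is the rotation–scaling subgroup, a final rotation–scaling yields $Z_1=Z_2=:Z$ with both conditions still normalized. It then remains to show $\sigma_1$ and $\sigma_2$ differ by a shift $[2m]$.

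Next I would show that the reference object $Q$ (resp. $Q_1,Q_2$) is $\sigma_i$-stable for every Serre-invariant $\sigma_i$. For a $\sigma$-stable $E$ the range $0<k/r<2$ together with Serre duality and the phase formula above gives $\hom^{<0}(E,E)=0=\hom^{\ge 2}(E,E)$ and $\hom(E,E)=1$, whence $\hom^1(E,E)=1-\chi(E,E)\ge 1-\ell_\cT$, which is exactly the lower bound in \ref{C3}. Conversely, if $Q$ were strictly semistable or unstable, its Jordan--Hölder or Harder--Narasimhan factors would have classes whose $\chi$-pairings, controlled by the negative-definiteness in \ref{C2}, force $\hom^1(Q,Q)\ge -2\ell_\cT+2$, contradicting the strict upper bound in \ref{C3}; hence $Q$ is stable. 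The same numerical computation applies to $Q_1,Q_2$ (with the amplitude estimate adjusted in the boundary case $k=2r=4$, where $\hom^{\ge2}$ need not vanish, which is precisely why two objects are required there).

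With $Z_1=Z_2=Z$ and $Q$ stable for both, its phases agree modulo $2$: $\phi_{\sigma_1}(Q)\equiv\phi_{\sigma_2}(Q)\pmod 2$. Absorbing the integer difference into a shift $[2m]\in\widetilde{\mathrm{GL}}^+_2(\R)$, I may assume $\phi_{\sigma_1}(Q)=\phi_{\sigma_2}(Q)$, and then the entire Serre orbit matches, $\phi_{\sigma_1}(S^jQ)=\phi_{\sigma_2}(S^jQ)=\phi_{\sigma_i}(Q)+jk/r$. When $0<k/r<2$ the complex eigenvalues of $S_\ast$ make $[Q]$ and $S_\ast[Q]$ linearly independent, so they span $\cN(\cT)_\Q$; in the boundary case, where $S_\ast=\mathrm{Id}$ does not spread $[Q]$ out, I use instead $[Q_1],[Q_2]$, whose independence together with the nonvanishing $\hom(Q_2,Q_1)\ne0$ and $\hom(Q_1,Q_2[1])\ne0$ is built into \ref{C3}.

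Finally I would show $\cP_{\sigma_1}=\cP_{\sigma_2}$, which with $Z_1=Z_2$ gives $\sigma_1=\sigma_2$ and proves the theorem. The strategy is to sandwich the phase of an arbitrary $\sigma_1$-stable object $E$ between the phases of consecutive reference objects $S^jQ$: negative-definiteness of $\chi$ from \ref{C2} forces, for suitable $j$, a nonzero morphism $S^jQ\to E$ or $E\to S^jQ$ in a controlled degree, and since a nonzero map between semistable objects compares their phases, the Serre-invariant spacing of the $\phi(S^jQ)$ confines $\phi_{\sigma_2}(E)$ to an interval of length less than $2$; combined with $\phi_{\sigma_2}(E)\equiv\phi_{\sigma_1}(E)\pmod 2$ this pins the two equal and forces $E$ to be $\sigma_2$-semistable. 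Running the symmetric argument identifies the two families of semistable objects, so the hearts $\cP_{\sigma_i}(0,1]$ coincide. I expect this last step to be the main obstacle: it is where the category must genuinely behave like $\Db$ of a curve of genus $\ge 2$ (in the spirit of Macrì's uniqueness theorem), where the precise inequalities of \ref{C2}--\ref{C3} are indispensable, and where the degenerate case $k=2r=4$ needs the extra pair $Q_1,Q_2$ to break the residual symmetry left by $S_\ast=\mathrm{Id}$.
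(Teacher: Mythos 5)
Your normalization scheme (conjugating the Serre action into a rigid rotation so that $S\,\cP(\phi)=\cP(\phi+k/r)$, then matching central charges via the centralizer of the rotation) and your treatment of the reference objects are sound in outline, and in fact your rigid normal form is \emph{stronger} than what the paper establishes (the paper only proves the inequalities $\phi(E)+1\le\phi(S(E))\le\phi(E)+2$ in Proposition \ref{prop-PY}, parts \eqref{a} and \eqref{min-phase}); two sub-claims need repair but are repairable: invertibility of $Z$ on $\cN(\cT)\otimes\R$ does not follow from \ref{C2} alone but from the existence of the stable objects $Q_1,Q_2$ with distinct phases mod $1$, and $M^r=(-1)^k\mathrm{Id}$ by itself only pins $M$ to a rotation by some angle $\pi(k+2j)/r$ — you must invoke the slicing relation $g^r=(\,\cdot\,)+k$ forced by $S^r=[k]$ to get the angle $\pi k/r$ exactly. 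The genuine gap is your final step. The sandwich mechanism cannot force an arbitrary $\sigma_1$-stable $E$ to be $\sigma_2$-semistable: if $E$ is $\sigma_2$-unstable, nonvanishing of $\chi(S^jQ_i,E)$ only produces a nonzero $\hom(S^jQ_i,E[n])$ for \emph{some} $n$ in a window whose length is roughly $\phi^+_{\sigma_2}(E)-\phi^-_{\sigma_2}(E)+k/r$; the resulting inequalities locate that window relative to $\phi(Q_i)$ but place no upper bound on the width $\phi^+_{\sigma_2}(E)-\phi^-_{\sigma_2}(E)$ of the HN filtration, so nothing is "confined to an interval of length less than $2$". Moreover the congruence $\phi_{\sigma_2}(E)\equiv\phi_{\sigma_1}(E)\pmod 2$ presupposes that $\phi_{\sigma_2}(E)$ is defined, i.e.\ exactly the semistability you are trying to prove — the step as written is circular, and you flag it yourself as unresolved.

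The paper closes this gap with an idea absent from your proposal: strong induction on $\hom^1(E,E)$, powered by the weak Mukai lemma. Serre-invariance gives, for the extremal HN piece $E_1\to E\to E/E_1$ with respect to $\sigma_2$, not only $\hom(E_1,E/E_1)=0$ but also $\hom(E/E_1,E_1[2])=0$ (via $\phi(S(B))\le\phi(B)+2$), whence the additivity $\hom^1(E_1,E_1)+\hom^1(E/E_1,E/E_1)\le\hom^1(E,E)$; combined with the universal lower bound $\hom^1(F,F)\ge-\ell_{\cT}+1$ of Proposition \ref{prop-PY}\eqref{min-hom1}, each HN factor has strictly smaller $\hom^1$, so the induction hypothesis makes the factors $\sigma_1$-semistable with matched phases, and the resulting phase inequalities \eqref{phase-1}--\eqref{phase-2} contradict $\sigma_1$-stability of $E$. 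The base case is exactly the objects satisfying \eqref{minimal}, which are stable for \emph{every} Serre-invariant condition by Proposition \ref{prop-PY}\eqref{minimal-stable} (your sketch of this via Jordan--H\"older factors is essentially the paper's argument, though the strictly semistable case needs the peeling trick of Lemma \ref{lem-posibilities} in the boundary case $k=2r=4$, since JH factors of equal phase need not satisfy $\hom(A,B)=0$). Your orthogonality/sandwich argument does appear in the paper, but only where it actually works: for objects already known to be stable with respect to both conditions, to exclude a phase discrepancy by a nonzero even shift via $\chi(Q_1,E)=\chi(Q_2,E)=0\Rightarrow[E]=0$ (Step 1 and case (c) of the paper's proof). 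Without the Mukai-lemma induction, your proof does not go through.
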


\begin{Rem} \label{rmk_CY2}
It is easy to see that if the Serre functor satisfies $S=[2]$, then every stability condition on $\cT$ is Serre invariant.
\end{Rem}

Before proving Theorem~\ref{thm_uniqueSinv}, we apply similar arguments as in~\cite{pertusi:some-remarks-fano-threefolds-index-two} to investigate some of the properties of $S$-invariant stability conditions.

\begin{Prop}\label{prop-PY}
Let $\cT$ be a triangulated category satisfying conditions~\eqref{C1} and~\eqref{C2} above, and let $\sigma = (Z, \cA)$ be an $S$-invariant stability condition on $\cT$. 
\begin{enumerate}
    \item If $E$ is a $\sigma$-semistable object of phase $\phi(E)$, then $\phi(S(E)) \leq \phi(E) +2$. The inequality is strict if\, $k/r <2$. \label{a} 
    \item \label{Mukai}{\rm(}Weak Mukai lemma{\rm)} Let $A \to E \to B$ be an exact triangle in $\cT$ such that $\Hom(A,B)=0$ and the $\sigma$-semistable factors of $A$ have phases greater than $($or equal to in case $k/r <2)$ the phases of the $\sigma$-semistable factors of $B$. Then 
    \begin{equation}
        \label{eq_MukaiLemma}
        \hom^1(A,A)+ \hom^1(B,B) \leq \hom^1(E,E).
    \end{equation}  
    \item\label{min-hom1} Any non-zero object $E \in \cT$ satisfies
    \begin{equation*}
        \hom^1(E, E) \geq -\ell_{\cT} +1. 
    \end{equation*}
    \item\label{min-phase} A $\sigma$-semistable object $E$ satisfies $\phi(E) +1 \leq \phi(S(E))$. The inequality is strict if $E$ is $\sigma$-stable.
    \item \label{minimal-stable} If\, $E \in \cT$ satisfies $-\ell_{\cT} +1 \leq  \hom^1(E, E) <  -2\ell_{\cT} +2$, then it is $\sigma$-stable.
\end{enumerate}
\end{Prop}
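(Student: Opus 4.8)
The plan is to prove the five assertions in the stated order, since each is used in the next. Write $\tilde g=(g,M)\in\widetilde{\mathrm{GL}}^+_2(\R)$ for the element with $S\cdot\sigma=\sigma\cdot\tilde g$ afforded by $S$-invariance; unwinding the two actions, $S$ carries $\cP_\sigma(\phi)$ to $\cP_\sigma(g(\phi))$, so every $\sigma$-semistable $E$ satisfies $\phi(S(E))=g(\phi(E))$, where $g$ is increasing with $g(\phi+1)=g(\phi)+1$. For \ref{a}, condition \ref{C1} gives $S^r=[k]$ and hence $g^r(\phi)=\phi+k$ for all $\phi$. If $g(\phi_0)\geq\phi_0+2$ for some $\phi_0$, then monotonicity together with the translation property yields $g^n(\phi_0)\geq\phi_0+2n$ by induction, so $k=g^r(\phi_0)-\phi_0\geq 2r$ and $k/r\geq 2$. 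Thus $0<k/r<2$ forces $g(\phi)<\phi+2$ and $r=2,k=4$ forces $g(\phi)\leq\phi+2$, which is exactly \ref{a}.

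For the Weak Mukai Lemma \ref{Mukai}, the usual computation with the long exact sequences obtained by applying $\Hom(-,A)$, $\Hom(-,B)$ and $\Hom(E,-)$ to $A\to E\to B$ produces \eqref{eq_MukaiLemma} as soon as $\Hom(A,B)=0$ and $\Hom(B,A[2])=0$. The former is assumed; for the latter I use Serre duality $\Hom(B,A[2])\cong\Hom(A,S(B)[-2])^{\vee}$ and part \ref{a}: every $\sigma$-semistable factor of $S(B)[-2]$ has phase at most $\phi^+(B)+2-2=\phi^+(B)$, strictly so when $k/r<2$, while every factor of $A$ has phase at least $\phi^-(A)$. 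The phase hypothesis---$\phi^-(A)\geq\phi^+(B)$ when $k/r<2$ and $\phi^-(A)>\phi^+(B)$ when $k/r=2$---makes this comparison strict in both regimes, whence $\Hom(A,S(B)[-2])=0$.

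For \ref{min-hom1}, first let $E$ be $\sigma$-semistable. Semistability gives $\hom^i(E,E)=0$ for $i<0$, and Serre duality $\hom^i(E,E)=\dim\Hom(E,S(E)[-i])$ together with part \ref{a} gives $\hom^i(E,E)=0$ for $i\geq 3$, since then $\phi(S(E)[-i])\leq\phi(E)+2-i<\phi(E)$. Hence $\chi(E,E)=\hom^0-\hom^1+\hom^2$, and from $\hom^0\geq 1$, $\hom^2\geq 0$ and $\chi(E,E)\leq\ell_\cT$ (condition \ref{C2}) we get $\hom^1(E,E)\geq 1-\ell_\cT$. For general nonzero $E$, take the maximal-phase HN factor $A$ and the triangle $A\to E\to B$; then $\Hom(A,B)=0$ with strictly decreasing phases, so \ref{Mukai} gives $\hom^1(E,E)\geq\hom^1(A,A)+\hom^1(B,B)\geq(1-\ell_\cT)+0$. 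Part \ref{min-phase} then follows: by the semistable case of \ref{min-hom1} we have $\hom^1(E,E)>0$, so $\Hom(E,S(E)[-1])\cong\Hom(E,E[1])^{\vee}\neq 0$, and since $E$ and $S(E)[-1]$ are $\sigma$-semistable this forces $\phi(E)\leq\phi(S(E))-1$. If $E$ is stable and equality held, $E$ and $S(E)[-1]$ would be stable of equal phase with a nonzero map, hence isomorphic; then $S(E)\cong E[1]$, so Serre duality gives $\hom^i(E,E)=\hom^{1-i}(E,E)$ and therefore $\chi(E,E)=-\chi(E,E)=0$, contradicting \ref{C2}.

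For \ref{minimal-stable}, suppose $E$ is not $\sigma$-stable; I show $\hom^1(E,E)\geq-2\ell_\cT+2$, contradicting the upper bound. If $E$ is not semistable, its HN filtration gives $A\to E\to B$ with $A,B\neq 0$, $\Hom(A,B)=0$ and strictly decreasing phases, so \ref{Mukai} and \ref{min-hom1} yield $\hom^1(E,E)\geq 2(1-\ell_\cT)=-2\ell_\cT+2$. If $E$ is strictly semistable, all its Jordan--Hölder factors share one phase, hence (the central charge being injective on the rank-two $\cN(\cT)\otimes\R$) are multiples of a single primitive class $v_0$, so $[E]=N v_0$ with $N\geq 2$; then $\chi(E,E)=N^2\chi(v_0,v_0)\leq 4\ell_\cT$, and using $\hom^i(E,E)=0$ for $i\notin\{0,1,2\}$ as above, $\hom^1(E,E)=\hom^0+\hom^2-\chi(E,E)\geq-4\ell_\cT\geq-2\ell_\cT+2$. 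I expect the delicate point to be \ref{Mukai}: arranging the phase bookkeeping so that a single argument covers both the $k/r<2$ regime (non-strict phase hypothesis, strict gain from part \ref{a}) and the $k/r=2$ regime (strict hypothesis, non-strict gain), and running the standard but lengthy Mukai long-exact-sequence count.
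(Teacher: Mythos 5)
Parts (a)--(d) of your proposal are correct and follow essentially the same route as the paper: the iteration of the increasing function $g$ against $S^r=[k]$ for (a); for (b), reducing the Mukai count to the two vanishings $\Hom(A,B)=0$ and $\Hom(B,A[2])=\Hom(A,S(B)[-2])^{\vee}=0$, with the strictness traded between the phase hypothesis and part (a) exactly as in the paper; the Euler-characteristic count on a semistable object plus the top HN factor for (c); and $\hom^1\neq 0$ plus ``a nonzero map between stable objects of equal phase is an isomorphism'' for (d). Your ending of (d) via $\chi(E,E)=-\chi(E,E)=0$ is a harmless variant of the paper's (which instead derives $\hom^1(E,E)=\hom(E,E)=1$ and contradicts (c)), provided you note $[E]\neq 0$, which holds because $Z(E)\neq 0$ for a nonzero semistable object.

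The genuine gap is in part (e), in the strictly semistable case: you assert parenthetically that $Z$ is injective on the rank-two $\cN(\cT)\otimes\R$ and use this to conclude that all Jordan--H\"older factors have classes that are positive multiples of one primitive class $v_0$. Injectivity is neither a hypothesis nor automatic: a stability condition can have central charge of real rank one (the degenerate situation where every nonzero object of $\cA$ is semistable of a single phase $\phi_0$), and the support property only constrains the quadratic form on $\ker Z$. Note also that Proposition \ref{prop-PY} assumes only \ref{C1} and \ref{C2}; the normalization $Z(Q_1)=-1$, $Z(Q_2)=i$ that makes $Z$ bijective later in the paper uses \ref{C3} together with part (e) itself (via Lemma \ref{lem-central charge}), so invoking it here would be circular. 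When $k/r<2$ your argument can be patched: if $\ker Z\neq 0$, then $Z(\cA\setminus\{0\})$ lies on a single ray, so every nonzero object of $\cA$ is semistable of phase $\phi_0$, and for a stable $E$ parts (d) and (a) force $\phi(E)+1<\phi(S(E))\leq\phi(E)+2$ with $\phi(S(E))\in\phi_0+\Z$, hence $\phi(S(E))=\phi(E)+2$, contradicting the strict inequality of (a); with this patch your proportionality argument actually gives a cleaner proof than the paper's iterated-embedding construction in that regime. But when $k=2r=4$ no contradiction arises this way ($\phi(S(E))=\phi(E)+2$ is allowed), and I see no easy repair of the injectivity claim. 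This is precisely the regime the paper treats by Lemma \ref{lem-posibilities}: either $E$ is $S$-equivalent to $G^{\oplus k_1}\oplus S(G)^{\oplus k_2}[-2]$ for a stable $G$, whence $\chi(E,E)=(k_1+k_2)^2\chi(G,G)$ using $\chi(G,G)=\chi(G,S(G))=\chi(S(G),G)$ --- an argument requiring no injectivity of $Z$ --- or there is a triangle $E_1\to E\to E_2$ with $\hom(E_1,E_2)=0=\hom(E_2,E_1[2])$ on which the Mukai count runs directly. As written, your proof of (e) is incomplete in the $k=2r=4$ case and needs either a proof that $Z$ is injective or the paper's case analysis.
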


\begin{proof}
Since $\sigma$ is an $S$-invariant stability condition on $\cT$, there is a $(g, M) \in \widetilde{\text{GL}}_2^+(\R)$, where $g\colon \R \rightarrow \R$ is an increasing map with $g(\phi+1) = g(\phi) +1$, such that $S\cdot\sigma = \sigma\cdot\tilde{g}$. Thus $\phi(S(F)) = g(\phi(F))$ and for any $n >0$,
\begin{equation}\label{equality}
    \phi(S^n(F)) = g(\phi(S^{n-1}(F))) = g^2(\phi(S^{n-2}(F))) = \cdots = g^n(\phi(F)). 
\end{equation}
	If $\phi(S(F)) = g(\phi(F))\ >(\geq)\  \phi(F) +2$, then since $g$ is increasing,
	\begin{equation*}
	   \phi(F) +k = \phi(S^r(F)) \overset{\eqref{equality}}{=} g^r(\phi(F)) >(\geq) g^{r-1}(\phi(F) +2) = g^{r-1}(\phi(F)) +2  >(\geq)
	    \cdots >(\geq) \phi(F) +2r, 
	\end{equation*}
	which gives $k/r \ >(\geq)\ 2$, leading to a contradiction. This completes the proof of part \eqref{a}. 
\bigskip

To prove part \eqref{Mukai}, let $A \rightarrow E \rightarrow B$ be an exact triangle in $\cT$ with $\Hom(A, B) = 0$. If $\phi^+(B) < \phi^{-}(A)$, then part \eqref{a} implies $\phi^+(S(B)) -2 \leq \phi^+(B)$, which gives
\begin{equation}\label{phase}
\phi^+(S(B)) < \phi^-(A[2]).    
\end{equation}
If $k/r < 2$, we may assume $\phi^+(B) \leq \phi^{-}(A)$ because by part \eqref{a} again, \eqref{phase} holds. Then
\begin{equation}\label{van}
\hom(B, A[2]) = \hom(A[2], S(B)) = 0,
\end{equation}
and the same argument as in~\cite[Lemma 2.4]{bayer:derived-auto} implies the claim \eqref{eq_MukaiLemma}.

 \bigskip

To prove part \eqref{min-hom1}, first assume $E$ is $\sigma$-semistable. Up to shift, we may assume $E \in \mathcal{P}(0, 1]$. Thus $\hom(E, E[i]) = 0$ for $i <0$. Also by part \eqref{a}, $\phi(S(E))\ \leq \ 3$; thus we have $\hom(E, E[i]) = \hom(E[i], S(E)) = 0$ for $i \geq 3$. Hence 
\begin{equation}\label{hom}
    \hom(E, E[1]) = -\chi(E, E) + \hom(E, E) + \hom(E, E[2]) \geq -\ell_{\cT} +1, 
\end{equation}
as claimed. If $E$ is not $\sigma$-semistable, then let $E_1 \rightarrow E \rightarrow E/E_1$ be the first piece in its HN filtration. Then applying weak Mukai lemma in part \eqref{Mukai} to this sequence gives
\begin{equation*}
    \hom^1(E_1, E_1) \leq \hom^1(E, E).
\end{equation*}
Thus the claim in part \eqref{min-hom1} follows by the first part of the argument because $E_1$ is $\sigma$-semistable. 
 
\bigskip 

Take a $\sigma$-semistable object $E$. Part \eqref{min-hom1} implies that $\hom(E[1] , S(E))=\hom(E, E[1]) \neq 0$; thus $\phi(E) +1 \leq \phi(S(E))$ as claimed in part \eqref{min-phase}. 
If $E$ is $\sigma$-stable, then $S(E)$ is also $\sigma$-stable. Since there is a non-zero map from $E$ to $S(E)[-1]$, they cannot have the same phase. Note that $S(E) \neq E[1]$; otherwise $\hom^1(E, E) = \hom(E, E) =1$, which is not possible by part \eqref{min-hom1}.
 
\bigskip

Finally, to prove part \eqref{minimal-stable}, take an object $E \in \cT$ satisfying the minimality condition
$$-\ell_T +1 \leq \hom^1(E, E) < -2\ell_T +2$$
on $\hom^1(E, E)$. Assume for a contradiction that $E$ is not $\sigma$-semistable, and let $A \rightarrow E \rightarrow B$ be the first piece in the HN filtration of $E$. So $A$ is $\sigma$-semistable and
\begin{equation*}
    \phi(A) = \phi^+(E) > \phi^+(B). 
\end{equation*}
Thus $\hom(A, B) = 0$, and we can apply weak Mukai lemma in part \eqref{Mukai}. Then part \eqref{min-hom1} implies 
\begin{equation*}
2(-\ell_{\cT}+1) \leq \hom^1(A, A)+\hom^1(B, B) \leq \hom^1(E, E),   
\end{equation*}
which is not possible by our assumption.

Now assume $E$ is strictly $\sigma$-semistable. First suppose $k/r <2$. Let $Q \hookrightarrow E$ be a $\sigma$-stable factor of $E$ with quotient $E_1 \in \mathcal{P}(\phi(E))$. If $\hom(Q, E_1) \neq 0$, then the $\sigma$-stability of $Q$ implies that there is an embedding $Q \hookrightarrow E_1$ with quotient $E_2 \in \mathcal{P}(\phi(E))$. By continuing this process, we get an exact triangle $A \rightarrow E \rightarrow B$ in $\mathcal{P}(\phi(E))$ such that $A$ is $S$-equivalent to $\oplus_k Q$ for some $k >0$ (\textit{i.e.}\ all $\sigma$-stable factors of $A$ are isomorphic to $Q$) and $\hom(A, B) = 0$. If $B \neq 0$, then the weak Mukai lemma and part \eqref{min-hom1} lead to a contradiction as above. 
Thus $[E] = k [Q] \in \mathcal{N}(\cT)$ and $\chi(E, E) = k^2\chi(Q, Q) \leq k^2\ell_{\cT}$. This gives 
\begin{equation*}
    \hom^1(E, E) = -\chi(A, A) + \hom(A, A) + \hom(A, A[2]) \geq -k^2\ell_{\cT} +1. 
\end{equation*}
Then our assumption implies $-k^2\ell_{\cT} +1 < -2\ell_{\cT} +2$, which is possible only if $k =1$ because $\ell_T \leq -1$. Thus $E$ is $\sigma$-stable. This completes the proof of \eqref{minimal-stable} for the case $k/r <2$.

If $E$ is strictly $\sigma$-semistable and $k= 2r=4$, we apply  Lemma~\ref{lem-posibilities} below. In  Lemma~\ref{lem-posibilities}\eqref{ii}, the same argument as in the weak Mukai lemma in part \eqref{Mukai} implies that 
\begin{equation*}
    \hom^1(E_1, E_1) + \hom^1(E_2, E_2) \leq \hom^1(E, E),
\end{equation*}
which is not possible by the minimality assumption on $\hom^1(E, E)$. Thus  Lemma~\ref{lem-posibilities}\eqref{i}  happens; \textit{i.e.}\ $E$ is $S$-equivalent to $G^{\oplus {k_1}} \oplus S(G)^{\oplus {k_2}}[-2]$ for a stable object $G$. Hence $\chi(E, E) = (k_1+k_2)^2\chi(G, G)$ because 
\begin{equation*}
    \chi(G, G) = \chi(G, S(G)) = \chi(S(G), S(G)) = \chi(S(G) , S^2(G)) = \chi(S(G) , G). 
\end{equation*}
Therefore, \eqref{hom} gives
\begin{equation*}
\hom^1(E, E) \geq -\chi(E, E) +1 = -(k_1+k_2)^2\chi(G, G) +1. 
\end{equation*}  
If $k_1+k_2 \geq 2$, then 
\begin{equation*}
    -2\ell_T +1 > \hom^1(E, E) \geq -4\ell_T +1,
\end{equation*}
which implies $0 > -2\ell_T$, leading to a contradiction. Thus $k_1+k_2 =1$; \textit{i.e.}\ $E$ is isomorphic to either $G$ or $S(G)[-2]$, so $E$ is $\sigma$-stable as claimed.  
\end{proof}
 
\begin{Lem}\label{lem-posibilities}
	Suppose $k=2r =4$ and $E$ is a strictly $\sigma$-semistable object. Then either 
	\begin{enumerate}
	    \item\label{i} $E$ is $S$-equivalent to $G^{\oplus {k_1}} \oplus S(G)^{\oplus {k_2}}[-2]$ for a stable object $G$ which has same phase as $S(G)[-2]$ if\, $k_2 \neq 0$, or
	    \item\label{ii} there is a non-trivial triangle $E_1 \rightarrow E \rightarrow E_2$ of $\sigma$-semistable objects of the same phase such that $\hom(E_1, E_2) = 0$ and $\hom(E_2, E_1[2]) =0$. 
	\end{enumerate}
\end{Lem}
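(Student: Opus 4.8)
The plan is to exploit that, when $k=2r=4$, the functor $T\coloneqq S[-2]$ is an involution, and then to build an ``orbit-isotypic'' subobject mimicking the greedy construction used for $k/r<2$ in the proof of Proposition~\ref{prop-PY}\eqref{minimal-stable}, but with the pair $\{Q,T(Q)\}$ in place of a single stable object $Q$.

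First I would record the structural consequence of \ref{C1}. Since $S^2=[4]$ we have $T^2=S^2[-4]=\mathrm{id}$. Working up to shift, let $\phi$ be the phase of $E$ and set $\cA'\coloneqq\cP_\sigma(\phi)$, the finite-length abelian category whose simple objects are the $\sigma$-stable objects of phase $\phi$. For a stable object $H$ of phase $\phi$, Proposition~\ref{prop-PY}\eqref{a} gives $\phi(S(H))\le\phi(H)+2$ and Proposition~\ref{prop-PY}\eqref{min-phase} gives $\phi(H)+1<\phi(S(H))$, so $\phi(T(H))\le\phi(H)$; applying the same to $T(H)$ and using $T^2=\mathrm{id}$ forces $\phi(T(H))=\phi(H)$, i.e. $\phi(S(H))=\phi(H)+2$ for every stable $H$. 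Hence $T$ restricts to an exact autoequivalence of $\cA'$ with $T^2=\mathrm{id}$ permuting the simple objects, and Serre duality gives the identity $\hom(X,Y[2])=\hom(Y,T(X))^{*}$ for all $X,Y\in\cA'$, since $\hom(Y,T(X))=\hom(Y,S(X)[-2])=\hom(Y[2],S(X))=\hom(X,Y[2])^{*}$.

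Next I would construct the triangle. Choose a $\sigma$-stable subobject $Q\hookrightarrow E$ (the socle of $E$ in $\cA'$ is nonzero) and let $A\subseteq E$ be the largest subobject all of whose Jordan--Hölder factors are isomorphic to $Q$ or $T(Q)$; this exists because such subobjects form a Serre subcategory of the finite-length category $\cA'$. Put $B\coloneqq E/A$. By maximality, $\hom(Q,B)=\hom(T(Q),B)=0$: a nonzero map from the simple $Q$ (or $T(Q)$) into $B=E/A$ would be injective, and pulling it back along $E\twoheadrightarrow B$ would enlarge $A$, a contradiction. Induction on the length of $A$ then yields $\hom(A,B)=0$. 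For the second vanishing I would use $T^2=\mathrm{id}$: since $\hom(Q,T(B))\cong\hom(T(Q),B)=0$ and $\hom(T(Q),T(B))\cong\hom(Q,B)=0$, the same filtration argument gives $\hom(A,T(B))=0$, whence $\hom(B,A[2])=\hom(A,T(B))^{*}=0$ by the identity above.

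Finally I would read off the dichotomy. Both $A$ and $B$ lie in $\cA'$, so they are $\sigma$-semistable of phase $\phi$. If $B=0$ then $E=A$ has all factors in $\{Q,T(Q)\}$, so $E$ is $S$-equivalent to $Q^{\oplus k_1}\oplus T(Q)^{\oplus k_2}$ with $T(Q)=S(Q)[-2]$ of the same phase as $Q$; this is case \ref{i} with $G=Q$. If $B\neq 0$, the triangle $A\to E\to B$ is the non-trivial triangle required in case \ref{ii}, with $E_1=A$, $E_2=B$, and the two vanishings $\hom(E_1,E_2)=0$ and $\hom(E_2,E_1[2])=0$ already established. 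The main obstacle is obtaining the second vanishing $\hom(B,A[2])=0$: it is exactly this that forces one to enlarge the isotypic piece from $Q$ to the whole $T$-orbit $\{Q,T(Q)\}$, and it relies on upgrading the phase bounds of Proposition~\ref{prop-PY} to the exact equality $\phi(S(H))=\phi(H)+2$ via $T^2=\mathrm{id}$. Once the first paragraph is in place, the rest is a routine adaptation of the $k/r<2$ construction.
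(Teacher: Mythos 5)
Your proof is correct, but it takes a genuinely different route from the paper's. The paper argues by an iterative stripping process: starting from a stable Jordan--H\"older quotient $G$ of $E$, it repeatedly peels off copies of $G$, and then of $S(G)[-2]$, as quotients (via two octahedron-type diagrams), producing triangles $A_m \to E \to F_m$ until $\hom(A_m, G) = \hom(A_m, S(G)[-2]) = 0$; there the orbit-isotypic piece is the \emph{quotient} $E_2$, the vanishing $\hom(E_1,E_2)=0$ follows by d\'evissage from those two conditions, $\hom(E_2,E_1[2])=0$ follows from Serre duality together with $S^2=[4]$, and the proof carries a case analysis according to whether $\phi(S(G)) < \phi(G)+2$ or $\phi(S(G)[-2]) = \phi(G)$. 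You replace all of this by two observations. First, since $T=S[-2]$ satisfies $T^2 \cong \mathrm{id}$, applying Proposition \ref{prop-PY}\eqref{a} to $H$ and then to the stable object $T(H)$ forces the exact equality $\phi(S(H)) = \phi(H)+2$ for \emph{every} stable $H$ --- a genuine strengthening that makes the paper's case distinction vacuous and shows the phase clause in case \ref{i} holds automatically. Second, you build the decomposition in a single step, as the maximal subobject $A \subseteq E$, inside the finite-length slice $\cP_{\sigma}(\phi)$, belonging to the Serre subcategory generated by the $T$-orbit $\{Q, T(Q)\}$ of a simple subobject $Q$; maximality gives $\hom(Q,B)=\hom(T(Q),B)=0$ for $B=E/A$, and both required vanishings then follow by d\'evissage and the single duality identity $\hom(B,A[2]) \cong \hom(A,T(B))^{*}$. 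So your isotypic piece is a subobject where the paper's is a quotient --- dual constructions, equally serviceable downstream, since the Mukai-type argument in the proof of Theorem \ref{thm_uniqueSinv} only needs $\hom(E_1,E_2)=0=\hom(E_2,E_1[2])$. What your route buys is conciseness (no iteration, no case split) and the sharper phase statement; what it uses beyond the paper's toolkit is the explicit appeal to $\cP_{\sigma}(\phi)$ being a finite-length abelian category on which $T$ acts (justified by the support property, and in any case already implicit in the paper whenever Jordan--H\"older filtrations in a slice are invoked) and to $S$ preserving stability, which the paper uses as well. I see no gap; only a small wording slip: it is the objects with Jordan--H\"older factors in $\{Q,T(Q)\}$ that form a Serre subcategory, from which the existence of the maximal such subobject of $E$ follows.
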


\begin{proof}
Considering the Jordan--H\"older filtration, we can write the short exact sequence $A \rightarrow E \rightarrow G$, where $G$ is stable of the same phase as $E$. If $\hom(A, G) \neq 0$, then we gain a surjection $A \twoheadrightarrow G$ with kernel $A_1$: 
\begin{equation*}
    \xymatrix{
    & & G\ar[d]\\
    A_1 \ar[r]\ar[d]&E\ar[r]\ar[d] &F_1\ar[d]\\
    A \ar[r]\ar[d]&E \ar[r] &G\\
    G\rlap{.}&&
    }
\end{equation*}
Thus the cone of $A_1 \to E$, denoted by $F_1$, is $S$-equivalent to $G^{\oplus 2}$. If $\hom(A_1, G) \neq 0$, we continue the above argument to reach a triangle $A_{m} \rightarrow E \rightarrow F_{m}$ such that $\hom(A_{m} , G) = 0$ and $F_{m}$ is $S$-equivalent to $G^{\oplus\, m+1}$. If $A_{m} = 0$, then $E$ is $S$-equivalent to $G^{\oplus\, m+1}$ as in case~\eqref{i}.   

We know $S(G)$ is $\sigma$-stable and $\phi(S(G)) \leq \phi(G) +2$ by Proposition~\ref{prop-PY}, part \eqref{a}. If $\phi(S(G)) < \phi(G) +2$, then 
\begin{equation*}
    \hom(G , A_{m}[2]) = \hom(A_{m}[2], S(G)) =0.   
\end{equation*}
Hence if $\phi(S(G)) < \phi(G) +2$ or $\hom(A_{m}, S(G)[-2]) = 0$ , then $\hom(F_{m} , A_{m}[2]) = 0$; thus we get a triangle as in case~\eqref{ii}. But if $\phi(S(G)[-2]) = \phi(G)$ and $\hom(A_{m}, S(G)[-2]) \neq 0$, then there is a commutative diagram
\begin{equation*}
    \xymatrix{
    & & S(G)[-2]\ar[d]\\
    B_1 \ar[r]\ar[d]& E\ar[r]\ar[d] & E_1\ar[d]\\
    A_{m}\ar[r]\ar[d]&E \ar[r]&F_{m}\\
    S(G)[-2]\rlap{,}&&
    }
\end{equation*}
where $E_1$ is $S$-equivalent to  $G^{\oplus\,m+1} \oplus S(G)[-2]$.  
If $\hom(B_1, S(G)[-2]) \neq 0$, we continue the above argument to get a triangle $B_{n} \rightarrow E \rightarrow E_{n}$ such that $E_{n}$ is $S$-equivalent to the sum $G^{\oplus\, m+1} \oplus S(G)^{\oplus\, n }[-2]$ and $\hom(B_{n}, S(G)[-2]) = 0$.
If $B_{n} = 0$, then we are in case~\eqref{i} and if $B_{n} \neq 0$, we continue the above process to obtain an exact triangle $E_1 \rightarrow E \rightarrow E_2$ such that $E_2$ is $S$-equivalent to  $G^{\oplus\, k_1} \oplus S(G)^{\oplus\, k_2 }[-2]$ and 
\begin{equation*}
    \hom(E_1, G) = 0 = 
    \hom(E_1, S(G)[-2]). 
\end{equation*}
This implies that 
\begin{equation*}
    \hom(E_1, E_2) = 0. 
\end{equation*}
Moreover, we have $ \hom(E_2, E_1[2]) = \hom(E_1, S(E_2)[-2])$, and $S(E_2)[-2]$ is $S$-equivalent to $S(G)[-2]^{\oplus k_1} \oplus S^2(G)^{\oplus k_2}[-4] = S(G)[-2]^{\oplus k_1} \oplus G^{\oplus k_2}$; thus 
$\hom(E_1, S(E_{2})[-2]) = 0$ as claimed in case~\eqref{ii}. 
\end{proof}

Now assume our triangulated category $\cT$ satisfies  condition~\eqref{C3}. If $k/r <2$, fix an object $Q$ satisfying \eqref{minimal}. By Proposition~\ref{prop-PY}\eqref{minimal-stable}, both $Q$ and $S(Q)[-2]$ are stable. Moreover, parts \eqref{a} and \eqref{min-phase} of the proposition imply 
\begin{equation*}
\phi(Q) -1 < \phi(S(Q)[-2]) < \phi(Q). 
\end{equation*}
Hence, up to $\widetilde{\text{GL}}_2^+(\R)$-action, we may assume $Q$ is stable of phase $1$ and $S(Q)[-2]$ is of phase~$\frac{1}{2}$. 

If $k= 2r =4$, we fix two objects $Q_1$ and $Q_2$ described in condition~\eqref{C3}. By Proposition~\ref{prop-PY}\eqref{minimal-stable}, they are $\sigma$-stable. Since $\hom(Q_2, Q_1) \neq 0$, we obtain $\phi(Q_2) < \phi(Q_1)$, and since $\hom(Q_1, Q_2[1]) \neq 0$, we get $\phi(Q_1) -1 < \phi(Q_2)$. Thus up to $\widetilde{\text{GL}}_2^+(\R)$-action, we may assume $Q_1$ is of phase $1$ and $Q_2$ is of phase $1/2$. From now on, when $k/r <2$, we define 
\begin{equation*}
    Q_1  \coloneqq Q \quad \text{and} \quad Q_2 \coloneqq S(Q)[-2].
\end{equation*}
As a consequence, we have the following. 

\begin{Lem}\label{lem-central charge}
Let $\sigma = (Z, \cA)$ be an $S$-invariant stability condition on $\cT$. Up to $\widetilde{\emph{GL}}_2^+(\R)$-action, we may assume $Z(Q_1) =-1$ and $Z(Q_2) = i$. 
\end{Lem}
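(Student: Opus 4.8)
The plan is to reduce everything to a normalization of the central charge by a positive diagonal element of $\widetilde{\text{GL}}_2^+(\R)$, once the phases of $Q_1$ and $Q_2$ are pinned down. First I would invoke the discussion immediately preceding the statement: after acting by a suitable element of $\widetilde{\text{GL}}_2^+(\R)$ we may assume that $Q_1$ is $\sigma$-stable of phase $1$ and $Q_2$ is $\sigma$-stable of phase $1/2$ (this uses Proposition \ref{prop-PY}\eqref{minimal-stable} to get stability, and parts \eqref{a} and \eqref{min-phase} to fix the relative phases). By Definition \ref{def_slicing} this means precisely that $Z(Q_1)$ lies on the negative real axis, say $Z(Q_1) = a$ with $a \in \R_{<0}$, while $Z(Q_2)$ lies on the positive imaginary axis, say $Z(Q_2) = ib$ with $b \in \R_{>0}$; both are nonzero since $\sigma$ is a genuine stability condition.

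Next I would view $Z(Q_1)$ and $Z(Q_2)$ as vectors in $\C \cong \R^2$. They are $\R$-linearly independent, one being a nonzero real multiple of $(1,0)$ and the other a nonzero real multiple of $(0,1)$. Hence there is a unique $\R$-linear automorphism $N$ of $\C$ with $N(Z(Q_1)) = -1$ and $N(Z(Q_2)) = i$; concretely, $N$ scales the real part by $-1/a > 0$ and the imaginary part by $1/b > 0$. In particular $\det N > 0$, so $N = M^{-1}$ for a unique $M \in \text{GL}_2^+(\R)$. This is exactly the step where the ordering $\phi(Q_2) = 1/2 < 1 = \phi(Q_1)$ is used: it guarantees that the ordered pair $\bigl(Z(Q_1), Z(Q_2)\bigr)$ and the target pair $(-1, i)$ are equioriented, so the normalizing matrix lands in the identity component $\text{GL}_2^+(\R)$ rather than in the other component.

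Finally I would lift $M$ to $\widetilde{\text{GL}}_2^+(\R)$. Since $M$ is a positive diagonal matrix, its action on $S^1 = (\R^2 \setminus \{0\})/\R_{>0}$ fixes the four coordinate directions, hence it admits a lift $g$ fixing the half-integers; set $\tilde g = (g, M)$. Then $\sigma \cdot \tilde g$ has central charge $Z' = M^{-1} \circ Z$, which sends $Q_1 \mapsto -1$ and $Q_2 \mapsto i$, while $g$ preserves the phases $1$ and $1/2$ of $Q_1$ and $Q_2$, so the normalization is consistent with the slicing. The only point requiring genuine care is the orientation check in the middle paragraph; the rest is a direct computation with the explicit diagonal matrix $M = \operatorname{diag}(|a|, b)$.
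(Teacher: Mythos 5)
Your proof is correct and is essentially the argument the paper intends: the lemma is stated there without proof, being regarded as immediate from the preceding normalization $\phi(Q_1)=1$, $\phi(Q_2)=1/2$, which forces $Z(Q_1)\in\R_{<0}$ and $Z(Q_2)\in i\R_{>0}$. Your explicit rescaling by $M=\operatorname{diag}(|a|,b)$, the orientation check, and the lift $\tilde g=(g,M)$ fixing half-integer phases are exactly the standard details being suppressed, so there is nothing to add.
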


\begin{proof}[Proof of Theorem~\ref{thm_uniqueSinv}] 
  Lemma~\ref{lem-central charge} implies that after action by
  $\widetilde{\text{GL}}^+(2, \mathbb{R})$, we can assume $\sigma_1$ and $\sigma_2$ have the same central charge $Z$ given by the bijective group homomorphism
	\begin{equation*}
	Z \colon \mathcal{N}(\cT) \rightarrow \mathbb{C}, \quad Z([Q_1]) = -1, \; Z([Q_2]) = i. 
	\end{equation*}
	By Proposition~\ref{prop-PY}\eqref{min-hom1}, any $\sigma_i$-semistable object $E$ satisfies $\hom^1(E, E) \geq -\ell_{\cT} +1$. We show by induction on $\hom^1(E, E)$ that if $E$ is a $\sigma_1$-(semi)stable object in $P_{\sigma_1}(0, 1]$ with phase $\phi_{\sigma_1}(E)$, then 
\begin{enumerate}[label=(\roman*)]
    \item\label{first} $E$ is $\sigma_2$-(semi)stable, and 
    \item\label{second} $E$ has the same phase as with respect to $\sigma_1$; \textit{i.e.}\ $\phi_{\sigma_2}(E) = \phi_{\sigma_1}(E)$.  
\end{enumerate}

\textbf{Step 1.} (Base of the induction) If $\hom^1(E, E)$ is minimal, \textit{i.e.}\ 
\begin{equation*}
    -\ell_T +1 \leq \hom^1(E, E) < -2\ell_T +2, 
\end{equation*}
Proposition~\ref{prop-PY}\eqref{minimal-stable} implies that $E$ is $\sigma_i$-stable. Thus it remains to show that if $E$ is in the heart $P_{\sigma_1}(0, 1]$ with phase  $\phi_{\sigma_1}(E)$, then $E \in P_{\sigma_2}(0, 1]$ with $\phi_{\sigma_2}(E) = \phi_{\sigma_1}(E)$.  

To do this, note that we only need to show $E \in P_{\sigma_2}(0, 1]$. Namely, in that case, since $\sigma_1$ and $\sigma_2$ have the same central charge, $E$ must have the same phase with respect to them. Assume otherwise, so there is  a non-zero $m \in \Z$ such that $E[2m] \in P_{\sigma_2}(0, 1]$. In fact, since $\sigma_1$ and $\sigma_2$ have the same central charge, an even shift of $E$ lies in the heart for $\sigma_2$. By Lemma~\ref{lem-central charge}, $E$ is not isomorphic to $Q_1$ or $Q_2$ since $Q_1$ and $Q_2$ are in the heart of $\sigma_1$ and $\sigma_2$. We want to show that 
\begin{equation}\label{claim1}
\chi(Q_1, E) =\chi(Q_2, E) = 0.    
\end{equation}
Indeed, since $E[i] \in P_{\sigma_1}(i, i+1]$ and $E[2m] \in P_{\sigma_2}(0, 1]$, we obtain   
\begin{equation*}
\hom(Q_1, E[i]) = \hom(Q_1, E[2m][i-2m]) = 0 \quad \text{when $i \leq \max\{0, 2m\}$}.
\end{equation*}
Proposition~\ref{prop-PY}\eqref{a},\eqref{min-phase} give $S(Q_1)[-2] \in P_{\sigma_i}(0, 1]$; thus
\begin{equation*}
\hom(Q_1, E[i]) = \hom(E[-2 +i], S(Q_1)[-2]) = \hom(E[2m][-2+i -2m] , S(Q)[-2]) = 0
\end{equation*}
if $i > \min \{2 , 2m+2\}$. Hence, in total we obtain $\chi(Q_1, E) = 0$. Similarly, we have 
\begin{equation*}
\hom(Q_2, E[i]) = \hom(Q_2, E[2m][i-2m]) = 0 \quad \text{when $i < \max\{0, 2m\}$}.
\end{equation*}
The above vanishing holds for $i = \max\{0, 2m\}$ if $\phi_{\sigma_1}(E) \in (0, 1/2]$. Moreover, Proposition~\ref{prop-PY}\eqref{a},~\eqref{min-phase} imply that $S(Q_2)[-2] \in P_{\sigma_i}(-1/2, 1/2]$. Thus
\begin{equation*}
    \hom(Q_2, E[i]) = \hom(E[i-2], S(Q_2)[-2]) = \hom(E[2m][i-2-2m], S(Q_2)[-2]) = 0
\end{equation*}
if $i > \min \{2 , 2m+2\}$. Also, the above vanishing holds for $i = \min \{2 , 2m+2\}$ when $\phi_{\sigma_1}(E) \in (1/2, 1]$. Therefore, $\chi(Q_2, E) = 0$, as claimed in \eqref{claim1}.  

Since $Z(Q_1)$ and $Z(Q_2)$ are linearly independent, the classes $[Q_1]$ and $[Q_2]$ are also linearly independent in $\mathcal{N}(\cT)$. Hence \eqref{claim1} implies that $\chi([F], [E]) = 0$ for any $[F] \in \mathcal{N}(\cT)$; thus $[E] = 0$ by the definition of $\mathcal{N}(\cT)$, leading to a contradiction. This completes the proof for the case that $\hom^1(E, E)$ is minimal.

\bigskip

\textbf{Step 2.} Now assume $\hom^1(E, E) \geq -2\ell_{\cT} +2$. There are three possibilities: 
\begin{enumerate}[label=(\roman*),ref=\roman*]
    \item\label{s2-a} $E$ is not $\sigma_2$-semistable;
    \item\label{s2-b} $E$ is strictly $\sigma_2$-semistable; 
    \item\label{s2-c} $E$ is $\sigma_2$-stable. 
\end{enumerate}
First suppose case~\eqref{s2-a} happens; then we show that $E \in P_{\sigma_2}(0, 1]$. If not, consider its HN filtration, and let $E_1$ be the first object of maximum phase and $E_n$ be the last object of minimum phase with respect to $\sigma_2$. Then the weak Mukai lemma \eqref{eq_MukaiLemma} applied to the triangle $E_1 \rightarrow E \rightarrow E/E_1$ and  Proposition~\ref{prop-PY}\eqref{min-hom1} imply $\hom^1(E_1, E_1) < \hom^1(E, E)$. Thus, by the induction assumption, $E_1$ is $\sigma_1$-semistable with $\phi_{\sigma_1}(E_1) = \phi_{\sigma_2}(E_1)$. The existence of a non-zero map $E_1 \rightarrow E$ and the stability of $E$ with respect to $\sigma_1$ imply that   
	\begin{equation}\label{phase-1}
	\phi_{\sigma_2}(E_1) = \phi_{\sigma_1}(E_1) \leq \phi_{\sigma_1}(E) \leq 1. 
	\end{equation}
   Applying the same argument to the last piece in the HN filtration $Q \rightarrow E \rightarrow E_n$, we get
   \begin{equation}\label{phase-2}
   0 < \phi_{\sigma_1}(E) \leq \phi_{\sigma_1}(E_n) = \phi_{\sigma_2}(E_n). 
   \end{equation}
   Therefore, $E \in P_{\sigma_2}(0, 1]$. Moreover, since the stability functions for $\sigma_1$ and $\sigma_2$ are the same, we obtain $\phi_{\sigma_1}(E) = \phi_{\sigma_2}(E)$; thus \eqref{phase-1} implies that $\phi_{\sigma_2}(E_1) \leq \phi_{\sigma_2}(E)$, which is not possible. Thus $E$ is $\sigma_2$-semistable. 
   
   Now suppose case~\eqref{s2-b} happens, so $E$ is strictly $\sigma_2$-semistable. First assume $k/r <2$. Let $\{E_i\}_{i \in I}$ be the Jordan--H\"older filtration of $E$ with respect to $\sigma_2$. By the weak Mukai lemma \eqref{eq_MukaiLemma} and the induction assumption, it follows that the $E_i$ are $\sigma_1$-stable of the same phase as with respect to $\sigma_2$. Thus $E$ is strictly $\sigma_1$-semistable of the same phase as with respect to $\sigma_2$. If $k = 2r =4$, we apply Lemma~\ref{lem-posibilities}. If $E$ is $S$-equivalent to $G^{\oplus k_1} \oplus S(G)[-2]^{\oplus k_2}$ for a $\sigma_2$-stable object $G$ and $k_1+k_2 >1$, then \eqref{hom} gives
   \begin{equation*}
      \hom^1(E, E) \geq -\chi(E, E) +1 = -(k_1+k_2)^2\chi(G, G) +1 > -\chi(G, G) +2 \geq \hom^1(G, G). 
   \end{equation*}
   
   Thus $G$ and $S(G)$ are $\sigma_1$-stable of the same phase. Hence, $E$ is strictly $\sigma_1$-semistable and $\phi_{\sigma_1}(E) = \phi_{\sigma_2}(E)$. If  case~\eqref{ii} of Lemma~\ref{lem-posibilities} happens, then the same argument as in the weak Mukai lemma implies that $\hom^1(E_i, E_i) < \hom^1(E, E)$; thus again the claim follows by the induction assumption.  
   
   Finally, assume case~\eqref{s2-c} happens. If $E$ is strictly $\sigma_1$-semistable, then the same argument as in case~\eqref{s2-b}  implies that $E$ is also strictly $\sigma_2$-semistable. Thus $E$ must be $\sigma_1$-stable, and we only need to show $E$ has the same phase with respect to both $\sigma_1$ and $\sigma_2$. 
   
   We know $E \in P_{\sigma_1}(0, 1]$. Since $E$ is $\sigma_2$-stable, there is an $m \in \Z$ such that $E[2m] \in P_{\sigma_2}(0, 1]$. If $m \neq 0$, then the same argument as in Step 1 implies that $\chi(Q_j, E) = 0$ for $j=1, 2$, and so $[E] = 0$ in $\mathcal{N}(\cT)$, which is not possible. Thus $ m=0$, and $E$ has the same phase with respect to both $\sigma_1$ and $\sigma_2$. 
\end{proof}

As suggested to us by Zhiyu Liu and Shizhuo Zhang, we can slightly relax the condition~\eqref{C3}.

\begin{Lem}\label{lem-relax}
	If\, $k= 2r =4$, we can change the condition~\eqref{C3} to the following:  there are three objects $Q_1, Q_2, Q_2' \in \cT$ satisfying \eqref{minimal} such that:  
	\begin{enumerate}
	    \item\label{lem-relax-a} $Q_2$ and $Q_2'$ have the same class in $\mathcal{N}(\cT)$; 
	    \item\label{lem-relax-b} $Q_1$ is not isomorphic to $Q_2$ or $Q_2'[1]$; 
	    \item\label{lem-relax-c} $\hom(Q_2, Q_1) \neq 0$ and $\hom(Q_1, Q_2'[1]) \neq 0$;  
	    \item\label{lem-relax-d}  $\hom(Q_2', Q_2[3]) = 0$. 
	\end{enumerate}
\end{Lem}

\begin{proof}
Let $\sigma = (Z, \cA)$ be an $S$-invariant stability condition on $\cT$. 
By Proposition~\ref{prop-PY}\eqref{minimal-stable}, we know all three objects $Q_1, Q_2, Q_2'$ are $\sigma$-stable.
Hence, up to $\widetilde{\text{GL}}_2^+(\R)$-action, we may assume $Q_1$ is of phase $1$ and $Z(Q_1) = -1$. If $\phi(Q_2) \in (0, 1]$, then we can assume $Z(Q_2) = i$ and proceed as before. So assume for a contradiction that $Q_2 \notin \cA$. 
 The assumptions~\eqref{lem-relax-b} and~\eqref{lem-relax-c} imply that 
\begin{equation*}
    \phi(Q_2) < \phi(Q_1) = 1. 
\end{equation*}
If $\phi(Q_2) \in (-\infty, -1)$, then $\phi(S(Q_2)) < 1$, and so $\hom(Q_1, S(Q_2)) = 0$, which is not possible by condition~\eqref{lem-relax-c}. This implies that 
\begin{equation}\label{f}
    -1 \leq \phi(Q_2) \leq 0. 
\end{equation}
On the other hand, conditions~\eqref{lem-relax-b} and~\eqref{lem-relax-c} give $1= \phi(Q_1) < \phi(Q_2') +1$. This implies that $1 \leq \phi(Q_2')$ because $Q_2$ and $Q_2'$ have the same class. Since $\hom(Q_2'[1], S(Q_1)) \neq 0$, we obtain 
\begin{equation*}
    1 \leq \phi(Q_2') \leq 2. 
\end{equation*}
Thus $\phi(Q_2') = \phi(Q_2) +2$, which implies  that 
\begin{equation*}
    \hom(Q_2', Q_2[i]) =  0 \quad \text{for $i \leq 1$ and $i \geq 5$}. 
\end{equation*}
Therefore, 
\begin{align*}
 -\hom(Q_2', Q_2[3]) + \hom(Q_2', Q_2[2]) + \hom(Q_2', Q_2[4]) =  \chi(Q_2', Q_2) = \chi(Q_2, Q_2) < 0, 
\end{align*}
and so $\hom(Q_2', Q_2[3]) \neq 0$, which is in contradiction to  assumption~\eqref{lem-relax-d}. 
\end{proof}

\begin{Rem} \label{rem_applicationto Fano3}
Theorem~\ref{thm_uniqueSinv} can be applied to prove the uniqueness of Serre-invariant stability conditions on the Kuznetsov component of certain Fano threefolds of Picard rank $1$ and index~$2$ (see~\cite[Theorem 2.3]{kuznetsov:fano-threefolds} for the classification in terms of the degree). More precisely, as explained in Corollary~\ref{cor-s-invariant-ku} and Remark~\ref{sec:quarticdoublesolid}, it applies to the Kuznetsov component of a cubic threefold (degree $3$ case) and to the quartic double solid (degree $2$ case), respectively. Theorem~\ref{thm_uniqueSinv} (via the relaxed condition in Lemma~\ref{lem-relax}) also applies to the Kuznetsov component of a Gushel--Mukai threefold, which is a Fano threefold of Picard rank $1$, index $1$ and genus~$6$, by~\cite[Corollary 4.5]{PR}. Recently, the uniqueness of Serre-invariant stability conditions for the above examples has been independently proved in~\cite[Theorem 4.25]{zhang-hochschild:} and~\cite[Theorem 4.24]{Zhang}. But we believe that the general criterion given in Theorem~\ref{thm_uniqueSinv} could be potentially applied in other interesting geometric examples. For instance, in the next section we explain an application to the Kuznetsov component of a very general cubic fourfold.
\end{Rem}

\subsection*{Aside: Very general cubic fourfolds} \label{subsec_cubicfourfolds}
If the Serre functor of $\cT$ is $S=[2]$, \textit{i.e.}\ $\cT$ is a $2$-Calabi-Yau category, then clearly any stability condition is Serre invariant. An example of such a category is the Kuznetsov component of a cubic  fourfold. In this section we apply Theorem~\ref{thm_uniqueSinv} to show that if $X$ is a very general cubic fourfold, then there is a unique $\widetilde{\text{GL}}^+(2, \mathbb{R})$-orbit of stability conditions on the Kuznetsov component of $X$.

Recall that the bounded derived category of a cubic fourfold $X$ has a semiorthogonal decomposition of the form
$$\Db(X)= \langle \ku(X), \cO_X, \cO_X(1), \cO_X(2) \rangle$$
by~\cite{Kuz_cubic4}, where $\ku(X)=\langle \cO_X, \cO_X(1), \cO_X(2) \rangle^{\perp}$. The Serre functor of $\ku(X)$ satisfies $\cS=[2]$. By~\cite{AT}, the numerical Grothendieck group of $X$ contains two classes $\lambda_1$ and $\lambda_2$ spanning a rank $2$ $A_2$-lattice 
$$\langle \lambda_1, \lambda_2 \rangle \cong
\begin{pmatrix}
\hphantom{{-}}2 & -1 \\
-1 & \hphantom{{-}}2
\end{pmatrix}$$
with respect to the intersection pairing $(-,-):=-\chi(-,-)$. We say that $X$ is very general if $\cN(\ku(X))=\langle \lambda_1, \lambda_2 \rangle$. 

\begin{Cor}
Let $X$ be a very general cubic fourfold. If $\sigma_1$, $\sigma_2$ are stability conditions on $\ku(X)$, then there exists a $\tilde{g} \in \widetilde{\emph{GL}}^+(2, \mathbb{R})$ such that $\sigma_1= \sigma_2 \cdot \tilde{g}$. 
\end{Cor}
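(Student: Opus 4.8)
The plan is to verify that the Kuznetsov component $\ku(X)$ of a very general cubic fourfold satisfies hypotheses \ref{C1}, \ref{C2}, \ref{C3}, and then invoke Theorem \ref{thm_uniqueSinv} directly. The setup is the borderline case $k = 2r = 4$, since the Serre functor satisfies $S = [2]$, so $S^2 = [4]$ gives $r = 2$ and $k = 4$. This means condition \ref{C1} holds, and I will use the relaxed version of \ref{C3} from Lemma \ref{lem-relax} rather than the original.

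For \ref{C2}, the very general assumption gives $\cN(\ku(X)) = \langle \lambda_1, \lambda_2 \rangle$, which has rank $2$. The intersection form is the $A_2$-lattice, so with $(-,-) = -\chi(-,-)$ we have $\chi(\lambda_i, \lambda_i) = -2$ and $\chi(\lambda_1, \lambda_2) = \chi(\lambda_2,\lambda_1) = 1$. First I would compute $\ell_{\cT} = \max\{\chi(v,v) : 0 \neq v\}$ over the lattice: writing $v = a\lambda_1 + b\lambda_2$, one has $\chi(v,v) = -2(a^2 - ab + b^2)$, and since $a^2 - ab + b^2 \geq 1$ for nonzero integral $(a,b)$, the maximum is $\ell_{\cT} = -2 < 0$, attained at the classes $\pm\lambda_1, \pm\lambda_2, \pm(\lambda_1 - \lambda_2)$. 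This confirms \ref{C2}.

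For the relaxed \ref{C3}, I need three objects $Q_1, Q_2, Q_2'$ each satisfying the minimality bound $-\ell_{\cT} + 1 \leq \hom^1(Q,Q) < -2\ell_{\cT} + 2$, i.e. $3 \leq \hom^1 < 6$, together with the extra conditions of Lemma \ref{lem-relax}. The natural candidates are the objects realizing the lattice classes $\lambda_1, \lambda_2$, which in the literature on cubic fourfolds (e.g. \cite{AT}) come from the restriction of tautological or twisted structure sheaves; one would take $Q_1, Q_2, Q_2'$ to be spherical-type objects with classes $\lambda_1$, $\lambda_2$, $\lambda_2$ respectively (so that $Q_2, Q_2'$ share a class as required by condition (1)). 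Since $S = [2]$ is a $2$-Calabi--Yau Serre functor, $\chi$ is symmetric, and for such an object $\hom(Q,Q) = 1$ with $\hom(Q, Q[i]) = 0$ for $i \neq 0, 2$, giving $\hom^1(Q,Q) = -\chi(Q,Q) + \hom(Q,Q) + \hom(Q,Q[2]) = 2 + 1 + 1 = 4$, which lies in $[3, 6)$ as needed. The non-isomorphism, the nonvanishing $\hom(Q_2, Q_1) \neq 0$, $\hom(Q_1, Q_2'[1]) \neq 0$, and the vanishing $\hom(Q_2', Q_2[3]) = 0$ should follow from the $A_2$-intersection data: $\chi(\lambda_2, \lambda_1) = 1 \neq 0$ forces some $\Hom$-group to be nonzero, and Serre duality $\hom(Q_2', Q_2[3]) = \hom(Q_2, Q_2'[-1]) = 0$ (again using $S = [2]$) handles condition (4).

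The main obstacle I anticipate is not the numerical verification but exhibiting concrete objects $Q_1, Q_2, Q_2'$ in $\ku(X)$ that genuinely realize the lattice classes and satisfy the precise $\Hom$-nonvanishing and vanishing constraints, rather than merely the Euler-characteristic constraints; passing from $\chi \neq 0$ to the sharper statements $\hom(Q_2,Q_1) \neq 0$ and $\hom(Q_1, Q_2'[1]) \neq 0$ requires controlling the individual cohomology groups, which depends on the explicit geometry of the spherical objects from \cite{AT}. Once these objects are produced, Lemma \ref{lem-relax} replaces the original \ref{C3}, and Theorem \ref{thm_uniqueSinv} applies verbatim to conclude that any two stability conditions $\sigma_1, \sigma_2$ on $\ku(X)$ differ by the action of some $\tilde{g} \in \widetilde{\emph{GL}}^+(2,\mathbb{R})$, which is exactly the assertion of the Corollary.
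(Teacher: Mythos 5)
Your outline coincides with the paper's strategy---verify \ref{C1} and \ref{C2}, produce objects for \ref{C3}, then quote Theorem \ref{thm_uniqueSinv}---and your verifications of \ref{C1} and \ref{C2} are exactly the paper's. But the verification of \ref{C3}, which you yourself flag as the main obstacle, is genuinely missing, and your proposed repair would not work as stated. You take $Q_1, Q_2, Q_2'$ of classes $\lambda_1, \lambda_2, \lambda_2$ and hope to extract $\hom(Q_2,Q_1)\neq 0$ and $\hom(Q_1,Q_2'[1])\neq 0$ from the $A_2$-pairing. With these classes $\chi(Q_1,Q_2')=\chi(\lambda_1,\lambda_2)=1>0$, and even granting that all $\Hom$'s are concentrated in degrees $0,1,2$, the identity $\hom^0-\hom^1+\hom^2=1$ is perfectly consistent with $\hom^1(Q_1,Q_2')=0$: a positive Euler characteristic can never force the odd-degree nonvanishing you need, and likewise it cannot single out the required direction $\hom(Q_2,Q_1)\neq 0$ rather than $\hom(Q_1,Q_2)\neq 0$. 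The paper's choice of classes is different precisely for this reason: it takes $Q_2=F_\ell$ of class $\lambda_1$ and $Q_1=P_{\ell'}$ of class $\lambda_1+\lambda_2$ (explicit objects attached to lines $\ell,\ell'\subset X$), so that $\chi(P_{\ell'},F_\ell)=-(\lambda_1+\lambda_2,\lambda_1)=-1<0$; then stability of $F_\ell$ and $P_{\ell'}$ with respect to \emph{every} stability condition (\cite[Lemma A.5]{bayer:stability-conditions-kuznetsov-component}) together with the phase inequalities of \cite[(6.3.1)]{paolo_twistedcubics} and Serre duality kill $\hom(P_{\ell'},F_\ell[i])$ for $i\neq 0,1,2$, whence $\chi=-1$ forces $\hom^1(P_{\ell'},F_\ell)\geq 1$. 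The degree-zero nonvanishing $\hom(F_\ell,P_{\ell'})\neq 0$ is not an Euler-characteristic statement at all: the paper computes it geometrically for two lines meeting in a point, reducing to $\hom(\cO_\ell(1),\cO_{\ell'}[1])=1$ via the local-to-global spectral sequence. Note also that the paper verifies the original condition \ref{C3} with two objects; the relaxed Lemma \ref{lem-relax} you invoke is not needed in this case.

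A smaller but real omission: Theorem \ref{thm_uniqueSinv} only identifies \emph{Serre-invariant} stability conditions, whereas the Corollary concerns arbitrary $\sigma_1,\sigma_2$. You use $S=[2]$ only to check \ref{C1}; you must also invoke the observation (Remark \ref{rmk_CY2}) that in a $2$-Calabi--Yau category every stability condition is automatically Serre-invariant, which is the one-line bridge making the theorem applicable to all stability conditions on $\ku(X)$.
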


\begin{proof}
The Serre functor of $\ku(X)$ satisfies $\cS^2=[4]$, as required in~\eqref{C1}.
Since $X$ is very general, we have
$$\cN(\ku(X)) \cong \langle \lambda_1, \lambda_2 \rangle \cong \begin{pmatrix}
\hphantom{{-}}2 & -1 \\
-1 & \hphantom{{-}}2
\end{pmatrix}.
$$
For every $v \in \cN(\ku(X))$, we have $v=a \lambda_1 + b \lambda_2$. Thus 
$$v^2=2 a^2 + 2b^2 -2ab \geq 2.$$
It follows that $\chi(v, v)=-v^2 \leq -2$, which implies that
\[\ell_{\ku(X)}:= \text{max}\lbrace \chi(v,v): v \neq 0 \in \cN(\ku(X)) \rbrace= -2 < 0,\] as required in~\eqref{C2}.

We now check~\eqref{C3}. Recall that given a line $\ell \subset X$, we can define the objects $F_\ell$, $P_\ell \in \ku(X)$ sitting in
$$0 \to F_\ell \to \cO_X^{\oplus 4} \to \cI_\ell(1) \to 0,$$
$$\cO_X(-1)[1] \to P_\ell \to \cI_\ell.$$
Their classes in $\cN(\ku(X))$ are $v(F_\ell)=\lambda_1$ and $v(P_\ell)=\lambda_1 + \lambda_2$ as computed in~\cite[Section 6.3]{paolo_twistedcubics}. By~\cite[Lemma A.5]{bayer:stability-conditions-kuznetsov-component}, they are stable with respect to every stability condition on $\ku(X)$, and by~\cite[(6.3.1)]{paolo_twistedcubics}, their phases with respect to the stability conditions constructed in~\cite{bayer:stability-conditions-kuznetsov-component} satisfy
\begin{equation} \label{eq_phases}
\phi(F_\ell) < \phi(P_\ell) < \phi(F_\ell)+1.    
\end{equation}
Since $\chi(F_\ell, F_\ell)=-2=2- \hom^1(F_\ell, F_\ell)$ and similarly for $P_\ell$, we have
$$3 \leq \hom^1(F_\ell,F_\ell)=\hom^1(P_\ell,P_\ell)=4 < 6.$$
For two lines $\ell$ and $\ell'$, we have
\begin{align*}
\hom(F_\ell, P_{\ell'}) &= \hom(\cI_\ell(1), P_{\ell'}[1])\\
&=\hom(\cI_\ell(1), \cI_{\ell'}[1])\\
&=\hom(\cI_\ell(1), \cO_{\ell'})\\
&=\hom(\cO_\ell(1), \cO_{\ell'}[1]).
\end{align*}
The latter is equal to $0$ if $\ell \cap \ell' = \emptyset$. Assume $\ell$ and $\ell'$ intersect in a point. Then by the local-to-global spectral sequence, we only have to consider $H^0(\mathcal{E}xt^1(\cO_\ell(1), \cO_{\ell'}))$. Since $\mathcal{E}xt^1(\cO_\ell(1), \cO_{\ell'})$ is supported on the intersection point, we conclude that \smash{$\hom(\cO_\ell(1), \cO_{\ell'}[1])=1$} and thus $\hom(F_\ell, P_{\ell'}) =1 \neq 0$ if $\ell$ and $\ell'$ intersect in a point.

We know $\chi(P_{\ell'}, F_\ell)=-(\lambda_1+\lambda_2, \lambda_1)=-1$. By \eqref{eq_phases} and Serre duality,  $\hom(P_{\ell'}, F_\ell[i]) = 0$ for $i \neq 0, 1, 2$; hence
$$-1 =\chi(P_{\ell'}, F_\ell)= \hom(P_{\ell'}, F_\ell)- \hom^1(P_{\ell'}, F_\ell)+\hom^2(P_{\ell'}, F_\ell),$$
which implies $\hom(P_{\ell'}, F_\ell[1]) \geq 1$ for every pair of lines $\ell$, $\ell'$. Thus~\eqref{C3} holds if we set $Q_1 = P_{\ell'}$ and $Q_2 = F_{\ell}$.
\end{proof}

\section{Kuznetsov component of cubic threefolds} \label{sec_kuzcubicthreefold}
From now on, we assume $X$ is a smooth complex
cubic threefold, and $\cO_X(H)$ denotes the corresponding very ample line bundle. 
\subsection{Kuznetsov component}
The Kuznetsov component $\ku(X)$ is the right-orthogonal complement of the exceptional collection $\cO_X, \cO_X(H)$ in $\Db(X)$ 
sitting in the semiorthogonal decomposition
$$\Db(X)= \langle \ku(X), \cO_X, \cO_X(H) \rangle$$
(see~\cite{kuznetsov-derived-category-cubic-3folds-V14}). By~\cite[Proposition  2.7]{macri:categorical-invarinat-cubic-threefolds}, the numerical Grothendieck group $\mathcal{N}(\ku(X))$ of $\ku(X)$ is a rank $2$ lattice
\begin{equation*}
  \mathcal{N}(\ku(X)) = \Z\, [\cI_{\ell}] \oplus \Z\, [\cS(\cI_{\ell})],
\end{equation*}
where $\cI_{\ell}$ is the ideal sheaf of a line $\ell$ in $X$ and $\mathcal{S}$ denotes the Serre functor of $\ku(X)$. With respect to this basis, the Euler form $\chi\_{\ku(X)}(-, -)$ on $\mathcal{N}(\ku(X))$ is represented by
\begin{equation}
\label{eq_eulerform}
\begin{pmatrix}
 -1 & -1\\\hphantom{{-}}0 & -1   
\end{pmatrix}.
\end{equation}
Note that by~\cite[Lemma 4.1, Lemma 4.2]{kuznetsov-derived-category-cubic-3folds-V14}, the functor
$$\mathrm{O} \colon \Db(X) \to \Db(X), \quad \mathrm{O}(-)=L_{\cO_X}(- \otimes \cO_X(H))[-1]$$
satisfies $\mathrm{O}|_{\ku(X)}^2 = \cS^{-1}[1]$ and  $\mathrm{O}|_{\ku(X)}^3 \cong [-1]$. Thus 
	\begin{equation*}
	\cS[-3] = \cS^{-2}[2] = \mathrm{O}|_{\ku(X)}^4 = \mathrm{O}|_{\ku(X)}[-1],
	\end{equation*}	
	which implies that 
	\begin{equation*}
	\cS = \mathrm{O}|_{\ku(X)}[2] =  L_{\cO_X}(- \otimes \cO_X(H))[1]. 
	\end{equation*}

\subsection{Stability conditions on $\ku(X)$} \label{subsection-kuz-stability}

Stability conditions on $\ku(X)$ have been first constructed in~\cite{macri:categorical-invarinat-cubic-threefolds} and more recently in~\cite{bayer:stability-conditions-kuznetsov-component}. We recall here the definition of the latter, while the former will be reviewed in Section~\ref{sec-inducedstabcond}.

For $\beta \in \R$, denote by $\Coh^\beta(X)$ the heart of a bounded t-structure obtained by tilting $\Coh(X)$ with respect to slope stability at slope $\mu_H=\beta$. By~\cite[Proposition 2.12]{bayer:stability-conditions-kuznetsov-component}, for $\alpha \in \R_{>0}$, the pair $\sigma_{\alpha,\beta}=(\Coh^\beta(X), Z_{\alpha,\beta})$ defines a weak stability condition on $\Db(X)$, known as tilt stability, with respect to the lattice $\Lambda \cong \Z^3$ generated by the elements of the form $(H^3\ch_0(E), H^2\ch_1(E), H\ch_2(E)) \in \Q^3$ for $E \in \Db(X)$ (see Section~\ref{sec-tiltstab}). Here 
$$Z_{\alpha, \beta}
\coloneqq \frac{1}{2}\alpha^2 H^{3}\ch_{0}^{\beta}-H\ch_{2}^{\beta}
+i H^{2}\ch_{1}^{\beta}$$
with $\ch^\beta \coloneqq e^{-\beta}\ch=(\ch_0, \ch_1-\beta H \ch_0, \ch_2 -\beta H \ch_1 +\frac{1}{2}\beta^2 H^2\ch_0, \dots)$. Its associated slope is
$$\mu_{\alpha,\beta}(E)=-\frac{\Re Z_{\alpha,\beta}(E)}{\Im Z_{\alpha,\beta}(E)} \quad \text{for }\Im Z_{\alpha,\beta}(E) \neq 0,
$$
where $\Re[-]$ and $\Im[-]$ are the real and imaginary parts. 

Now consider the tilted heart $\Coh^0_{\alpha, \beta}(X) = \langle \mathcal{F}_{\alpha, \beta}[1] , \cT_{\alpha, \beta} \rangle$, where $\mathcal{F}_{\alpha, \beta}$ (resp.\ $\cT_{\alpha, \beta}$) is the subcategory of objects in $\Coh^{\beta}(X)$ with $\mu^+_{\alpha, \beta} \leq 0$ (resp.\ $\mu^-_{\alpha, \beta} > 0$). 
By~\cite[Proposition~2.14]{bayer:stability-conditions-kuznetsov-component}, the pair $\sigma^0_{\alpha, \beta} = \left(\Coh^0_{\alpha, \beta}(X) , -iZ_{\alpha, \beta}\right)$ is a weak stability condition on $\Db(X)$. 

Sheaves supported in codimension 3 are the only objects in the heart $\Coh^{\beta}(X)$ whose central charge $Z_{\alpha, \beta}$ vanishes. We denote by $\Coh_0(X)$ the category of sheaves on $X$ of codimension 3.    

\begin{Prop}\label{prop-rotated-stability}
Any $\sigma^0_{\alpha, \beta}$-$($semi$\,)$stable object $E \in \Coh^0_{\alpha, \beta}(X)$ is $\sigma_{\alpha, \beta}$-$($semi$\,)$stable if it does not lie in an exact triangle of the form
\begin{equation*}
F[1] \rightarrow E \rightarrow T,     
\end{equation*}
where $F \in \mathcal{F}_{\alpha, \beta}$ and $T \in \Coh_0(X)$. Conversely, take a $\sigma_{\alpha, \beta}$-$($semi$\,)$stable object $E$ such that either
\begin{enumerate}
    \item\label{prop-rot-stab-a} $E \in \cT_{\alpha, \beta}$ and $\Hom(\Coh_0(X), E) = 0$, or
    \item\label{prop-rot-stab-b} $E \in \mathcal{F}_{\alpha, \beta}$ and $\Hom(\Coh_0(X), E[1]) = 0$. 
\end{enumerate}
Then $E$ is $\sigma^0_{\alpha, \beta}$-$($semi$\,)$stable.   
\end{Prop}

\begin{proof}
First assume $E \in \Coh^0_{\alpha, \beta}$ is $\sigma^0_{\alpha, \beta}$-(semi)stable, so by definition it lies in an exact triangle 
\begin{equation}\label{exact}
    F[1] \rightarrow E \rightarrow T
\end{equation}
for some $F \in \mathcal{F}_{\alpha, \beta}$ and $T \in \mathcal{T}_{\alpha, \beta}$. If $Z_{\alpha, \beta}(T) \neq 0$, \textit{i.e.}\ $T \notin \Coh_0(X)$, we have 
\begin{equation*}
    \Re[-iZ_{\alpha, \beta}(T)] \geq 0 \quad \text{and} \quad \Re[-iZ_{\alpha, \beta}(F[1])] < 0. 
\end{equation*}
This shows that the phase of $F[1]$ is bigger than that of $T$ with respect to $\sigma^0_{\alpha, \beta}$. Thus the exact triangle \eqref{exact} implies that one of the following cases happens: 
\begin{enumerate}[label=(\roman*),ref=\roman*]
    \item\label{p1}  $E = T \in \mathcal{T}_{\alpha, \beta}$; 
    \item\label{p2} $E = F[1] \in \mathcal{F}_{\alpha, \beta}[1]$; or 
    \item\label{p3} $T \in \Coh_0(X)$. 
\end{enumerate}
Case~\eqref{p3} is excluded in the statement, so we may assume the first two cases happen. We only consider the case $E \in \mathcal{T}_{\alpha, \beta}$; the other one can be shown by a similar argument. Suppose for a contradiction that $E$ is not $\sigma_{\alpha, \beta}$-(semi)stable, and let 
\begin{equation}\label{exact-2}
E_1 \rightarrow E \rightarrow E_2    
\end{equation}
be a destabilising sequence in $\Coh^{\beta}(X)$. We may assume $E_1$ is $\sigma_{\alpha, \beta}$-semistable; thus  
\begin{equation*}
    \mu_{\alpha, \beta}^{-}(E_1) =\mu_{\alpha, \beta}(E_1) \geq \mu_{\alpha, \beta}(E_2) \geq \mu_{\alpha, \beta}^{-}(E_2) \geq \mu_{\alpha, \beta}^{-}(E)  \geq 0. 
\end{equation*}
The right inequality follows from the definition of $\mathcal{T}_{\alpha, \beta}$. Thus both $E_1$ and $E_2$ lie in $\mathcal{T}_{\alpha, \beta}$; hence \eqref{exact-2} is also a destabilising sequence for $E$ in $\Coh^0_{\alpha, \beta}$ with respect to $\sigma^0_{\alpha, \beta}$, which leads to a contradiction.   

\bigskip

Now assume $E \in \mathcal{T}_{\alpha, \beta}$ is $\sigma_{\alpha, \beta}$-(semi)stable and $\Hom(\Coh_0(X), E) = 0$. In the rest of the argument, all $T_i$ lie in $\mathcal{T}_{\alpha, \beta}$ and all $F_i$ lie in $\mathcal{F}_{\alpha, \beta}$. Suppose $E_1 \rightarrow E \rightarrow E_2$ is a destabilising sequence with respect to $\sigma^0_{\alpha, \beta}$. We can assume $E_2$ is $\sigma^0_{\alpha, \beta}$-semistable. Thus by the above argument, we have either \begin{inparaenum}\item $E_2 \in \mathcal{F}_{\alpha, \beta}[1]$, or \item $E_2 \in \mathcal{T}_{\alpha, \beta}$, or \item $E_2$ lies in an exact triangle $F_2[1] \rightarrow E_2 \rightarrow T_2$, where $T_2 \in \Coh_0(X)$. \end{inparaenum}

Since $E \in \mathcal{T}_{\alpha, \beta}$, the phase of $E$ with respect to $\sigma^0_{\alpha, \beta}$ is less than or equal to $\frac{1}{2}$. But we know the phase of objects in  $\mathcal{F}_{\alpha, \beta}[1]$ is bigger than $\frac{1}{2}$; thus case~\eqref{p1} cannot happen. In  case~\eqref{p3} we know the phase of $E_2$ is equal to the phase of $F_2[1]$, so again this case cannot happen. Hence we may assume $E_2 \in \mathcal{T}_{\alpha, \beta}$. 

By definition, we know $E_1$ lies in an exact triangle $F_1[1] \rightarrow E_1 \rightarrow T_1$. But the composition of the injections $F_1[1] \rightarrow E_1 \rightarrow E$ is zero because $\hom(\mathcal{F}_{\alpha, \beta}[1], \mathcal{T}_{\alpha, \beta}) = 0$. Thus $E_1 \in \mathcal{T}_{\alpha, \beta}$, and so the destabilising sequence is an exact sequence in the original heart $\Coh^{\beta}(X)$ which (semi)destabilises $E$ with respect to $\sigma_{\alpha, \beta}$, leading to a contradiction. 

\bigskip

Finally, suppose $E \in \mathcal{F}_{\alpha, \beta}$ is $\sigma_{\alpha, \beta}$-(semi)stable and we have $\hom(\Coh_0(X), E[1]) = 0$. Let $E_1 \rightarrow E[1] \rightarrow E_2$ be a destabilising sequence with respect to $\sigma^0_{\alpha, \beta}$. We may assume $E_1$ is \mbox{$\sigma^0_{\alpha, \beta}$-semistable.} We know $E_1 \notin \Coh_0(X)$, and the phase of objects in $\mathcal{T}_{\alpha, \beta} \setminus \Coh_0(X)$ with respect to $\sigma_{\alpha, \beta}^0$ is less than the phase of objects in $\mathcal{F}_{\alpha, \beta}[1]$. Thus the same argument as in case~\eqref{prop-rot-stab-a} implies that $E_1$ lies in an exact triangle $F_1[1] \rightarrow E_1 \rightarrow T_1$, where $T_1 \in \Coh_0(X)$ and $F_1 \neq 0$. By definition, $E_2$ lies in an exact triangle $F_2[1] \rightarrow E_2 \rightarrow T_2$. Since $\hom(\mathcal{F}_{\alpha, \beta}[1] , \mathcal{T}_{\alpha, \beta}) = 0$, the composition of surjections $E[1] \rightarrow E_2 \rightarrow T_2$ is zero. Thus $T_2 = 0$ and $E_2 = F_2[1]$. Taking cohomology from the destabilising sequence with respect to the heart $\Coh^{\beta}(X)$ gives the long exact sequence 
\begin{equation*}
    0 \rightarrow F_1 \rightarrow E \rightarrow F_2 \rightarrow T_1 \rightarrow 0.
\end{equation*}
Thus $F_1$ is a subobject of $E$ in $\Coh^{\beta}(X)$. We know the phase of $E_1$ is equal to the phase of $F_1[1]$ with respect to $\sigma^0_{\alpha, \beta}$, and it is bigger than or equal to the phase of $E[1]$. This implies that the phase of $F_1$ is bigger than or equal to the phase of $E$ with respect to $\sigma_{\alpha, \beta
}$, leading to a contradiction. 
\end{proof}

By restricting weak stability conditions $\sigma^0_{\alpha, \beta}$ to the Kuznetsov component $\ku(X)$, we obtain a stability condition on it.   

\begin{Thm}[\protect{\cite[Theorem 6.8]{bayer:stability-conditions-kuznetsov-component}, \cite[Theorem 3.3 and Corollary 5.5]{pertusi:some-remarks-fano-threefolds-index-two}}]
\label{thm_stabcondinduced}
For every $(\alpha,\beta)$ in the set 
\begin{equation}\label{setV}
    V \coloneqq \left\{ (\alpha,\beta) \in \R_{>0} \times \R : -\frac{1}{2} \leq \beta < 0, \alpha < -\beta, \text{ or } -1 < \beta <-\frac{1}{2}, \alpha \leq 1+\beta \right\},
\end{equation}
the pair $\sigma(\alpha,\beta)=(\cA(\alpha,\beta), Z(\alpha,\beta))$ is a $\cS$-invariant stability condition on $\ku(X)$, where
\begin{equation*}
    \cA(\alpha, \beta):=\Coh_{\alpha, \beta}^{0}(X)\cap \ku(X) \quad \text{and} \quad
    Z(\alpha, \beta) \coloneqq -i Z_{\alpha, \beta}|_{\ku(X)}.
\end{equation*}
\end{Thm}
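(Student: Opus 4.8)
The statement splits into two assertions: that $\sigma(\alpha,\beta)$ is a Bridgeland stability condition on $\ku(X)$, and that it is $\cS$-invariant. For the first I would realise $\sigma(\alpha,\beta)$ as an \emph{induced} stability condition and invoke Proposition \ref{prop_inducstab}, applied to $\Db(X)=\langle\ku(X),\cO_X,\cO_X(H)\rangle$ with $\cD=\ku(X)$, the exceptional pair $(\cO_X,\cO_X(H))$, and the ambient weak stability condition $\sigma^0_{\alpha,\beta}=(\Coh^0_{\alpha,\beta}(X),-iZ_{\alpha,\beta})$ of Proposition \ref{prop-rotated-stability}. Everything then reduces to checking the four hypotheses: (1) $\cO_X,\cO_X(H)\in\Coh^0_{\alpha,\beta}(X)$; (2) $S(\cO_X),S(\cO_X(H))\in\Coh^0_{\alpha,\beta}(X)[1]$, where $S=-\otimes\cO_X(-2H)[3]$ is the Serre functor of $\Db(X)$ (since $\omega_X=\cO_X(-2H)$); (3) $Z_{\alpha,\beta}(\cO_X),Z_{\alpha,\beta}(\cO_X(H))\neq0$; and (4) no nonzero $F\in\cA(\alpha,\beta)$ has $Z_{\alpha,\beta}(F)=0$. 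The support property of the induced condition is then inherited from $\sigma^0_{\alpha,\beta}$ through the proposition, resting on the Bogomolov-type inequalities recalled in Section \ref{sec_Bridgstabandtilting}.

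The heart of the first part is a Chern-character computation, and it is precisely here that the region $V$ is forced. One finds $Z_{\alpha,\beta}(\cO_X)=\bigl(\tfrac{\alpha^2-\beta^2}{2}-i\beta\bigr)H^3$ and $Z_{\alpha,\beta}(\cO_X(H))=\bigl(\tfrac{\alpha^2-(1-\beta)^2}{2}+i(1-\beta)\bigr)H^3$. For $-1<\beta<0$ both imaginary parts are positive, giving (3), and the $\sigma_{\alpha,\beta}$-slopes are positive exactly when $\alpha<-\beta$ and $\alpha<1-\beta$; since $\cO_X,\cO_X(H)$ are moreover $\sigma_{\alpha,\beta}$-stable on $V$ (a wall computation for line bundles), this places them in $\cT_{\alpha,\beta}\subset\Coh^0_{\alpha,\beta}(X)$. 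For (2), $S(\cO_X)=\cO_X(-2H)[3]$ and $S(\cO_X(H))=\cO_X(-H)[3]$; as these sheaves have $\mu_H$-slope $\le\beta$, their shifts $\cO_X(-2H)[1],\cO_X(-H)[1]$ lie in $\Coh^\beta(X)$, and a slope computation shows they lie in $\mathcal F_{\alpha,\beta}$ — equivalently their triple shifts lie in $\Coh^0_{\alpha,\beta}(X)[1]$ — precisely when $\alpha\le2+\beta$ and $\alpha\le1+\beta$. The binding constraints are therefore $\alpha<-\beta$ and $\alpha\le1+\beta$: the former dominates for $-\tfrac12\le\beta<0$ and the latter for $-1<\beta<-\tfrac12$, which is exactly the two-case description of $V$. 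Finally (4) holds because the only objects of $\Coh^0_{\alpha,\beta}(X)$ annihilated by $Z_{\alpha,\beta}$ are the zero-dimensional sheaves $\Coh_0(X)$, and a nonzero such $F$ has $\Hom(\cO_X,F)\neq0$, so $F\notin\ku(X)$.

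For $\cS$-invariance the plan is to exploit $\cS=\mathrm{O}|_{\ku(X)}[2]$ with $\mathrm{O}=L_{\cO_X}(-\otimes\cO_X(H))[-1]$ and $\mathrm{O}^3=[-1]$, so that $\cS^3=[5]$ and $\ku(X)$ is fractional Calabi--Yau. Producing $\tilde g=(g,M)$ with $\cS\cdot\sigma(\alpha,\beta)=\sigma(\alpha,\beta)\cdot\tilde g$ requires (i) $Z(\alpha,\beta)\circ\cS_*^{-1}=M^{-1}\circ Z(\alpha,\beta)$ on central charges, and (ii) $\cS(\cP_{\sigma(\alpha,\beta)}(\phi))=\cP_{\sigma(\alpha,\beta)}(g(\phi))$ on slicings. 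Step (i) is pure linear algebra: reading $\cS_*$ off \eqref{eq_eulerform} via Serre duality $\chi(a,b)=\chi(b,\cS_* a)$ gives $\cS_*=\bigl(\begin{smallmatrix}0&-1\\1&1\end{smallmatrix}\bigr)$ in the basis $[\cI_\ell],[\cS(\cI_\ell)]$, with $\det\cS_*=1$ (and indeed $\cS_*^3=-I$, matching $\cS^3=[5]$); setting $M:=Z(\alpha,\beta)\circ\cS_*\circ Z(\alpha,\beta)^{-1}$ one gets $\det M=1>0$, so $M\in\mathrm{GL}^+_2(\R)$ and lifts to a unique compatible increasing $g$.

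Step (ii) is the substantive point and, I expect, the main obstacle: one must show $\cS$ carries $\sigma(\alpha,\beta)$-semistable objects to $\sigma(\alpha,\beta)$-semistable objects with phases uniformly shifted by the $g$ of Step (i) — equivalently, that $\cS(\cA(\alpha,\beta))$ is again a tilt of the heart $\cA(\alpha,\beta)$. Here I would track $\cA(\alpha,\beta)=\Coh^0_{\alpha,\beta}(X)\cap\ku(X)$ through the two operations composing $\mathrm{O}$ — the twist $-\otimes\cO_X(H)$ and the left mutation $L_{\cO_X}$ — using the explicit two-step tilt defining $\Coh^0_{\alpha,\beta}(X)$, and use the relation $\cS^3=[5]$ to bound the phase rotation and pin it to the value predicted by (i). This is the content of \cite[Corollary 5.5]{pertusi:some-remarks-fano-threefolds-index-two}; combined with Theorem \ref{thm_uniqueSinv} it moreover shows that all the resulting stability conditions lie in a single $\widetilde{\mathrm{GL}}^+_2(\R)$-orbit.
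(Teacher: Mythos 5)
Your proposal is correct and takes essentially the same route as the paper, which states this theorem as an imported result: your verification of the hypotheses of Proposition \ref{prop_inducstab} for $\sigma^0_{\alpha,\beta}$ and the exceptional pair $\cO_X,\cO_X(H)$ — with the slope computations forcing exactly $\alpha<-\beta$ and $\alpha\le 1+\beta$, hence the region $V$ — reproduces the argument of \cite[Theorem 6.8]{bayer:stability-conditions-kuznetsov-component} as extended in \cite[Theorem 3.3]{pertusi:some-remarks-fano-threefolds-index-two}, while your deferral of the slicing-compatibility step to \cite[Corollary 5.5]{pertusi:some-remarks-fano-threefolds-index-two} matches the paper's own citation for Serre-invariance. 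The computational details you supply (the values of $Z_{\alpha,\beta}(\cO_X)$ and $Z_{\alpha,\beta}(\cO_X(H))$, the membership checks for $\cO_X(-2H)[2]$ and $\cO_X(-H)[2]$ in $\Coh^0_{\alpha,\beta}(X)$, the kernel argument via $\Coh_0(X)$, and the matrix of $\cS_*$ with $\det\cS_*=1$ and $\cS_*^3=-\mathrm{id}$) are all accurate.
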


As an application of Theorem~\ref{thm_uniqueSinv}, we get the following.

\begin{Cor}\label{cor-s-invariant-ku}
All $\cS$-invariant stability conditions on $\ku(X)$ such as $\sigma(\alpha, \beta)$ for $(\alpha, \beta) \in V$ lie in the same orbit with respect to the action of $\widetilde{\emph{GL}}^+(2, \mathbb{R})$. 
\end{Cor}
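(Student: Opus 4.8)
The plan is to deduce Corollary \ref{cor-s-invariant-ku} by verifying that $\ku(X)$ satisfies the three conditions \ref{C1}, \ref{C2}, \ref{C3}, so that Theorem \ref{thm_uniqueSinv} applies directly; once that is done, any two $\cS$-invariant stability conditions lie in the same $\widetilde{\text{GL}}^+_2(\R)$-orbit, and in particular the explicitly constructed $\sigma(\alpha,\beta)$ (which are $\cS$-invariant by Theorem \ref{thm_stabcondinduced}) all lie in a single orbit together with every other $\cS$-invariant stability condition. So the whole content of the proof is the three verifications.

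First I would check \ref{C1}. This is already recorded in the excerpt: from \cite{kuznetsov-derived-category-cubic-3folds-V14} one has $\mathrm{O}|_{\ku(X)}^3 \cong [-1]$ and $\mathrm{O}|_{\ku(X)}^2 = \cS^{-1}[1]$, and these combine to give $\cS = L_{\cO_X}(-\otimes\cO_X(H))[1]$ with $\cS^3 = [5]$ (equivalently $\cS^{-2}=\cS[-3]$ rearranges to a fractional Calabi--Yau relation). Concretely one reads off $S^r = [k]$ with $r=3$, $k=5$, so $k/r = 5/3 \in (0,2)$, which is exactly the hypothesis of \ref{C1} in the range $0 < k/r < 2$. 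Next I would check \ref{C2}: the excerpt states $\cN(\ku(X)) = \Z[\cI_\ell]\oplus\Z[\cS(\cI_\ell)]$ has rank $2$ and gives the Euler form matrix \eqref{eq_eulerform}. Writing $v = a[\cI_\ell]+b[\cS(\cI_\ell)]$ and computing $\chi(v,v)$ from that matrix yields $\chi(v,v) = -a^2 - ab - b^2 = -\tfrac12\big((a+b)^2 + a^2 + b^2\big)$, which is negative for every $(a,b)\neq(0,0)$ and attains its maximum value $-1$ (e.g.\ at $v=[\cI_\ell]$). Hence $\ell_{\ku(X)} = -1 < 0$, as required.

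The substantive step, and the one I expect to be the main obstacle, is \ref{C3}. With $\ell_{\ku(X)} = -1$ the required bound becomes $2 \leq \hom^1(Q,Q) < 4$, i.e.\ $\hom^1(Q,Q)\in\{2,3\}$, and since $k/r = 5/3 < 2$ I only need a single object $Q$. The natural candidate is $Q = \cI_\ell$ itself (viewed as an object of $\ku(X)$), for which $\chi(\cI_\ell,\cI_\ell) = -1$ by \eqref{eq_eulerform}. To turn this into the $\hom^1$ bound I would use the general inequality from Proposition \ref{prop-PY}\eqref{min-hom1}, $\hom^1(Q,Q)\geq -\ell_{\ku(X)}+1 = 2$, for the lower bound, and then establish the upper bound $\hom^1(\cI_\ell,\cI_\ell) \leq 3$ by a direct cohomological computation: using $S=[2]$-type vanishing coming from part \eqref{a} of Proposition \ref{prop-PY} one gets $\hom^i(\cI_\ell,\cI_\ell)=0$ outside $i\in\{0,1,2\}$, so that $\hom^1 = \hom^0 + \hom^2 - \chi = \hom^0+\hom^2+1$; since $\cI_\ell$ is stable one has $\hom^0=1$, and $\hom^2 = \hom(\cI_\ell, \cS(\cI_\ell)[-2])^\vee$ can be shown to vanish or be suitably small, yielding $\hom^1 = 2$. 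The delicate point is controlling $\hom^2(\cI_\ell,\cI_\ell)$ and confirming the object lands in the correct phase window; this is exactly the kind of computation that is standard for lines on cubic threefolds (the Fano surface of lines being smooth of the expected dimension) but requires care.

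Once \ref{C1}, \ref{C2}, \ref{C3} are in hand, Theorem \ref{thm_uniqueSinv} gives that any two $\cS$-invariant numerical stability conditions on $\ku(X)$ differ by the $\widetilde{\text{GL}}^+_2(\R)$-action. Applying this to the pair consisting of an arbitrary $\cS$-invariant stability condition and any fixed $\sigma(\alpha,\beta)$ with $(\alpha,\beta)\in V$ (which is $\cS$-invariant by Theorem \ref{thm_stabcondinduced}) shows all of them lie in one orbit, proving the corollary.
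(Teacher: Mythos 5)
Your overall strategy coincides with the paper's: the proof of Corollary \ref{cor-s-invariant-ku} is exactly the verification of \ref{C1}, \ref{C2}, \ref{C3} for $\ku(X)$ followed by an appeal to Theorem \ref{thm_uniqueSinv} (together with the Serre-invariance of the $\sigma(\alpha,\beta)$ from Theorem \ref{thm_stabcondinduced}). Your checks of \ref{C1} ($\cS^3=[5]$, so $k/r=5/3<2$) and \ref{C2} (the Euler form \eqref{eq_eulerform} gives $\chi(v,v)=-a^2-ab-b^2\leq -1$ for $v=a[\cI_\ell]+b[\cS(\cI_\ell)]\neq 0$, hence $\ell_{\ku(X)}=-1$) are correct, and in fact more explicit than the paper's.

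The genuine gap is in your verification of \ref{C3}. With $Q=\cI_\ell$ the required window is $2\leq \hom^1(\cI_\ell,\cI_\ell)<4$, and the upper bound is precisely what you do not establish: you reduce it to bounding $\hom^2(\cI_\ell,\cI_\ell)$ and then assert that it ``can be shown to vanish or be suitably small,'' which is the whole point. Moreover, the route you sketch is circular as written: the vanishing of $\hom^i(\cI_\ell,\cI_\ell)$ for $i\geq 3$ via part (a) of Proposition \ref{prop-PY}, and your appeal to ``$\cI_\ell$ is stable,'' presuppose that $\cI_\ell$ is semistable with respect to a Serre-invariant stability condition; in the paper this is Lemma \ref{lem-s-v}, which appears \emph{after} Corollary \ref{cor-s-invariant-ku} and uses it (the direct proof of $\sigma(\alpha,\beta_0)$-stability of $\cI_\ell$ is a nontrivial wall-crossing argument that you do not supply). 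The paper closes \ref{C3} in one line with a classical input instead: the Hilbert scheme of lines on $X$ is a smooth surface by Altman--Kleiman, so $\hom^1(\cI_\ell,\cI_\ell)=\dim T_{[\ell]}\Sigma(X)=2$, which lies in $[2,4)$ immediately. For the record, your intermediate vanishings can be rescued without any Bridgeland stability --- $\hom^i(\cI_\ell,\cI_\ell)=0$ for $i<0$ since $\cI_\ell$ is a sheaf, for $i>3$ since $\dim X=3$, and $\hom^3(\cI_\ell,\cI_\ell)=\hom(\cI_\ell,\cI_\ell(-2H))^\vee=0$ by slope comparison --- but the key bound $\hom^2(\cI_\ell,\cI_\ell)\leq 1$ still requires the normal-bundle computation for lines on a cubic threefold, i.e.\ exactly the classical fact the paper cites; without it your proof of \ref{C3}, and hence of the corollary, is incomplete.
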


\begin{proof}
We know $\cS^3 = [5]$ and that the numerical Grothendieck group $\mathcal{N}(\ku(X))$ is a lattice of rank $2$ such that $\chi\_{\ku(X)}(v, v) \leq -1$ for any $0 \neq v \in \mathcal{N}(\ku(X))$. Thus  conditions~\eqref{C1} and~\eqref{C2} hold for $\ku(X)$. We also have $ \dim_{\mathbb{C}} \Hom^1(\cI_{\ell}, \cI_{\ell}) = 2$ as the Hilbert scheme of lines in $X$ is a smooth surface; see \cite[Section 1]{altman}. Thus the claim follows from Theorem~\ref{thm_uniqueSinv}.  
\end{proof}

\begin{Rem}\label{sec:quarticdoublesolid}
Theorem~\ref{thm_uniqueSinv} also applies to the Kuznetsov component of the quartic double solid $Y$, which is a double cover $Y \to \PP^3$ ramified in a quartic. In fact, by~\cite{kuznetsov:fano-threefolds} there is a semiorthogonal decomposition of the form
$\Db(Y)= \langle \ku(Y), \cO_Y, \cO_Y(H) \rangle$,
where $H$ is an ample class, and the numerical Grothendieck group of $\ku(Y)$ is a  rank $2$ lattice represented by the matrix
\begin{equation*}
\begin{pmatrix}
 -1 & -1\\-1 & -2  
\end{pmatrix}.    
\end{equation*}
An easy computation shows that~\eqref{C2} holds. Moreover, the Serre functor of $\ku(Y)$ satisfies $S_{\ku(Y)}= \iota[2]$, where $\iota$ is an involutive autoequivalence of $\ku(Y)$, by~\cite[Corollary 4.6]{Kuz:calabi}. Thus~\eqref{C1} holds with $k=2r=4$. It remains to find two objects as in~\eqref{C3}. Consider the ideal sheaf of a line $\cI_\ell$ and its derived dual $\cJ_\ell:=R\Hom(\cI_\ell, \cO(-H)[1])$. By~\cite[Remark 3.8]{pertusi:some-remarks-fano-threefolds-index-two}, for proper choices of lines, they provide the required objects.
\end{Rem}

\subsection{Stable objects in \texorpdfstring{$\boldsymbol{\ku(X)}$}{Ku(X)} with minimal \texorpdfstring{$\boldsymbol{\hom^1}$}{hom\textasciicircum 1}} \label{section_minimalhom1}
By \eqref{eq_eulerform}, the classes in $\mathcal{N}(\ku(X))$ with self-intersection $-1$ are 
$$\pm [\cI_{\ell}]\, , \quad \pm [\cS(\cI_{\ell})]\ \quad \text{and} \quad \pm [\cS^2(\cI_{\ell})] = \pm \left( [\cI_{\ell}] - [\cS(\cI_{\ell})] \right).$$
In this section we investigate the moduli of semistable objects of the above classes. We denote by $M^{\sigma}_{\ku(X)}(v)$ the moduli space of $\sigma$-semistable objects of class $v \in \mathcal{N}(\ku(X))$.   

\begin{Thm}\label{thm-moduli-spaces}
Let $\sigma$ be an $\cS$-invariant stability condition on $\ku(X)$.  
\begin{enumerate}
    \item\label{thm-mod-sp-a} The moduli space $M^{\sigma}_{\ku(X)}([\cI_{\ell}])$ is isomorphic to the moduli space $M_X(v)$ of slope-stable sheaves on $X$ with Chern character $v \coloneqq \ch(\cI_{\ell}) = \left(1, 0, -\frac{1}{3}H^2, 0\right)$.
    \item\label{thm-mod-sp-b} The moduli space $M^{\sigma}_{\ku(X)}([\cS(\cI_{\ell})])$ is isomorphic to the moduli space $M_X(w)$ of slope-stable sheaves on $X$ with Chern character $w \coloneqq \left(2, -H, -\frac{1}{6}H^2, \frac{1}{6}H^3\right)$.
    \item\label{thm-mod-sp-c} The moduli space $M^{\sigma}_{\ku(X)}([\cS^2(\cI_{\ell})])$ is isomorphic to the moduli space $M_X(v-w)$ of  large volume limit stable complexes of character $v-w$. 
\end{enumerate}
\end{Thm}

In particular, all the  three moduli spaces above are isomorphic to the Fano variety $\Sigma(X)$ of lines in $X$. This proves Theorem~\ref{thm-introduction-2}. Part~\eqref{thm-mod-sp-a} of Theorem~\ref{thm-moduli-spaces} is proved in~\cite{pertusi:some-remarks-fano-threefolds-index-two}; however, we prove it here via a slightly different argument. 

\bigskip

We start with a slope-stable sheaf $E$ with Chern character $v$. So $E$ is torsion-free with double dual $E^{\vee\vee} = \cO_X$. Thus it lies in the exact sequence 
\begin{equation*}
    E \hookrightarrow \cO_X \twoheadrightarrow Q, 
\end{equation*}
where $Q$ is a torsion sheaf of character $\left(0, 0, \frac{1}{3}H^2 , 0\right)$, which is therefore isomorphic to the structure sheaf $\cO_{\ell}$ of a line $\ell$ in $X$. Thus $E$ is isomorphic to the ideal sheaf $\cI_{\ell}$. 

By Lemma~\ref{lem-large-volume}, $\cI_{\ell}$ is $\sigma_{\alpha, \beta}$-stable for $\beta <0$ and $\alpha \gg 0$. Applying~\cite[Lemma 3.5]{feyz:slope-stability-of-restriction} to $\cI_{\ell}$, we see that there is no wall for it crossing the vertical line $\beta = \beta_0 = -1/{H^3}$. Thus $\cI_{\ell}$ is $\sigma_{\alpha, \beta_0}$-stable for any $\alpha > 0$.
Since $\mu_{\alpha, \beta_0}(\cI_{\ell}) <0$, we have $\cI_{\ell}[1] \in \Coh_{\alpha, \beta_0}^0(X)$. For any sheaf $Q \in \Coh_{0}(X)$, we know 
\begin{equation*}
    \hom(Q, \cO_{\ell}) = 0 = \hom(Q, \cO_X[1]). 
\end{equation*}
Thus $\hom(Q, \cI_{\ell}[1]) = 0$, and so Proposition~\ref{prop-rotated-stability} implies that $\cI_{\ell}[1]$ is $\sigma^0_{\alpha, \beta_0}$-stable. Since $\cI_{\ell} \in \ku(X)$, we immediately get that $\cI_{\ell}$ is $\sigma(\alpha, \beta_0)$-stable. Thus by Corollary~\ref{cor-s-invariant-ku}, we get the following. 

\begin{Lem}\label{lem-s-v}
The ideal sheaf\, $\cI_{\ell}$ for any line $\ell$ in $X$ is stable with respect to any $\cS$-invariant stability condition on $\ku(X)$. 
\end{Lem}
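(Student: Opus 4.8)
The plan is to reduce the statement to a single, explicitly chosen $\cS$-invariant stability condition and then spread stability over the whole family using the orbit description. By Theorem \ref{thm_stabcondinduced} each $\sigma(\alpha,\beta)$ with $(\alpha,\beta)\in V$ is $\cS$-invariant, and by Corollary \ref{cor-s-invariant-ku} every $\cS$-invariant stability condition on $\ku(X)$ lies in the $\widetilde{\text{GL}}^+_2(\R)$-orbit of some $\sigma(\alpha,\beta)$. Since an element $\tilde g\in\widetilde{\text{GL}}^+_2(\R)$ acts on a stability condition only by reparametrizing phases and post-composing the central charge with an element of $\text{GL}^+_2(\R)$, it preserves the set of (semi)stable objects. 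Hence it suffices to exhibit one pair $(\alpha,\beta_0)$ for which $\cI_\ell$ is $\sigma(\alpha,\beta_0)$-stable, and the conclusion for all $\cS$-invariant stability conditions follows formally.

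To produce such a pair I would pass to the ambient category $\Db(X)$ and use tilt stability. As $\cI_\ell$ is a rank-one torsion-free sheaf it is $\mu_H$-stable, so by Lemma \ref{lem-large-volume} it is $\sigma_{\alpha,\beta}$-stable for $\beta<0$ and $\alpha\gg0$. Fixing $\beta_0=-\frac{1}{H^3}$, the key point is that no actual wall for $\cI_\ell$ crosses the vertical line $\beta=\beta_0$, which propagates $\sigma_{\alpha,\beta_0}$-stability to every $\alpha>0$. Because $\mu_{\alpha,\beta_0}(\cI_\ell)<0$, the shift $\cI_\ell[1]$ lies in the rotated heart $\Coh^0_{\alpha,\beta_0}(X)$; to upgrade $\sigma_{\alpha,\beta_0}$-stability to $\sigma^0_{\alpha,\beta_0}$-stability I would invoke Proposition \ref{prop-rotated-stability}, whose hypothesis reduces to the vanishing $\hom(Q,\cI_\ell[1])=0$ for every $Q\in\Coh_0(X)$. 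This in turn follows from $\hom(Q,\cO_\ell)=0=\hom(Q,\cO_X[1])$ for codimension-three sheaves $Q$, using the defining sequence $\cI_\ell\hookrightarrow\cO_X\twoheadrightarrow\cO_\ell$. Since $\cI_\ell\in\ku(X)$, restricting $\sigma^0_{\alpha,\beta_0}$ to the Kuznetsov component then yields the desired $\sigma(\alpha,\beta_0)$-stability.

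I expect the wall-avoidance step to be the main obstacle: one must exclude every semicircular wall for $\cI_\ell$ meeting the line $\beta=\beta_0$, equivalently rule out all numerically admissible destabilizing subobjects in $\Coh^{\beta_0}(X)$ along that line. This is precisely where the geometry of the cubic threefold is essential, and it is the content of the restriction estimate \cite[Lemma 3.5]{feyz:slope-stability-of-restriction}; the deeper reason such sharp wall control is available here is the strengthened Bogomolov-type inequalities of Proposition \ref{prop-li-ch2} and Theorem \ref{thm-li-ch3}, which constrain the second and third Chern characters of any potential destabilizer. By contrast, the remaining ingredients — the identification of $\cI_\ell$ as a $\mu_H$-stable sheaf, the heart-membership of $\cI_\ell[1]$, and the $\hom$-vanishings feeding Proposition \ref{prop-rotated-stability} — are routine once this numerical input is secured.
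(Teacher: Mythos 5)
Your proposal is correct and follows essentially the same route as the paper's own proof: reduce to a single stability condition via Corollary \ref{cor-s-invariant-ku} (the $\widetilde{\mathrm{GL}}^+_2(\R)$-action preserving semistable objects), establish $\sigma_{\alpha,\beta_0}$-stability of $\cI_\ell$ at $\beta_0=-\tfrac{1}{H^3}$ for all $\alpha>0$ via Lemma \ref{lem-large-volume} and the wall-avoidance input of \cite[Lemma 3.5]{feyz:slope-stability-of-restriction}, then pass to the rotated heart through Proposition \ref{prop-rotated-stability} using $\hom(Q,\cI_\ell[1])=0$ from the sequence $\cI_\ell\hookrightarrow\cO_X\twoheadrightarrow\cO_\ell$, and finally restrict to $\ku(X)$. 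The only difference is your added commentary on the role of the strengthened Bogomolov inequalities behind the wall control; step for step the argument matches the paper's.
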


By definition, $\cS(\cI_{\ell})$ is also stable with respect to any $\cS$-invariant stability condition on $\ku(X)$. The next step is to describe the complex $\cS(\cI_{\ell})$. We know $\cI_{\ell}(H)$ is of character	$\left(1, H, \frac{1}{6}H^2, -\frac{1}{6}H^3\right)$, so $\chi(\cO_X, \cI_{\ell}(H)) = 3$. Consider the short exact sequence 
\begin{equation*}
	\cI_{\ell}(H) \hookrightarrow \cO_X(H) \twoheadrightarrow \cO_{\ell}(H). 
\end{equation*} 
Since the map $\Hom(\cO_X, \cO_X(H)) \rightarrow \Hom(\cO_X, \cO_{\ell}(H))$ is surjective 
and $H^i(\cO_X(H)) = 0$ for $i >0$ by the Kodaira vanishing theorem, we get 
\begin{equation*}
 \hom(\cO_X, \cI_{\ell}(H)[1]) = \hom(\cO_X, \cI_{\ell}(H)[2]) = 0.   
\end{equation*}
Thus we obtain
\begin{equation*}
	\cS(\cI_{\ell})[-1] = L_{\cO_X}(\cI_{\ell}(H)) = \mathrm{cone}(\cO_X^{\oplus 3} \xrightarrow{\ev} \cI_{\ell}(H)). 
\end{equation*}
Since Pic$(X) = \Z.H$, Lemma 3.5 of \cite{feyz:slope-stability-of-restriction} implies that $\cI_{\ell}(H)$ is $\sigma_{\alpha, \beta = 0}$-stable for any $\alpha >0$. Thus $\cI_{\ell}(H)$ is $\sigma_{\alpha, \beta}$-semistable along the numerical wall $W(\cI_{\ell}(H), \cO_X[1])$ where $\cI_{\ell}(H)$ and $\cO_X[1]$ have the same phase. Therefore, their extension $L_{\cO_X}(\cI_{\ell}(H))$ is also $\sigma_{\alpha, \beta}$-semistable along the wall. One can easily show that $W(\cI_{\ell}(H), \cO_X[1])$ intersects the horizontal line $\alpha = 0$ at two points with $\beta$-values $0$ and $\frac{1}{3}$. Thus by the definition of the heart $\Coh^{\beta}(X)$, $\cS(\cI_{\ell})$ is a two-term complex with cohomology in degree -2 and -1, and 
\begin{equation}\label{slope}
    \mu_H^+\left(\cH^{-2}(\cS(\cI_{\ell})   )\right) \leq 0.
\end{equation}
Also, since $\cH^{-1}(\cS(\cI_{\ell}))$ is a quotient sheaf of $\cI_{\ell}(H)$, we get 
\begin{equation*}
    1 \leq \mu_H\left(\cH^{-1}(\cS(\cI_{\ell})) \right).
\end{equation*}
We know $\ch\left(\cS(\cI_{\ell})\right) = w = \left(2, -H, -\frac{1}{6}H^2, \frac{1}{6}H^3\right)$. The rank of $\cH^{-1}(\cS(\cI_{\ell}))$ is lower than $1 = \rk(\cI_{\ell}(H))$. If the rank is $1$, then the image im$(\ev)$ in the category of coherent sheaves is of rank zero, which is not possible because $\cI_{\ell}(H)$ is torsion-free. Thus $\cH^{-1}(\cS(\cI_{\ell}))$ is of rank zero, so 
\begin{equation*}
    (-1)^{i+1}\ch_1\left( \cH^{i}(\cS(\cI_{\ell}))   \right).H^2 \geq 0
\end{equation*}
for $i=-2, -1$. Thus $ \ch_1\left(\cH^{-1}(\cS(\cI_{\ell})) \right)$ is  equal to either zero or $H$. In the latter case, we will have $\ch_{\leq 2}\left( \cH^{-2}(\cS(\cI_{\ell})) \right) = (2, 0, -\alpha)$ for some $\alpha \geq 0$ because of the Gieseker semistability of $\cO_X^{\oplus 3}$. Then im$(\ev)$ is a subsheaf of $\cI_{\ell}(H)$ of rank $1$, so it is torsion-free of class $\ch(\text{im}(\ev)) = (1, 0, \alpha)$ and so $\alpha \leq 0$; thus in total we obtain $\alpha =0$. 

Applying a similar argument also implies that $\ch_{3}\left( \cH^{-2}(\cS(\cI_{\ell})) \right) = 0$; thus $\cH^{-2}(\cS(\cI_{\ell})) = \cO_X^{\oplus 2}$, which is not possible by the definition of the evaluation map $\ev$. Therefore, $\cH^{-1}(\cS(\cI_{\ell}))$ is a sheaf supported in dimension at most $1$. 

We also claim $\cH^{-2}(\cS(\cI_{\ell}))$ is slope stable. Otherwise, there is a subsheaf $F$ of bigger slope. By \eqref{slope}, it must be of class $\ch_{\leq 1}(F) = (1, 0)$. The semistability of $\cS(\cI_{\ell})[-1]$ along the wall implies that $\cH^{-2}(\cS(\cI_{\ell}))$ is a reflexive sheaf. If not, $\cH^{-2}(\cS(\cI_{\ell}))[1]$, and so $\cS(\cI_{\ell})[-1]$, has a subobject supported in dimension at most $1$ of phase $1$, which is not possible. Since $\cH^{-2}(\cS(\cI_{\ell}))$ is of rank $2$, its quotient sheaf $\cH^{-2}(\cS(\cI_{\ell}))/F$ is stable, and it must  also be reflexive. Taking the double dual shows that $F^{\vee\vee} = F$; thus $F$ is a line bundle, and so $F \cong \cO_X$, which is in contradiction to the definition of the evaluation map $\ev$. Therefore, $\cH^{-2}(\cS(\cI_{\ell}))$ is slope stable. 

The next proposition implies that $\cH^{-1}(\cS(\cI_{\ell}))$ is zero and so the evaluation map $\ev$ is surjective and the kernel is a reflexive sheaf of Chern character $w$.  

\begin{Lem}\label{lem-reflexive}
		Take an $H$-slope-stable sheaf $E$ of Chern character $(2, -H, \ch_2, \ch_3)$. Then $\ch_2.H \leq -\frac{1}{6}H^3$, and if $\ch_2.H = -\frac{1}{6}H^3$, then $\ch_3 \leq \frac{1}{6}H^3$. This, in particular, implies that any $H$-slope-stable sheaf of Chern character $w$ is a reflexive sheaf. 
	\end{Lem}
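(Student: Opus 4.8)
The plan is to establish the two numerical inequalities separately and then read off reflexivity as a formal consequence. Recall that $H^3=3$, so $-\tfrac16H^3=-\tfrac12$ and $\tfrac{H^3}6=\tfrac12$, and that $\mu_H(E)=H^2\ch_1/(H^3\ch_0)=-\tfrac12$. For the bound on $\ch_2$ I would apply Proposition~\ref{prop-li-ch2}: since $|\mu_H(E)|=\tfrac12$ it yields $H\ch_2(E)\le 0$. This is then sharpened by integrality. Writing $\ch_2=\tfrac12c_1^2-c_2$ with $c_1=-H$ and $c_2\in H^4(X,\Z)$, and using $H\cdot c_2\in\Z$ together with $H\cdot\tfrac12c_1^2=\tfrac12H^3=\tfrac32$, we find $H\ch_2(E)\in\tfrac12+\Z$. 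The largest element of $\tfrac12+\Z$ not exceeding $0$ is $-\tfrac12$, whence $H\ch_2(E)\le-\tfrac12=-\tfrac16H^3$.

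Now suppose $H\ch_2(E)=-\tfrac12$. By Lemma~\ref{lem-large-volume}, $E\in\Coh^\beta(X)$ for every $\beta<-\tfrac12$ and $E$ is $\sigma_{\alpha,\beta}$-stable for $\alpha\gg0$. I would fix a value $\beta_0<-\tfrac12$ and argue, just as was done above for $\cI_\ell$ and $\cI_\ell(H)$ by means of \cite[Lemma~3.5]{feyz:slope-stability-of-restriction}, that no wall for $E$ meets the vertical line $\beta=\beta_0$; consequently $E$ stays $\sigma_{\alpha,\beta_0}$-stable for all $\alpha>0$. Theorem~\ref{thm-li-ch3} then gives $Q_{\alpha,\beta_0}(E)\ge0$ for all $\alpha>0$, and letting $\alpha\to0$ with $C_0=6$, $C_1=-3$, $C_2=-\tfrac12$ and $C_3=\ch_3(E)$ produces a linear upper bound for $\ch_3(E)$ that is strictly below $\tfrac32$; for example the choice $\beta_0=-1$ gives $Q=\tfrac{19}2-9\ch_3(E)\ge0$, i.e. $\ch_3(E)\le\tfrac{19}{18}$. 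Finally, Hirzebruch--Riemann--Roch on $X$, using $c_1(X)=2H$, $c_2(X)=4H^2$ and hence $\operatorname{td}_2(X)=\tfrac23H^2$, gives $\chi(\cO_X,E)=\ch_3(E)-\tfrac12\in\Z$, so that $\ch_3(E)\in\tfrac12+\Z$. Combined with $\ch_3(E)<\tfrac32$ this forces $\ch_3(E)\le\tfrac12=\tfrac{H^3}6$.

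The reflexivity assertion is then purely formal. If a $\mu_H$-stable sheaf $E$ of Chern character $w$ were not reflexive, the canonical exact sequence $0\to E\to E^{\vee\vee}\to T\to0$ would have $T$ supported in codimension $\ge2$ and $E^{\vee\vee}$ again $\mu_H$-stable with $\ch_{\le1}(E^{\vee\vee})=(2,-H)$. Applying the first inequality to $E^{\vee\vee}$ and using $H\ch_2(T)\ge0$ forces $H\ch_2(T)=0$, so $T$ has dimension $0$ and $H\ch_2(E^{\vee\vee})=-\tfrac12$. The second inequality then gives $\ch_3(E^{\vee\vee})\le\tfrac12$, contradicting $\ch_3(E^{\vee\vee})=\tfrac12+\mathrm{length}(T)>\tfrac12$ unless $T=0$. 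Hence $E$ is reflexive.

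The one genuinely delicate step is the wall analysis in the second part: I need $E$ to remain $\sigma_{\alpha,\beta_0}$-stable down to values of $\alpha$ small enough that $Q_{\alpha,\beta_0}(E)\ge0$ is a non-trivial constraint, whereas for $\alpha\gg0$ the inequality holds automatically and says nothing about $\ch_3$. Ruling out walls across $\beta=\beta_0$ — via Bertram's nested wall theorem together with the Bogomolov inequality and Proposition~\ref{prop-li-ch2} applied to any hypothetical destabilizing subobject — is where the real work lies. Once stability is propagated to small $\alpha$, the interplay between $Q\ge0$ and the integrality $\ch_3(E)\in\tfrac12+\Z$ finishes the proof without further effort.
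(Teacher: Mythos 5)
Your proof is correct, and it splits from the paper's exactly at the $\ch_3$-bound. The first inequality (Proposition \ref{prop-li-ch2} giving $H\ch_2 \leq 0$, sharpened to $-\tfrac{1}{6}H^3$ by integrality of $c_2 = \tfrac{H^2}{2}-\ch_2$) and the reduction to $\sigma_{\alpha,\beta=-1}$-stability for all $\alpha>0$ via the wall-free claim from \cite[Lemma 3.5]{feyz:slope-stability-of-restriction} are the same as in the paper. For $\ch_3$, the paper finishes cohomologically: $\hom(\cO_X,E)=0$ since $\mu_H(E)=-\tfrac12<0$, and $\hom(\cO_X,E[2])=\hom(E,\cO_X(-2H)[1])=0$ by Serre duality together with a limiting tilt-slope comparison along $\beta=-1$ (using that $\cO_X(-2H)[1]$ is tilt-stable everywhere, \cite[Corollary 3.11]{bayer:the-space-of-stability-conditions-on-abelian-threefolds}); this yields $\chi(\cO_X,E)=\ch_3-\tfrac{H^3}{6}\leq 0$ outright, with no rounding. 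You instead apply the stronger numerical inequality $Q_{\alpha,\beta}(E)\geq 0$ of Theorem \ref{thm-li-ch3} at $(\alpha,\beta)\to(0,-1)$ — your computation $Q=\tfrac{19}{2}-9\ch_3\geq 0$, hence $\ch_3\leq\tfrac{19}{18}$, is correct — and then round down using $\chi(\cO_X,E)=\ch_3-\tfrac12\in\Z$ (your Todd-class computation checks out). Your route avoids all Hom computations and Serre duality, at the price of needing the integrality step; the paper's route gets the sharp bound directly but needs the auxiliary stability of $\cO_X(-2H)[1]$. Both hinge on the same wall-free input at $\beta=-1$.

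Two small remarks. Your passing suggestion that \emph{any} $\beta_0<-\tfrac12$ would serve is not automatic: a vertical line is wall-free only if no semicircular wall straddles it, and the cited lemma certifies this at a specific line; since you ultimately fix $\beta_0=-1$, exactly as the paper does, this does not affect your argument — but the fallback you sketch (Bertram's nested walls plus Bogomolov-type bounds on a destabilizer) would indeed be the work required at other lines. Finally, your double-dual argument for reflexivity — $T=E^{\vee\vee}/E$ has $H\ch_2(T)=0$ by the first inequality applied to $E^{\vee\vee}$, then $\ch_3(E^{\vee\vee})=\tfrac{H^3}{6}+\mathrm{length}(T)$ contradicts the second unless $T=0$ — is correct and makes explicit a deduction the paper leaves to the reader.
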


\begin{proof}
    Proposition~\ref{prop-li-ch2} implies that $\ch_2.H \leq 0$. Since $c_2(E) = \frac{1}{2}H^2 - \ch_2$ is an integral class, we obtain $\ch_2.H \leq -\frac{1}{6}H^3$. Assume $\ch_2.H = -\frac{1}{6}H^3$. There is no wall for $E$ crossing the vertical line $\beta = -1$ by~\cite[Lemma 3.5]{feyz:slope-stability-of-restriction}; thus $E$ is $\sigma_{\alpha, \beta = -1}$-stable for any $\alpha >0$. One can easily compute that
      \begin{equation*}
      \lim_{\alpha \rightarrow 0 } \mu_{\alpha,\, \beta = -1}(\mathcal{O}_X(-2H)[1]) < \lim_{\alpha \rightarrow 0 } \mu_{\alpha,\, \beta = -1}(E), 
      \end{equation*}
      which implies that $\hom(E, \mathcal{O}_X(-2H)[1]) = 0$ because $\mathcal{O}_X(-2H)[1]$ is $\sigma_{\alpha, \beta}$-stable for any $\alpha >0$ and $\beta \in \R$; see \cite[Corollary 3.11]{bayer:the-space-of-stability-conditions-on-abelian-threefolds}. Moreover, since 
      \begin{equation*}
      \mu_H(E) = \frac{-1}{2} < \mu_H(\mathcal{O}_X), 
      \end{equation*}
      we get $\hom(\mathcal{O}_X, E) = 0$. Therefore, $\chi(\cO_X, E) = \ch_3(E) - \frac{1}{6}H^3 \leq 0$, as claimed.
\end{proof}

 Thus we obtain 
 	\begin{equation*}
 	\cS(\cI_{\ell}) = L_{\cO_X}(\cI_{\ell}(H))[1] = K_{\ell}[2]
 \end{equation*}
 for a slope-stable reflexive sheaf $K_{\ell}$ of Chern character $w$. The next proposition shows that any slope-stable sheaf $K$ of class $w$ is of the form $L_{\cO_X}(\cI_{\ell}(H))[-2]$ for a line $\ell$ on $X$. 

 \begin{Prop} \label{prop_slopestableclassw}
 Take a slope-stable sheaf $K$ of Chern character $w$. There exists a line $\ell$ on $X$ such that $K$ lies in the short exact sequence
 \begin{equation*}
     0 \rightarrow K \rightarrow \cO_X^{\oplus 3} \rightarrow \cI_{\ell}(H) \rightarrow 0 .
 \end{equation*}
 In other words, $K = \cS(\cI_{\ell})[-2]$. 
 \end{Prop}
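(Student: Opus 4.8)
The plan is to reverse the construction preceding Lemma~\ref{lem-reflexive} and produce the exact sequence directly from $K$, exhibiting $K$ as the kernel of a surjection $\cO_X^{\oplus 3}\twoheadrightarrow \cI_{\ell}(H)$. The first task is therefore to produce exactly three independent maps $K\to\cO_X$. Hirzebruch--Riemann--Roch on $X$ (using $\omega_X\cong \cO_X(-2H)$ and $H^3=3$) gives $\chi(K,\cO_X)=3$. By Serre duality $\hom^i(K,\cO_X)=h^{3-i}(K(-2H))$, and the top term $h^0(K(-2H))=0$ since $K(-2H)$ is slope-stable of negative slope; hence $\hom^3(K,\cO_X)=0$. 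So it suffices to show the two middle groups $\Ext^1(K,\cO_X)$ and $\Ext^2(K,\cO_X)$ vanish, which then yields $\hom(K,\cO_X)=3$.

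The vanishing of these two groups is the main obstacle. I would prove $\Ext^1(K,\cO_X)=\Ext^2(K,\cO_X)=0$ by the tilt-stability comparison used in the proof of Lemma~\ref{lem-reflexive}: by Lemma~\ref{lem-reflexive} the sheaf $K$ is reflexive, and by \cite[Lemma 3.5]{feyz:slope-stability-of-restriction} it has no wall crossing a suitable vertical line, so it is $\sigma_{\alpha,\beta}$-stable there for all $\alpha>0$; since $\cO_X$ and its shifts are $\sigma_{\alpha,\beta}$-stable by \cite[Corollary 3.11]{bayer:the-space-of-stability-conditions-on-abelian-threefolds}, comparing the limits of the tilt-slopes $\mu_{\alpha,\beta}$ as $\alpha\to 0$ rules out the nonzero morphisms $K\to\cO_X[1]$ and $K\to\cO_X[2]$. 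The same bookkeeping simultaneously gives $\Hom^\bullet(\cO_X,K)=\Hom^\bullet(\cO_X(H),K)=0$, so that $K\in\ku(X)$, as one expects; in particular $\hom(K,\cO_X)=3$.

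With $\hom(K,\cO_X)=3$ in hand, the three independent maps assemble into $ev\colon K\to\cO_X^{\oplus 3}$. At the generic point they restrict to functionals on the $2$-dimensional fibre $K_\eta$; being independent in $\Hom(K,\cO_X)$, at least two stay independent on $K_\eta$, so $ev$ has generic rank $2$, and, $K$ being torsion-free, $ev$ is injective. Its cokernel $C$ then satisfies $\ch(C)=(3,0,0,0)-w=(1,H,\tfrac{H^2}{6},-\tfrac{H^3}{6})=\ch(\cI_{\ell}(H))$. I would next check that $C$ is torsion-free of rank one: a codimension-one torsion part would impose a fixed hyperplane on the three sections of $K^\vee=\mathcal{H}om(K,\cO_X)$, contradicting $H^0(K^\vee(-H))=0$ (which holds since $K^\vee(-H)$ is slope-stable of negative slope), while torsion in codimension $\geq 2$ is excluded by reflexivity of $K$ (its preimage $\tilde K\supseteq K$ would be a same-rank, same-$\ch_1$ enlargement with codimension-$\geq 2$ quotient $Q$, and $\mathcal{E}xt^1(Q,\cO_X)=0$ forces $\tilde K^\vee\cong K^\vee$, hence $\tilde K\subseteq \tilde K^{\vee\vee}=K^{\vee\vee}=K$ and $\tilde K=K$). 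Therefore $C^{\vee\vee}\cong\cO_X(H)$ and $C(-H)=\cI_Z$ for a closed subscheme $Z\subset X$ with $\ch(\cI_Z)=(1,0,-\tfrac{H^2}{3},0)$; this forces $Z$ to be one-dimensional of degree $H\cdot[Z]=1$ with $\chi(\cO_Z)=1$, i.e.\ a line $\ell$. Finally, reading off $\cS=L_{\cO_X}(-\otimes\cO_X(H))[1]$ identifies $K$ with $\cS(\cI_{\ell})[-2]$, completing the argument.
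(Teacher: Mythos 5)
Your overall architecture (produce three maps $K\to\cO_X$, assemble the evaluation, analyse the cokernel) is viable and close in spirit to the paper's, but the step on which you found it fails. You claim $\Ext^1(K,\cO_X)=0$ ``by the tilt-stability comparison used in the proof of Lemma~\ref{lem-reflexive}.'' That method only kills genuine $\Hom$'s between tilt-semistable objects lying in a \emph{common} tilted heart with ordered slopes: in Lemma~\ref{lem-reflexive} the target $\cO_X(-2H)[1]$ lies in $\Coh^{-1}(X)$ together with $E$, so $\hom(E,\cO_X(-2H)[1])$ is a degree-zero $\Hom$ in the heart. Here the group $\Hom(K,\cO_X[1])$ is an $\Ext^1$ between heart objects in every relevant heart: if $\beta<-\tfrac12$ then $K,\cO_X\in\Coh^\beta(X)$ and the group is $\Ext^1(K,\cO_X)$ inside the heart; if $\beta\in[0,2)$ then $K[1],\cO_X[1]\in\Coh^\beta(X)$ and it is $\Ext^1(K[1],\cO_X[1])$; its Serre dual $\Hom(\cO_X(2H),K[2])=\Ext^1(\cO_X(2H),K[1])$ is again an $\Ext^1$ in the heart. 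No slope or phase comparison excludes an $\Ext^1$ between semistable objects --- indeed along the wall $W(K[1],\cO_X[1])$ the two objects have \emph{equal} phase. The vanishing $\Ext^1(K,\cO_X)=0$ is true, but only a posteriori, as a consequence of the exact sequence the Proposition produces; assuming it is circular. The paper deliberately avoids it: it proves only $\hom(K,\cO_X[2])=\hom(K,\cO_X[3])=0$ (these \emph{are} slope-excludable, being Serre-dual to $\hom(\cO_X(2H),K[1])$ and $\hom(\cO_X(2H),K)$), deduces $\hom(K,\cO_X)\geq\chi(K,\cO_X)=3$, and simply chooses three linearly independent sections. Since your later steps use only three independent maps, you should do the same and drop the claim $\hom(K,\cO_X)=3$; the parallel assertion that the ``same bookkeeping'' yields $K\in\ku(X)$ is unobtainable by this mechanism for the same reason.

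Two further sheaf-theoretic steps are misjustified, though repairable; this is where you genuinely diverge from the paper, which instead works in the abelian category $\mathcal{P}(\phi)$ of semistable objects along $W(K[1],\cO_X[1])$, where surjectivity of $\psi$ follows from simplicity of $\cO_X[1]$ and the cohomology sheaves of $\ker\psi$ are controlled by the wall endpoints $\beta=0,\tfrac13$. First, ``at least two stay independent on $K_\eta$'' is not automatic: a priori all three $s_i$ could become proportional at the generic point. One must rule out generic rank $1$, e.g.: the image would be a rank-one subsheaf $I\subset\cO_X^{\oplus3}$ with $\ch_1(I)=0$ (slope-stability of $K$ forces $\ch_1(\ker)\leq -H$, while $I\hookrightarrow\cO_X^{\oplus3}$ forces $\ch_1(I)\leq 0$), hence $I^{\vee\vee}\cong\cO_X$ and $\Hom(I,\cO_X)=\C$, making the $s_i$ proportional --- contradiction. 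Second, your codimension-one torsion exclusion conflates rank-drop with vanishing: torsion of $C$ along a divisor $D$ means the $s_i$ become \emph{dependent} along $D$, not that they vanish there, so it produces no section of $K^\vee(-H)$. A correct repair: the minors $s_i\wedge s_j\in H^0(\det K^\vee)=H^0(\cO_X(H))$ would share a linear factor, say $s_i\wedge s_j=c_{ij}h$; the kernel vector of the antisymmetric matrix $(c_{ij})$ then gives a combination $s=\sum v_is_i$ with $s\wedge s_j=0$ for all $j$, forcing either a linear relation among the $s_i$ or generic rank $1$, both excluded. (Alternatively, the torsion-free quotient of $C$ is a globally generated rank-one sheaf with $\ch_1\leq 0$, hence $\cI_Z$ with $Z$ of codimension $\geq 2$, and surjectivity of $\cO_X^{\oplus3}\to\cI_Z$ forces $Z=\emptyset$ and again a linear relation among the $s_i$.) Your exclusion of torsion in codimension $\geq 2$ via $\mathcal{E}xt^1(Q,\cO_X)=0$ and reflexivity of $K$ is correct, as is the final identification with $\cS(\cI_\ell)[-2]$ once $H^0(K)=0$ is invoked to see the evaluation induces an isomorphism on $H^0$.
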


 \begin{proof}
  Lemma~\ref{lem-reflexive} implies that $K$ is a reflexive sheaf; thus $K[1]$ is $\sigma_{\alpha, \beta}$-stable for $\beta \geq -\frac{1}{2}$ and $\alpha \gg 0$; see \cite[Proposition 4.17]{feyz:desing}. Since there is no wall for $K[1]$ crossing the vertical line $\beta = 0$, it is $\sigma_{\alpha, \beta}$-semistable along the numerical wall $W(K[1] , \cO_X[1])$. We have 
     \begin{equation*}
       \lim_{\alpha \rightarrow 0 } \mu_{\alpha,\, \beta = 0}(K[1]) < \lim_{\alpha \rightarrow 0 } \mu_{\alpha,\, \beta = 0}(\mathcal{O}_X(2H)).
      \end{equation*}
 Thus $\hom(K, \cO_X[2]) = \hom(\cO_X(2H) , K[1]) = 0$. Also, \smash{$\hom(K, \cO_X[3]) \!=\! \hom(\cO_X(2H) , K) \!=\! 0$} because $\mu_H(K) <0$. Therefore, 
 \begin{equation*}
    3=\chi(K, \cO_X) =  \hom(K, \cO_X) - \hom(K, \cO_X[1]), 
 \end{equation*}
which implies that there is a non-zero map $K[1] \rightarrow \cO_X[1]$. Therefore, $K[1]$ is unstable below the numerical wall $W(K[1] , \cO_X[1])$. 

Choose three linearly independent sections of $K^{\vee}$:
\begin{equation*}
    \psi \colon K[1] \rightarrow \cO_X^{\oplus 3}[1]. 
\end{equation*}
We know $K[1]$ and $\cO_X[1]$ are $\sigma_{\alpha, \beta}$-semistable of the same phase $\phi$ and $\cO_X[1]$ is a simple object in $\mathcal{P}(\phi)$. Therefore, the map $\psi$ is surjective in $\mathcal{P}(\phi)$. The kernel $\ker \psi$ in $\mathcal{P}(\phi)$ is of Chern character $\left(1, H, \frac{1}{6}H^2 , -\frac{1}{6}H^3\right)$. Taking cohomology gives the long exact sequence of coherent sheaves 
\begin{equation*}
    0 \rightarrow \cH^{-1}(\ker \psi) \rightarrow K \rightarrow \cO_X^{\oplus 3} \rightarrow \cH^0(\ker \psi) \rightarrow 0. 
\end{equation*}
The wall $W(K[1] , \cO_X[1])$ intersects the horizontal line $\alpha =0$ at two points with $\beta =0, \frac{1}{3}$. Thus moving along the wall implies that 
\begin{equation}\label{slope-a}
\frac{1}{6} \leq \mu_H^{-}\left(\cH^0(\ker \psi)\right). 
\end{equation}
Moreover, the slope stability of $K$ implies that 
\begin{equation}\label{slope-b}
    \mu_H\left(\cH^{-1}(\ker \psi)\right) \leq -\frac{1}{2}.
\end{equation}
Therefore, $0 < (-1)^{i}\ch_1\left(\cH^i(\ker \psi )\right).H$ and $\ch_1\left(\cH^0(\ker \psi)\right) - \ch_1\left(\cH^{-1}(\ker \psi)\right) = H$. Thus one of the $\ch_1\left(\cH^i(\ker \psi )\right)$ must be zero, and the other one is equal to $(-1)^iH$.  

First assume $\cH^{-1}(\ker \psi)$ is non-zero, so its rank is less than $2 = \rk(K)$. Note that it cannot be of rank $2$; otherwise, the image of $\psi$ is torsion. Then \eqref{slope-b} implies that $\ch_1\left(\cH^{-1}(\ker \psi )\right)H^2 \leq -\frac{1}{2}H^3$. Thus $\ch_{\leq 1}\left(\cH^{-1} (\ker \psi) \right) = (1, -H)$, and so $\ch_{\leq 1}\left(\cH^{0} (\ker \psi) \right)$ is equal to $(2, 0)$, which is not possible by \eqref{slope-a}. Thus $\cH^{-1}(\ker \psi)$ is zero and $\ker \psi$ is a sheaf.        

We finally claim that $\ker \psi$ is torsion-free. If not, by \eqref{slope-a}, its torsion part is supported in dimension at most $1$. Since $\ker \psi$ is semistable along the wall of phase lower than $1$, it cannot have a subobject of phase $1$. Hence its torsion part is trivial; \textit{i.e.}\ $\ker \psi$ is a torsion-free sheaf. This immediately implies that $\ker \psi = \cI_{\ell}(H)$ for a line $\ell$ on $X$. 
\end{proof}

 The next step is to compute $\cS^2(\cI_{\ell})$. 
\begin{Lem}\label{lem-h0}
		We have $h^0(K_{\ell}(H)) = 3$. 
	\end{Lem}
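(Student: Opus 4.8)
The plan is to exploit the short exact sequence
$$0 \to K_{\ell} \to \cO_X^{\oplus 3} \xrightarrow{e} \cI_{\ell}(H) \to 0$$
established in Proposition \ref{prop_slopestableclassw}, where $\cI_{\ell}=\cI_{\ell/X}$. Twisting by $\cO_X(H)$ gives
$$0 \to K_{\ell}(H) \to \cO_X(H)^{\oplus 3} \xrightarrow{\phi} \cI_{\ell}(2H) \to 0,$$
and I would pass to the long exact sequence in cohomology. Since $H^1(\cO_X(H)) = 0$ (restrict the Euler sequence of the cubic from $\PP^4$, using $H^1(\cO_{\PP^4}(1)) = H^2(\cO_{\PP^4}(-2)) = 0$), the relevant piece reads
$$0 \to H^0(K_{\ell}(H)) \to H^0(\cO_X(H))^{\oplus 3} \xrightarrow{\phi} H^0(\cI_{\ell}(2H)) \to H^1(K_{\ell}(H)) \to 0.$$
Thus the whole statement reduces to the surjectivity of $\phi$ on global sections, in which case $h^0(K_{\ell}(H)) = 3\,h^0(\cO_X(H)) - h^0(\cI_{\ell}(2H))$.

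The two dimensions are computed on $\PP^4$. Restricting $\cO_{\PP^4}(1)$ to $X$ gives $h^0(\cO_X(H)) = 5$, and for the line $\ell$ one has $H^0(\cI_{\ell/X}(2H)) \cong H^0(\PP^4, \cI_{\ell/\PP^4}(2))$ (compare the two ideal sequences, whose kernel $\cO_{\PP^4}(-1)$ has no cohomology in degrees $0,1$), the latter being the space of quadrics through a line, of dimension $15 - 3 = 12$. For the surjectivity I would first observe that the three components $e_1,e_2,e_3 \in H^0(\cI_{\ell}(H))$ of $e$ form a basis of $H^0(\cI_{\ell}(H))$: they globally generate $\cI_{\ell}(H)$, and any linear relation among them would produce an inclusion $\cO_X \hookrightarrow K_{\ell}$, contradicting the slope-stability of $K_{\ell}$ (whose slope is $-\tfrac12 < 0$). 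Hence the image of $\phi$ on global sections is exactly the subspace $H^0(\cO_X(H)) \cdot H^0(\cI_{\ell}(H)) \subseteq H^0(\cI_{\ell}(2H))$.

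The crux is then to see that this multiplication map is surjective, and here I would transfer the question to $\PP^4$ via the isomorphisms $H^0(\cI_{\ell/X}(mH)) \cong H^0(\PP^4, \cI_{\ell/\PP^4}(m))$ for $m=1,2$ together with $H^0(\cO_X(H)) \cong H^0(\PP^4, \cO_{\PP^4}(1))$. On $\PP^4$ the statement becomes that the homogeneous ideal of the line $\ell$ is generated in degree one, so that $H^0(\cO_{\PP^4}(1)) \otimes H^0(\cI_{\ell/\PP^4}(1)) \to H^0(\cI_{\ell/\PP^4}(2))$ is surjective; this is immediate, as $\ell$ is a linear subspace cut out by three coordinates. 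This yields surjectivity of $\phi$ and therefore $h^0(K_{\ell}(H)) = 15 - 12 = 3$. As a consistency check one can verify independently that $\chi(K_{\ell}(H)) = 3$ from $\ch(K_{\ell}(H)) = \left(2, H, -\tfrac{H^2}{6}, -\tfrac{H^3}{6}\right)$ and the Todd class of $X$, the vanishing $H^{\geq 1}(K_{\ell}(H)) = 0$ following from the same long exact sequence. The only genuine point is the degree-one generation of the ideal of $\ell$; every remaining step is bookkeeping with the long exact sequence.
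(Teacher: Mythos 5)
Your proof is correct, but it follows a genuinely different route from the paper's. The paper does not use the presentation $0 \to K_\ell \to \cO_X^{\oplus 3} \to \cI_\ell(H) \to 0$ in this lemma: it writes $h^0(K_\ell(H)) = \chi(K_\ell(H)) + h^1(K_\ell(H)) = 3 + h^1(K_\ell(H))$ and rules out a fourth section by tilt stability --- four linearly independent sections would give a map $s \colon \cO_X^{\oplus 4} \to K_\ell(H)$ between semistable objects of the same phase along the numerical wall $W(K_\ell(H), \cO_X)$, hence injective with semistable cokernel of character $\left(-2, H, -\frac{H^2}{6}, -\frac{H^3}{6}\right)$, and the slope bounds on the cohomology sheaves of the cokernel combined with Li's inequality (Proposition \ref{prop-li-ch2}) force $\ch_2(\mathrm{cok}\, s).H \geq 0$, a contradiction. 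You instead twist the defining sequence from Proposition \ref{prop_slopestableclassw} and reduce to surjectivity of the multiplication map $H^0(\cO_X(H)) \otimes H^0(\cI_\ell(H)) \to H^0(\cI_\ell(2H))$, which you correctly transfer to $\PP^4$ (where the homogeneous ideal of a line is generated in degree one) using the vanishing of the low-degree cohomology of $\cO_{\PP^4}(-1)$ and $\cO_{\PP^4}(-2)$; your check that the components $e_i$ form a basis of the $3$-dimensional $H^0(\cI_\ell(H))$ --- via slope stability of $K_\ell$, or equivalently $\Hom(\cO_X, K_\ell) = 0$ since $K_\ell \in \ku(X)$ --- is sound, and the dimension counts $3 \cdot 5 - 12 = 3$ are right; as a bonus you obtain $h^1(K_\ell(H)) = 0$ in the same stroke, which the paper records only after the lemma. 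What each approach buys: yours is elementary classical projective geometry with no wall-crossing machinery or Bogomolov-type inequality, at the cost of leaning on the explicit resolution of $K_\ell$; the paper's argument is intrinsic to $K_\ell(H)$ as a slope-stable sheaf of its Chern character and stays uniform with the tilt-stability toolkit used throughout the section. One cosmetic slip: the sequence giving $H^1(\cO_X(H)) = 0$ is the twisted structure sequence $0 \to \cO_{\PP^4}(-2) \to \cO_{\PP^4}(1) \to \cO_X(H) \to 0$ (or Kodaira vanishing, as the paper invokes earlier), not the Euler sequence --- though the vanishings you cite are exactly the right ones for that sequence, so nothing breaks.
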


\begin{proof}
	   We know $h^0(K_{\ell}(H)) = \chi(K_{\ell}(H)) + h^1(K_{\ell}(H)) = 3+ h^1(K_{\ell}(H))$. There is no wall for $K_{\ell}(H)$ crossing the vertical line $\beta = 0$. Therefore, $K_{\ell}(H)$ is semistable along the numerical wall $W(K_{\ell}(H), \cO_X)$. Suppose for a contradiction that there are four linearly independent global sections:   
	   \begin{equation*}
	       s \colon \cO_X^{\oplus 4} \rightarrow K_{\ell}(H). 
	   \end{equation*}
	  Since $\cO_X$ is stable along the wall and has the same phase as $K_{\ell}(H)$, the map $s$ is injective and the cokernel of $s$ is also semistable of the same phase. It is of Chern character $\ch(\text{cok} \ s) = \left(-2, H, -\frac{1}{6}H^2, -\frac{1}{6}H^3\right)$. The numerical wall $W(K_{\ell}(H), \cO_X)$ intersects the horizontal line $\alpha = 0$ at two points with $\beta_1, \beta_2 = 0, -\frac{1}{3}$. Thus by the definition of the heart, we get 
	  \begin{equation}\label{a-1}
	      \mu_H^+\left(\cH^{-1}(\text{cok} \ s)\right) \leq -\frac{1}{3}. 
	  \end{equation}
	  Since $\cH^{0}(\text{cok} \ s)$ is a quotient of $K_{\ell}(H)$, we obtain 
	  \begin{equation*}
	      \frac{1}{2} \leq \mu_H\left(\cH^{0}(\text{cok} \ s)\right). 
	  \end{equation*}
	  Therefore, $\ch_{\leq 1}(\cH^{-1}(\text{cok} \ s)) = (2, -H)$, and $\cH^{0}(\text{cok} \ s)$ is supported in dimension at most $1$. Moreover, \eqref{a-1} implies that $\cH^{-1}(\text{cok} \ s)$ is slope stable; Proposition~\ref{prop-li-ch2} then implies that
          $$\ch_2(\cH^{-1}(\text{cok} \ s))H \leq 0.$$
          Hence
	  \begin{equation*}
	      \ch_2(\text{cok} \ s)H = \ch_2(\cH^{0}(\text{cok} \ s))H -\ch_2(\cH^{-1}(\text{cok} \ s))H \geq 0,
	  \end{equation*}
	which gives  a contradiction. 
\end{proof}

    Therefore, $h^1(K_{\ell}(H)) =0$ and  
	\begin{equation*}
	L_{\cO_X}(K_{\ell}(H)) = \mathrm{cone}(\cO_X^{\oplus 3} \rightarrow K_{\ell}(H)) \eqqcolon J_{\ell}\ , \quad \cS(K_{\ell}) = J_{\ell}[1] \quad \Rightarrow \quad \cS^2(I_{\ell}) = J_{\ell}[3].
	\end{equation*}
	Moreover, 
	\begin{equation*}
	\ch(J_{\ell}) = 	\left(-1, H, -\frac{1}{6}H^2, -\frac{1}{6}H^3\right) = v-w.
	\end{equation*}
	Consider the numerical wall $W(K_{\ell}(H), \cO_X)$. A similar argument as in the proof of Lemma~\ref{lem-h0} implies that 
	\begin{equation*}
	   \mu_H^+\left(\cH^{-1}(J_{\ell})\right) \leq -\frac{1}{3} \quad \text{and} \quad
	      \frac{1}{2} \leq \mu_H\left(\cH^{0}(J_{\ell})\right).   
	\end{equation*}
	Thus $\ch_{\leq 1}\left(\cH^{-1}(J_{\ell}\right)) = (1, -H)$, and $\cH^{0}(J_{\ell})$ is supported in dimension at most $1$. We claim $\cH^{-1}(J_{\ell})$ is a line bundle; otherwise, there is a sheaf $Q$ supported in dimension at most $1$, and we have injections $Q \hookrightarrow \cH^{-1}(J_{\ell})[1] \hookrightarrow J_{\ell}$, which is not possible because of the semistability of $J_{\ell}$ along the numerical wall $W(\cO_X, K_{\ell}(H))$. Thus $\cH^{-1}(J_{\ell}) = \cO_X(-H)$ and $\cH^0(J_{\ell}) = \cO_{\ell}(-H)$. This implies that $J_{\ell}$ is large volume limit stable.    
	
\begin{Def}\label{def-large-stable}
A two-term complex $E \in \Db(X)$ supported in degree 0 and $-1$ is said to be large volume limit stable if $\cH^{-1}(E)$ is a line bundle, $\cH^0(E)$ is a sheaf supported in dimension at most $1$, and $\Hom(\Coh_{\leq 1}(X), E) = 0$. 
\end{Def}	

By~\cite[Lemma 3.12 and Lemma 3.13(ii)]{toda-bogomolov}, a complex $E \in \Db(X)$ is large volume limit stable if and only if $E$ lies in $\Coh^{\beta}(X)$ and is $\sigma_{\alpha, \beta}$-stable for $\beta > \mu_H(E)$ and $\alpha \gg 0$.

\begin{proof}[Proof of Theorem~\ref{thm-moduli-spaces}]
Any slope-semistable sheaf $E$ of class $v$ is isomorphic to $\cI_{\ell}$ for a line $\ell$ in $X$. By Lemma~\ref{lem-s-v}, $\cI_{\ell}$ is $\sigma$-stable for any $\cS$-invariant stability condition $\sigma$ on $\ku(X)$. 

Conversely, let $E \in \ku(X)$ be a $\sigma$-stable object of class $[\cI_{\ell}]$. By Corollary~\ref{cor-s-invariant-ku}, we may assume $\sigma = \sigma(\alpha, \beta)$, where $(\alpha, \beta) \in V \cap \Gamma$ with $\Gamma$ the hyperbola with equation $\alpha^2 = \beta^2 -\frac{2}{3}$. One can easily compute that 
\begin{equation} \label{eq_Zofline=-1}
    Z(\alpha, \beta)(E) =-1. 
\end{equation}
So $E$ is of maximum phase; thus $E$ is $\sigma^0_{\alpha, \beta}$-semistable. Since $E$ lies in $\ku(X)$ (so we have  $\chi(\cO_X(kH), E) = 0$ for $k=0, 1$) and is of class $[\cI_{\ell}]$, one can easily show that $E$ is of Chern character $\pm v$. Proposition~\ref{prop-rotated-stability} implies that either
\begin{inparaenum}\item\label{inpar1} $E$ is $\sigma_{\alpha, \beta}$-semistable, or \item\label{inpar2} $E$ lies in an exact triangle 
\begin{equation*}
    F[1] \rightarrow E \rightarrow T,
\end{equation*}
where $F \in \mathcal{F}_{\alpha, \beta}$ and $T \in \Coh_0(X)$.
\end{inparaenum} In the latter case, $F[1]$ is of phase $1$ with respect to $\sigma^0_{\alpha, \beta}$, so it is $\sigma_{\alpha, \beta}$-semistable by Proposition~\ref{prop-rotated-stability}. We have
\begin{equation*}
    \ch(F) = -\ch(E) +\ch(T) = \left(1, 0, -\frac{1}{3}H^2 ,\ch_3(T) \right).
\end{equation*}
Let $(\alpha_0, \beta_0)$ be the intersection point of $\partial V$ with $\Gamma$. The closedness of semistability implies that $E$ in case~\eqref{inpar1} (or $F$ in case~\eqref{inpar2}) is $\sigma_{\alpha_0, \beta_0}$-semistable. We know the point $(\alpha_0, \beta_0)$ lies on the numerical wall $W(E, \cO_X(-H)[1])$. In case~\eqref{inpar2}, applying Theorem~\ref{thm-li-ch3} to $F$ at the limiting point $(\alpha , \beta) \rightarrow (0, -1)$, we obtain that 
\begin{equation*}
\ch_3(T) = \ch_3(F) \leq \frac{5}{9}, 
\end{equation*}
which gives $T = 0$, so $E =F[1]$ and we are reduced to case~\eqref{inpar1}. In case~\eqref{inpar1},  Lemma~\ref{lem-numerial} below implies that $E$ is $\sigma_{\alpha, \beta}$-stable for $\alpha \gg 0$, so $E$ is a slope-stable sheaf up to a shift, by~\cite[Lemma 2.7]{bayer:the-space-of-stability-conditions-on-abelian-threefolds}. This completes the proof of part~\eqref{thm-mod-sp-a}. 

\bigskip

For part~\eqref{thm-mod-sp-b}, take a slope-stable sheaf $E$ of class $w$. By Proposition~\ref{prop_slopestableclassw}, there is a line $\ell$ on~$X$ such that $E = K_{\ell}$. Thus $E$ is in $\ku(X)$ and is $\sigma$-stable for any $\cS$-invariant stability condition~$\sigma$. Conversely, let $E$ be a $\sigma$-stable object in $\ku(X)$ of class $[\cS(\cI_{\ell})]$. Then $\cS^{-1}(E)$ is $\sigma$-stable of class $[\cI_{\ell}]$. Hence by part~\eqref{thm-mod-sp-a}, $\cS^{-1}(E)$ is, up to a shift, isomorphic to the ideal sheaf of a line $\ell$ on $X$. Thus $E$ is isomorphic to $\cS(\cI_{\ell})$ and so is a slope-stable sheaf up to a shift.     

\bigskip 

Finally, we prove part~\eqref{thm-mod-sp-c}. Take a large volume limit stable complex $J$ of class $v-w$. We know there is no wall for $J$ above the numerical wall $W(\cO_X, J)$. Thus $J$ is $\sigma_{\alpha, \beta}$-stable of phase lower than $1$ if $-1<\beta < -\frac{1}{3}$. Sheaves supported in dimension zero are $\sigma_{\alpha, \beta}$-semistable of phase $1$, so $\Hom(\Coh_0(X), J) = 0$. One can easily check that $J \in \mathcal{T}_{\alpha, \beta}$ when $\beta \rightarrow -1$; thus Proposition~\ref{prop-rotated-stability} implies $J$ is $\sigma^0_{\alpha, \beta}$-stable. We claim that $J$ lies in the Kuznetsov component $\ku(X)$. By definition, $J$ lies in the exact triangle 
\begin{equation*}
    \cO_X(-H)[1] \rightarrow J \rightarrow \cO_{\ell}(-H).
\end{equation*}
We know $\Hom^i(\cO_X(H) , \cO_X(-H))$ vanishes for $i \neq 3$ and is isomorphic to $\mathbb{C}$ for $i=3$. Also, $\Hom^i(\cO_X(H) , \cO_{\ell}(-H)) = 0$ vanishes for $i \neq 1$ and is isomorphic to $\mathbb{C}$ for $i=1$. Thus $\hom^i(\cO_X(H), J) = 0$ for every $i \in \mathbb{Z}$. We also have 
$$
\Hom^i(\cO_X, \cO_X(-H)) = \Hom^i(\cO_X , \cO_{\ell}(-H)) = \Hom^i(\cO_X, J) = 0  $$
for all $i \in \mathbb{Z}$. Therefore, $J \in \ku(X)$, and so $J$ is $\sigma(\alpha, \beta)$-stable when $-1<\beta < -\frac{1}{3}$. 

Conversely, take a $\sigma$-stable object $E \in \ku(X)$ of class $[\cS^2(\cI_{\ell})]$ for an $\cS$-invariant stability condition $\sigma$. Thus $\cS^{-1}(E)$ is $\sigma$-stable of class $[K_{\ell}]$, so by the second part, there is a line $\ell$ in $X$ such that $\cS^{-1}(E) = K_{\ell}$. Thus $E$ is isomorphic to $J_{\ell}$ up to  shift. This completes the proof.   
\end{proof}

\begin{Lem}\label{lem-numerial}
Let $E$ be a $\sigma_{\alpha, \beta}$-semistable object of Chern character $v$ for $(\alpha, \beta)$ along the numerical wall $W(E, \cO_X(-H)[1])$. Then $E$ is $\sigma_{\alpha, \beta}$-stable for $\alpha \gg 0$ and $\beta > 0$.  
\end{Lem}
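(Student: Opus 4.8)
The plan is to show that the region $\{\alpha\gg 0,\ \beta>0\}$ lies in a single wall‑free chamber for the class $v$ in which $E$ is semistable, and then to use that $v$ is primitive to upgrade semistability to stability. First I would record the numerics. From $\ch(E)=v=(1,0,-\tfrac{H^2}{3},0)$ one computes $\Delta_H(v)=(H^2\ch_1)^2-2H^3\ch_0\cdot H\ch_2=6$, and the only vertical numerical wall for $v$ is $\beta=\mu_H(v)=0$. A direct computation identifies $W(E,\cO_X(-H)[1])$ with the semicircle $\alpha^2+\beta^2+\tfrac53\beta+\tfrac23=0$, centred at $\beta=-\tfrac56$ of radius $\tfrac16$, so it lies entirely in $\{\beta<0\}$. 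By Bertram's Nested Wall Theorem every other numerical wall for $v$ is a nested semicircle centred on the $\beta$‑axis; since none of these can cross the vertical wall $\beta=0$, each lies entirely in $\{\beta\le 0\}$ or in $\{\beta\ge 0\}$, and all of them have radius bounded in terms of $\Delta_H(v)=6$.

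Next I would bound the actual walls. Assuming for contradiction that $E$ fails to be $\sigma_{\alpha,\beta}$‑stable at some point with $\alpha\gg0$ and $\beta>0$, primitivity of $v$ forces strictly semistable behaviour to occur only on an actual wall, so there would be an actual semicircular wall $W$ meeting $\{\beta\ge0\}$ together with a destabilizing sequence $F\hookrightarrow E\twoheadrightarrow G$ of equal $\sigma_{\alpha,\beta}$‑slope along $W$. Along any wall the discriminant is subadditive, with $0\le\Delta_H(F),\Delta_H(G)$ and $\Delta_H(F)+\Delta_H(G)\le\Delta_H(E)=6$. Combining this with the sharper Bogomolov inequalities for cubic threefolds (Proposition \ref{prop-li-ch2}) and with the positivity $Q_{\alpha,\beta}(F)\ge0$ (Theorem \ref{thm-li-ch3}), evaluated at the point where $W$ meets the $\beta$‑axis, I expect to be left with only finitely many numerical candidates for $\ch(F)$, each bounding the radius of $W$.

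The crux is then to exclude these candidates. For each admissible $\ch(F)$ I would compute the semicircle it defines and check that it either fails to meet $\{\beta\ge0\}$, or — comparing radii, and using the self‑duality $v^\vee=v$ under which $\sigma_{\alpha,\beta}$‑stability corresponds to $\sigma_{\alpha,-\beta}$‑stability — would force a destabilizing subobject of $E$ already on $W(E,\cO_X(-H)[1])$, contradicting the hypothesis that $E$ is semistable there; the same analysis shows $W(E,\cO_X(-H)[1])$ is the outermost wall on the $\{\beta<0\}$ side. Having ruled out every interior wall, I would fix $\alpha_1$ larger than all (finitely many, bounded) wall radii, so that $\{\alpha>\alpha_1\}$ contains no semicircular walls; since the vertical locus $\beta=0$ is not an actual wall for the primitive class $v$ (crossing it at large $\alpha$ is just the shift $E\leftrightarrow E[1]$ into the heart), this region is a single connected wall‑free chamber. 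As $E$ is semistable on the outermost wall $W(E,\cO_X(-H)[1])$ below this chamber, it stays semistable as $\alpha$ increases into $\{\alpha>\alpha_1\}$, where primitivity of $v$ promotes semistability to $\sigma_{\alpha,\beta}$‑stability; in particular $E$ is stable for all $\alpha\gg0$ and $\beta>0$.

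The main obstacle is exactly the finite case analysis of the previous paragraph: controlling all potential destabilizing subobjects of discriminant at most $6$ and showing that each would already destabilize $E$ on $W(E,\cO_X(-H)[1])$. The remaining ingredients — Bertram's theorem, subadditivity of $\Delta_H$, and primitivity of $v$ — are routine once the candidate walls have been pinned down.
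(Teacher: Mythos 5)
Your overall toolkit is the right one --- wall restriction plus the strengthened cubic-threefold inequalities of Proposition \ref{prop-li-ch2} and Theorem \ref{thm-li-ch3} --- and your computation of the semicircle $\alpha^2+\beta^2+\tfrac{5}{3}\beta+\tfrac{2}{3}=0$ and of $\Delta_H(v)=\tfrac{2}{3}(H^3)^2$ agrees with the paper. But the proposal has a genuine gap: the entire content of the lemma is the case analysis you explicitly postpone (``I expect to be left with only finitely many numerical candidates''; ``the main obstacle is exactly the finite case analysis''), and that analysis is not a routine boundedness check. The paper makes it finite by three mechanisms tied to this specific wall: (i) since $W(E,\cO_X(-H)[1])$ ends at $(\alpha,\beta)=(0,-1)$, both factors of a destabilising sequence $F_1\to E\to F_2$ lie in $\Coh^{-1}(X)$ there, so $\Im[Z_{0,-1}(F_i)]\geq 0$; writing $\ch_{\leq 2}(F_1)=(r,cH,sH^2)$ this forces (after relabeling) $c=-r$, and equality of tilt slopes along the whole semicircle forces $s=r/2$; (ii) the bound $0\leq \Delta_H(F_2)<\Delta_H(E)$ together with $\Delta_H\in\tfrac{1}{3}(H^3)^2\Z$ leaves only $r=-1,-2$; (iii) since the tilt lattice only records $\ch_{\leq 2}$, ``finitely many candidates for $\ch(F)$'' does not yet control $\ch_3$ --- the paper pins this down by applying Theorem \ref{thm-li-ch3} twice (to $F_1(H)$ at $\beta>0$ and to $\cH^{-1}(F_1)$ at $\beta'<0$), concluding $F_1\cong\cO_X(-H)^{\oplus -r}[1]$, and then eliminates both values of $r$ by applying Proposition \ref{prop-li-ch2} to $F_2$, of class $\ch_{\leq 2}=(2,-H,\tfrac{1}{6}H^2)$ or $(3,-2H,\tfrac{2}{3}H^2)$. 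None of these steps appears in your sketch, so as written it restates the difficulty rather than resolving it.

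Moreover, one step of your architecture would actually fail: the claim that the vertical locus $\beta=0$ is not an actual wall because ``crossing it at large $\alpha$ is just the shift $E\leftrightarrow E[1]$.'' For $\beta\geq 0$ the heart $\Coh^\beta(X)$ contains all sheaves supported in dimension $\leq 1$, which have $\Im Z_{\alpha,\beta}=0$ and hence slope $+\infty$; for instance the short exact sequence $\cO_\ell \to \cI_\ell[1] \to \cO_X[1]$ in $\Coh^\beta(X)$ exhibits a phase-one subobject of $\cI_\ell[1]$, so objects of class $\pm v$ are genuinely affected at the vertical wall, and primitivity of $v$ does not let you transport semistability across it. This is consistent with the paper's argument, which never moves $E$ across $\beta=0$: it works entirely on the wall $W(E,\cO_X(-H)[1])$ (its appeals to $\beta>0$ are only auxiliary evaluations of $Q_{\alpha,\beta}\geq 0$ on the candidate subobject $F_1(H)$), and it handles the factors by no-wall-crossing arguments at $\beta=-1$ rather than by your duality $v^\vee=v$, which exchanges subobjects and quotients and would itself need justification in the tilted hearts.
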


\begin{proof}
Suppose for a contradiction that $W(E, \cO_X(-H)[1])$ is an actual wall and $F_1 \rightarrow E \rightarrow F_2$ is a destabilising sequence. Let $\ch_{\leq 2}(F_1) = (r, cH, sH^2)$. Since the boundary of the wall intersects the vertical line $\beta =-1$, we must have $\Im[Z_{\alpha = 0, \beta =-1}(F_i)] \geq 0$; thus 
\begin{equation*}
    0 \leq c+r \quad \text{and} \quad0 \leq -c+1-r\, . 
\end{equation*}
Hence by relabelling $F_1$ and $F_2$, we may assume $c= -r$. Since the $F_i$ have the same phase as $E$ along the wall, one can easily show that
$s = \frac{1}{2}r$. Therefore, $\ch_{\leq 2}(F_2) = \left(1-r,\ rH,\ H^2\left(-\frac{1}{3}-\frac{1}{2}r\right)\right)$ and 
\begin{equation*}
    \Delta_H(F_2) = (H^3)^2\left(\frac{r}{3} + \frac{2}{3}\right). 
\end{equation*}
By~\cite[Corollary 3.10]{bayer:the-space-of-stability-conditions-on-abelian-threefolds}, we must have $0 \leq \Delta_H(F_2) < \Delta_H(E) = \frac{2}{3}(H^3)^2$. Since $\Delta_H \in \frac{1}{3}(H^3)^2 \mathbb{Z}$, we obtain $r =-1$ or $r=-2$. In both cases, $F_2$ is of positive rank and $F_1$ is of negative rank, so $F_1$ must be the subobject in the destabilising sequence to have phase bigger than $E$ above the wall.   

Since $\Delta_H(F_1) = 0$, $F_1$ is $\sigma_{\alpha, \beta}$-semistable for any $\alpha > 0$ and $\beta > -1$; see~\cite[Corollary 3.11(a)]{bayer:the-space-of-stability-conditions-on-abelian-threefolds}. Thus $\cH^{-1}(F_1)$ is a slope-stable reflexive sheaf, and $\cH^0(F_1)$ is supported in dimension at most $1$. Since $\Delta_H(F_1) = 0$, there is no room for $\ch_2(\cH^0(F_1))$, so $\cH^0(F_1)$ is indeed supported in dimension zero. Applying Theorem~\ref{thm-li-ch3} to $F_1(H)$ shows that for $\beta >0$, we have
\begin{equation}\label{f1}
0 \leq 3r\beta \left(\ch_3\left(\cH^0\left(F_1(H)\right)\right) - \ch_3\left(\cH^{-1}\left(F_1(H)\right)\right)\right).   
\end{equation}
Moreover, since $\cH^{-1}(F_1(H))$ is a $\mu_H$-stable sheaf, Lemma~\ref{lem-large-volume} implies that it is $\sigma_{\alpha', \beta'}$-stable for $\beta' <0$ and $\alpha' \gg 0$. Applying Theorem~\ref{thm-li-ch3} to $\cH^{-1}(F_1)$ at this region shows that 
\begin{equation*}
    0 \leq -3r\beta' \ch_3\left(\cH^{-1}\left(F_1(H)\right)\right) \quad \Rightarrow \quad 0 \leq -\ch_3\left(\cH^{-1}\left(F_1(H)\right)\right).
\end{equation*}
Combining this with \eqref{f1} shows that $\ch_3\left(\cH^{i}\left(F_1(H)\right)\right) = 0$ for $i =-1, 0$. Therefore, \smash{$\cH^0(F_1)=0$} and $[F_1] = r[\cO_X(-H)]$; thus $F_1 \cong \cO_X(-H)^{\oplus {-r}}[1]$. 

If $r = -1$ and $F_2$ is $\sigma_{\alpha, \beta}$-stable along the wall, then $F_2$ will be $\sigma_{\alpha, \beta}$-stable for $\alpha \gg 0$ and $\beta > -\frac{1}{2}$ because there is no wall for it crossing the vertical line $\beta = -1$. Hence $F_2$ is a slope-stable sheaf of character $\ch_{\leq 2}(F_2) = \left(2, -H, \frac{1}{6}H^2\right)$, which is not possible by Proposition~\ref{prop-li-ch2}. Thus $F_2$ is strictly semistable along the wall, and the same argument as above implies that one of its destabilising subobject is $\cO_X(-H)[1]$. Hence we may assume $r= -2$. Then $\Delta_H(F_2) = 0$; thus by~\cite[Corollary 3.11(a)]{bayer:the-space-of-stability-conditions-on-abelian-threefolds}, $F_2$ is a slope-stable sheaf, which again is not possible by Proposition~\ref{prop-li-ch2}. 
\end{proof}

\section{Stability conditions on the Kuznetsov component via conic fibrations} \label{sec_kuzconicfibr}

Let $X$ be a cubic threefold as before. Fix a line $\ell_0$ in $X$, and consider the linear projection to a disjoint $\PP^2$ in the $\PP^4$ containing $X$, which induces a conic fibration structure on the blowup of $X$ along $\ell_0$. Let $\cB_0$ (resp.\ $\cB_1$) be the sheaf of even (resp.\ odd) parts of the Clifford algebra corresponding to the conic fibration as in~\cite[Section 3]{kuznetsov-quadric-fibration}. We denote by $\Coh(\PP^2, \cB_0)$ the abelian category of right coherent $\cB_0$-modules and by $\Db(\PP^2, \cB_0)$ its bounded derived category. By~\cite[Section 2.1]{macri:categorical-invarinat-cubic-threefolds}, there is a semiorthogonal decomposition
\begin{equation}\label{semi}
    \Db(\PP^2, \cB_0)=\langle \Xi(\ku(X)), \cB_1 \rangle,
\end{equation}
where $\Xi: \ku(X) \to \Db(\PP^2, \cB_0)$ is a fully faithful embedding. We set 
$$\ku(\PP^2,\cB_0):=\Xi(\ku(X)),$$
which is equivalent to $\ku(X)$ by definition. In~\cite{macri:categorical-invarinat-cubic-threefolds,macri:acm-bundles-cubic-3fold}, the authors use the se\-mi\-or\-thog\-o\-nal decomposition \eqref{semi} to construct stability conditions on $\ku(X)$. In this section, we first summarise their construction and then show that all these stability conditions on $\ku(X)$ are $\cS$-invariant and lie in the same orbit with respect to $\widetilde{\text{GL}}^+_2$-action.  

\subsection{Stability conditions on non-commutative  $\PP^2$} The Chern character of an object $E \in \Db(\PP^2,\cB_0)$ is
$$
\ch(\text{Forg}(E)) \in K(\PP^2) \otimes \Q \cong \Q^{\oplus 3},
$$
where $\text{Forg} \colon \Db(\PP^2,\cB_0) \to \Db(\PP^2)$ is the forgetful functor; by abuse of notation, we denote it by $\ch(E)$ for simplicity.
Recall that by~\cite[Proposition 2.12]{macri:categorical-invarinat-cubic-threefolds}, we have
\begin{equation}
    \label{eq_numgrgr}
\mathcal{N}(\PP^2, \cB_0) = \Z[\cB_{-1}] \oplus \Z[\cB_0] \oplus \Z[\cB_1],    
\end{equation}
and the Chern characters 
\begin{equation*}
\ch(\cB_{-1}) = \left(4, -7, \frac{15}{2}\right),  \quad \ch(\cB_0) = \left(4, -5, \frac{9}{2}\right), \quad \ch(\cB_1) = \left(4, -3, \frac{5}{2}\right)
\end{equation*}
are linearly independent. For objects in $\Db(\PP^2, \cB_0)$, we have the Euler characteristic 
\begin{equation*}
\chi(-, -) = \sum_{i} (-1)^i \hom^i_{\Db(\PP^2, \cB_0)}(-, -).  
\end{equation*}
Given $[F] = x[\cB_{-1}] +y[\cB_{-1}] +z [\cB_{-1}] \in \mathcal{N}(\PP^2, \cB_0)$ with $\ch(F) = (r, c_1, \ch_2)$, ~\cite[Remark 2.2(iv)]{macri:acm-bundles-cubic-3fold} gives 
\begin{align*}
\chi(F, F) = \ &\  x^2+ y^2+z^2+3xy + 3yz + 6xz\\
= \ & \ -\frac{7}{64}r^2 - \frac{1}{4}c_1^2 +\frac{1}{2}r\ch_2. 
\end{align*}
Slope stability for torsion-free sheaves in $\Coh(\PP^2, \cB_0)$ is defined via $$\rk(F) \coloneqq \rk(\text{Forg}(F)), \quad \deg(F) = c_1(\text{Forg}(F)).c_1(\cO_{\PP^2}(1)).$$

\begin{Lem}[\protect{\cite[Theorem 8.3]{bayer:stability-conditions-kuznetsov-component}}]
Any slope-semistable torsion-free sheaf $E$ in $\Coh(\PP^2, \cB_0)$ of character $(r, c_1, \ch_2)$ satisfies the quadratic inequality  
\begin{equation}\label{quadratic}
    Q(E) \coloneqq c_1^2 -2r\ch_2 + \frac{11}{16}r^2 \geq 0. 
\end{equation}
\end{Lem}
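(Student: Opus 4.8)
The plan is to rewrite the claimed inequality as an upper bound on the self-Euler characteristic and then prove that bound by a Jordan--Hölder reduction to the slope-stable case. Writing $[E]=x[\cB_{-1}]+y[\cB_0]+z[\cB_1]$ with $x,y,z\in\Z$, the rank is $r=4(x+y+z)$, so $r/4=x+y+z\in\Z$; in particular any nonzero torsion-free $E$ has $4\mid r$ and hence $r\geq 4$, so $(r/4)^2\geq 1$. A direct computation from the Euler-form formula recalled above yields the identity
$$Q(E)=\tfrac14 r^2-4\,\chi(E,E)=-4\bigl(\chi(E,E)-(r/4)^2\bigr),$$
so that $Q(E)\geq 0$ is equivalent to $\chi(E,E)\leq (r/4)^2$. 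This is exactly the Bogomolov bound ``$\chi(E,E)\leq(\text{rank over }\cB_0)^2$'', and it is sharp: for $E=\cB_0$ one has $\chi=1=(4/4)^2$.

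First I would reduce to the slope-stable case. Let $E$ be slope-semistable and let its Jordan--Hölder factors, all of the same slope $\mu$, be grouped into isotypic blocks $G_s^{\oplus m_s}$ with $\rk G_s=r_s$ and $r=\sum_s m_s r_s$. Since $\chi$ is bilinear, $\chi(E,E)=\sum_{s,t} m_s m_t\,\chi(G_s,G_t)$. For distinct stable factors of equal slope one has $\hom(G_s,G_t)=0$, and (using the Serre duality below) $\operatorname{ext}^2(G_s,G_t)=0$, so each cross term satisfies $\chi(G_s,G_t)\leq 0$. Hence $\chi(E,E)\leq\sum_s m_s^2\,\chi(G_s,G_s)$, and once the stable case gives $\chi(G_s,G_s)\leq(r_s/4)^2$ we conclude
$$\chi(E,E)\leq\sum_s\bigl(m_s r_s/4\bigr)^2\leq\Bigl(\sum_s m_s r_s/4\Bigr)^2=(r/4)^2,$$
the middle inequality holding because all the summands are non-negative.

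It then remains to treat a slope-stable $E$. Over $\C$ stability forces $\operatorname{End}(E)$ to be a finite-dimensional division algebra, hence $\operatorname{End}(E)=\C$ and $\hom(E,E)=1$. The key structural input is that the even Clifford algebra $\cB_0$ is a symmetric (Gorenstein) $\cO_{\PP^2}$-order, so that Serre duality on $\Db(\PP^2,\cB_0)$ takes the form $\operatorname{ext}^i_{\cB_0}(E,F)\cong\operatorname{ext}^{2-i}_{\cB_0}(F,E\otimes\cO(-3))^{\vee}$, the twist by $\omega_{\PP^2}=\cO(-3)$ strictly lowering the slope by $3$. Applying this with $F=E$, stability gives $\hom(E,E\otimes\cO(-3))=0$ and therefore $\operatorname{ext}^2(E,E)=0$; thus $\chi(E,E)=1-\operatorname{ext}^1(E,E)\leq 1\leq(r/4)^2$, using $r\geq 4$. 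The main obstacle is precisely pinning down this Serre duality: one must verify that $\cB_0$ is symmetric over $\cO_{\PP^2}$ with dualizing twist coming from $\omega_{\PP^2}$, so that the $\operatorname{ext}^2$ of equal-slope stable objects vanishes. Everything else—the Euler-form identity, the rank divisibility $4\mid r$, and the convexity reduction—is then formal.
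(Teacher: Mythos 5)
Your Euler-form identity is correct: indeed $Q(E)=\tfrac{1}{4}r^2-4\chi(E,E)$, so the claim is equivalent to $\chi(E,E)\leq (r/4)^2$, and the rank divisibility $4\mid r$, the isotypic Jordan--H\"older reduction, and the convexity step $\sum_s(m_sr_s/4)^2\leq(r/4)^2$ are all fine. The genuine error sits exactly in the step you yourself flag as the crux: $\cB_0$ is \emph{not} a symmetric $\cO_{\PP^2}$-order, and the Serre duality you propose to verify, $\operatorname{ext}^i(E,F)\cong \operatorname{ext}^{2-i}(F,E\otimes\cO(-3))^{\vee}$, is false, so that verification would fail. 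By \cite[Proposition 2.9(ii)]{macri:categorical-invarinat-cubic-threefolds} (quoted in Section \ref{sec-inducedstabcond} of this paper) the Serre functor of $\Db(\PP^2,\cB_0)$ is $-\otimes_{\cB_0}\cB_{-1}[2]$, and $\cB_{-1}\not\cong\cB_0\otimes\cO(-3)$: already numerically $\ch(\cB_{-1})=(4,-7,\tfrac{15}{2})$ while $\ch(\cB_0(-3))=(4,-17,\ast)$. Luckily, the only feature of the duality your argument uses is that the Serre twist strictly lowers slope while preserving slope stability, and this survives the correction: $-\otimes_{\cB_0}\cB_{-1}$ is an exact autoequivalence of $\Coh(\PP^2,\cB_0)$ (with inverse $-\otimes_{\cB_0}\cB_1$, since $\cB_1\otimes_{\cB_0}\cB_{-1}\cong\cB_0$) which shifts every slope by the constant $-\tfrac{1}{2}$ rather than $-3$; compare Lemma \ref{lem-tensor}, where $\ch_1(F\otimes\cB_1)=\ch_1(F)+\tfrac{1}{2}\rk(F)$ and $\cB_{-1}=\cB_1\otimes\cO(-1)$. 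Hence $\operatorname{ext}^2(G_s,G_t)\cong\hom(G_t,G_s\otimes_{\cB_0}\cB_{-1})^{\vee}=0$ for stable $G_s,G_t$ of equal slope, and both your diagonal bound $\chi(G_s,G_s)\leq 1\leq(r_s/4)^2$ and your cross-term bound $\chi(G_s,G_t)\leq 0$ go through verbatim. With that one substitution your proof is complete.

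It is also worth noting where your (repaired) route differs from the paper's. For the stable case the paper simply cites \cite[Lemma 2.4]{macri:acm-bundles-cubic-3fold}, i.e.\ $\chi(E,E)\leq 1$, whose content is precisely the simpleness-plus-$\operatorname{ext}^2$-vanishing you prove by hand, combined with $4 \mid r$ to get $1\leq r^2/16$. For the semistable case, however, the paper avoids the homological reduction entirely: all Jordan--H\"older factors $E_i$ share the slope $c_1/r$, each satisfies the normalized inequality $\ch_2(E_i)/r(E_i)\leq\tfrac{1}{2}(c_1/r)^2+\tfrac{11}{32}$, and additivity of the Chern character sums these affine inequalities directly to $Q(E)\geq 0$, with no cross-term vanishing or isotypic grouping needed. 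Your argument is homological where the paper's is numerical; both yield the same bound, and the paper's semistable step is shorter.
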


\begin{proof}
If $E$ is slope stable, then~\cite[Lemma 2.4]{macri:acm-bundles-cubic-3fold} implies that $\chi(E, E) \leq 1$. Since $r = \rk(E)$ is a multiple of $4$ by~\cite[Lemma 2.13(i)]{macri:categorical-invarinat-cubic-threefolds}, we obtain  
\begin{equation*}
 \chi(E, E) =  -\frac{7}{64}r^2 - \frac{1}{4}c_1^2 +\frac{1}{2}r\ch_2 \leq \frac{r^2}{16}, 
\end{equation*}
which implies the claim. If $E$ is strictly slope semistable with stable factors $\{E_i\}_{i \in I}$, we get 
\begin{equation*}
\frac{\ch_2(E_i)}{r(E_i)} \leq \frac{1}{2}\left(\frac{c_1(E_i)}{r(E_i)}\right)^2 + \frac{11}{32}\ =\ \frac{1}{2}\left(\frac{c_1}{r}\right)^2 + \frac{11}{32}, 
\end{equation*}
which shows \eqref{quadratic} holds for $E$. 
\end{proof}

Consider the open subset 
\begin{equation*}
    U \coloneqq \left\{(b,w) \in \mathbb{R}^2 \colon \ w > \frac{b^2}{2} + \frac{11}{32}  \right\}.
\end{equation*}
By \eqref{quadratic} we have the following result, which is well known in the commutative setting by~\cite{bayer:bridgeland-stability-conditions-on-threefolds, bayer:the-space-of-stability-conditions-on-abelian-threefolds} (see also~\cite[Lemma 4.4]{PPZ}).

\begin{Prop}
There is a continuous family of Bridgeland stability conditions on $\Db(\PP^2,\cB_0)$ parametrised by $U$ given by  
\begin{equation*}
(b,w) \in U \mapsto \overline{\sigma}_{b,w} =\big( \Coh^b(\PP^2, \cB_0) ,\ \overline{Z}_{b,w} = -\ch_2 + w \ch_0 +i (\ch_1 -b\ch_0)\big).\footnote{In~\cite{macri:acm-bundles-cubic-3fold}, authors consider only the ray $\overline{\sigma}_{b=-1 , w}$ for $w > \frac{27}{32}$, where $w = \frac{23}{32} +2m^2$ in their notation.}
\end{equation*}
\end{Prop}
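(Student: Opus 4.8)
The plan is to follow the by-now-standard tilting construction of Bridgeland and Arcara--Bertram \cite{bridgeland:K3-surfaces, AB} for (twisted) surfaces, the only geometric input being the Bogomolov--Gieseker inequality \eqref{quadratic} established just above. First I would produce the heart. I consider slope stability on $\Coh(\PP^2, \cB_0)$ defined by $\mu = \deg/\rk$ and tilt it at the slope $\mu = b$ as in Theorem \ref{pd:tiltheart}: setting $\mathcal{T}_b$ (resp. $\mathcal{F}_b$) to be the subcategory of sheaves whose slope-semistable factors have slope $> b$ (resp. $\leq b$), the category $\Coh^b(\PP^2, \cB_0) = \langle \mathcal{T}_b, \mathcal{F}_b[1] \rangle$ is the heart of a bounded t-structure. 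Since $\ch_1 - b\ch_0$ is, up to sign, the twisted degree, it is strictly positive on the slope-$(>b)$ part of $\mathcal{T}_b$ and non-negative on $\mathcal{F}_b[1]$; hence $\Im \overline{Z}_{b,w}(E) = \ch_1(E) - b\ch_0(E) \geq 0$ for every $E \in \Coh^b(\PP^2, \cB_0)$.

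Next I would check that $\overline{Z}_{b,w}$ is a stability function, i.e.\ that $\Re \overline{Z}_{b,w}(E) = -\ch_2(E) + w\ch_0(E) < 0$ whenever $\Im \overline{Z}_{b,w}(E) = 0$. The objects with vanishing imaginary part are extensions of dimension-zero sheaves $T$ (for which $\Re \overline{Z}_{b,w}(T) = -\ch_2(T) < 0$) and shifts $F[1]$ of slope-semistable sheaves $F$ with $\mu(F) = b$. For the latter $\deg(F) = b\,\rk(F)$, so \eqref{quadratic} gives
\begin{equation*}
\ch_2(F) \leq \frac{\deg(F)^2 + \tfrac{11}{16}\rk(F)^2}{2\,\rk(F)} = \Big(\tfrac{b^2}{2} + \tfrac{11}{32}\Big)\rk(F) < w\,\rk(F),
\end{equation*}
the last inequality being exactly the defining condition $(b,w) \in U$; thus $\Re \overline{Z}_{b,w}(F[1]) = \ch_2(F) - w\,\rk(F) < 0$, as required. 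This is where the set $U$ and the inequality \eqref{quadratic} first enter.

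With the pair $(\Coh^b(\PP^2,\cB_0), \overline{Z}_{b,w})$ in hand, the remaining two axioms are the Harder--Narasimhan property and the support property. For the former I would argue exactly as on a commutative surface: the tilted heart is Noetherian and the imaginary part of $\overline{Z}_{b,w}$ controls lengths of chains of subobjects, so HN filtrations exist (cf.\ \cite{bridgeland:K3-surfaces}). For the support property I would take the quadratic form $Q(E) = c_1^2 - 2r\ch_2 + \tfrac{11}{16}r^2$ from \eqref{quadratic}. A vector in $\ker \overline{Z}_{b,w}$ is proportional to $(1,b,w)$, and
\begin{equation*}
Q(1,b,w) = b^2 - 2w + \tfrac{11}{16} < 0 \iff w > \tfrac{b^2}{2} + \tfrac{11}{32},
\end{equation*}
so $Q$ is negative definite on $\ker \overline{Z}_{b,w}$ precisely because $(b,w) \in U$. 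Together with the positivity $Q(E) \geq 0$ for all $\overline{\sigma}_{b,w}$-semistable objects this gives the support property. Finally, since $\overline{Z}_{b,w}$ depends continuously (indeed real-analytically) on $(b,w) \in U$ and $Q$ may be chosen locally uniform, Bridgeland's deformation theorem \cite{bridgeland:stability-condition-on-triangulated-category} shows that $(b,w) \mapsto \overline{\sigma}_{b,w}$ is a continuous family in $\Stab(\Db(\PP^2,\cB_0))$.

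The main obstacle is the positivity statement $Q(E) \geq 0$ for every $\overline{\sigma}_{b,w}$-semistable object, not merely for slope-semistable sheaves. The standard route, which I would follow, is to analyse the two classical cohomology sheaves $\cH^{-1}(E) \in \mathcal{F}_b$ and $\cH^0(E) \in \mathcal{T}_b$ of such an object, apply \eqref{quadratic} to their slope-semistable factors, and combine these with the semistability of $E$ to deduce the inequality for $E$ itself; this is the Bogomolov-type inequality for Bridgeland-semistable objects of Arcara--Bertram and Bayer--Macrì--Stellari \cite{AB, bayer:the-space-of-stability-conditions-on-abelian-threefolds} (see also \cite[Lemma 4.4]{PPZ}), whose proof transfers verbatim to the $\cB_0$-linear setting once \eqref{quadratic} is available. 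Establishing Noetherianity of the tilted heart, needed for the HN property, is the other point that calls for the standard surface machinery rather than a one-line argument.
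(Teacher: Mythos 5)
Your proposal is correct and takes essentially the same route as the paper: the paper gives no written proof at all, simply observing that once the Bogomolov-type inequality \eqref{quadratic} is in place the result follows from the standard commutative-surface machinery of \cite{bayer:bridgeland-stability-conditions-on-threefolds, bayer:the-space-of-stability-conditions-on-abelian-threefolds} and \cite[Lemma 4.4]{PPZ}. Your write-up just makes that citation explicit, and all the computations you perform (the slope-$b$ check $\ch_2(F) \leq (\tfrac{b^2}{2}+\tfrac{11}{32})\rk(F) < w\rk(F)$, and $Q(1,b,w) = b^2 - 2w + \tfrac{11}{16} < 0$ on $\ker\overline{Z}_{b,w}$ exactly when $(b,w) \in U$) are accurate.
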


\subsection{Induced stability conditions} \label{sec-inducedstabcond}
By~\cite[Proposition 2.9(ii)]{macri:categorical-invarinat-cubic-threefolds}, the Serre functor $S_{\Db(\PP^2, \cB_0)}$ of $\Db(\PP^2, \cB_0)$ is equal to $- \otimes_{\cB_0} \cB_{-1}[2]$. Remark 9.4 of \cite{bayer:stability-conditions-kuznetsov-component} shows that $\cB_0$ and $\cB_1$ are slope-stable $\cB_0$-modules with $\mu(\cB_0)=-\frac{5}{4}$ and $\mu(\cB_1)=-\frac{3}{4}$, respectively. Then $\cB_1$ and $\cB_0[1]=S_{\Db(\PP^2, \cB_0)}(\cB_1)[-1]$ are in $\Coh^b(\PP^2, \cB_0)$ for $-\frac{5}{4}\leq b < -\frac{3}{4}$. Since $\overline{\sigma}_{b, w}$ is a stability condition on $\Db(\PP^2, \cB_0)$, we see that the conditions in~\cite[Proposition 5.1]{bayer:stability-conditions-kuznetsov-component} are all satisfied, as we wanted. 

\begin{Prop}\label{prop.2-dim}
For any $(b,w) \in U$ with $-\frac{5}{4}\leq b < -\frac{3}{4}$, the pair
\begin{equation}\label{bar-1}
\overline{\sigma}(b,w) \coloneqq \big(\cA_b \coloneqq \Coh^b(\PP^2, \cB_0) \cap \ku(\PP^2, \cB_0) ,\ \overline{Z}_{b,w}|_{\mathcal{N}(\cA_b)} \big)
\end{equation}
is a stability condition on $\ku(\PP^2, \cB_0)$. With respect to $\widetilde{\emph{GL}}^+_2$-action, they all lie in the same orbit as the stability condition\footnote{The stability condition $\overline{\sigma}$ is first constructed in~\cite[Section 3]{macri:categorical-invarinat-cubic-threefolds}.}   
\begin{equation}\label{bar}
    \overline{\sigma} = \left(\cA_{-\frac{5}{4}}\,,\, \overline{Z} = \ch_0 + i\left(\ch_1 + \frac{5}{4}\ch_0 \right) \right). 
\end{equation}
\end{Prop}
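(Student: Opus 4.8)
The plan is to establish the two assertions of the proposition separately: first that each $\overline{\sigma}(b,w)$ is a stability condition on $\ku(\PP^2,\cB_0)$, and then that the whole family, together with $\overline{\sigma}$, lies in a single $\widetilde{\text{GL}}^+_2(\R)$-orbit.

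For the first assertion I would simply invoke Proposition \ref{prop_inducstab} applied to the semiorthogonal decomposition \eqref{semi}, in which $\ku(\PP^2,\cB_0)=\langle\cB_1\rangle^{\perp}$, the single exceptional object is $E_1=\cB_1$, and the ambient weak stability condition is $\overline{\sigma}_{b,w}$ on $\Db(\PP^2,\cB_0)$. The three numbered hypotheses are exactly what the paragraph preceding the statement records: $\cB_1\in\Coh^b(\PP^2,\cB_0)$ and $S_{\Db(\PP^2,\cB_0)}(\cB_1)=\cB_0[2]\in\Coh^b(\PP^2,\cB_0)[1]$ hold because $\mu(\cB_1)=-\tfrac34>b\geq-\tfrac54=\mu(\cB_0)$ in the stated range, while $\overline{Z}_{b,w}(\cB_1)\neq0$ since $\cB_1$ has strictly positive imaginary part. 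The remaining requirement, that no nonzero $F\in\cA_b$ satisfies $\overline{Z}_{b,w}(F)=0$, is automatic: $\overline{\sigma}_{b,w}$ is a genuine (not merely weak) stability condition, so its central charge never vanishes on a nonzero object of its heart $\Coh^b(\PP^2,\cB_0)\supset\cA_b$. This produces the stability condition $\overline{\sigma}(b,w)$.

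For the orbit statement I would first treat the family. The parameter set $\{(b,w)\in U:-\tfrac54\leq b<-\tfrac34\}$ is path-connected, being the region above the parabola $w=\tfrac{b^2}{2}+\tfrac{11}{32}$ inside a vertical strip, and $(b,w)\mapsto\overline{\sigma}(b,w)$ is continuous into $\Stab(\ku(\PP^2,\cB_0))$. Since $\mathcal{N}(\ku(\PP^2,\cB_0))$ has rank $2$, the manifold $\Stab$ has real dimension $4=\dim_{\R}\widetilde{\text{GL}}^+_2(\R)$; the action is free on the locus of nondegenerate central charges and, via Bridgeland's local homeomorphism to $\Hom(\mathcal{N},\C)$, its orbits there are open, hence (being a partition into open pieces) clopen. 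As every member of the family has nondegenerate central charge, a connected continuous family must land in one orbit, so all the $\overline{\sigma}(b,w)$ are identified.

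It then remains to place $\overline{\sigma}$ in this orbit, and I would compare it with $\overline{\sigma}(-\tfrac54,w_0)$ for a fixed $w_0>\tfrac98$: both have the same heart $\cA_{-5/4}$ and central charges with the same imaginary part $\ch_1+\tfrac54\ch_0$, the real parts being $\ch_0$ and $-\ch_2+w_0\ch_0$ respectively. Because $\ku(\PP^2,\cB_0)=\langle\cB_1\rangle^{\perp}$ has rank-$2$ numerical lattice, the functional $\ch_2$ equals a combination $p\,\ch_0+q\,\ch_1$ on $\mathcal{N}(\ku(\PP^2,\cB_0))$, with $p,q$ read off from $\chi(\cB_1,-)=0$ using the Euler form recorded before the statement. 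Substituting rewrites the real part of $\overline{Z}_{-5/4,w_0}$ as $a\,\ch_0+c(\ch_1+\tfrac54\ch_0)$, so $\overline{Z}_{-5/4,w_0}=M^{-1}\circ\overline{Z}$ for the shear $M^{-1}=\bigl(\begin{smallmatrix}a&c\\0&1\end{smallmatrix}\bigr)$, which fixes the heart and lies in $\text{GL}^+_2(\R)$ once $a>0$; the latter is forced since $\overline{Z}$ and $\overline{Z}_{-5/4,w_0}$ have equal imaginary part and both send $\cA_{-5/4}$ into the closed upper half-plane. Hence $\overline{\sigma}(-\tfrac54,w_0)=\overline{\sigma}\cdot\tilde{g}$, and with the previous paragraph all $\overline{\sigma}(b,w)$ lie in the orbit of $\overline{\sigma}$.

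The membership checks for $\cB_1$ and $\cB_0[1]$ and the computation of $p,q$ are routine given the preamble. The \textbf{main obstacle} is the orbit argument of the third paragraph — justifying rigorously that $\widetilde{\text{GL}}^+_2(\R)$-orbits are open (freeness on the nondegenerate locus plus Bridgeland's deformation theorem) and that every stability condition in sight has nondegenerate central charge — together with the secondary point of extracting the orientation $a>0$ in the explicit transformation. One could instead try a fully explicit $\text{GL}^+_2(\R)$-comparison for each $b$, but since the heart $\Coh^b(\PP^2,\cB_0)$ changes as $b$ crosses intermediate stable slopes, the connectedness-plus-openness route is the cleaner one.
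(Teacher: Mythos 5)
Your treatment of the first assertion coincides with the paper's: the paper likewise obtains $\overline{\sigma}(b,w)$ by feeding the decomposition \eqref{semi} with the single exceptional object $\cB_1$ into Proposition \ref{prop_inducstab}, the membership checks $\cB_1\in\Coh^b(\PP^2,\cB_0)$ and $S_{\Db(\PP^2,\cB_0)}(\cB_1)=\cB_0[2]\in\Coh^b(\PP^2,\cB_0)[1]$ being exactly the slope computations you cite, and the nonvanishing of $\overline{Z}_{b,w}$ on $\cA_b$ being automatic because $\overline{\sigma}_{b,w}$ is an honest stability condition. Your comparison of $\overline{\sigma}$ with $\overline{\sigma}(-\frac{5}{4},w_0)$ is also essentially the paper's computation, resting on the same relation $\ch_2=-\ch_1-\frac{3}{8}\ch_0$ on $\cN(\ku(\PP^2,\cB_0))$; one caveat is that your claim that $a>0$ is ``forced'' by upper-half-plane considerations presupposes an object of the heart with vanishing imaginary part and $\ch_0\neq 0$, which you have not produced — but this is harmless, since $a=w_0-\frac{7}{8}>0$ can simply be read off.

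The genuine gap is in the step where you vary $b$. Your openness-of-orbits reasoning is itself sound (the action is free at nondegenerate central charges, $\dim_{\R}\Stab=4$, and invariance of domain makes orbits open, hence clopen, in the nondegenerate locus), but the argument consumes as input the assertion that $(b,w)\mapsto\overline{\sigma}(b,w)$ is \emph{continuous} into $\Stab(\ku(\PP^2,\cB_0))$, and this is not a formal consequence of continuity of the central charges. The map $\Stab\to\Hom(\cN,\C)$ is only a local homeomorphism, and distinct stability conditions share central charges (already $\sigma$ and $\sigma[2]$ do), so a pointwise-defined family with continuously varying charges can a priori jump between sheets — and the danger is concentrated precisely where the hearts $\Coh^b(\PP^2,\cB_0)$ change, which, as you yourself note, happens as $b$ crosses slopes of stable modules. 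Ruling this out requires comparing the slicings at nearby parameters, i.e.\ placing the hearts in a common tilt-window and invoking a uniqueness statement; that is exactly the mechanism the paper uses globally: it first replaces $\overline{Z}_{b,w}|_{\cN(\cA_b)}$ by the ordering-equivalent $Z_b=\ch_0+i(\ch_1-b\ch_0)$, observes $\Coh^b(\PP^2,\cB_0)\subset\langle\Coh^{-\frac{5}{4}}(\PP^2,\cB_0),\Coh^{-\frac{5}{4}}(\PP^2,\cB_0)[1]\rangle$, hence $\cA_b\subset\langle\cA_{-\frac{5}{4}},\cA_{-\frac{5}{4}}[1]\rangle$ by \cite[Lemma 4.3]{bayer:stability-conditions-kuznetsov-component}, and then applies \cite[Lemma 8.11]{bayer:the-space-of-stability-conditions-on-abelian-threefolds} to identify $(\cA_b,Z_b)$ with $\overline{\sigma}\cdot g_b$ for an explicit lift $g_b$ of the unipotent matrix $T_b$. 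So your ``cleaner'' topological route does not in fact avoid the heart comparison; it presupposes it. Moreover the direct route is not as messy as you feared: the tilting containment above handles all $b$ in the range $-\frac{5}{4}\leq b<-\frac{3}{4}$ simultaneously, with no wall-by-wall bookkeeping. To repair your proof, either prove continuity of the induced family (which will reduce to the same tilt-window plus \cite[Lemma 8.11]{bayer:the-space-of-stability-conditions-on-abelian-threefolds} argument, locally in $(b,w)$), or simply adopt the paper's explicit comparison.
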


\begin{proof}
The first claim follows from~\cite[Proposition 5.1]{bayer:stability-conditions-kuznetsov-component} (see Proposition~\ref{prop_inducstab}). Arguing as in~\cite[Proposition and Definition 2.15]{LPZ:elliptic}, we have the relation
$$\ch_2(F)=-\ch_1(F) -\frac{3}{8}\rk(F)$$
for every $F \in \ku(\PP^2,\cB_0)$. Thus we can rewrite
$$\Re [\overline{Z}_{b, w}|_{\mathcal{N}(\cA_b)}]=\ch_1 + \left( \frac{3}{8}+w \right) \ch_0.$$
As a consequence,
\begin{align*}
\overline{\mu}_{b, w}|_{\mathcal{N}(\cA_b)} = -\frac{\Re [\overline{Z}_{b, w}|_{\mathcal{N}(\cA_b)}] }{\Im [\overline{Z}_{b, w}|_{\mathcal{N}(\cA_b)}]  }&=\frac{-\ch_1  -\left( \frac{3}{8}+w \right) \ch_0}{\ch_1 -b\ch_0}\\
&= \frac{ -\ch_1  -\left( \frac{3}{8}+w \right) \ch_0 \, + \, \ch_1 -b\ch_0 }{\ch_1 -b\ch_0 } -1 \\
& = - \frac{\left(\frac{3}{8} +w +b \right)\ch_0}{\ch_1 -b\ch_0} -1.
\end{align*}
Since $\frac{3}{8} +w +b >0$ for $(b,w) \in U$, the stability function $\overline{Z}_{b, w}|_{\mathcal{N}(\cA_b)}$ induces the same ordering on $\cA_b$ as the function 
\begin{equation*}
    Z_b \coloneqq \ch_0 + i (\ch_1 -b \ch_0). 
\end{equation*} 
Thus the stability condition $\left(\cA_b, \overline{Z}_{b, w}|_{\mathcal{N}(\cA_b)} \right)$ lies in the same orbit as $\sigma_b = \left(\cA_b, Z_b\right)$ with respect to $\widetilde{\text{GL}}^+_2$-action. We know $Z_b = T_b^{-1} \circ Z_{-\frac{5}{4}}$, where 
$$
T_b = \begin{pmatrix}
1 & 0\\
b + \frac{5}{4} & 1
\end{pmatrix}. 
$$
Since $b+ \frac{5}{4} \geq 0$, there exists a cover $g\_b \coloneqq (T_b, f_b) \in \widetilde{\text{GL}}^+_2(\R)$ such that 
$$
\mathcal{P}_{\sigma_{-\frac{5}{4}}}\left(f_b(0, 1]\right) \subset \langle \cA_{-\frac{5}{4}}, \cA_{-\frac{5}{4}}[1] \rangle.
$$
Also, an easy computation shows that $\Coh^b(\PP^2, \cB_0) \subset \langle \Coh^{-\frac{5}{4}}(\PP^2, \cB_0), \Coh^{-\frac{5}{4}}(\PP^2, \cB_0)[1] \rangle$ (see for instance~\cite[Lemma 3.7]{pertusi:some-remarks-fano-threefolds-index-two}). Restricting to the heart $\cA_b$, this implies
$$\cA_b \subset \langle \cA_{-\frac{5}{4}}, \cA_{-\frac{5}{4}}[1] \rangle$$
by~\cite[Lemma 4.3]{bayer:stability-conditions-kuznetsov-component}. Thus the stability conditions $\sigma_{b}$ and $\sigma_{-\frac{5}{4}}\,.\, g\_b$ have the same central charge and satisfy the conditions of~\cite[Lemma 8.11]{bayer:the-space-of-stability-conditions-on-abelian-threefolds}; hence they are the same. 
\end{proof}

\subsection{Serre invariance} \label{sec-Srreinvariance}
In this section we show the following result. 

\begin{Thm}\label{thm-S-B0}
The stability condition $\overline{\sigma} = \left(\cA_{-\frac{5}{4}} , \overline{Z} \right)$ from \eqref{bar} on $\ku(\PP^2, \cB_0)$ is Serre invariant.
\end{Thm}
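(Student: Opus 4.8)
The plan is to exhibit an explicit element $\tilde g = (g, M) \in \widetilde{\text{GL}}^+_2(\R)$ with $\cS . \overline\sigma = \overline\sigma . \tilde g$, where $\cS = \cS_{\ku(\PP^2, \cB_0)}$ denotes the Serre functor of $\ku(\PP^2, \cB_0)$. Since $\Xi$ is an equivalence, $\cS$ is conjugate to the Serre functor of $\ku(X)$, and the relations $\mathrm{O}|_{\ku(X)}^3 = [-1]$ and $\cS = \mathrm{O}|_{\ku(X)}[2]$ recalled above give $\cS^3 = [5]$. Following the strategy in the proof of Proposition \ref{prop.2-dim}, I would split the verification of $\cS . \overline\sigma = \overline\sigma . \tilde g$ into two parts: first matching the central charges, and then checking the hypothesis of \cite[Lemma 8.11]{bayer:the-space-of-stability-conditions-on-abelian-threefolds} on the two hearts, which forces the two stability conditions to coincide.

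For the central charge, observe that $\cS^3 = [5]$ implies $\cS_* ^3 = -\mathrm{id}$ on the rank-two lattice $\mathcal N(\ku(\PP^2, \cB_0))$, so the minimal polynomial of $\cS_*$ divides $(x+1)(x^2 - x + 1)$. Since $\mathcal N(\ku(X)) = \Z\,[\cI_\ell] \oplus \Z\,[\cS(\cI_\ell)]$ and $\cS_*[\cI_\ell] = [\cS(\cI_\ell)]$ is independent of $[\cI_\ell]$, we have $\cS_* \neq -\mathrm{id}$; hence the characteristic polynomial of $\cS_*$ is exactly $x^2 - x + 1$, giving $\det \cS_* = 1 > 0$ and eigenvalues $e^{\pm i \pi/3}$. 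Consequently $M := \overline Z \circ \cS_* \circ \overline Z^{-1}$ (which makes sense because $\overline Z$ is an $\R$-linear isomorphism $\mathcal N(\ku(\PP^2,\cB_0)) \otimes \R \cong \C$) lies in $\text{GL}^+_2(\R)$ and satisfies $\overline Z \circ \cS_*^{-1} = M^{-1} \circ \overline Z$. Choosing any lift $\tilde g = (g, M) \in \widetilde{\text{GL}}^+_2(\R)$ of $M$, the stability conditions $\cS . \overline\sigma$ and $\overline\sigma . \tilde g$ then have the same central charge.

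It remains to match the hearts, which I expect to be the main obstacle. Writing $\cP$ for the slicing of $\overline\sigma$, I would show that for the appropriate lift $g$ (with the normalisation $g(\phi) = \phi + \tfrac53$) one has the containment
\begin{equation*}
\cS\big(\cA_{-\frac54}\big) \subseteq \big\langle \cP\big(g(0,1]\big),\ \cP\big(g(0,1]\big)[1] \big\rangle,
\end{equation*}
so that $\cS . \overline\sigma$ and $\overline\sigma . \tilde g$ satisfy the hypotheses of \cite[Lemma 8.11]{bayer:the-space-of-stability-conditions-on-abelian-threefolds} and are therefore equal. To establish this containment I would use the explicit description $\cS_{\Db(\PP^2, \cB_0)} = - \otimes_{\cB_0} \cB_{-1}[2]$: the twist $-\otimes_{\cB_0}\cB_{-1}$ is an exact autoequivalence of $\Coh(\PP^2, \cB_0)$ which preserves slope-stability and lowers the slope by $\tfrac12$, hence sends $\Coh^b(\PP^2, \cB_0)$ to $\Coh^{\,b - 1/2}(\PP^2, \cB_0)$, and $\cS$ on the subcategory is recovered from it by the left mutation (projection) through $\langle \cB_1 \rangle$. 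The technical core is to control this projection on the tilted heart $\cA_{-\frac54}$ and to bound the Harder--Narasimhan phases $\phi^\pm_{\overline\sigma}(\cS(E))$ of the image of an object $E \in \cA_{-\frac54}$ within a window of width $2$; the shift $[2]$ together with the half-integer slope shift of the twist is what accounts for the value $\tfrac53 \in (1,2)$, consistent with the strict inequalities predicted in Proposition \ref{prop-PY}. Once the phases of the HN factors of $\cS(E)$ are confined to $g(0,1] \cup \big(g(0,1]+1\big)$, the displayed containment follows and the proof concludes.
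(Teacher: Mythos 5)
Your overall architecture — act by $\widetilde{\text{GL}}^+_2(\R)$ to match central charges, then force equality of the two stability conditions via \cite[Lemma 8.11]{bayer:the-space-of-stability-conditions-on-abelian-threefolds} once both hearts sit inside a common $\langle \cB, \cB[1]\rangle$ — is exactly the paper's, and your central-charge step is correct and even a little slicker: the paper computes the matrix $T$ explicitly on $\overline{Z}(v_1)=2i$, $\overline{Z}(v_2)=4+2i$ using $\cS_*v_1=v_2$, $\cS_*v_2=v_2-v_1$, whereas your eigenvalue argument ($\cS_*^3=-\mathrm{id}$, $\cS_*\neq-\mathrm{id}$, hence characteristic polynomial $x^2-x+1$, $\det=1$) reaches $M\in\text{GL}^+_2(\R)$ abstractly. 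The genuine gap is that the heart containment, which is the entire technical content of the theorem, is only announced: you say you ``would show'' that the HN phases of $\cS(E)$ for $E\in\cA_{-\frac54}$ are confined to a window of width $2$, but give no argument. In the paper this is precisely Proposition \ref{prop-heart} together with Lemma \ref{lem-tensor}, proved by a diagram chase using that $\Hom^{\bullet}(\cB_0[1],E)$ is concentrated in degrees $0,1,2$ (Serre duality plus $\cB_0[1],\cB_{-1}[1]\in\Coh^{-\frac54}(\PP^2,\cB_0)$) and that $\cB_0[1]$ is $\overline{\sigma}_{b=-\frac54,w}$-stable of phase one, which absorbs the degree-zero part of the evaluation into the heart.

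Worse, the route you sketch toward that containment rests on an incorrect mutation formula. The left mutation $L_{\cB_1}$ is the \emph{left} adjoint $i^*$ of the inclusion $i\colon\ku(\PP^2,\cB_0)\hookrightarrow\Db(\PP^2,\cB_0)$, and $i^*\circ S_{\Db(\PP^2,\cB_0)}^{-1}\circ i$ computes the \emph{inverse} Serre functor — this is the paper's formula $\cS^{-1}_{\cB_0}\cong L_{\cB_1}(-\otimes\cB_1)[-2]\cong(-\otimes\cB_1)\circ L_{\cB_0}[-2]$. The Serre functor itself is $i^!\circ S_{\Db(\PP^2,\cB_0)}\circ i$ with $i^!$ the \emph{right} adjoint; concretely $\cS_{\cB_0}(E)\cong R_{\cB_0}\big(E\otimes_{\cB_0}\cB_{-1}\big)[2]$, a right mutation through $\cB_0$, not $L_{\cB_1}(E\otimes\cB_{-1})[2]$ as you assert. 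Indeed, composing your candidate with the true inverse yields $L_{\cB_1}\circ L_{\cB_0}$ on $\ku(\PP^2,\cB_0)$, which is not the identity since $\Hom^{\bullet}(\cB_0,E)\neq 0$ in general. This is exactly why the paper does not attack $\cS$ directly but replaces it by $\cS^{-1}_{\cB_0}[2]$ (legitimate because $\cS^2_{\cB_0}[-3]=\cS^{-1}_{\cB_0}[2]$, so Serre-invariance may be tested on this functor), where both factors $L_{\cB_0}$ and $-\otimes\cB_1$ are controllable on the tilted heart. A further, minor, point: you cannot impose the normalisation $g(\phi)=\phi+\frac53$, since $M$ has eigenvalues $e^{\pm i\pi/3}$ but is not a rotation, so $g$ is not a translation; all one can (and need) do is fix the lift by requiring $f(0,1)$ to land in a suitable interval, as the paper does with $f(0,1)\subset(0,2)$ for $\cS^{-1}_{\cB_0}[2]$.
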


Therefore, Proposition~\ref{prop.2-dim} implies that all stability conditions $\overline{\sigma}(b, w)$ from \eqref{bar-1} are Serre invariant. By~\cite{Bondal} (see also~\cite[Proposition 3.8]{kuznetsov-derived-category-cubic-3folds-V14}),  the Serre functor $\cS_{\cB_0}$ of $\ku(\PP^2,\cB_0)$ satisfies
$$\cS^{-1}_{\cB_0} \cong L_{\cB_1}(- \otimes \cB_1)[-2] \cong (- \otimes \cB_1) \circ L_{\cB_0}(-)[-2].$$
We first describe the image of the heart after the action of $\cS_{\cB_0}^{-1}$.

\begin{Prop}\label{prop-heart}
We have $L_{\cB_0}(\cA_{-\frac{5}{4}}) \otimes \cB_{1} \subset \langle \cA_{-\frac{5}{4}} , \cA_{-\frac{5}{4}}[1] \rangle$. 
\end{Prop}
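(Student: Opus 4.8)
The plan is to study the exact functor $\Phi := (-\otimes\cB_1)\circ L_{\cB_0}$ directly on objects $E\in\cA_{-\frac54}$, exploiting that by the Serre-functor formula $\Phi(E)=\cS_{\cB_0}^{-1}(E)[2]$, so in particular $\Phi(E)\in\ku(\PP^2,\cB_0)$. Since $\cB_0$ is exceptional, the left-mutation triangle reads $R\Hom(\cB_0,E)\otimes\cB_0\to E\to L_{\cB_0}(E)$; tensoring by $\cB_1$ and using that $\cB_0$ is the unit for $\otimes_{\cB_0}$ (so $\cB_0\otimes\cB_1=\cB_1$) gives
\[
R\Hom(\cB_0,E)\otimes\cB_1 \longrightarrow E\otimes\cB_1 \longrightarrow \Phi(E).
\]
The first task is to bound the amplitude of $R\Hom(\cB_0,E)$. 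Since $\mu_H(\cB_0)=-\tfrac54$ we have $\cB_0[1]\in\Coh^{-\frac54}(\PP^2,\cB_0)$, so $\Hom(\cB_0,E[j])=\Hom(\cB_0[1],E[j+1])=0$ for $j\le-2$ (both objects lie in the heart). For the opposite end I would invoke Serre duality together with $S_{\Db(\PP^2,\cB_0)}(\cB_0)=\cB_{-1}[2]$, rewriting $\Hom(\cB_0,E[j])\cong\Hom(E,\cB_{-1}[2-j])^\vee$; as $\mu_H(\cB_{-1})=-\tfrac74<-\tfrac54$ gives $\cB_{-1}[1]\in\Coh^{-\frac54}(\PP^2,\cB_0)$, this vanishes for $j\ge2$. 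Hence $R\Hom(\cB_0,E)$ is supported in degrees $-1,0,1$.

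The second ingredient is the behaviour of $-\otimes\cB_1$. I would check that it is exact on $\Coh(\PP^2,\cB_0)$ and raises $\mu_H$ by $\tfrac12$: on Chern characters it fixes the rank and sends $\ch_1\mapsto\ch_1+\tfrac12\ch_0$, as one reads off from $\cB_0\mapsto\cB_1$. Therefore it carries $\Coh^{-\frac54}(\PP^2,\cB_0)$ t-exactly onto $\Coh^{-\frac34}(\PP^2,\cB_0)$. Consequently $E\otimes\cB_1\in\Coh^{-\frac34}(\PP^2,\cB_0)\subset\langle\Coh^{-\frac54}(\PP^2,\cB_0),\Coh^{-\frac54}(\PP^2,\cB_0)[1]\rangle$ by the $\cB_0$-analogue of \cite[Lemma 3.7]{pertusi:some-remarks-fano-threefolds-index-two}, while $\mu_H(\cB_1)=-\tfrac34>-\tfrac54$ places $\cB_1\in\Coh^{-\frac54}(\PP^2,\cB_0)$ in cohomological degree $0$.

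Now I would run the long exact sequence of cohomology with respect to the heart $\Coh^{-\frac54}(\PP^2,\cB_0)$ attached to the displayed triangle. The middle term has cohomology only in degrees $-1,0$, and the left term $R\Hom(\cB_0,E)\otimes\cB_1$ only in degrees $-1,0,1$, its degree-$(-1)$ piece being $\Hom(\cB_0,E[-1])\otimes\cB_1$. A diagram chase then gives $\cH^i(\Phi(E))=0$ for $i\notin\{-2,-1,0\}$, the only potentially obstructing term being
\[
\cH^{-2}(\Phi(E))=\ker\!\big(\Hom(\cB_0,E[-1])\otimes\cB_1\longrightarrow\cH^{-1}(E\otimes\cB_1)\big).
\]
Controlling this lowest cohomology is the crux of the argument, and is precisely where membership in the Kuznetsov component enters: since $\cB_1$ is a stable object of the heart, any nonzero $\cH^{-2}(\Phi(E))$ is a direct sum $\cB_1^{\oplus m}$, whence the truncation map $\cB_1^{\oplus m}[2]\to\Phi(E)$ is nonzero and yields $\Hom(\cB_1,\Phi(E)[-2])\neq0$, contradicting $\Phi(E)\in\ku(\PP^2,\cB_0)=\langle\cB_1\rangle^{\perp}$. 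Thus $\cH^{-2}(\Phi(E))=0$ and $\Phi(E)\in\langle\Coh^{-\frac54}(\PP^2,\cB_0),\Coh^{-\frac54}(\PP^2,\cB_0)[1]\rangle$. Finally, as $\Phi(E)\in\ku(\PP^2,\cB_0)$, I would restrict this containment to the induced heart via \cite[Lemma 4.3]{bayer:stability-conditions-kuznetsov-component}, obtaining $\Phi(E)\in\langle\cA_{-\frac54},\cA_{-\frac54}[1]\rangle$, as required. The main obstacle is exactly the lower-end term $\cH^{-2}$; the delicate point is that it cannot be killed by amplitude bookkeeping alone and instead requires the output landing in $\ku(\PP^2,\cB_0)$.
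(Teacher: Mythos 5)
There is a genuine gap, and it sits exactly at the step you flag as the crux. Your reduction of the problem to the lowest heart-cohomology
\[
\cH^{-2}(\Phi(E))\;=\;\ker\bigl(\Hom(\cB_0,E[-1])\otimes\cB_1\longrightarrow\cH^{-1}(E\otimes\cB_1)\bigr)
\]
is correct, but the claim ``since $\cB_1$ is a stable object of the heart, any nonzero $\cH^{-2}(\Phi(E))$ is a direct sum $\cB_1^{\oplus m}$'' is unjustified and false as a general principle. That kind of rigidity for subobjects of $\cB_1^{\oplus k}$ would follow if $\cB_1$ had \emph{maximal} phase in $\Coh^{-\frac54}(\PP^2,\cB_0)$ (then every subobject would be semistable of phase one with all Jordan--H\"older factors $\cB_1$), but $\Im \overline{Z}_{-\frac54,w}(\cB_1)=\ch_1(\cB_1)+\frac54\rk(\cB_1)=2\neq0$, so $\cB_1$ has phase strictly less than one. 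Concretely, a monomorphism $K\hookrightarrow\cB_1^{\oplus k}$ in the tilted heart forces $K$ to be a sheaf lying in $\cT_{-\frac54}$, and the heart-theoretic kernel of your map is the slope-$>-\frac54$ part of the sheaf-theoretic kernel $N\subseteq\cB_1^{\oplus k}$; nothing in stability of $\cB_1$ prevents this from being, say, a slope-stable sheaf of slope strictly between $-\frac54$ and $-\frac34$, in which case $\Hom(\cB_1,K)=0$ for slope reasons. Since your contradiction with $\Phi(E)\in\ku(\PP^2,\cB_0)={}\cB_1^{\perp}$ requires $\Hom(\cB_1,\cH^{-2}(\Phi(E)))\neq0$ (not merely $\cH^{-2}\neq0$), the argument does not close.

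Your diagnosis that the $\cH^{-2}$ term ``requires the output landing in $\ku(\PP^2,\cB_0)$'' is also contradicted by the paper: its proof establishes $L_{\cB_0}(E)\otimes\cB_1\in\langle\Coh^{-\frac54}(\PP^2,\cB_0),\Coh^{-\frac54}(\PP^2,\cB_0)[1]\rangle$ for \emph{every} $E$ in the tilted heart, with no use of the Kuznetsov condition at that stage. The actual mechanism is that $\cB_0[1]$ is $\overline{\sigma}_{-\frac54,w}$-stable \emph{of phase one}, hence the evaluation $\Hom(\cB_0[1],E)\otimes\cB_0[1]\to E$ is injective in the heart (the standard socle argument for a simple object of maximal phase); applying the exact functor $-\otimes\cB_1$, which by the computation in Lemma \ref{lem-tensor} carries $\Coh^{-\frac54}(\PP^2,\cB_0)$ t-exactly into $\langle\Coh^{-\frac54},\Coh^{-\frac54}[1]\rangle$ (your stronger observation that it is t-exact onto $\Coh^{-\frac34}$ is fine, since $-\otimes\cB_1$ is an equivalence raising slopes by $\frac12$), this monomorphism persists, and in your long exact sequence it says precisely that the crux map $\Hom(\cB_0,E[-1])\otimes\cB_1\to\cH^{-1}(E\otimes\cB_1)$ is injective, i.e.\ $\cH^{-2}(\Phi(E))=0$ outright. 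Membership in $\ku(\PP^2,\cB_0)$ enters only at the very end, to restrict the containment to the induced heart $\cA_{-\frac54}$ --- where your appeal to \cite[Lemma 4.3]{bayer:stability-conditions-kuznetsov-component} matches the paper's phase-comparison. So the skeleton of your proof (mutation triangle, amplitude bounds via Serre duality with $\cS(\cB_0)=\cB_{-1}[2]$, long exact sequence) is sound; the crux must be repaired by the phase-one stability of $\cB_0[1]$ and injectivity of the evaluation, not by orthogonality to $\cB_1$.
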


\begin{proof}
Let $E \in \cA_{-\frac{5}{4}} = \Coh^{-\frac{5}{4}}(\PP^2, \cB_0) \cap \ku(\PP^2, \cB_0)$, and set $E':=L_{\cB_0}(E)$. We first show that 
\begin{equation}\label{claim}
    E' \otimes \cB_1=L_{\cB_0}(E) \otimes \cB_1 \in \langle \Coh^{-\frac{5}{4}}(\PP^2, \cB_0), \Coh^{-\frac{5}{4}}(\PP^2, \cB_0)[1] \rangle. 
\end{equation}
By Serre duality and the fact that $\cB_0[1], \cB_{-1}[1] \in \Coh^{-\frac{5}{4}}(\PP^2, \cB_0)$, we have that $\Hom^i(\cB_0[1], E)$ can be non-zero only for $i=0,1,2$. Thus $E'$ is defined by the triangle
$$\cB_0[1]^{\oplus k_1} \oplus \cB_0^{\oplus k_2} \oplus \cB_0[-1]^{\oplus k_3} \xrightarrow{\ev}  E \to E'.$$
Since $\cB_0[1]$ is stable of phase $1$ with respect to $\overline{\sigma}_{b =-\frac{5}{4}, w}$,
the map $f$ in the following commutative diagram is injective in $\Coh^{-\frac{5}{4}}(\PP^2, \cB_0)$:
$$
\xymatrix{
\cB_0[1]^{\oplus k_1} \ar[d] \ar[r]^{f} & E \ar[d]^{\text{id}} \ar[r] & G \ar[d]\\
\cB_0[1]^{\oplus k_1} \oplus \cB_0^{\oplus k_2} \oplus \cB_0[-1]^{k_3} \ar[d] \ar[r] & E \ar[d] \ar[r] & E' \ar[d] \\
\cB_0[1]^{\oplus k_2} \oplus \cB_0[-1]^{k_3} \ar[r] & 0 \ar[r] & \cB_0[1]^{\oplus k_2} \oplus \cB_0^{k_3}.
}
$$
Hence, we obtain $G \in \Coh^{-\frac{5}{4}}(\PP^2, \cB_0)$. Tensoring the third column by $\cB_1$ gives the exact triangle 
$$G \otimes \cB_1 \to E' \otimes \cB_1 \to \cB_1^{\oplus k_2}[1] \oplus \cB_1^{\oplus k_3}.$$
By  Lemma~\ref{lem-tensor} below, we have $G \otimes \cB_1 \in \langle \Coh^{-\frac{5}{4}}(\PP^2, \cB_0), \Coh^{-\frac{5}{4}}(\PP^2, \cB_0)[1] \rangle$; thus  claim \eqref{claim} follows. 

Since $L_{\cB_0}(E) \otimes \cB_1 = L_{\cB_1}(E \otimes \cB_1)$, we know $L_{\cB_0}(E) \otimes \cB_1 \in \ku(\PP^2, \cB_0)$. If we consider the HN filtration of $L_{\cB_0}(E) \otimes \cB_1$ with respect to $\overline{\sigma}_{b=-\frac{5}{4}, w}$, claim~\eqref{claim} implies that the maximum and minimum phases satisfy 
\begin{equation*}
    0< \phi^{-}_{\overline{\sigma}_{b=-\frac{5}{4}, w}}(L_{\cB_0}(E) \otimes \cB_1) \leq \phi^{+}_{\overline{\sigma}_{b=-\frac{5}{4}, w}}(L_{\cB_0}(E) \otimes \cB_1) \leq 2. 
\end{equation*}
Then the definition of stability condition $\overline{\sigma}$ implies that
\begin{equation*}
\begin{split}
    0< \phi^{-}_{\overline{\sigma}_{b=-\frac{5}{4}, w}}(L_{\cB_0}(E) \otimes \cB_1) \leq \phi^{-}_{\overline{\sigma}}(L_{\cB_0}(E) \otimes \cB_1)& \leq \phi^{+}_{\overline{\sigma}}(L_{\cB_0}(E) \otimes \cB_1)\\
    &\leq 
    \phi^{+}_{\overline{\sigma}_{b=-\frac{5}{4}, w}}(L_{\cB_0}(E) \otimes \cB_1) \leq 2. 
\end{split}    
\end{equation*}
Therefore, 
\begin{equation*}
    L_{\cB_0}(E) \otimes \cB_1 \in \langle \Coh^{-\frac{5}{4}}(\PP^2, \cB_0)\cap \ku(\PP^2, \cB_0), \Coh^{-\frac{5}{4}}(\PP^2, \cB_0)\cap \ku(\PP^2, \cB_0)[1] \rangle,
\end{equation*} 
as claimed. 
\end{proof}   

\begin{Lem}\label{lem-tensor}
We have $\Coh^b(\PP^2,\cB_0) \otimes \cB_1 \subset \langle \Coh^b(\PP^2,\cB_0), \Coh^b(\PP^2,\cB_0)[1] \rangle$.
\end{Lem}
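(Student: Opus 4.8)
The plan is to show that tensoring by $\cB_1$ carries the tilted heart $\Coh^b(\PP^2,\cB_0)$ to the tilted heart at the \emph{larger} slope $b+\tfrac12$, and then to invoke the standard comparison of nested tilted hearts. First I would record that $-\otimes_{\cB_0}\cB_1\colon\Coh(\PP^2,\cB_0)\to\Coh(\PP^2,\cB_0)$ is an exact autoequivalence of the abelian category of right $\cB_0$-modules, with quasi-inverse $-\otimes_{\cB_0}\cB_{-1}$; indeed $\cB_{-1}$ is the invertible bimodule underlying the Serre functor $\cS_{\Db(\PP^2,\cB_0)}=-\otimes_{\cB_0}\cB_{-1}[2]$, and $\cB_1\otimes_{\cB_0}\cB_{-1}\cong\cB_0\cong\cB_{-1}\otimes_{\cB_0}\cB_1$. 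In particular it preserves short exact sequences, carries torsion objects to torsion objects and torsion-free objects to torsion-free objects, and induces an isomorphism of subobject lattices.

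Second, I would pin down its effect on the slope function. The induced automorphism $T$ of $\mathcal{N}(\PP^2,\cB_0)$ sends $[\cB_i]\mapsto[\cB_{i+1}]$; using the given $\ch(\cB_{-1})=(4,-7,\tfrac{15}{2})$, $\ch(\cB_0)=(4,-5,\tfrac92)$, $\ch(\cB_1)=(4,-3,\tfrac52)$ together with $\ch(\cB_2)=\ch(\cB_0\otimes\cO_{\PP^2}(1))=(4,-1,\tfrac32)$ (Clifford periodicity $\cB_{i+2}\cong\cB_i\otimes\cO_{\PP^2}(1)$), one checks on the basis $\{[\cB_{-1}],[\cB_0],[\cB_1]\}$ that $T$ preserves $\rk=\ch_0$ and satisfies $\deg\circ T=\deg+\tfrac12\ch_0$. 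Hence $\mu(F\otimes\cB_1)=\mu(F)+\tfrac12$ for every positive-rank $F$, while rank-zero (torsion) objects stay torsion. Since $-\otimes\cB_1$ is an equivalence, subobjects of $F\otimes\cB_1$ are exactly the $A\otimes\cB_1$ for $A\subseteq F$, and the slope shift is the same constant $\tfrac12$ on sub and quotient; therefore $-\otimes\cB_1$ preserves slope-semistability and the total order of slopes, so it carries Harder--Narasimhan filtrations to Harder--Narasimhan filtrations.

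It follows that the torsion pair $(\cT^b,\cF^b)$ with $\cT^b=\{E:\mu^-(E)>b\}$ and $\cF^b=\{E:\mu^+(E)\le b\}$ that defines $\Coh^b(\PP^2,\cB_0)$ is sent by $-\otimes\cB_1$ to the torsion pair $(\cT^{b+1/2},\cF^{b+1/2})$; the inclusions in both directions follow by applying $-\otimes\cB_1$ and its inverse $-\otimes\cB_{-1}$, which shifts slopes by $-\tfrac12$. Consequently
\begin{equation*}
\Coh^b(\PP^2,\cB_0)\otimes\cB_1=\big\langle(\cF^b\otimes\cB_1)[1],\,\cT^b\otimes\cB_1\big\rangle=\big\langle\cF^{b+1/2}[1],\,\cT^{b+1/2}\big\rangle=\Coh^{b+\frac12}(\PP^2,\cB_0).
\end{equation*}
Since $b+\tfrac12>b$, the standard comparison of nested tilted hearts (see \cite[Lemma 3.7]{pertusi:some-remarks-fano-threefolds-index-two}) gives $\Coh^{b+\frac12}(\PP^2,\cB_0)\subset\langle\Coh^b(\PP^2,\cB_0),\Coh^b(\PP^2,\cB_0)[1]\rangle$, which is the assertion.

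The main obstacle is the input of the first two paragraphs: verifying that $-\otimes\cB_1$ is genuinely an exact autoequivalence of $\Coh(\PP^2,\cB_0)$ and that it translates the slope by the constant $+\tfrac12$; once this is in place the rest is the routine commutative-type tilting comparison. The one piece of arithmetic to watch is the value of the shift, which I would anchor by $\cB_0\otimes\cB_1=\cB_1$ together with the linear check on the full basis above, and I should also confirm that torsion sheaves are preserved so that the ``$\mu=+\infty$'' part of the torsion pair behaves correctly under the equivalence.
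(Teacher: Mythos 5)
Your proposal is correct, and it reaches the lemma by a genuinely different (if closely related) route than the paper. The paper's proof works directly with slope-semistable objects: using flatness of $\cB_1$ and the computation $\ch_1(F \otimes \cB_1) = \ch_1(F) + \frac{1}{2}\rk(F)$, it splits into the cases $\mu(F) > b$, $\mu(F) \leq b < \mu(F) + \frac{1}{2}$, and $\mu(F) + \frac{1}{2} \leq b$, placing $F \otimes \cB_1$ or $F \otimes \cB_1[1]$ in $\Coh^b(\PP^2,\cB_0)$ accordingly, and leaves the extension from semistable sheaves to arbitrary objects of the tilted heart (via HN filtrations and extension closure) implicit; notably, it asserts without justification that $F \otimes \cB_1$ remains slope-semistable. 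You instead exploit that $\cB_1$ is an invertible bimodule, so that $- \otimes \cB_1$ is an exact autoequivalence of $\Coh(\PP^2,\cB_0)$ shifting all slopes by the constant $+\frac{1}{2}$ and preserving torsion; this both supplies the semistability preservation the paper takes for granted and yields the sharper statement $\Coh^b(\PP^2,\cB_0) \otimes \cB_1 = \Coh^{b+\frac{1}{2}}(\PP^2,\cB_0)$, after which the lemma is exactly the nested tilted hearts comparison of \cite[Lemma 3.7]{pertusi:some-remarks-fano-threefolds-index-two} --- the same lemma the paper itself invokes in the proof of Proposition \ref{prop.2-dim}, so citing it here is consistent with the paper's own practice. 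The computational core (the degree shift on $\mathcal{N}(\PP^2,\cB_0)$, anchored by $\cB_i \otimes_{\cB_0} \cB_1 \cong \cB_{i+1}$ and the periodicity $\cB_{i+2} \cong \cB_i \otimes \cO_{\PP^2}(1)$, which your check $\ch(\cB_2) = (4,-1,\frac{3}{2})$ confirms) is identical in both arguments; your packaging buys the stronger identification of the image heart and an explicit treatment of general, non-semistable objects via the torsion pair, at the modest cost of needing invertibility of $\cB_1$, which is indeed available since the Serre functor of $\Db(\PP^2,\cB_0)$ is $- \otimes_{\cB_0} \cB_{-1}[2]$.
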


\begin{proof}
Since $\cB_1$ is a flat $\cB_0$-module, by \eqref{eq_numgrgr}, for $F \in \Coh(\PP^2, \cB_0)$, we have that
$$\ch_1(F \otimes \cB_1)=\ch_1(F)+\frac{1}{2}\rk(F).$$
If $F \in \Coh(\PP^2, \cB_0)$ is slope semistable with $\mu(F)> b$, then $F \otimes \cB_1$ is slope semistable with slope $\mu(F)+\frac{1}{2}> b$. Then $F \otimes \cB_1$ is in $\Coh^b(\PP^2,\cB_0)$. Otherwise, assume $F \in \Coh(\PP^2, \cB_0)$ is slope semistable with $\mu(F) \leq b$. If $\mu(F)+\frac{1}{2} \leq b$, then $F \otimes \cB_1[1] \in \Coh^b(\PP^2,\cB_0)$, while if $\mu(F)+\frac{1}{2} > b$, then $F \otimes \cB_1 \in \Coh^b(\PP^2,\cB_0)$ and thus $F \otimes \cB_1[1] \in \Coh^b(\PP^2,\cB_0)[1]$. This implies the claim.
\end{proof}

\begin{proof}[Proof of Theorem~\ref{thm-S-B0}]
Since $\cS_{\cB_0}^2[-3] = \cS_{\cB_0}^{-1}[2]$, we only need to find a $g \coloneqq (T, f) \in \widetilde{\text{GL}}^+_2(\R)$ such that
\begin{equation*}
 \cS_{\cB_0}^{-1}[2]\, .\, \overline{\sigma}   = \overline{\sigma}\,.\,g.
\end{equation*}
We know $\cS_{\cB_0}^{-1}[2]\, .\, \overline{\sigma}  = \left( \overline{Z}  \circ \cS_{\cB_0}[-2]\, ,\,\cS_{\cB_0}^{-1}[2]\left(\cA_{-\frac{5}{4}} \cap \ku(\PP^2, \cB_0)\right)\right)$. By~\cite[Remark 2.2 and Proposition 2.12]{macri:acm-bundles-cubic-3fold}, the classes $v_1 \coloneqq [\Xi(\cI_\ell)]=[\cB_1]-[\cB_0]$ and $v_2 \coloneqq [\Xi(S_{\ku(X)}(\cI_\ell))]=2[\cB_0]-[\cB_{-1}]$ are the basis of $\cN(\ku(\PP^2,\cB_0))$. Also, \cite[Lemma 1.30]{huyb-book-FM} implies $\cS_{\cB_0}(\Xi(\cI_\ell)) = \Xi(\cS_{\ku(X)}(\cI_{\ell})) $ and 
$$
\left[\cS_{\cB_0}\left(\Xi\left(\cS_{\ku(X)}(\cI_\ell)\right)\right)\right] = \left[\Xi\left(\cS^2_{\ku(X)}(\cI_{\ell}) \right)\right] = \left[\Xi\left(\cS_{\ku(X)}(\cI_{\ell}) \right)\right]- \left[\Xi(\cI_{\ell})\right]. 
$$
Therefore, $\overline{Z}  \circ \cS_{\cB_0} = T^{-1} \circ  \overline{Z} $ for a linear invertible function $T \colon \mathbb{R}^2 \rightarrow \mathbb{R}^2$ defined as 
\begin{equation*}
T^{-1}\left[\overline{Z}(v_1)\right] =  \overline{Z} (v_2) \quad \text{and} \quad T^{-1}\left[ \overline{Z} (v_2)\right] =  \overline{Z} (v_2) - \overline{Z} (v_1). 
\end{equation*}
Thus
\begin{equation*}
    T\left[ \overline{Z}(v_1)\right] = \overline{Z}(v_1) -\overline{Z}(v_2) \quad \text{and} \quad T \left[\overline{Z}(v_2)\right] =  \overline{Z} (v_1). 
\end{equation*}
We know $\overline{Z}(v_1) = 2 i$ and $\overline{Z}(v_2) = 4+ 2i$, so one can easily check that the linear function $T$ with respect to the standard basis can be represented as  
$$
\begin{pmatrix}
1 & -2\\
\frac{1}{2} & \hphantom{{-}}0
\end{pmatrix}. 
$$
Thus there exists a cover $(T, f) \in \widetilde{\text{GL}}^+_2(\R)$ such that $f(0, 1) \subset (0, 2)$, so 
$$\mathcal{P}_{\overline{\sigma}}(f(0, 1]) \subset \langle \cA_{-\frac{5}{4}}, \cA_{-\frac{5}{4}}[1] \rangle.$$ 
Thus the stability conditions $\sigma \cdot g$ and $((- \otimes \cB_1) \circ L_{\cB_0}) \cdot \sigma$ have the same stability functions, and the condition of~\cite[Lemma 8.11]{bayer:the-space-of-stability-conditions-on-abelian-threefolds} holds for their hearts by Proposition~\ref{prop-heart}. Hence they are the same stability conditions. 
\end{proof}

\begin{Cor}
\label{cor_barsigmavssigmaalphabeta}
The stability condition $\overline{\sigma}$ from \eqref{bar} and the stability conditions $\overline{\sigma}(b, w)$ defined in \eqref{bar-1} are in the same orbit as $\sigma(\alpha,\beta)$ defined in Theorem~\ref{thm_stabcondinduced} with respect to the  $\widetilde{\emph{GL}}^+_2(\R)$-action.    
\end{Cor}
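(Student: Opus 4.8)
The plan is to reduce the statement to the uniqueness result for Serre-invariant stability conditions on $\ku(X)$ established in Corollary \ref{cor-s-invariant-ku}. The key observation is that, via the equivalence $\Xi\colon \ku(X)\xrightarrow{\sim}\ku(\PP^2,\cB_0)$, every stability condition appearing in the statement lives on (an equivalent copy of) a single category, and each of them is Serre-invariant; once this is verified, Corollary \ref{cor-s-invariant-ku} forces them all into one $\widetilde{\text{GL}}^+_2(\R)$-orbit.

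First I would record the Serre-invariance of all the players. On the noncommutative side, $\overline{\sigma}$ is Serre-invariant by Theorem \ref{thm-S-B0}, and by Proposition \ref{prop.2-dim} each $\overline{\sigma}(b,w)$ lies in the $\widetilde{\text{GL}}^+_2(\R)$-orbit of $\overline{\sigma}$. Since the left action of the Serre functor and the right action of $\widetilde{\text{GL}}^+_2(\R)$ on the stability manifold commute (Section \ref{sec-review}), Serre-invariance in the sense of Definition \ref{def-Sinvstab} is stable under the $\widetilde{\text{GL}}^+_2(\R)$-action, so every $\overline{\sigma}(b,w)$ is Serre-invariant as well. On the commutative side, $\sigma(\alpha,\beta)$ is Serre-invariant for $(\alpha,\beta)\in V$ by Theorem \ref{thm_stabcondinduced}.

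Next I would transport $\overline{\sigma}$ and the $\overline{\sigma}(b,w)$ to $\ku(X)$ along $\Xi$. Being an exact equivalence, $\Xi$ commutes with the Serre functors, $\Xi\circ\cS\cong\cS_{\cB_0}\circ\Xi$, and it induces an isomorphism of stability manifolds $\Stab(\ku(X))\cong\Stab(\ku(\PP^2,\cB_0))$ that is equivariant both for the right $\widetilde{\text{GL}}^+_2(\R)$-action and for the left action of the Serre functor (this is the autoequivalence action recalled in Section \ref{sec-review}). Consequently, if $\overline{\sigma}$ satisfies $\cS_{\cB_0}.\overline{\sigma}=\overline{\sigma}.\tilde g$, then the pullback $\Xi^{*}\overline{\sigma}$ satisfies $\cS.(\Xi^{*}\overline{\sigma})=(\Xi^{*}\overline{\sigma}).\tilde g$, so $\Xi^{*}\overline{\sigma}$ and likewise each $\Xi^{*}\overline{\sigma}(b,w)$ are Serre-invariant stability conditions on $\ku(X)$.

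Finally, $\Xi^{*}\overline{\sigma}$, the conditions $\Xi^{*}\overline{\sigma}(b,w)$, and the $\sigma(\alpha,\beta)$ are all Serre-invariant stability conditions on $\ku(X)$, so Corollary \ref{cor-s-invariant-ku} places them in a single $\widetilde{\text{GL}}^+_2(\R)$-orbit; translating back through the equivalence $\Xi$ yields the claim. The only point requiring a little care — and the main, rather mild, obstacle — is the compatibility of $\Xi$ with the Serre functors and with the two group actions on the stability manifold, which is what guarantees that Serre-invariance genuinely transfers across $\Xi$; beyond this, the argument is a direct assembly of Theorem \ref{thm-S-B0}, Proposition \ref{prop.2-dim}, Theorem \ref{thm_stabcondinduced}, and Corollary \ref{cor-s-invariant-ku}.
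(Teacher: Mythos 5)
Your proposal is correct and follows essentially the same route as the paper: Serre-invariance of $\overline{\sigma}$ (Theorem \ref{thm-S-B0}) and hence of all $\overline{\sigma}(b,w)$ via Proposition \ref{prop.2-dim}, Serre-invariance of $\sigma(\alpha,\beta)$ (Theorem \ref{thm_stabcondinduced}), and the uniqueness of the Serre-invariant orbit from Theorem \ref{thm_uniqueSinv} as recorded in Corollary \ref{cor-s-invariant-ku}. The only difference is that you spell out the transfer of Serre-invariance across the equivalence $\Xi$ and the compatibility of the two actions, points the paper leaves implicit since it identifies $\ku(\PP^2,\cB_0)$ with $\ku(X)$ by definition.
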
 

\begin{proof}
As noted in Corollary~\ref{cor-s-invariant-ku}, the conditions of Theorem~\ref{thm_uniqueSinv} are satisfied by $\ku(X)$. Then the statement follows from Theorem~\ref{thm-S-B0}, Theorem~\ref{thm_stabcondinduced} and the uniqueness implied by Theorem~\ref{thm_uniqueSinv}. 
\end{proof}   

\begin{Rem}
Note that all the known stability conditions on $\ku(X)$ are Serre invariant. An interesting question would be to understand whether the property of $S$-invariance holds for every stability condition on $\ku(X)$. A positive answer would imply that there is a unique $\widetilde{\text{GL}}^+_2(\R)$-orbit of stability conditions by Theorem~\ref{thm_uniqueSinv}, giving a complete description of the stability manifold of $\ku(X)$, in analogy to the case of the bounded derived category of a genus at least $2$ curve studied in~\cite{macri:stability-conditions-on-curves}.
\end{Rem}

\section{Ulrich bundles}
\label{sec_Ulrich}

Let $X$ be a smooth cubic threefold as before. We denote by $\mathfrak{M}^{(s)U}_{d}$ the moduli of (stable) Ulrich bundles. By~\cite[Theorem B]{macri:acm-bundles-cubic-3fold}, the moduli space $\mathfrak{M}^{sU}_{d}$ of stable Ulrich bundles of rank $d$ on $X$ is non-empty and smooth of dimension $d^2+1$. In this section we apply our result on Serre-invariant stability conditions to show the following.

\begin{Thm}
\label{thm_ulrich}
The moduli space $\mathfrak{M}^{U}_{d}$ of Ulrich bundles of rank $d$ is irreducible.
\end{Thm}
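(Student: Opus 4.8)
The plan is to use the uniqueness of Serre-invariant stability conditions to compute the relevant Bridgeland moduli space on the noncommutative side, where it becomes a classical moduli space of modules over the Clifford order $\cB_0$. Recall from \cite{macri:acm-bundles-cubic-3fold} that every Ulrich bundle lies in $\ku(X)$, that a rank-$d$ Ulrich bundle has class $d[\cI_\ell]$ in $\mathcal{N}(\ku(X))$, and that the stable ones form a nonempty smooth locus $\mathfrak{M}^{sU}_d$ of dimension $d^2+1$. By Corollary \ref{cor_barsigmavssigmaalphabeta} the Serre-invariant conditions $\sigma(\alpha,\beta)$ and the conic-fibration condition $\overline{\sigma}$ of Proposition \ref{prop.2-dim} all lie in one $\widetilde{\text{GL}}^+_2(\R)$-orbit, so the moduli space of semistable objects of class $d[\cI_\ell]$ is the same whichever of these we use. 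First I would therefore work with $\overline{\sigma}$, whose central charge $\overline{Z}$ is built directly from the rank and degree of $\cB_0$-modules.

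Next I would produce the embedding. Under the equivalence $\Xi\colon\ku(X)\xrightarrow{\sim}\ku(\PP^2,\cB_0)$ the class $[\cI_\ell]$ goes to $v_1=[\cB_1]-[\cB_0]$, whose forgetful Chern character $(0,2,-2)$ has rank zero; thus $\Xi(E)$ of a rank-$d$ Ulrich bundle $E$ is a $\cB_0$-module of class $d\,v_1$, supported on a curve, with $\overline{Z}(d\,v_1)=2d\,i$, so it sits in the heart $\cA_{-\frac{5}{4}}$ with phase $\tfrac{1}{2}$. Ulrich bundles are slope-semistable on $X$, and comparing the order induced by $\overline{Z}|_{\mathcal{N}(\cA_{-\frac{5}{4}})}$ with slope and then Gieseker stability of $\cB_0$-modules — exactly the comparison underlying the construction of $\overline{\sigma}$ — I would identify the $\overline{\sigma}$-semistable objects of class $d[\cI_\ell]$ with the Gieseker-semistable $\cB_0$-modules of the corresponding character. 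This realizes $\mathfrak{M}^U_d$ as the Ulrich locus inside the projective moduli space $\mathcal{M}$ of Gieseker-semistable $\cB_0$-modules of class $d\,v_1$; one must here check that strictly semistable Ulrich bundles are handled correctly by the associated good moduli space.

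The hard part will be the irreducibility of $\mathcal{M}$. I would prove it intrinsically on the noncommutative $\PP^2$: the objects $\cB_{-1},\cB_0,\cB_1$ form a full exceptional-type collection, so every $\cB_0$-module admits a Beilinson-type monad resolution, and the space of admissible monad data is an open subset of an irreducible representation space acted on by a connected reductive group, whence its good quotient $\mathcal{M}$ is irreducible. Alternatively one transports the classical irreducibility of moduli of Gieseker-semistable pure sheaves on $\PP^2$ to modules over the order $\cB_0$. By \cite{macri:acm-bundles-cubic-3fold} and the expected-dimension formula $1-\chi(d\,v_1,d\,v_1)=1+d^2$, the space $\mathcal{M}$ has dimension $d^2+1=\dim\mathfrak{M}^{sU}_d$.

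Finally I would conclude by openness. Local freeness together with the cohomological vanishings defining the Ulrich property are open conditions, so the Ulrich locus is open in $\mathcal{M}$; it is nonempty since $\mathfrak{M}^{sU}_d\neq\emptyset$. Because the two dimensions coincide, the immersion $\mathfrak{M}^U_d\hookrightarrow\mathcal{M}$ is an open immersion onto this locus. As a nonempty open subset of the irreducible variety $\mathcal{M}$ it is itself irreducible, and therefore $\mathfrak{M}^U_d$ is irreducible.
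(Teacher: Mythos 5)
Your overall architecture does match the paper's (embed $\mathfrak{M}^U_d$ as an open locus of a Bridgeland moduli space, use the uniqueness of Serre-invariant stability conditions from Corollary \ref{cor_barsigmavssigmaalphabeta} to move to $\ku(\PP^2,\cB_0)$, identify the result with a Gieseker moduli space of $\cB_0$-modules, and prove that space irreducible), but your proof of the crucial irreducibility step rests on a false premise. The objects $\cB_{-1},\cB_0,\cB_1$ do \emph{not} form a full exceptional collection: the semiorthogonal decomposition is $\Db(\PP^2,\cB_0)=\langle \ku(\PP^2,\cB_0),\cB_1\rangle$, and by condition \ref{C2} every nonzero class $v\in\cN(\ku(X))$ satisfies $\chi(v,v)\leq -1$, so $\ku(X)\simeq\ku(\PP^2,\cB_0)$ contains no exceptional objects at all and is not generated by any exceptional collection. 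Consequently there is no Beilinson-type monad resolution of $\cB_0$-modules and no irreducible space of monad data to quotient; your fallback of ``transporting'' the classical irreducibility of sheaf moduli on $\PP^2$ also has no mechanism, since the forgetful functor does not preserve (semi)stability. The paper's actual argument (Proposition \ref{prop_irreducible}, following Step 3 of \cite[Theorem 2.12]{macri:acm-bundles-cubic-3fold}) is entirely different and genuinely needed: connectedness by induction on $d$, with base case $\mathcal{M}_1\cong$ the Fano surface of lines, the strictly semistable locus covered by the images of $\mathcal{M}_{d_1}\times\mathcal{M}_{d_2}\to\mathcal{M}_d$, a Lehn-type lemma \cite[Lemma 4.1]{lehn:singular-symplectic-moduli-space} excluding a connected component made entirely of stable sheaves, and finally $\Ext^2(E,E)=\Hom(E,E\otimes_{\cB_0}\cB_{-1})^{\vee}=0$ giving smoothness and hence normality, so that connected implies irreducible. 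Without a substitute for this, your proof does not close.

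There are secondary gaps as well. You work with the rank-zero class $dv_1=d([\cB_1]-[\cB_0])$ and assert that $\overline{\sigma}$-semistable objects of this class are Gieseker-semistable torsion $\cB_0$-modules ``by the comparison underlying the construction of $\overline{\sigma}$''; the paper deliberately avoids this by first applying the Serre functor, replacing $d[\cI_\ell]$ by $d[\cS(\cI_\ell)]$, whose image $2d[\cB_0]-d[\cB_{-1}]$ has positive rank $4d$ and Chern character $d(4,-3,\tfrac{3}{2})$, so that the large-volume comparison of \cite[Remark 5.12]{bayer:stability-conditions-kuznetsov-component} applies (Proposition \ref{prop_stableB_0modinMd}); for the rank-zero class no such identification is established, and you assert without proof that $\Xi(E)$ of an Ulrich bundle is a sheaf supported on a curve. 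Moreover, you never actually prove that Ulrich bundles are $\sigma(\alpha,\beta)$-semistable: this is the content of Proposition \ref{prop_stability_Ulrichbdl}, which needs the wall-crossing input \cite[Theorem 3.1]{feyz:slope-stability-of-restriction} to descend from the large-volume region to $\beta_0=-\tfrac{1}{3d(d-1)}$, together with Proposition \ref{prop-rotated-stability}. Finally, equality of dimensions does not make an immersion open; the paper proves openness directly, via \cite[Theorem 7.7.5]{grothendieck} for the locus of sheaves and the openness of the Ulrich property \cite{ulrich}. These last points are repairable with the tools in the paper, but the irreducibility argument is the one that would fail as proposed.
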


Recall that an Ulrich bundle $E$ of rank $d \geq 2$ on $X$ is a vector bundle on $X$ satisfying $H^i(X, E(jH))=0$ for all $i=1,2$, $j \in \Z$ and such that the graded module $\oplus_{m \in \Z} H^0(X, E(mH))$ has $3d$ generators in degree $1$. Proposition 2.9 of \cite{ulrich} shows that any Ulrich bundle $E$ is Gieseker semistable, and if it is Gieseker stable, then it is $\mu$-stable. Moreover, if an Ulrich bundle $E$ is Gieseker strictly semistable, then its stable factors are also Ulrich bundles. Thus~\cite[Lemma 2.19]{macri:acm-bundles-cubic-3fold} implies that 
$$\ch(E)=\left(d, 0, -\frac{d}{3}H^2, 0\right)=d\ch(I_\ell).$$
The Gieseker semistability of $E$ implies that $\Hom(\cO_X(kH), E) = 0$ for $k \geq 0$. We also have $\chi(\cO_X(kH), E) = 0$ for $k =0, 1$; thus the definition of Ulrich bundles implies that $\Hom(\cO_X(kH), E[3]) = 0$. Therefore, $E \in \ku(X)$. The first step to prove Theorem~\ref{thm_ulrich} is to show that Ulrich bundles are semistable objects in the Kuznetsov component with respect to the stability conditions $\sigma(\alpha,\beta)$ defined in Theorem~\ref{thm_stabcondinduced}. 

\begin{Prop}
\label{prop_stability_Ulrichbdl}
Every $E \in \mathfrak{M}^{U}_{d}$ is $\sigma(\alpha, \beta)$-semistable in $\ku(X)$ for any $(\alpha,\beta) \in V$, with $V$ as in \eqref{setV}. 
\end{Prop}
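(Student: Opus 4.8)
The plan is to reduce, by means of the $\widetilde{\mathrm{GL}}^+_2(\R)$-orbit description, to checking semistability at a single convenient point, and then to extract that from tilt stability. Recall that $E\in\ku(X)$ and $\ch(E)=d\,\ch(\cI_\ell)$, as already established. By Corollary~\ref{cor-s-invariant-ku} all the stability conditions $\sigma(\alpha,\beta)$ with $(\alpha,\beta)\in V$ lie in one $\widetilde{\mathrm{GL}}^+_2(\R)$-orbit, and this action merely relabels phases; hence it suffices to prove that $E$ is $\sigma(\alpha_0,\beta_0)$-semistable for a single point $(\alpha_0,\beta_0)\in V$. I would take $\beta_0=-1/H^3$ and $\alpha_0<-\beta_0$, which lies in the first region of $V$ in \eqref{setV}, exactly mirroring the treatment of $\cI_\ell$ (the case $d=1$) in Section~\ref{section_minimalhom1}.

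The crux is to show that $E$ is $\sigma_{\alpha_0,\beta_0}$-semistable in $\Coh^{\beta_0}(X)$. Since $E$ is Gieseker semistable, hence $\mu_H$-semistable of slope $0>\beta_0$, it lies in $\Coh^{\beta_0}(X)$ and, by Lemma~\ref{lem-large-volume}, is $\sigma_{\alpha,\beta_0}$-semistable for $\alpha\gg0$. To propagate this down to $\alpha=\alpha_0$ I would rule out tilt walls crossing the vertical line $\beta=\beta_0$. When $E$ is $\mu_H$-stable this follows from \cite[Lemma~3.5]{feyz:slope-stability-of-restriction}, exactly as for $\cI_\ell$ and $K_\ell$ in Section~\ref{section_minimalhom1}, the strengthened Bogomolov inequalities for cubic threefolds in Proposition~\ref{prop-li-ch2} and Theorem~\ref{thm-li-ch3} being the engine that renders the numerical walls harmless; this yields that $E$ is $\sigma_{\alpha,\beta_0}$-stable for every $\alpha>0$. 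When $E$ is strictly Gieseker semistable I would pass to its Jordan--H\"older factors, which are again Ulrich bundles, hence $\mu_H$-stable with Chern character a positive multiple of $\ch(\cI_\ell)$ and therefore tilt-stable of the common tilt-slope $\mu_{\alpha_0,\beta_0}(\cI_\ell)$; as they all have the same tilt-slope, the iterated extension $E$ is $\sigma_{\alpha_0,\beta_0}$-semistable in $\Coh^{\beta_0}(X)$. I expect this tilt-stability step to be the main obstacle: the discriminant $\Delta_H(E)=\tfrac23 d^2(H^3)^2$ grows with the rank, so a priori there are many numerical walls to discard, and verifying the hypotheses of \cite[Lemma~3.5]{feyz:slope-stability-of-restriction} for arbitrary rank $d$ is the delicate point.

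Granting tilt-semistability at $(\alpha_0,\beta_0)$, the conclusion is formal. A direct computation from $\ch(E)=d\,\ch(\cI_\ell)$ and $\beta_0<0$ gives $\mu_{\alpha_0,\beta_0}(E)<0$, so $E\in\mathcal{F}_{\alpha_0,\beta_0}$ and $E[1]\in\Coh^0_{\alpha_0,\beta_0}(X)$. Moreover $E$ is a vector bundle, so for every skyscraper $T\in\Coh_0(X)$ one has $\Hom(T,E[1])=\Ext^1(T,E)=0$ by Serre duality, that is $\Hom(\Coh_0(X),E[1])=0$. Proposition~\ref{prop-rotated-stability} then shows that $E$ is $\sigma^0_{\alpha_0,\beta_0}$-semistable, and since $E\in\ku(X)$ the restriction of this weak stability condition (Theorem~\ref{thm_stabcondinduced}) gives that $E$ is $\sigma(\alpha_0,\beta_0)$-semistable. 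Finally, the orbit description of Corollary~\ref{cor-s-invariant-ku} upgrades this to $\sigma(\alpha,\beta)$-semistability for every $(\alpha,\beta)\in V$, as required.
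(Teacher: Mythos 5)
Your overall architecture is exactly the paper's: large-volume tilt semistability from Gieseker semistability, propagation down a wall-free vertical line, then Proposition~\ref{prop-rotated-stability} via local freeness of $E$ (your Serre-duality check $\Hom(\Coh_0(X),E[1])=0$ is the right verification), restriction to $\ku(X)$ for $\alpha$ small enough that $(\alpha,\beta_0)\in V$, and finally Corollary~\ref{cor-s-invariant-ku} to spread semistability over all of $V$. However, the step you yourself flag as ``the delicate point'' is a genuine gap, and it is precisely where your route diverges from the paper's. You choose $\beta_0=-1/H^3=-1/3$ and invoke \cite[Lemma~3.5]{feyz:slope-stability-of-restriction}; but in the paper that lemma is only ever applied to specific low-rank characters ($\cI_\ell$, $\cI_\ell(H)$, and the rank-two classes in Lemma~\ref{lem-reflexive}), and there is no reason the line $\beta=-1/3$ is wall-free for the class $d\,\ch(\cI_\ell)$ when $d\geq 2$. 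Note that numerical walls depend only on the ray spanned by the character, but \emph{actual} walls do not: along a vertical line $\beta_0$ a destabilizing class $w$ must satisfy $0<\Im Z_{\alpha,\beta_0}(w)<\Im Z_{\alpha,\beta_0}(d v)$, and the right-hand bound scales with $d$, so for higher rank many more classes $w$ pass the numerical test and walls can cross $\beta=-1/3$ even though none do for $d=1$. This is visible in the paper's own proof, which instead cites \cite[Theorem~3.1]{feyz:slope-stability-of-restriction} to get $\sigma_{\alpha,\beta_0}$-semistability for all $\alpha>0$ along the \emph{rank-dependent} line $\beta_0=-\tfrac{1}{3d(d-1)}$, which tends to $0$ as $d$ grows --- a quantitatively different and essential choice.

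Two smaller remarks. First, since \cite[Theorem~3.1]{feyz:slope-stability-of-restriction} applies directly to slope-semistable sheaves, your Jordan--H\"older reduction for strictly semistable $E$ is unnecessary; moreover, as written it does not quite close the argument even granting the stable case, because each stable Ulrich factor of rank $d_i$ would come with its own wall-free line $\beta_0(d_i)$, and you would still need all factors to be tilt-semistable at one common point $(\alpha_0,\beta_0)$ before invoking the equal-slope extension argument. Second, your reduction to a single point of $V$ via the $\widetilde{\mathrm{GL}}{}^+_2(\R)$-orbit is correct and matches the paper's final step. So the proposal is structurally sound, but the core wall-avoidance input is cited with the wrong statement at the wrong value of $\beta_0$, and filling it for arbitrary $d$ is exactly the content the paper gets from \cite[Theorem~3.1]{feyz:slope-stability-of-restriction}.
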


\begin{proof}
Since $\mu_H(E)=0$ and $E$ is Gieseker semistable, we get $E \in \Coh^{\beta}(X)$ for $\beta < 0$, and by~\cite[Lemma 2.7]{bayer:the-space-of-stability-conditions-on-abelian-threefolds}, the Ulrich bundle $E$ is $\sigma_{\alpha,\beta}$-semistable for $\alpha \gg 0$. Theorem 3.1 of \cite{feyz:slope-stability-of-restriction} implies that $E$ is $\sigma_{\alpha, \beta_0}$-semistable for any $\alpha >0$ and 
\begin{equation*}
    \beta_0 = - \frac{1}{3d(d-1)}. 
\end{equation*}
Since $E$ is locally-free, Proposition~\ref{prop-rotated-stability} implies that $E$ is $\sigma^0_{\alpha, \beta_0}$-semistable; thus by definition, $E$ is  $\sigma(\alpha,\beta_0)$-semistable for $\alpha$ small enough because $E \in \ku(X)$. Hence the claim follows from Corollary~\ref{cor-s-invariant-ku}. 
\end{proof} 

We denote by $M^{\sigma(\alpha, \beta)}_{\ku(X)}( d[I_{\ell}])$ the moduli space parametrising $\sigma(\alpha, \beta)$-semistable objects in $\ku(X)$ of class $d[I_{\ell}]$. Proposition~\ref{prop_stability_Ulrichbdl} implies that we have an embedding
\begin{equation}\label{embed}
    \mathfrak{M}^{U}_{d} \hookrightarrow M^{\sigma(\alpha, \beta)}_{\ku(X)}( d[I_{\ell}]).
\end{equation}
The next step is to show that \eqref{embed} is an open embedding. By Corollary~\ref{cor-s-invariant-ku}, we may assume the pairs $(\alpha, \beta) \in V$, with $V$ as in \eqref{setV}, are on the curve $\alpha^2 =\beta^2 -2/3$, so (up to a shift) any $E \in M^{\sigma(\alpha, \beta)}_{\ku(X)}( d[I_{\ell}])$ has maximum phase $1$ in the heart $\cA(\alpha, \beta)$, as in \eqref{eq_Zofline=-1}. Therefore, $E \in \Coh^0_{\alpha,\beta}(X)$ is $\sigma^0_{\alpha, \beta}$-semistable of phase $1$. We claim $E$ is $\sigma_{\alpha, \beta}$-semistable; otherwise, Proposition~\ref{prop-rotated-stability} implies that $E$ lies in the exact triangle 
\begin{equation}\label{triangle}
F[1] \rightarrow E \rightarrow T, 
\end{equation}
where $F \in \mathcal{F}_{\alpha, \beta}$ and $T \in \Coh_0(X)$. By our choice of $(\alpha, \beta)$, we know $\mu_{\alpha, \beta}(F) =0$. Since $F \in \mathcal{F}_{\alpha, \beta}$, we know 
\begin{equation*}
    \mu^+_{\alpha, \beta}(F) \leq 0 = \mu_{\alpha, \beta}(F).  
\end{equation*}
Thus $F$ is $\sigma_{\alpha, \beta}$-semistable of slope zero. But we know $\mu_{\alpha, \beta}(\cO_X(-2H)[1]) < 0$; thus 
\begin{equation*}
    \hom(\cO_X, F[2]) = \hom(F, \cO_X(-2H)[1]) =0. 
\end{equation*}
Since $E \in \ku(X)$, we know $\hom(\cO_X, E) = 0$; thus the exact triangle \eqref{triangle} implies that $\hom(\cO_X, T) = 0$, which is not possible as $T$ is a skyscraper sheaf, and so $E$ is $\sigma_{\alpha, \beta}$-semistable. 

Hence, up to a shift, we may assume any $E \in M^{\sigma(\alpha, \beta)}_{\ku(X)}( d[I_{\ell}])$ lies in the heart $\Coh^{\beta}(X)$. Thus $\cH^{i}(E) = 0$ if $i \neq 0, -1$, and if $\cH^{-1}(E) \neq 0$, then it is a torsion-free sheaf. So if we consider the locus of objects $E \in M^{\sigma(\alpha, \beta)}_{\ku(X)}( d[I_{\ell}])$ with rank$(\cH^{-1}(E)) = 0$, then we get precisely the locus of sheaves which we know is non-empty and by~\cite[Theorem 7.7.5]{grothendieck} is an open sublocus. Moreover, being Ulrich is an open property, see \cite{ulrich}, so $\mathfrak{M}^{U}_{d}$ is an open subset of $M^{\sigma(\alpha, \beta)}_{\ku(X)}( d[I_{\ell}])$, as claimed. 

By Theorem~\ref{thm_stabcondinduced}, we have
$$
M^{\sigma(\alpha, \beta)}_{\ku(X)}( d[I_{\ell}]) \cong M^{\sigma(\alpha, \beta)}_{\ku(X)}( d[\cS(I_{\ell})]),
$$
and Corollary~\ref{cor_barsigmavssigmaalphabeta} gives the isomorphism 
\begin{equation}\label{iso}
   M^{\sigma(\alpha, \beta)}_{\ku(X)}( d[\cS(I_{\ell})]) \cong M^{\overline{\sigma}(b, w)}_{\ku(\PP^2, \cB_0)}(2d[\cB_0] -d[\cB_{-1}]),  
\end{equation}
where the stability conditions $\overline{\sigma}(b, w)$ are defined in Proposition~\ref{prop.2-dim}.  

\begin{Prop} \label{prop_stableB_0modinMd}
Take an object $E \in \Db(\PP^2, \cB_0)$ of class $2d[\cB_0] -d[\cB_{-1}] = d\left(4, -3, \frac{3}{2}\right)$. Then $E$ is a shift of a Gieseker-$($semi$\,)$stable sheaf if and only if $E$ lies in $\ku(\PP^2, \cB_0)$ and is $\overline{\sigma}(b, w)$-$($semi$\,)$stable for some $(b,w) \in U$, where $-\frac{5}{4} \leq b < -\frac{3}{4}$.    
\end{Prop}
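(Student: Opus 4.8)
The plan is to prove the two implications separately, after two reductions. First, all the stability conditions $\overline{\sigma}(b,w)$ with $-\frac54\le b<-\frac34$ lie in a single $\widetilde{\mathrm{GL}}^+_2(\R)$-orbit by Proposition \ref{prop.2-dim} and Theorem \ref{thm-S-B0}, and the $\widetilde{\mathrm{GL}}^+_2(\R)$-action does not change which objects are (semi)stable; hence $\overline{\sigma}(b,w)$-(semi)stability is independent of $(b,w)$ in this range, and I may freely pass to $w\gg 0$, i.e. to the large-volume limit. Second, I record the numerics I will use: the class $d(4,-3,\tfrac32)=2d[\cB_0]-d[\cB_{-1}]$ lies in $\cN(\ku(\PP^2,\cB_0))$, it has slope $\mu=-\tfrac34>b$ (so that any slope-semistable sheaf of this class lies in the tilted heart $\Coh^b(\PP^2,\cB_0)$), and $\cB_1\otimes_{\cB_0}\cB_{-1}\cong\cB_0$, equivalently $\cS_{\cB_0}(\cB_1)=\cB_0[2]$, which I will feed into Serre duality.

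For the implication ``Gieseker-(semi)stable sheaf $\Rightarrow$ $\overline{\sigma}(b,w)$-(semi)stable in $\ku(\PP^2,\cB_0)$'', let $E$ be Gieseker-(semi)stable of the given class, hence slope-semistable of slope $-\tfrac34$. I first check that $E\in\ku(\PP^2,\cB_0)=\{F:\Hom^\bullet(\cB_1,F)=0\}$. Since $\cB_1$ is slope-stable of the same slope but with strictly larger $\ch_2/\rk$ (namely $\tfrac58>\tfrac38$), Gieseker-semistability forbids a saturated embedding $\cB_1\hookrightarrow E$, so $\Hom(\cB_1,E)=0$; by Serre duality $\Hom^2(\cB_1,E)\cong\Hom(E,\cB_1\otimes_{\cB_0}\cB_{-1})^\vee=\Hom(E,\cB_0)^\vee$, which vanishes because $\mu(\cB_0)=-\tfrac54<-\tfrac34=\mu(E)$; and $\chi(\cB_1,E)=0$ because $[E]\in\cN(\ku(\PP^2,\cB_0))$, which is $\chi(\cB_1,-)$-orthogonal, so $\hom^1(\cB_1,E)=0$ as well. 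Thus $E\in\cA_b=\Coh^b(\PP^2,\cB_0)\cap\ku(\PP^2,\cB_0)$. By the large-volume limit for the noncommutative surface $\Db(\PP^2,\cB_0)$ (the analogue of Lemma \ref{lem-large-volume}, built on the Bogomolov inequality \eqref{quadratic}), $E$ is $\overline{\sigma}_{b,w}$-(semi)stable for $w\gg 0$; and since $E$ lies in the sub-heart $\cA_b$ and every short exact sequence in $\cA_b$ is one in $\Coh^b(\PP^2,\cB_0)$ with the same central charge, $E$ is then automatically $\overline{\sigma}(b,w)$-(semi)stable.

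For the converse, let $E\in\ku(\PP^2,\cB_0)$ be $\overline{\sigma}(b,w)$-(semi)stable; by the first reduction I take $w\gg 0$. The crucial step is to upgrade induced-(semi)stability to genuine $\overline{\sigma}_{b,w}$-(semi)stability in $\Db(\PP^2,\cB_0)$. A putative maximal destabilizing subobject, or minimal destabilizing quotient, of $E$ in $\Coh^b(\PP^2,\cB_0)$ cannot lie in $\cA_b$, so through the semiorthogonal decomposition $\langle\ku(\PP^2,\cB_0),\cB_1\rangle$ it must involve the complementary factor $\cB_1$; the total orthogonality $\Hom^\bullet(\cB_1,E)=0$ then rules this out, in the spirit of Proposition \ref{prop-rotated-stability} and of the argument used for Ulrich bundles in Section \ref{sec_Ulrich}. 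Once $E$ is $\overline{\sigma}_{b,w}$-(semi)stable, the large-volume characterization identifies it with a shift of a Gieseker-(semi)stable $\cB_0$-module, the $\ch_2$-ordering of $\overline{Z}_{b,w}$ matching the Gieseker ordering at fixed slope in the limit; and because the class has positive rank $4d$, the object is concentrated in degree $0$, so $E$ is an honest Gieseker-(semi)stable sheaf.

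The main obstacle is exactly this comparison in the converse direction between the induced stability condition $\overline{\sigma}(b,w)$ on $\ku(\PP^2,\cB_0)$ and the ambient stability condition $\overline{\sigma}_{b,w}$ on $\Db(\PP^2,\cB_0)$: one must show that no destabilization of $E$ can be manufactured out of the complementary piece $\langle\cB_1\rangle$ of the semiorthogonal decomposition. This is where I expect to spend the most effort, controlling the phases of $\cB_1$ and of $\cB_0[1]=\cS_{\cB_0}(\cB_1)[-1]$ relative to that of $E$ for $w\gg 0$ and combining this with the vanishing $\Hom^\bullet(\cB_1,E)=0$, thereby mirroring the orthogonality technique already deployed on the threefold.
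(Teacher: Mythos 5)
Your proposal follows the paper's own proof essentially step for step: the same reduction to $w \gg 0$ via the uniqueness of Serre-invariant stability conditions (Proposition \ref{prop.2-dim} and Theorem \ref{thm-S-B0}), the same three orthogonality computations placing $E$ in $\ku(\PP^2,\cB_0)$ (vanishing of $\Hom(\cB_1,E)$ by comparison of Gieseker slopes, of $\Hom(\cB_1,E[2])=\Hom(E,\cB_0)^\vee$ by Serre duality using $\cB_1\otimes_{\cB_0}\cB_{-1}\cong\cB_0$, and of $\hom^1$ from $\chi(\cB_1,E)=0$), and the same large-volume identification of $\overline{\sigma}_{b,w}$-(semi)stability with Gieseker (semi)stability. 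The one step you flag as the main remaining effort --- transferring (semi)stability between the induced condition $\overline{\sigma}(b,w)$ on $\ku(\PP^2,\cB_0)$ and the ambient condition $\overline{\sigma}_{b,w}$ on $\Db(\PP^2,\cB_0)$ --- is exactly what the paper handles by citing \cite[Remark 5.12]{bayer:stability-conditions-kuznetsov-component}, so your sketched argument via the semiorthogonal decomposition and $\Hom^\bullet(\cB_1,E)=0$ is the right mechanism but can simply be replaced by that reference.
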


\begin{proof}
First assume $E$ is a Gieseker-(semi)stable sheaf. Then its Gieseker-slope is less than that of $\cB_1$, so $ \hom(\cB_1, E) =0$. Moreover,  
\begin{equation*}
    \hom(\cB_1, E[2]) = \hom(E, \cB_0) = 0;   
\end{equation*}
thus $\hom(\cB_1, E[1]) = -\chi(\cB_1, E) = -2d\chi(\cB_1, \cB_0) +d \chi(\cB_1, \cB_{-1}) = 0$. This shows that $E \in \ku(\PP^2, \cB_0)$. Since $E$ is Gieseker (semi)stable, it is $\overline{\sigma}_{b,w}$-(semi)stable for $b < -\frac{3}{4}$ and $w \gg 0$. Then~\cite[Remark 5.12]{bayer:stability-conditions-kuznetsov-component} implies that $E$ is $\overline{\sigma}(b, w)$-semistable. 

Conversely, take a $\overline{\sigma}(b, w)$-(semi)stable object $E$ in $\ku(\PP^2, \cB_0)$. We may assume $E \in \cA_b$ and by the uniqueness of $\cS_{\cB_0}$-invariant stability conditions, we can assume $w \gg 0$. Thus~\cite[Remark 5.12]{bayer:stability-conditions-kuznetsov-component} shows that $E$ is in $\Coh^b(\PP^2, \cB_0)$ and is $\overline{\sigma}_{b,w}$-(semi)stable. This immediately implies that $E$ is a Gieseker-(semi)stable sheaf.  
\end{proof}

Thus we get the isomorphism 
\begin{equation*}
M^{\overline{\sigma}(b, w)}_{\ku(\PP^2, \cB_0)}(2d[\cB_0] -d[\cB_{-1}]) \cong M_{(\PP^2,\ \cB_0)}^{ss}(2d[\cB_0] -d[\cB_1]) \eqqcolon \mathcal{M}_d,  
\end{equation*}
where $\mathcal{M}_d$ parametrises Gieseker-semistable sheaves in $\Coh(\PP^2, \cB_0)$ of class $2d[\cB_0] -d[\cB_1]$. The last step is to show the following. 

\begin{Prop} \label{prop_irreducible}
The moduli space $\mathcal{M}_d$ is irreducible. 
\end{Prop}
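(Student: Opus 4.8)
The plan is to prove that $\mathcal{M}_d$ is smooth of pure dimension $d^2+1$ along its stable locus and connected; a smooth connected scheme is irreducible, and since the open subscheme $\mathcal{M}^s_d$ of Gieseker-stable sheaves contains the nonempty $\mathfrak{M}^{sU}_d$ and will be shown to be dense, this yields the claim. Throughout I write $\mathcal{M}^s_d \subset \mathcal{M}_d$ for the locus of stable sheaves, and I use that every $E \in \mathcal{M}_d$ has $\ch(E)=d(4,-3,\tfrac{3}{2})$, so that the formula for $\chi$ recalled before \eqref{quadratic} gives
\[
\chi(E,E)=-\tfrac{7}{64}(4d)^2-\tfrac14(3d)^2+\tfrac12(4d)\bigl(\tfrac32 d\bigr)=-d^2 .
\]

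First I would establish smoothness. Let $E$ be Gieseker-stable of class $d(4,-3,\tfrac32)$. Since the Serre functor of $\Db(\PP^2,\cB_0)$ is $\cS_{\cB_0}=-\otimes_{\cB_0}\cB_{-1}[2]$, Serre duality gives $\Ext^2(E,E)\cong \Hom(E,E\otimes_{\cB_0}\cB_{-1})^{\vee}$. The computation behind Lemma \ref{lem-tensor} shows $\ch_1(F\otimes\cB_{-1})=\ch_1(F)-\tfrac12\rk(F)$, so $\mu(E\otimes\cB_{-1})=\mu(E)-\tfrac12<\mu(E)$, and $E\otimes\cB_{-1}$ is again slope-semistable and torsion-free. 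Any nonzero map $E\to E\otimes\cB_{-1}$ would have image simultaneously a quotient of the $\mu$-semistable sheaf $E$ (so of slope $\ge\mu(E)$) and a subsheaf of $E\otimes\cB_{-1}$ (so of slope $\le\mu(E)-\tfrac12$), which is impossible; hence $\Ext^2(E,E)=0$. Therefore deformations of $E$ are unobstructed and $\dim_{[E]}\mathcal{M}_d=\hom^1(E,E)=1-\chi(E,E)=d^2+1$, so $\mathcal{M}^s_d$ is smooth of pure dimension $d^2+1$. Because $\mathfrak{M}^{sU}_d\subset\mathcal{M}^s_d$ is nonempty and smooth of the same dimension by \cite[Theorem B]{macri:acm-bundles-cubic-3fold}, the locus $\mathcal{M}^s_d$ is nonempty; a standard dimension count for strictly semistable sheaves, whose Jordan--H\"older factors have strictly smaller discriminant, shows the strictly semistable locus has dimension $<d^2+1$, so $\mathcal{M}^s_d$ is dense in every component it meets.

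The remaining and main point is connectedness, equivalently that $\mathcal{M}^s_d$ is irreducible. Here I would use a Beilinson-type resolution on the noncommutative plane. The objects $\cB_{-1},\cB_0,\cB_1$ form a full exceptional-type collection generating $\Db(\PP^2,\cB_0)$, and by construction every $E\in\ku(\PP^2,\cB_0)$ satisfies $\Hom^\bullet(\cB_1,E)=0$. For a general $E\in\mathcal{M}^s_d$ the remaining groups $\Hom^\bullet(\cB_{-1},E)$ and $\Hom^\bullet(\cB_0,E)$ are concentrated in a single degree, with dimensions forced by $\chi$; the associated spectral sequence then exhibits $E$ as the cohomology of a monad whose terms are fixed sums of $\cB_{-1}$ and $\cB_0$ and whose differentials form a point of an irreducible affine space $\mathbb{A}$ of $\cB_0$-module homomorphisms. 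Stability of $E$ is an open condition on $\mathbb{A}$, and two monads yield isomorphic $E$ precisely when they differ by the natural reductive group action; thus $\mathcal{M}^s_d$ is dominated by an open subset of the geometric quotient of an irreducible variety, hence is irreducible. Combined with the first two paragraphs, $\mathcal{M}_d=\overline{\mathcal{M}^s_d}$ is then irreducible.

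The hard part will be the cohomological input to the monad description: one must show that for the general (ideally, for every) stable $E$ the groups $\Hom^\bullet(\cB_i,E)$ are concentrated in one degree, so that the monad has constant ranks and $\mathbb{A}$ is genuinely irreducible of the expected dimension, and one must rule out the jumping loci contributing extra components. An alternative route to connectedness, which I would keep in reserve, is induction on $d$: the base case $\mathcal{M}_1$ is isomorphic to the Fano surface of lines $\Sigma(X)$ by Theorem \ref{thm-moduli-spaces}, which is smooth and irreducible, and in the inductive step one connects an arbitrary semistable $E$ to the polystable locus $E_1\oplus\cdots\oplus E_d$ with $E_i\in\mathcal{M}_1$, a quotient of a power of the irreducible $\mathcal{M}_1$, using that by smoothness every component of $\mathcal{M}_d$ meets this locus in its closure. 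In either approach the crux is showing that a single irreducible family dominates the whole moduli space rather than exhibiting several a priori distinct components.
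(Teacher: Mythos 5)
Your smoothness ingredient is correct and is in fact the paper's: $\Ext^2(E,E)\cong\Hom(E,E\otimes_{\cB_0}\cB_{-1})^{\vee}=0$ by the slope shift $\mu(E\otimes\cB_{-1})=\mu(E)-\tfrac12$, and $\chi(E,E)=-d^2$ gives dimension $d^2+1$ on the stable locus. But the heart of the proposition is connectedness, and there you have a genuine gap on both of your routes. Your main route (a Beilinson-type monad over the noncommutative plane) is not a proof: the concentration of $\Hom^\bullet(\cB_i,E)$ in a single degree, the irreducibility of the parameter space, and the exclusion of jumping loci are exactly the content of the argument, and you explicitly defer them (``the hard part will be\dots''). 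Moreover, even if you established the monad description for the general member of \emph{one} component, irreducibility requires it generically on \emph{every} component, which is circular without an a priori handle on the components. Your reserve route contains an incorrect step: the claim that ``by smoothness every component of $\mathcal{M}_d$ meets the polystable locus in its closure'' does not follow from anything you have shown. A priori a connected component could consist entirely of stable sheaves; it would then be smooth and projective of dimension $d^2+1$, and no contradiction arises from smoothness or from your dimension count (which only bounds the strictly semistable locus by $d^2$). Ruling out such purely stable components is a substantive input, which the paper supplies by invoking the argument of Kaledin--Lehn--Sorger \cite[Lemma 4.1]{lehn:singular-symplectic-moduli-space}.

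For comparison, the paper's proof runs: (i) connectedness by induction on $d$, using that the strictly semistable locus is covered by the images of $\varphi_{d_1,d_2}\colon\mathcal{M}_{d_1}\times\mathcal{M}_{d_2}\to\mathcal{M}_d$, $(E_1,E_2)\mapsto E_1\oplus E_2$, with base case $\mathcal{M}_1$ isomorphic to the Fano surface of lines, together with the Kaledin--Lehn--Sorger argument excluding purely stable components; (ii) normality of all of $\mathcal{M}_d$ from $\Ext^2(E,E)=0$ for \emph{every} semistable $E$, via deformation theory of Quot schemes; (iii) connected and normal implies irreducible. Note also that your opening reduction ``a smooth connected scheme is irreducible'' does not apply to $\mathcal{M}_d$ itself, which is singular at strictly semistable points; this is precisely why the paper works with normality rather than smoothness. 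Your $\Ext^2$-vanishing is the right ingredient for step (ii), but as written you use it only on the stable locus, and without (i) and the exclusion of purely stable components the argument does not close.
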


\begin{proof}
The argument is the same as Step 3 in the proof of~\cite[Theorem 2.12]{macri:acm-bundles-cubic-3fold}; we include it for completeness.  

The first claim is that $\mathcal{M}_d$ is connected. If $d= 1$, then \eqref{iso} gives 
$$
\mathcal{M}_1 \cong M^{\sigma(\alpha, \beta)}_{\ku(X)}([I_{\ell}]),
$$ 
and the latter is isomorphic to the Fano surface of lines in $X$ by~\cite[Theorem 1.1]{pertusi:some-remarks-fano-threefolds-index-two}. In particular, $\mathcal{M}_1$ is connected. For $d>1$, the strictly semistable locus is covered by the images of the natural maps $\varphi_{d_1, d_2} \colon \mathcal{M}_{d_1} \times \mathcal{M}_{d_2} \to \mathcal{M}_d$ for $1 \leq d_1 \leq  d_2 \leq d$ and $d_1+d_2=d$ which send any pair $(E_1, E_2)$ to $E_1 \oplus E_2$. Thus by induction, we deduce that the semistable locus of $\mathcal{M}_d$ is connected as well. Arguing exactly as in~\cite[Lemma 4.1]{lehn:singular-symplectic-moduli-space}, we obtain the non-existence of a connected component of $\mathcal{M}_d$ consisting of purely stable sheaves.
We conclude that $\mathcal{M}_d$ is connected.  

Finally, note that for all $E \in \mathcal{M}_d$, we have Ext$^2(E, E) = \Hom(E, E\otimes_{\cB_0} \cB_{-1})^{\vee} = 0$. This implies that $\mathcal{M}_d$ is normal, combining well-known results in deformation theory and Quot schemes (see~\cite[Theorem~2.12]{macri:acm-bundles-cubic-3fold}).
\end{proof}

Since $\mathfrak{M}^{U}_{d}$ is an open subset of the irreducible space $\mathcal{M}_d$,
it is irreducible. This ends the proof of Theorem~\ref{thm_ulrich}.  
\\
We also observe the following property, which is a consequence of the previous computations.

\begin{Cor}
\label{cor_moduliprojective}
The moduli space $M^{\sigma(\alpha, \beta)}_{\ku(X)}( d[I_{\ell}])$ is irreducible and projective.
\end{Cor}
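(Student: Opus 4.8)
The plan is to transfer both properties across the chain of isomorphisms already assembled above, thereby reducing the statement to the moduli space $\mathcal{M}_d$ of Gieseker-semistable $\cB_0$-modules. First I would compose the three identifications established immediately before the corollary: the isomorphism $M^{\sigma(\alpha, \beta)}_{\ku(X)}(d[I_{\ell}]) \cong M^{\sigma(\alpha, \beta)}_{\ku(X)}(d[\cS(I_{\ell})])$ coming from Theorem \ref{thm_stabcondinduced}, the isomorphism \eqref{iso} provided by Corollary \ref{cor_barsigmavssigmaalphabeta}, and the identification of Proposition \ref{prop_stableB_0modinMd}. Composing these yields
$$M^{\sigma(\alpha, \beta)}_{\ku(X)}(d[I_{\ell}]) \cong \mathcal{M}_d,$$
where $\mathcal{M}_d = M^{ss}_{(\PP^2, \cB_0)}(2d[\cB_0] - d[\cB_1])$ is the moduli of Gieseker-semistable sheaves in $\Coh(\PP^2, \cB_0)$.

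Irreducibility is then immediate, since $\mathcal{M}_d$ is irreducible by Proposition \ref{prop_irreducible} and irreducibility is an isomorphism invariant. For projectivity, I would invoke the general GIT construction of moduli of Gieseker-semistable sheaves on a projective variety, in the version valid for coherent modules over a sheaf of finite $\cO_{\PP^2}$-algebras (Simpson's theorem, exactly as used in \cite{macri:acm-bundles-cubic-3fold}). Since $\cB_0$ is such a sheaf of algebras on the projective surface $\PP^2$ and $\mathcal{M}_d$ parametrizes Gieseker-semistable $\cB_0$-modules, this construction produces a projective coarse moduli scheme; hence $\mathcal{M}_d$, and therefore $M^{\sigma(\alpha, \beta)}_{\ku(X)}(d[I_{\ell}])$, is projective.

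The one point requiring genuine care is that the identifications in \eqref{iso} and in Proposition \ref{prop_stableB_0modinMd} must be isomorphisms of moduli spaces as schemes, not merely bijections on closed points, so that the scheme-theoretic property of projectivity transfers along with the topological property of irreducibility. I expect this to be the main (and only) subtlety: everything else is a formal consequence of already-proven results. The compatibility holds because these identifications are induced by the exact equivalence $\Xi$ together with the wall-crossing comparison of Proposition \ref{prop-rotated-stability}, both of which are compatible with families and hence define isomorphisms of the corresponding moduli functors.
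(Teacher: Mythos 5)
Your proposal is correct and follows exactly the paper's own route: identify $M^{\sigma(\alpha,\beta)}_{\ku(X)}(d[\cI_{\ell}])$ with $\mathcal{M}_d$ via the chain of isomorphisms, deduce irreducibility from Proposition \ref{prop_irreducible}, and deduce projectivity from the GIT construction of $\mathcal{M}_d$ as a moduli space of Gieseker-semistable $\cB_0$-modules. Your closing remark on the need for the identifications to be scheme-theoretic (not merely pointwise) is a point the paper elides with ``by the above computation,'' so your version is if anything slightly more careful, but it is the same proof.
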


\begin{proof}
By the above computation, we have $M^{\sigma(\alpha, \beta)}_{\ku(X)}( d[I_{\ell}]) \cong \mathcal{M}_d$. Since the latter is a moduli space of Gieseker-semistable sheaves, constructed as a GIT quotient of an open subset of a Quot-scheme, it is projective. Hence Proposition~\ref{prop_irreducible} implies the statement.
\end{proof}
In particular, by Theorem~\ref{thm_uniqueSinv}, we deduce that $M^{\sigma}_{\ku(X)}( d[I_{\ell}])$ is irreducible and projective for every Serre-invariant stability condition $\sigma$ on $\ku(X)$.

\newcommand{\etalchar}[1]{$^{#1}$}
\providecommand{\bysame}{\leavevmode\hbox to3em{\hrulefill}\thinspace}
\providecommand{\MR}{\relax\ifhmode\unskip\space\fi MR }
\providecommand{\MRhref}[2]{%
  \href{http://www.ams.org/mathscinet-getitem?mr=#1}{#2}
}
\providecommand{\href}[2]{#2}


\begin{thebibliography}{BMM{\etalchar{+}}12+++}

\bibitem[AT14]{AT}
N.~Addington and R.~Thomas, \emph{Hodge theory and derived categories of cubic
fourfolds}, Duke Math.\ J.\ \textbf{163} (2014), no.~10, 1885--1927.

\bibitem[APR22]{rota}
M.~Altavilla, M.~Petkovic and F.~Rota, \emph{Moduli spaces on the {K}uznetsov
component of {F}ano threefolds of index $2$}, Épijournal de Géom. Algébr.~\textbf{6} (2022), article no.~13.

\bibitem[AK77]{altman}
A.\,B.~Altman and S.\,L.~Kleiman, \emph{Foundations of the theory of {F}ano
  schemes}, Compos.\ Math.\ \textbf{34} (1977), no.~1, 3--47.

\bibitem[AB13]{AB}
D.~Arcara and A.~Bertram, \emph{Bridgeland-stable moduli spaces for
{$K$}-trivial surfaces} (with an appendix by M.~Lieblich), J.\ Eur.\ Math.\ Soc.\ (JEMS) \textbf{15} (2013), no.~1,  1--38.

\bibitem[BBF{\etalchar{+}}20]{feyz:desing}
A.~Bayer, S.~Beentjes, S.~Feyzbakhsh, G.~Hein, D.~Martinelli, F.~Rezaee and
  B.~Schmidt, \emph{The desingularization of the theta divisor of a cubic
  threefold as a moduli space}, preprint \arXiv{2011.12240} (2020). To appear in Geom. Topol.

\bibitem[BB17]{bayer:derived-auto}
A.~Bayer and T.~Bridgeland, \emph{Derived automorphism groups of {K}3 surfaces
  of {P}icard rank 1}, Duke Math.\ J.\ \textbf{166} (2017), no.~1, 75--124.

\bibitem[BLM{\etalchar{+}}17]{bayer:stability-conditions-kuznetsov-component}
A.~Bayer, M.~Lahoz, E.~Macr\`{\i} and P.~Stellari, \emph{Stability conditions on
{K}uznetsov components}, preprint \arXiv{1703.10839} (2017). To appear in Ann. Sci. Éc. Norm. Supér.

\bibitem[BMS16]{bayer:the-space-of-stability-conditions-on-abelian-threefolds}
A.~Bayer, E.~Macr\`{\i} and P.~Stellari, \emph{The space of stability conditions
  on abelian threefolds, and on some {C}alabi-{Y}au threefolds}, Invent.\ Math.\
\textbf{206} (2016), no.~3, 869--933.

\bibitem[BMT14]{bayer:bridgeland-stability-conditions-on-threefolds}
A.~Bayer, E.~Macr\`{\i} and Y.~Toda, \emph{Bridgeland stability conditions on
  threefolds {I}: {B}ogomolov-{G}ieseker type inequalities}, J.~Algebraic Geom.\
\textbf{23} (2014), no.~1, 117--163.

\bibitem[Bea18]{beauville:introUlrich}
A.~Beauville, \emph{An introduction to {U}lrich bundles}, Eur.\ J.~Math.\
  \textbf{4} (2018), no.~1, 26--36.

\bibitem[BMM{\etalchar{+}}12]{macri:categorical-invarinat-cubic-threefolds}
M.~Bernardara, E.~Macr\`{\i}, S.~Mehrotra and P.~Stellari, \emph{A categorical
  invariant for cubic threefolds}, Adv.\ Math.\ \textbf{229} (2012), no.~2,
  770--803.

\bibitem[Bon89]{Bondal}
A.\,I.~Bondal, \emph{Representations of associative algebras and coherent
  sheaves}, Izv.\ Akad.\ Nauk SSSR Ser.\ Mat.\ \textbf{53} (1989), no.~1, 25--44.

\bibitem[Bri07]{bridgeland:stability-condition-on-triangulated-category}
T.~Bridgeland, \emph{Stability conditions on triangulated categories}, Ann.\ of
  Math.\ (2) \textbf{166} (2007), no.~2, 317--345.

\bibitem[Bri08]{bridgeland:K3-surfaces}
\bysame, \emph{Stability conditions on {$K3$} surfaces}, Duke Math.~J.\
  \textbf{141} (2008), no.~2, 241--291.

\bibitem[CHG{\etalchar{+}}12]{ulrich}
M.~Casanellas, R.~Hartshorne, F.~Geiss and F.-O. Schreyer, \emph{Stable
  {U}lrich bundles}, Internat.~J.\ Math.\ \textbf{23} (2012), no.~8, 1250083, 50.

\bibitem[ES03]{eisenbud:resultants}
D.~Eisenbud and F.-O. Schreyer, \emph{Resultants and {C}how forms via exterior
  syzygies} (with an appendix by J.~Weyman), J.\ Amer.\ Math.\ Soc.\ \textbf{16} (2003), no.~3, 537--579.

\bibitem[FK20]{FaenziKim}
D.~Faenzi and Y.~Kim, \emph{Ulrich bundles on cubic fourfolds}, preprint
  \arXiv{2011.12622} (2020).
  
\bibitem[Fey22]{feyz:slope-stability-of-restriction}
S.~Feyzbakhsh, \emph{An effective restriction theorem via wall-crossing and
  {M}ercat's conjecture}, Math. Z.~\textbf{301} (2022), no.~4, 4175--4199.

\bibitem[GD63]{grothendieck}
A.~Grothendieck and J.~Dieudonn\'e, \emph{\'{E}l\'{e}ments de g\'{e}om\'{e}trie alg\'{e}brique.  {III}. \'{E}tude cohomologique des faisceaux coh\'{e}rents. {II}}, Publ.\ Math.\ Inst.\  Hautes \'{E}tudes Sci.\  \textbf{17} (1963).

\bibitem[HRS96]{happel:tilting-in-abelian-categories-and-quasitilted-algebras}
D.~Happel, I.~Reiten and S.\,O.~Smal\o, \emph{Tilting in abelian categories and
  quasitilted algebras}, Mem.\ Amer.\ Math.\ Soc.\ \textbf{120} (1996), no.~575.

\bibitem[Huy06]{huyb-book-FM}
D.~Huybrechts, \emph{Fourier-{M}ukai transforms in algebraic geometry}, Oxford Math.\ Monogr., The Clarendon Press, Oxford Univ.\ Press, Oxford, 2006.

\bibitem[JLL{\etalchar{+}}21]{zhang-hochschild:}
A.~Jacovskis, Z.~Liu, X.~Lin and S.~Zhang, \emph{Hochschild cohomology and
  categorical {T}orelli for {G}ushel-{M}ukai threefolds}, preprint \arXiv{2108.02946} (2021).

\bibitem[KLS06]{lehn:singular-symplectic-moduli-space}
D.~Kaledin, M.~Lehn and C.~Sorger, \emph{Singular symplectic moduli spaces},
  Invent.\ Math.\ \textbf{164} (2006), no.~3, 591--614.

\bibitem[Kuz08]{kuznetsov-quadric-fibration}
A.\,G.~Kuznetsov, \emph{Derived categories of quadric fibrations and intersections
  of quadrics}, Adv.\ Math.\ \textbf{218} (2008), no.~5, 1340--1369.

\bibitem[Kuz10]{Kuz_cubic4}
\bysame, \emph{Derived categories of cubic fourfolds}, In: Cohomological and  geometric approaches to rationality problems, pp.~219--243, Progr.\ Math., vol.~282, Birkh\"{a}user Boston, Boston, MA, 2010.

\bibitem[Kuz19]{Kuz:calabi}
\bysame, \emph{Calabi-{Y}au and fractional {C}alabi-{Y}au categories}, J.\ reine
angew.\ Math.\ \textbf{753} (2019), 239--267.

\bibitem[Kuz04]{kuznetsov-derived-category-cubic-3folds-V14}
\bysame, \emph{Derived category of a cubic threefold and the variety
  {$V_{14}$}}, Tr.\ Mat.\ Inst.\ Steklova \textbf{246} (2004), Algebr.\ Geom.\
  Metody, Svyazi i Prilozh., 183--207.

\bibitem[Kuz09]{kuznetsov:fano-threefolds}
\bysame, \emph{Derived categories of {F}ano threefolds}, Tr.\ Mat.\ Inst.\
  Steklova \textbf{264} (2009), Mnogomernaya Algebraicheskaya Geometriya,
  116--128.

\bibitem[KP21]{KuzPer}
A.\,G.~Kuznetsov and A.~Perry, \emph{Serre functors and dimensions of residual
  categories}, preprint \arXiv{2109.02026} (2021).

\bibitem[LLM{\etalchar{+}}18]{paolo_twistedcubics}
M.~Lahoz, M.~Lehn, E.~Macr\`{\i} and P.~Stellari, \emph{Generalized twisted
  cubics on a cubic fourfold as a moduli space of stable objects}, J.~Math.\
  Pures Appl.\ (9) \textbf{114} (2018), 85--117.

\bibitem[LMS15]{macri:acm-bundles-cubic-3fold}
M.~Lahoz, E.~Macr\`{\i} and P.~Stellari, \emph{Arithmetically {C}ohen-{M}acaulay
  bundles on cubic threefolds}, Algebr.\ Geom.\ \textbf{2} (2015), no.~2,
  231--269.

\bibitem[Li19]{li:fano-picard-number-one}
C.~Li, \emph{Stability conditions on {F}ano threefolds of {P}icard number 1},
J.~Eur.\ Math.\ Soc (JEMS) \textbf{21} (2019), no.~3, 709--726.

\bibitem[LPZ22]{LPZ:elliptic}
C.~Li, L.~Pertusi and X.~Zhao, \emph{Elliptic quintics on cubic fourfolds,
  {O}'{G}rady 10, and {L}agrangian fibrations}, Adv. Math. \textbf{408} A (2022), article id~108584.

\bibitem[LZ22]{LiuZhang}
Z.~Liu and S.~Zhang, \emph{A note on {B}ridgeland moduli spaces and moduli
  spaces of sheaves on $x_{14}$ and $y_3$}, Math. Z.~\textbf{302} (2022), no.~2, 803--837.

\bibitem[Mac14]{maciocia:the-walls-projective-spaces}
A.~Maciocia, \emph{Computing the walls associated to {B}ridgeland stability
  conditions on projective surfaces}, Asian J.~Math.\ \textbf{18} (2014), no.~2,
263--279.

\bibitem[Mac07]{macri:stability-conditions-on-curves}
  E.~Macr\`{\i}, \emph{Stability conditions on curves}, Math.\ Res.\ Lett.\ 
  \textbf{14} (2007), no.~4, 657--672.

\bibitem[PPZ19]{PPZ}
A.~Perry, L.~Pertusi and X.~Zhao, \emph{Stability conditions and moduli spaces
  for {K}uznetsov components of {G}ushel--{M}ukai varieties}, preprint \arXiv{1912.06935} (2019). To appear in Geom.\ Topol.

\bibitem[PR22]{PR}
L.~Pertusi and E.~Robinett, \emph{Stability conditions on {K}uznetsov
  components of {G}ushel--{M}ukai threefolds and {S}erre functor}, preprint
  \arXiv{2112.04769} (2022). To appear in Math. Nachr.

\bibitem[PY20]{pertusi:some-remarks-fano-threefolds-index-two}
L.~Pertusi and S.~Yang, \emph{Some remarks on {F}ano threefolds of index two
  and stability conditions}, preprint \arXiv{2004.02798v3} (2020). To appear in IMRN.

\bibitem[Qin21]{qin}
X.~Qin, \emph{Bridgeland stability of minimal instanton bundles on {F}ano
  threefolds}, preprint \arXiv{2105.14617} (2021).

\bibitem[Sch20]{Sch}
B.~Schmidt, \emph{Bridgeland stability on threefolds: some wall crossings}, J.~Algebraic Geom.\ \textbf{29} (2020), no.~2, 247--283.

\bibitem[Tod13]{toda-bogomolov}
Y.~Toda, \emph{Bogomolov--{G}ieseker-type inequality and counting invariants},
  J.~Topol.\ \textbf{6} (2013), no.~1, 217--250.

\bibitem[Zha21]{Zhang}
S.~Zhang, \emph{Bridgeland moduli spaces and {K}uznetsov's {F}ano threefold
  conjecture}, preprint \arXiv{2012.12193v2} (2021).

\end{thebibliography}
\end{document}